\newtheorem{lemma}{Lemma}[section]
\newtheorem{proposition}{Proposition}[section]
\newtheorem{thm}{Theorem}[section]
\newtheorem{definition}{Definition}[section]
\newtheorem{corollary}{Corollary}[section]
\def\text#1{\mbox{\rm #1}}
\def\overset#1#2{\stackrel{#1}{#2} }
\DeclarePairedDelimiter{\ceil}{\lceil}{\rceil}
\newcommand{\argmin}{\mathop{\rm argmin}}
\newcommand{\argmax}{\mathop{\rm argmax}}
\newcommand{\wh}{\widehat}
\newcommand{\wt}{\widetilde}
\newcommand{\R}{\mathbb{R}}
\newcommand{\E}{\mathbb{E}}
\newcommand{\TV}{{\sf TV}}
\newcommand{\supp}{{\rm supp}}
\newcommand{\iprod}[2]{\left \langle #1, #2 \right \rangle}
\newcommand{\floor}[1]{{\left\lfloor {#1} \right \rfloor}}
\newcommand\defeq{\stackrel{def}{=}}
\newcommand{\1}{\mathbf{1}}
\title{Convergence Rates of Variational Posterior Distributions
}
\author{Fengshuo Zhang}
\author{Chao Gao}
\affil{University of Chicago
}
\begin{document}
\maketitle

\begin{abstract}
We study convergence rates of variational posterior distributions for nonparametric and high-dimensional inference. We formulate general conditions on prior, likelihood, and variational class that characterize the convergence rates. Under similar ``prior mass and testing" conditions considered in the literature, the rate is found to be the sum of two terms. The first term stands for the convergence rate of the true posterior distribution, and the second term is contributed by the variational approximation error. For a class of priors that admit the structure of a mixture of product measures, we propose a novel prior mass condition, under which the variational approximation error of the  mean-field class is dominated by convergence rate of the true posterior. We demonstrate the applicability of our general results for various models, prior distributions and variational classes by deriving convergence rates of the corresponding variational posteriors.

\smallskip

\textbf{Keywords.}  posterior contraction, mean-field variational inference, density estimation, Gaussian sequence model, piecewise constant model, empirical Bayes
\end{abstract}



\section{Introduction}\label{sec:intro}

Variational Bayes inference is a popular technique to approximate difficult-to-compute probability posterior distributions. Given a posterior distribution $\Pi(\cdot|X^{(n)})$, and a variational family $\mathcal{S}$,  variational Bayes inference seeks a $\wh{Q}\in\mathcal{S}$ that best approximates $\Pi(\cdot|X^{(n)})$ under the Kullback-Leibler divergence. Though it is not exact Bayes inference, the variational class $\mathcal{S}$ often gives computational advantage and leads to algorithms such as coordinate ascent that can be efficiently implemented on large-scale data sets. Researchers in many fields have used variational Bayes inference to solve real problems. Successful examples include statistical genetics \citep{carbonetto2012scalable,raj2014faststructure}, natural language processing \citep{blei2003latent,lafferty2006correlated}, computer vision \citep{sudderth2009shared}, and network analysis \citep{bickel2013asymptotic,zhang2017theoretical}, to name a few. We refer the readers to an excellent recent review \citep{blei2017variational} on this topic.

The goal of this paper is to study the variational posterior distribution $\wh{Q}$ from a theoretic perspective. We propose general conditions on the prior, the likelihood and the variational class to characterize the convergence rate of the variational posterior to the true data generating process.

Before discussing our results, we give a brief review on the theory of convergence rates of the posterior distributions in the literature. In order that the posterior distribution concentrates around the true parameter with some rate, the ``prior mass and testing" framework requires three conditions on the prior and the likelihood:
a) The prior is required to put a minimal amount of mass in a neighborhood of the true parameter;
b) Restricted to a subset of the parameter space, there exists a testing function that can distinguish the truth from the complement of its neighborhood;
c) The prior is essentially supported on the subset described above. Rigorous statements of these three conditions can be found in seminal papers \citep{ghosal2000convergence,shen2001rates,ghosal2007convergence}. Earlier versions of these conditions go back to \cite{schwartz1965bayes,lecam1973convergence,barron1988exponential,barron1999consistency}. We also mention another line of work \cite{zhang2006,walker2007rates,castillo2014bayesian,hoffmann2015adaptive} that established posterior rates of convergence using other approaches.

In this paper, we show that under almost the same three conditions, the variational posterior $\wh{Q}$ also converges to the true parameter, and the rate of convergence is given by
\begin{equation}
\epsilon_n^2+\frac{1}{n}\inf_{Q\in\mathcal{S}}P_0^{(n)}D(Q\|\Pi(\cdot|X^{(n)})).\label{eq:overall-rate}
\end{equation}
The first term $\epsilon_n^2$ is the rate of convergence of the posterior distribution $\Pi(\cdot|X^{(n)})$. The second term is the variational approximation error with respect to the class $\mathcal{S}$ under the data generating process $P_0^{(n)}$. Since we are able to generalize the ``prior mass and testing" theory with the same old conditions, many well-studied problems in the literature can now be revisited under our framework of variational Bayes inference with very similar proof techniques. This will be illustrated with several examples considered in the paper.

Remarkably, for a special class of prior distributions and a corresponding variational class, the second term of (\ref{eq:overall-rate}) will be automatically dominated by $\epsilon_n^2$ under a modified ``prior mass" condition. We illustrate this result by a prior distribution of product measure
$$d\Pi(\theta)=\prod_jd\Pi_j(\theta_j),$$
and a mean-field variational class
$$\mathcal{S}_{\rm MF}=\left\{Q: dQ(\theta)=\prod_jdQ_j(\theta_j)\right\}.$$
As long as there exists a subset $\otimes_j\wt{\Theta}_j\subset\left\{\theta: D_{\rho}\left(P_0^{(n)}\|P_{\theta}^{(n)}\right)\leq C_1n\epsilon_n^2\right\}$, such that the prior mass condition
\begin{equation}
\Pi\left(\otimes_j\wt{\Theta}_j\right) \geq \exp\left(-C_2n\epsilon_n^2\right)\label{eq:super-prior-mass}
\end{equation}
holds together with the testing conditions, then the variational posterior distribution $\wh{Q}$ converges to the true parameter with the rate $\epsilon_n^2$. In other words, the variational approximation error term in (\ref{eq:overall-rate}) is dominated under this stronger prior mass condition (\ref{eq:super-prior-mass}). This is the result of Theorem \ref{thm:mean field}. Here, $D_{\rho}(\cdot\|\cdot)$ stands for a R\'enyi divergence with some $\rho>1$. The implication of the condition (\ref{eq:super-prior-mass}) is important. It says that as long as the prior satisfies a ``prior mass" condition that is coherent with the structure of the variational class, the resulted variational approximation error will always be small compared with the statistical error from the true posterior. Therefore, the condition (\ref{eq:super-prior-mass}) offers a practical guidance on how to choose a good prior for variational Bayes inference. In addition, as a condition only on the prior mass, (\ref{eq:super-prior-mass}) is usually very easy to check. This mathematical simplicity is not just for independent priors and the mean-field class. In Section \ref{sec:var-con}, a more general condition is proposed that includes the setting of (\ref{eq:super-prior-mass}) as a special case.

Besides the general formulation of conditions to ensure convergence of the variational posteriors, several interesting aspects of variational Bayes inference are also discussed in the paper. We show that for a general likelihood with a sieve prior, its mean-field variational approximation of the posterior distribution has an interesting relation to an empirical Bayes procedure. We also show that the empirical Bayes procedure is exactly a variational Bayes procedure using a specially designed variational class. This connection between empirical Bayes and variational Bayes is interesting, and may suggest similar theoretical properties of the two.

Finally, we would like to remark that the general rate (\ref{eq:overall-rate}) for variational posteriors is only an upper bound. It is \textit{not} always true that the variational posterior has a slower convergence rate than the true posterior. Sometimes the variational posterior may not be a good approximation to the true posterior, but it can still contract faster to the true parameter if additional regularity is imposed by the variational class $\mathcal{S}$. We construct examples in Section \ref{sec:better} to illustrate this point.

\paragraph{Related Work}
Statistical properties of variational posterior distributions have also been studied in the literature. A recent work by \cite{wang2017frequentist} established Bernstein-von Mises type of results for parametric models. We refer the readers to \cite{blei2017variational,wang2017frequentist} for other related references on theories for parametric variational Bayes inference. For nonparametric and high-dimensional models, recent work by \cite{alquier2017concentration,yang2017alpha} studied variational approximation to tempered posteriors, where the likelihood $dP_{\theta}^{(n)}/dP_0^{(n)}$ is replaced by $\left(dP_{\theta}^{(n)}/dP_0^{(n)}\right)^{\alpha}$ for some $\alpha\in(0,1)$. Just as the convergence of tempered posteriors \citep{walker2001bayesian}, the convergence of the variational approximation can also be established under generalizations of the prior mass condition. In addition, the paper \cite{alquier2017concentration} also studied convergence rates under model misspecification, and the paper \cite{yang2017alpha} considered a more general setting that can handle latent variables, which is quite useful to analyze mixture models. We would like to point out that these results do not apply to the usual posterior distributions with $\alpha=1$. After the first version of our paper was posted, similar results on $\alpha=1$ have also been obtained independently by \cite{pati2017statistical}\footnote{Some extensions of the results of \cite{pati2017statistical} were later added in the revised version of \cite{yang2017alpha} by the same authors.}.
An early related work on this topic is by \cite{zhang2006}, where the results cover both posterior distributions and their variational approximations. However, the conditions in \cite{zhang2006} are rather abstract and are not easy to check in applications.

\paragraph{Organization}
The rest of the paper is organized as follows. In Section \ref{sec:method}, we formulate the problem and introduce the general conditions that characterize convergence rates of variational posteriors. This section also includes results for the mean-field variational class, where the variational approximation error can be explicitly analyzed. In Section \ref{sec:application}, we apply our general theory to three examples that use three different variational classes. Then, in Section \ref{sec:var-con}, for a general class of prior distributions and a mean-field class under a model selection setting, we propose a new prior mass condition that leads to an automatic control of the variational approximation error. In Section \ref{sec:discussion}, we discuss the relation between variational Bayes and empirical Bayes. We also discuss possible situations where the variational posterior outperforms the true posterior in this section. An extension of the main results under model misspecification is also discussed in Section \ref{sec:discussion}. All the proofs will be given in the Appendix.

\paragraph{Notation}
We close this section by introducing notations that will be used later. For $a,b\in\mathbb{R}$, let $a\vee b=\max(a,b)$ and $a\wedge b=\min(a,b)$. For a positive real number $x$, $\ceil{x}$ is the smallest integer no smaller than $x$ and $\floor{x}$ is the largest integer no larger than $x$. For two positive sequences $\{a_n\}$ and $\{b_n\}$, we write $a_n\lesssim b_n$ or $a_n=O(b_n)$ if $a_n\leq Cb_n$ for all $n$ with some constant $C>0$ that does not depend on $n$. The relation $a_n\asymp b_n$ holds if both $a_n\lesssim b_n$ and $b_n\lesssim a_n$ hold. For an integer $m$, $[m]$ denotes the set $\{1,2,...,m\}$. Given a set $S$, $|S|$ denotes its cardinality, and $\mathbf{1}_S$ is the associated indicator function. The $\ell_p$ norm of a vector $v\in\R^m$ with $1\leq m\leq\infty$ is defined as $\|v\|_p = \left(\sum_{j=1}^m|v_j|^p\right)^{1/p}$ for $1\leq p<\infty$ and $\|v\|_\infty = \sup_{1\leq k\leq m}|v_k|$. Moreover, we use $\|v\|$ to denote the $\ell_2$ norm $\|v\|_2$ by convention. For any function $f$, the $\ell_p$ norm is defined in a similar way, i.e. $\|f\|_p = \left(\int f(x)^pdx\right)^{1/p}$. Specifically, $\|f\|_\infty = \sup_{x}|f(x)|$. We use $\mathbb{P}$ and $\mathbb{E}$ to denote generic probability and expectation whose distribution is determined from the context. The notation $\mathbb{P}f$ also means expectation of $f$ under $\mathbb{P}$ so that $\mathbb{P}f=\int fd\mathbb{P}$. Throughout the paper, $C$, $c$ and their variants denote generic constants that do not depend on $n$. Their values may change from line to line.

\section{Main Results}\label{sec:method}

\subsection{Definitions and Settings}

We start this section by introducing a class of divergence functions.

\begin{definition}[R\'enyi divergence]\label{def:divergence}
Let $\rho>0$ and $\rho\neq 1$. The $\rho$-R\'enyi divergence between two probability measures $P_1$ and $P_2$ is defined as
\[D_\rho(P_1\|P_2) = \left\{\begin{array}{ll}\frac{1}{\rho-1}\log\int\left(\frac{dP_1}{dP_2}\right)^{\rho-1}dP_1,& \mbox{if $ P_1\ll P_2$},\\
+\infty,&\mbox{otherwise}.\\
\end{array}\right.\]
\end{definition}

The relations between the R\'enyi divergence and other divergence functions are summarized below.
\begin{enumerate}
\item When $\rho\rightarrow 1$, the R\'enyi divergence converges to the Kullback-Leibler divergence, defined as
\[D_1(P_1\| P_2) = \left\{\begin{array}{ll}\int\log\left(\frac{dP_1}{dP_2}\right)dP_1,& \mbox{if $ P_1\ll P_2$},\\
+\infty,&\mbox{otherwise}.\\
\end{array}\right.\]
From now on, we use $D(P_1\| P_2)$ without the subscript to denote $D_1(P_1\| P_2)$.
\item When $\rho = 1/2$, the R\'enyi divergence is related to the Hellinger distance by
\[D_{1/2}(P_1\|P_2) = -2\log(1-H(P_1, P_2)^2),\]
and the Hellinger distance is defined as
\[H(P_1, P_2) = \sqrt{\frac{1}{2}\int(\sqrt{dP_1}-\sqrt{dP_2})^2}.\]
\item When $\rho = 2$, the R\'enyi divergence is related to the $\chi^2$-divergence by
\[D_2(P_1\|P_2) = \log(1+\chi^2(P_1\|P_2)),\]
and the $\chi^2$-divergence  is defined as
\[\chi^2(P_1\|P_2) = \int\frac{(dP_1)^2}{dP_2}-1.\]
\end{enumerate}

\begin{definition}[total variation]
The total variation distance between two probability measures $P_1$ and $P_2$ is defined as
$$\TV(P_1,P_2)=\frac{1}{2}\int\left|dP_1-dP_2\right|.$$
\end{definition}

The relation among the divergence functions defined above is given by the following proposition (see \cite{van2014renyi}).
\begin{proposition}\label{prop:div}
With the above definitions, the following inequalities hold,
\begin{eqnarray*}
&&TV(P_1,P_2)^2\leq2H(P_1, P_2)^2\leq D_{1/2}(P_1\|P_2)\\
&& \leq D(P_1\|P_2)\leq D_2(P_1\|P_2)\leq \chi^2(P_1\|P_2).
\end{eqnarray*}
Moreover, the R\'enyi divergence $D_{\rho}(P_1\|P_2)$ is a non-decreasing function of $\rho$.
\end{proposition}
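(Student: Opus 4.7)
The plan is to establish the six inequalities one link at a time using elementary tools, then derive the three middle inequalities ($D_{1/2}\le D\le D_2$) from the monotonicity claim, which I prove at the end.

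First I would handle the two outer inequalities, which are just identity-plus-calculus. For $\TV(P_1,P_2)^2\le 2H(P_1,P_2)^2$, I factor $|dP_1-dP_2|=|\sqrt{dP_1}-\sqrt{dP_2}|\cdot(\sqrt{dP_1}+\sqrt{dP_2})$, apply Cauchy--Schwarz, and use $\int(\sqrt{dP_1}+\sqrt{dP_2})^2\le 2\int(dP_1+dP_2)=4$ (via $(a+b)^2\le 2(a^2+b^2)$), which leaves exactly $4H(P_1,P_2)^2$ on the right after dividing by 4 and squaring. For $D_2(P_1\|P_2)\le\chi^2(P_1\|P_2)$, I substitute the stated identity $D_2=\log(1+\chi^2)$ and invoke $\log(1+x)\le x$ for $x\ge 0$.

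Next I would dispose of the Hellinger-to-$D_{1/2}$ step: from the identity $D_{1/2}(P_1\|P_2)=-2\log(1-H(P_1,P_2)^2)$ and the elementary inequality $-\log(1-x)\ge x$ on $[0,1)$, one gets $D_{1/2}\ge 2H^2$ directly. (If $H^2=1$ both sides are $+\infty$.)

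The remaining content is the monotonicity of $\rho\mapsto D_\rho(P_1\|P_2)$, which then yields $D_{1/2}\le D\le D_2$ (the $\rho\to 1$ limit recovers the KL divergence as noted in the definition). The plan is to write, assuming $P_1\ll P_2$,
\[
D_\rho(P_1\|P_2)=\frac{\varphi(\rho)}{\rho-1},\qquad \varphi(\rho)=\log\int\Bigl(\frac{dP_1}{dP_2}\Bigr)^{\rho}dP_2,
\]
and to observe two facts about $\varphi$: (i) $\varphi(1)=\log 1=0$, and (ii) $\varphi$ is convex on $\{\rho>0\}$. Fact (ii) is the standard log-convexity of moments: for $\rho=\lambda\rho_0+(1-\lambda)\rho_1$ with $\lambda\in[0,1]$, H\"older's inequality applied to $(dP_1/dP_2)^{\lambda\rho_0}\cdot(dP_1/dP_2)^{(1-\lambda)\rho_1}$ under $P_2$ gives
\[
\int\Bigl(\tfrac{dP_1}{dP_2}\Bigr)^{\rho}dP_2\le\Bigl(\int\Bigl(\tfrac{dP_1}{dP_2}\Bigr)^{\rho_0}dP_2\Bigr)^{\lambda}\Bigl(\int\Bigl(\tfrac{dP_1}{dP_2}\Bigr)^{\rho_1}dP_2\Bigr)^{1-\lambda},
\]
and taking logarithms yields convexity of $\varphi$. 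A convex function vanishing at $1$ has the property that the slope $(\varphi(\rho)-\varphi(1))/(\rho-1)=\varphi(\rho)/(\rho-1)$ is non-decreasing in $\rho$; this is exactly $D_\rho$, so monotonicity follows on $\{\rho>0,\rho\ne 1\}$, with continuity at $\rho=1$ (where the limit equals $D$) closing the gap.

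The main obstacle, if any, is the bookkeeping around the sign of $\rho-1$ and the possibility that some moments are infinite; treating $\varphi(\rho)=+\infty$ as an admissible value and separately checking the $P_1\not\ll P_2$ case (where $D_\rho=+\infty$ for $\rho\ge 1$ and the inequalities hold trivially on the relevant side) handles these. Everything else is routine substitution and one-line applications of Cauchy--Schwarz, H\"older, and elementary $\log$ inequalities.
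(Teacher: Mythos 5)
Your proposal is correct. Note, however, that the paper does not actually prove Proposition 2.1 at all: it simply cites \cite{van2014renyi} for these facts. So what you have written is a self-contained substitute for that citation, and it is essentially the standard argument from the cited reference: Le Cam's inequality $\TV^2\leq 2H^2$ via the factorization $|dP_1-dP_2|=|\sqrt{dP_1}-\sqrt{dP_2}|(\sqrt{dP_1}+\sqrt{dP_2})$, Cauchy--Schwarz and $(a+b)^2\leq 2(a^2+b^2)$ (your bookkeeping lands at $2H^2$ after squaring and dividing by $4$, as required, even though the prose says ``$4H^2$''); the links to $D_{1/2}$ and $\chi^2$ from the stated identities together with $-\log(1-x)\geq x$ and $\log(1+x)\leq x$; and monotonicity in $\rho$ from log-convexity of $\rho\mapsto\int(dP_1/dP_2)^{\rho}dP_2$ (H\"older) plus the secant-slope lemma for a convex function vanishing at $\rho=1$. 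The only point that deserves care is how $D=D_1$ enters the chain: you invoke $\lim_{\rho\to1}D_\rho=D$, which the paper's definition section asserts (and which holds for the limit from below in general); alternatively, $D\leq D_2$ follows in one line from Jensen's inequality applied to $\int\log(dP_1/dP_2)\,dP_1$, and $D_{1/2}\leq D$ from the same convexity/limit argument, so no additional regularity is needed. Your treatment of the $P_1\not\ll P_2$ case is consistent with the paper's convention that $D_\rho=+\infty$ there, making those links trivial.
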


Now we are ready to introduce the variational posterior distribution. Given a statistical model $P_{\theta}^{(n)}$ parametrized by $\theta$, and a prior distribution $\theta\sim \Pi$, the posterior distribution is defined by
\[d\Pi(\theta|X^{(n)}) = \frac{dP_\theta^{(n)}(X^{(n)})d\Pi(\theta)}{\int dP_\theta^{(n)}(X^{(n)})d\Pi(\theta)}.\]
To address possible computational difficulty of the posterior distribution,
variational approximation is a way to find the closest object in a class $\mathcal{S}$ of probability measures to $\Pi(\cdot|X^{(n)})$. The class $\mathcal{S}$ is usually required to be computationally or analytically tractable. The most popular mathematical definition of variational approximation is given through the KL-divergence.

\begin{definition}[variational posterior]\label{def:VB}

Let $\mathcal{S}$ be a family of distributions. The variational approximation of the posterior is defined as
\begin{equation}
\wh Q = \argmin_{Q\in\mathcal{S}}D(Q\|\Pi(\cdot|X^{(n)}))\label{eq:VB}.
\end{equation}
\end{definition}

Just like the posterior distribution $\Pi(\cdot|X^{(n)})$, the variational posterior $\wh{Q}$ is a data-dependent measure that summarizes information from both the prior and the data. For a variational set $\mathcal{S}$, the corresponding variational posterior can be regarded as the projection of the true posterior onto $\mathcal{S}$ under KL-divergence. When $\mathcal{S}$ is the set of all distributions, $\wh{Q}$ turns out to be the true posterior $\Pi(\cdot|X^{(n)})$. The choice of the class $\mathcal{S}$ usually determines the difficulty of the optimization (\ref{eq:VB}). In this paper, our main goal is to study the statistical property of the data-dependent measure $\wh{Q}$ for a general $\mathcal{S}$.

\subsection{Results for General Variational Posteriors}

Assume the observation $X^{(n)}$ is generated from a probability measure $P_0^{(n)}$, and $\wh Q$ is the variational posterior distribution driven by $X^{(n)}$. The goal of this paper is to analyze $\wh Q$ from a frequentist perspective. In other words, we study statistical properties of $\wh{Q}$ under $P_0^{(n)}$. The first theorem gives conditions that guarantee convergence of the variational posterior $\wh Q$.

%
%
%

\begin{thm}\label{thm:convergence}
Suppose $\epsilon_n$ is a sequence that satisfies $n\epsilon_n^2\geq 1$. Consider a loss function $L(\cdot,\cdot)$, such that for any two probability measures $P_1$ and $P_2$, $L(P_1, P_2)\geq 0$. Let $C, C_1, C_2, C_3>0$ be constants such that $C>C_2+C_3+2$. We assume
\begin{itemize}
\item For any $\epsilon >\epsilon_n$, there exists a set $\Theta_n(\epsilon)$ and a testing function $\phi_n$, such that
\begin{equation}\label{eq:C1}
P_0^{(n)}\phi_n+\sup_{\substack{\theta\in\Theta_n(\epsilon)\\ L(P_\theta^{(n)}, P_0^{(n)})\geq C_1n\epsilon^2}}P_\theta^{(n)}(1-\phi_n)\leq \exp(-Cn\epsilon^2).\tag{C1}
\end{equation}
\item For any $\epsilon >\epsilon_n$, the set $\Theta_n(\epsilon)$ above satisfies
\begin{equation}\label{eq:C2}
\Pi(\Theta_n(\epsilon)^c)\leq\exp(-Cn\epsilon^2).\tag{C2}
\end{equation}
\item
For some constant $\rho>1$,
\begin{equation}\label{eq:C3}
\Pi\left(D_{\rho}(P_0^{(n)}\|P_\theta^{(n)})\leq C_3n\epsilon_n^2\right)\geq\exp(-C_2n\epsilon_n^2).\tag{C3}
\end{equation}
\end{itemize}
Then for the variational posterior $\wh Q$ defined in (\ref{eq:VB}), we have
\begin{equation}\label{eq:mean convergence}
P_0^{(n)}\wh QL(P_\theta^{(n)}, P_0^{(n)})\leq M n(\epsilon_n^2+\gamma_n^2),
\end{equation}
for some constant $M$ only depending on $C_1,C$ and $\rho$,
where the quantity $\gamma_n^2$ is defined as
\[\gamma_n^2 = 
\frac{1}{n}\inf_{Q\in\mathcal{S}}P_0^{(n)}D(Q||\Pi(\cdot|X^{(n)})).\]
\end{thm}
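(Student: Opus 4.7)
The argument will rest on three ingredients: (i) the ELBO identity, which relates $D(Q\|\Pi(\cdot|X^{(n)}))$ to the prior KL and the log-likelihood; (ii) the Donsker--Varadhan dual representation of KL divergence; and (iii) the classical ``prior mass and testing'' machinery underlying hypotheses (A1)--(A3). The strategy is to use (i) together with variational optimality of $\wh Q$ to control $P_0^{(n)} D(\wh Q\|\Pi(\cdot|X^{(n)}))$, then use (ii) to convert that KL bound into one on $P_0^{(n)}\wh Q L$, and finally use (iii) to close the loop by bounding a posterior exponential moment of $L$.

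For Step~1, I would write, with $\ell_n(\theta):=\log\frac{dP_\theta^{(n)}}{dP_0^{(n)}}(X^{(n)})$ and $Z_n:=\int e^{\ell_n}\,d\Pi$, the identity
\[
D(Q\|\Pi(\cdot|X^{(n)})) \;=\; D(Q\|\Pi) \;-\; \int \ell_n(\theta)\,dQ(\theta) \;+\; \log Z_n,
\]
valid for every $Q\ll\Pi$. Only the first two terms on the right depend on $Q$, so $\wh Q$ simultaneously minimizes the left-hand side and the difference $D(Q\|\Pi)-\int \ell_n\,dQ$ over $\mathcal S$. Consequently $D(\wh Q\|\Pi(\cdot|X^{(n)}))\le D(Q\|\Pi(\cdot|X^{(n)}))$ for every $Q\in\mathcal S$; taking $P_0^{(n)}$-expectation and infimizing over $Q\in\mathcal S$ gives the deterministic bound $P_0^{(n)}D(\wh Q\|\Pi(\cdot|X^{(n)}))\le n\gamma_n^2$. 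Step~2 is the Donsker--Varadhan inequality: for any measurable $g$ and a small constant $\alpha>0$ to be chosen, applying it with $g(\theta)=\alpha L(P_\theta^{(n)},P_0^{(n)})$ yields
\[
\alpha \int L\,d\wh Q \;\le\; D(\wh Q\|\Pi(\cdot|X^{(n)})) \;+\; \log \int e^{\alpha L(\theta)}\,d\Pi(\theta|X^{(n)}).
\]
Taking $P_0^{(n)}$-expectation, using Jensen on the second term, and invoking Step~1 reduces the proof to showing that $\log P_0^{(n)}\int e^{\alpha L(\theta)}\,d\Pi(\theta|X^{(n)}) \le C''\,n\epsilon_n^2$ for a suitable constant $C''$.

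This last task, Step~3, is the main obstacle and is where (A1)--(A3) enter in the standard Ghosal--Ghosh--van~der~Vaart fashion. Expanding the posterior,
\[
P_0^{(n)} \int e^{\alpha L(\theta)}\,d\Pi(\theta|X^{(n)}) \;=\; P_0^{(n)}\, \frac{\int e^{\alpha L(\theta)}\,e^{\ell_n(\theta)}\,d\Pi(\theta)}{Z_n},
\]
I would introduce the evidence-lower-bound event $\mathcal B_n=\{Z_n\ge e^{-(C_2+2)n\epsilon_n^2}\}$; the Rényi prior mass condition (A3) with some $\rho>1$ combined with a Chebyshev/Markov-style argument on $\log Z_n$ yields $P_0^{(n)}(\mathcal B_n^c)$ small (decaying in $n\epsilon_n^2$). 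On $\mathcal B_n$ the denominator is under control, and the numerator is split across $\Theta_n(\epsilon)$ and its complement, and further across $\{L\ge C_1 n\epsilon^2\}$ versus $\{L<C_1 n\epsilon^2\}$: on the sieve the testing function $\phi_n$ from (A1) annihilates the large-$L$ piece in $P_0^{(n)}$-expectation (using Fubini together with $P_0^{(n)}(dP_\theta^{(n)}/dP_0^{(n)})=1$), the small-$L$ piece is $\le e^{\alpha C_1 n\epsilon^2}$, and condition (A2) directly controls the complement of the sieve. A peeling over $\epsilon\ge\epsilon_n$ à la Ghosal--Ghosh--van~der~Vaart assembles the pieces. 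The gap $C>C_2+C_3+2$ is precisely what is required for all the exponential factors to combine, and choosing $\alpha$ a sufficiently small constant (depending only on $C_1,C,\rho$) makes $\alpha C_1$ negligible relative to $C$. Dividing Step~2 through by $\alpha$ then yields $P_0^{(n)}\wh Q L\le M n(\epsilon_n^2+\gamma_n^2)$, as claimed.
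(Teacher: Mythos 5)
Your Steps~1 and~2 are exactly the paper's route: the Gibbs/Donsker--Varadhan inequality plus the optimality of $\wh Q$ over $\mathcal{S}$ reduce everything to showing that $P_0^{(n)}\Pi\bigl(\exp(\alpha L(P_\theta^{(n)},P_0^{(n)}))\,\big|\,X^{(n)}\bigr)$ is at most $e^{C''n\epsilon_n^2}$ for a small constant $\alpha$ depending only on $C_1,C,\rho$ (the paper's Lemmas on the basic KL inequality and on the variational posterior give precisely this reduction, and the paper also takes $a=\min\{C,\rho-1,1\}/(2C_1)$). The gap is in your Step~3, in how the small-evidence event is handled. You introduce a single event $\mathcal B_n=\{Z_n\ge e^{-(C_2+2)n\epsilon_n^2}\}$, at the fixed scale $\epsilon_n$, and note that (A3) makes $P_0^{(n)}(\mathcal B_n^c)$ exponentially small in $n\epsilon_n^2$. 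But on $\mathcal B_n^c$ you must still control $P_0^{(n)}\bigl[\int e^{\alpha L}\,d\Pi(\cdot|X^{(n)})\,\mathbf{1}_{\mathcal B_n^c}\bigr]$, and since $L$ is unbounded in all the applications (e.g.\ $L=n\|\theta-\theta^*\|^2$), the posterior integral has no a priori bound there. If you peel into shells $\{C_1n\epsilon_j^2\le L<C_1n\epsilon_{j+1}^2\}$ and bound the shell posterior mass by $1$ on the bad event, the $j$-th shell contributes about $e^{\alpha C_1 n\epsilon_{j+1}^2}\,P_0^{(n)}(\mathcal B_n^c)\approx e^{\alpha C_1 n\epsilon_{j+1}^2}e^{-(\rho-1)n\epsilon_n^2}$: the weight grows along the peeling while the bad-event probability stays fixed at scale $n\epsilon_n^2$, so the sum diverges. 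A fixed-scale evidence bound therefore cannot close the argument.

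The fix, which is what the paper actually does, is to make the evidence event scale with the shell: for each $\epsilon\ge\epsilon_n$ define $A_n(\epsilon)=\{\int(dP_\theta^{(n)}/dP_0^{(n)})\,d\wt\Pi\le e^{-(C_3+1)n\epsilon^2}\}$ with $\wt\Pi$ the prior restricted to the R\'enyi neighborhood $\{D_\rho(P_0^{(n)}\|P_\theta^{(n)})\le C_3n\epsilon_n^2\}$; Markov applied to the $-(\rho-1)$ power together with (A3) gives $P_0^{(n)}(A_n(\epsilon))\le e^{-(\rho-1)n\epsilon^2}$ — exponentially small at scale $n\epsilon^2$, not merely $n\epsilon_n^2$, and this is exactly where the R\'enyi (rather than KL) prior-mass condition is needed. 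Combined with (A1) and (A2) at scale $\epsilon$ and $C>C_2+C_3+2$, this yields the uniform tail bound $P_0^{(n)}\Pi\bigl(L>C_1n\epsilon^2\,\big|\,X^{(n)}\bigr)\le e^{-Cn\epsilon^2}+e^{-(\rho-1)n\epsilon^2}+2e^{-n\epsilon^2}$ for all $\epsilon\ge\epsilon_n$ (the paper's key lemma), and the exponential moment then follows by integrating this sub-exponential tail (the paper's elementary MGF lemma) with $\alpha$ below half the tail exponent. So your plan is salvageable, but only after replacing the single event $\mathcal B_n$ by scale-dependent events inside the peeling — equivalently, proving the tail bound at every $\epsilon\ge\epsilon_n$ first and then integrating; as written, the bad-event term is not controlled.
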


Conditions (\ref{eq:C1})-(\ref{eq:C3}) resemble the three conditions of ``prior mass and testing" in \cite{ghosal2000convergence}. Interestingly, Theorem \ref{thm:convergence} shows that with a slight modification, these three conditions also lead to the convergence of the variational posterior. The testing conditions (\ref{eq:C1}) and (\ref{eq:C2}) are required to hold for all $\epsilon>\epsilon_n$. In the prior mass condition (\ref{eq:C3}), the neighborhood of $P_0^{(n)}$ is defined through a R\'enyi divergence with a $\rho>1$, compared with the KL-divergence used in \cite{ghosal2000convergence}. According to Proposition \ref{prop:div}, $D_{\rho}(P_1\|P_2)\geq D(P_1\|P_2)$ for $\rho>1$, so the condition (\ref{eq:C3}) in our paper is slightly stronger than that in \cite{ghosal2000convergence}. This stronger ``prior mass" condition ensures that the loss $L(P_\theta^{(n)}, P_0^{(n)})$ is exponentially integrable under the true posterior $\Pi(\cdot|X^{(n)})$, which is a key step in the proof of Theorem \ref{thm:convergence}. In all the examples considered in this paper, we will check (\ref{eq:C3}) with $D_2(P_0^{(n)}\|P_{\theta}^{(n)})$, which turns out to be a very convenient choice.

The convergence rate is the sum of two terms, $\epsilon_n^2$ and $\gamma_n^2$. The first term $\epsilon_n^2$ is the convergence rate of the true posterior $\Pi(\cdot|X^{(n)})$.
The second term $\gamma_n^2$  characterizes the approximation error given by the variational set $\mathcal{S}$. A larger $\mathcal{S}$ means more expressive power given by the variational approximation, and thus the rate of $\gamma_n^2$ is smaller.

It is worth mentioning that we characterize the convergence of the variational posterior $\wh{Q}$ through the expected loss $P_0^{(n)}\wh QL(P_\theta^{(n)}, P_0^{(n)})$. Bounds for this quantity are also obtained by \cite{pati2017statistical} independently with a stronger testing condition on the entire space. We remark that convergence in $P_0^{(n)}\wh QL(P_\theta^{(n)}, P_0^{(n)})$ automatically implies that the entire variational posterior distribution concentrates in a neighborhood of the true distribution $P_0^{(n)}$ with a radius of the same rate. When the loss function is convex, it also implies the existence of a point estimator that enjoys the same convergence rate. We summarize these results in the next corollary.

\begin{corollary}\label{cor:convergence}
Under the same setting of Theorem \ref{thm:convergence}, for any diverging sequence $M_n\rightarrow\infty$, we have
\[P_0^{(n)}\wh Q\left(L(P_\theta^{(n)}, P_0^{(n)})>M_nn(\epsilon_n^2+\gamma_n^2)\right)\rightarrow 0.\]
Furthermore, if the loss $L(P_\theta^{(n)}, P_0^{(n)})$ is convex respect to $\theta$, then the variational posterior mean $\wh{\theta}=\wh{Q}\theta$ satisfies
\[P_0^{(n)}L(P_{\wh \theta}^{(n)}, P_0^{(n)})\leq Mn(\epsilon_n^2+\gamma_n^2),\]
where $M$ is the same constant in (\ref{eq:mean convergence}).
\end{corollary}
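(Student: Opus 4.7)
The corollary has two essentially independent parts, each of which is a short consequence of Theorem \ref{thm:convergence}, so the plan is to treat them separately.

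For the first statement, my plan is to apply Markov's inequality to the nonnegative random quantity $L(P_\theta^{(n)}, P_0^{(n)})$ under the product measure $P_0^{(n)} \otimes \wh Q$ (interpreting the integral against $\wh Q$ conditionally on the data). Since Theorem \ref{thm:convergence} already provides
\[
P_0^{(n)} \wh Q L(P_\theta^{(n)}, P_0^{(n)}) \leq M n(\epsilon_n^2 + \gamma_n^2),
\]
Markov's inequality yields
\[
P_0^{(n)} \wh Q\bigl(L(P_\theta^{(n)}, P_0^{(n)}) > M_n n(\epsilon_n^2 + \gamma_n^2)\bigr) \leq \frac{M}{M_n},
\]
which tends to $0$ as $M_n \to \infty$. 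The only small subtlety is to keep the randomness in $\wh Q$ straight: $\wh Q$ is data-dependent, so I want to view $P_0^{(n)} \wh Q(\cdot)$ as the expectation of $\wh Q(\cdot \mid X^{(n)})$ under $P_0^{(n)}$, and Markov's inequality is then applied to the nonnegative random variable obtained by integrating $L$ against $\wh Q$. Nothing here requires any structural assumption on $L$ beyond nonnegativity.

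For the second statement, the plan is a one-line application of Jensen's inequality. Since the map $\theta \mapsto L(P_\theta^{(n)}, P_0^{(n)})$ is assumed convex, and $\wh\theta = \int \theta \, d\wh Q(\theta)$ is by definition the barycenter of $\wh Q$, Jensen's inequality gives
\[
L(P_{\wh\theta}^{(n)}, P_0^{(n)}) \leq \int L(P_\theta^{(n)}, P_0^{(n)}) \, d\wh Q(\theta) = \wh Q L(P_\theta^{(n)}, P_0^{(n)}).
\]
Taking $P_0^{(n)}$-expectation on both sides and invoking Theorem \ref{thm:convergence} again produces
\[
P_0^{(n)} L(P_{\wh\theta}^{(n)}, P_0^{(n)}) \leq P_0^{(n)} \wh Q L(P_\theta^{(n)}, P_0^{(n)}) \leq M n(\epsilon_n^2 + \gamma_n^2),
\]
with the same constant $M$ as in (\ref{eq:mean convergence}).

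There is no real obstacle here beyond bookkeeping. Both parts are corollaries in the literal sense: the first is Markov applied to the bound of Theorem \ref{thm:convergence}, and the second is Jensen applied to the same bound. The only care needed is to ensure that $\wh\theta$ is well defined (integrability of $\theta$ under $\wh Q$, which is implicit in the statement) and that the convexity hypothesis is used in the correct variable, namely $\theta$ rather than the distribution $P_\theta^{(n)}$.
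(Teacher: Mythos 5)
Your proof is correct and matches the paper's own argument exactly: Markov's inequality applied to the expectation bound from Theorem \ref{thm:convergence} for the first claim, and Jensen's inequality (using convexity of $\theta \mapsto L(P_\theta^{(n)}, P_0^{(n)})$) for the second. The remarks about data-dependence of $\wh Q$ and integrability of $\theta$ are sensible bookkeeping but add nothing beyond what the paper already treats implicitly.
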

\begin{proof}
The first result is an application of Markov's inequality
$$P_0^{(n)}\wh Q\left(L(P_\theta^{(n)}, P_0^{(n)})>M_nn(\epsilon_n^2+\gamma_n^2)\right)\leq \frac{P_0^{(n)}\wh QL(P_\theta^{(n)}, P_0^{(n)})}{M_nn(\epsilon_n^2+\gamma_n^2)}\leq \frac{M}{M_n}\rightarrow 0.$$
The second result is directly implied by Jensen's inequality that
$$P_0^{(n)}L(P_{\wh Q\theta}^{(n)}, P_0^{(n)})\leq P_0^{(n)}\wh QL(P_\theta^{(n)}, P_0^{(n)})\leq Mn(\epsilon_n^2+\gamma_n^2).$$
\end{proof}

To apply Theorem \ref{thm:convergence} to specific problems, we need to analyze the variational approximation error $\gamma_n^2=\frac{1}{n}\inf_{Q\in\mathcal{S}}P_0^{(n)}D(Q||\Pi(\cdot|X^{(n)}))$ in each individual setting. However, this task may not be trivial for many problems. Now we borrow a technique in \cite{zhang2006} to get a useful upper bound for $\gamma_n^2$. For any $Q\in\mathcal{S}$, we have
\begin{eqnarray*}
n\gamma_n^2&\leq&P_0^{(n)}D(Q\|\Pi(\cdot|X^{(n)})) = D(Q\|\Pi)+Q\left[\int\log\left(\frac{d P_{\Pi}^{(n)}}{dP_\theta}\right)dP_0^{(n)}\right]\\
& = &D(Q\|\Pi)+Q\left[D(P_0^{(n)}\|P_\theta^{(n)})-D(P_0^{(n)}\|P_\Pi^{(n)})\right]\\
&\leq& D(Q\|\Pi)+Q\left[D(P_0^{(n)}\|P_\theta^{(n)})\right],\\
\end{eqnarray*}
where $P_{\Pi}^{(n)} = \int P_\theta^{(n)}d\Pi(\theta)$. Then, we obtain the upper bound
$$\gamma_n^2\leq \inf_{Q\in\mathcal{S}}R(Q),$$
where 
\begin{equation}\label{eq:def risk}
R(Q) = \frac{1}{n}\left(D(Q\|\Pi)+Q\left[D(P_0^{(n)}\|P_\theta^{(n)})\right]\right).
\end{equation}
Now, it is easy to see that a sufficient condition for the variational posterior to converge at the same rate as the true posterior is
\begin{equation}\label{eq:C4}
\inf_{Q\in\mathcal{S}}R(Q)\lesssim \epsilon_n^2.\tag{C4}
\end{equation}
We incorporate this condition into the next theorem.

\begin{thm}\label{thm:general2}
Suppose $\epsilon_n$ is a sequence that satisfies $n\epsilon_n^2\geq 1$, for which the conditions (\ref{eq:C1}), (\ref{eq:C2}), (\ref{eq:C3}), (\ref{eq:C4}) hold. Then, for the variational posterior $\wh Q$ that is defined in (\ref{eq:VB}), we have 
\begin{equation}\label{eq:mean convergence1}
P_0^{(n)}\wh QL(P_\theta^{(n)}, P_0^{(n)})\lesssim n\epsilon_n^2.
\end{equation}
\end{thm}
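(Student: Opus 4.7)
The plan is to obtain Theorem \ref{thm:general2} as an immediate consequence of Theorem \ref{thm:convergence} combined with the upper bound on $\gamma_n^2$ that the paper derives just before stating (\ref{eq:C4}). Since conditions (\ref{eq:C1}), (\ref{eq:C2}), (\ref{eq:C3}) are assumed, Theorem \ref{thm:convergence} already provides
$$P_0^{(n)}\wh Q L(P_\theta^{(n)}, P_0^{(n)}) \leq M n(\epsilon_n^2 + \gamma_n^2),$$
so the only thing left is to show $\gamma_n^2 \lesssim \epsilon_n^2$.

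For that, I would simply reuse the chain of inequalities displayed in the paragraph preceding (\ref{eq:C4}): for an arbitrary $Q \in \mathcal{S}$, expanding $P_0^{(n)} D(Q\|\Pi(\cdot|X^{(n)}))$ via the definition of the posterior and inserting the marginal $P_\Pi^{(n)} = \int P_\theta^{(n)} d\Pi(\theta)$ gives
$$P_0^{(n)} D(Q\|\Pi(\cdot|X^{(n)})) = D(Q\|\Pi) + Q\bigl[D(P_0^{(n)}\|P_\theta^{(n)})\bigr] - D(P_0^{(n)}\|P_\Pi^{(n)}).$$
Using $D(P_0^{(n)}\|P_\Pi^{(n)}) \geq 0$ and dividing by $n$ yields $\gamma_n^2 \leq R(Q)$. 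Taking the infimum over $Q \in \mathcal{S}$ and applying the hypothesis (\ref{eq:C4}) gives $\gamma_n^2 \leq \inf_{Q \in \mathcal{S}} R(Q) \lesssim \epsilon_n^2$. Substituting this into the bound from Theorem \ref{thm:convergence} produces the claimed rate $P_0^{(n)}\wh Q L(P_\theta^{(n)}, P_0^{(n)}) \lesssim n\epsilon_n^2$.

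I do not anticipate any real obstacle here. The theorem is essentially a packaging of the earlier observation that whenever the variational approximation error $\gamma_n^2$ is controlled by the posterior rate $\epsilon_n^2$, the variational posterior inherits the rate of the true posterior. The one small point worth checking is that $R(Q)$ is a deterministic upper envelope for $\gamma_n^2$ that does not require any measurability or regularity assumption on $\mathcal{S}$ beyond what is needed to define $\wh{Q}$ in (\ref{eq:VB}), so the infimum-over-$\mathcal{S}$ step is unproblematic.
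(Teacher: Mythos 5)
Your proposal is correct and matches the paper's own route: the paper proves Theorem \ref{thm:general2} exactly by combining Theorem \ref{thm:convergence} (under (\ref{eq:C1})--(\ref{eq:C3})) with the chain of inequalities displayed just before (\ref{eq:C4}), which shows $n\gamma_n^2 \leq D(Q\|\Pi)+Q\left[D(P_0^{(n)}\|P_\theta^{(n)})\right]$ for every $Q\in\mathcal{S}$ by dropping the nonnegative term $D(P_0^{(n)}\|P_\Pi^{(n)})$, so that (\ref{eq:C4}) yields $\gamma_n^2\lesssim\epsilon_n^2$. No gap here; your decomposition and the infimum step are exactly the paper's argument.
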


We would like to remark that the quantity $\inf_{Q\in\mathcal{S}}R(Q)$ is easier to analyze compared with the original definition of $\gamma_n^2$. According to its definition given by (\ref{eq:def risk}), it is sufficient to find a distribution $Q\in\mathcal{S}$, such that
\begin{equation}
D(Q\|\Pi)\lesssim n\epsilon_n^2\quad\text{and}\quad Q\left[D(P_0^{(n)}\| P_\theta^{(n)})\right]\lesssim n\epsilon_n^2. \label{eq:alquier}
\end{equation}
These are exactly the two conditions formulated by \cite{alquier2017concentration} as a natural extension of the prior mass condition. The relation between the prior mass condition and (\ref{eq:alquier}) has also been discussed in \cite{yang2017alpha}.

One way to construct such a distribution $Q$ that satisfies the above two inequalities is to focus on those whose supports are within the set $\mathcal{C} = \{\theta: D(P_0^{(n)}\|P_\theta^{(n)})\leq Cn\epsilon_n^2\}$ for some constant $C>0$. We summarize this method into the following theorem.

\begin{thm}\label{thm:convergence2}
Suppose there exist constants $C_1,C_2 > 0$, such that
\begin{equation}\label{eq:C4*}
\inf_{\substack{Q\in\mathcal{S}\cap\mathcal{E}}} D(Q\|\Pi)\leq C_1n\epsilon_n^2,\tag{C4*}
\end{equation}
where $\mathcal{E} = \{Q: \supp(Q)\subset\mathcal{C}\}$ with $\mathcal{C} = \{\theta: D(P_0^{(n)}\|P_\theta^{(n)})\leq C_2n\epsilon_n^2\}$. Then, we have
\[\inf_{Q\in\mathcal{S}}R(Q)\leq (C_1+C_2)\epsilon_n^2.\]
\end{thm}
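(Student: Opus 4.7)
The plan is to observe that the statement is essentially a direct bookkeeping exercise once one restricts attention to the smaller set $\mathcal{S}\cap\mathcal{E}$ used in hypothesis (\ref{eq:C4*}). The quantity to bound, $R(Q)=\frac{1}{n}\bigl(D(Q\|\Pi)+Q[D(P_0^{(n)}\|P_\theta^{(n)})]\bigr)$, splits additively into a prior-KL term and an expected-KL-to-truth term, and the two factors of the definition of $\mathcal{E}$ are exactly tailored to control these two terms separately.

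First I would note the chain of infima $\inf_{Q\in\mathcal{S}} R(Q)\le \inf_{Q\in\mathcal{S}\cap\mathcal{E}} R(Q)$, which simply says that restricting the feasible set can only enlarge the value. Next, for any $Q\in\mathcal{S}\cap\mathcal{E}$, I would use the support condition $\supp(Q)\subset\mathcal{C}$ to control the second term pointwise: for every $\theta$ in the support, $D(P_0^{(n)}\|P_\theta^{(n)})\le C_2 n\epsilon_n^2$, so integrating against $Q$ gives $Q[D(P_0^{(n)}\|P_\theta^{(n)})]\le C_2 n\epsilon_n^2$. This bound holds uniformly over $\mathcal{S}\cap\mathcal{E}$ and can be pulled outside the infimum.

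Combining these observations yields
\[
\inf_{Q\in\mathcal{S}} R(Q)\;\le\;\inf_{Q\in\mathcal{S}\cap\mathcal{E}}\frac{1}{n}\bigl(D(Q\|\Pi)+C_2 n\epsilon_n^2\bigr)
\;=\;\frac{1}{n}\inf_{Q\in\mathcal{S}\cap\mathcal{E}} D(Q\|\Pi)\;+\;C_2\epsilon_n^2,
\]
at which point the hypothesis (\ref{eq:C4*}) immediately gives the claimed bound $(C_1+C_2)\epsilon_n^2$.

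There is essentially no obstacle here; the theorem is a clean reformulation designed to decouple the problem of producing a $Q\in\mathcal{S}$ with small prior KL from the problem of controlling $D(P_0^{(n)}\|P_\theta^{(n)})$ on the support of $Q$. The only subtlety worth flagging is the tacit assumption that $\mathcal{S}\cap\mathcal{E}$ is nonempty, which is implicit in the hypothesis that the infimum in (\ref{eq:C4*}) is finite. The real work in applications is verifying (\ref{eq:C4*}) itself; this theorem merely packages the reduction.
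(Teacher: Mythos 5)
Your proof is correct and matches the paper's argument: restrict to $\mathcal{S}\cap\mathcal{E}$, use the support condition to bound $Q[D(P_0^{(n)}\|P_\theta^{(n)})]$ uniformly by $C_2 n\epsilon_n^2$, and invoke (\ref{eq:C4*}) for the prior-KL term. If anything your write-up is slightly cleaner than the paper's, since you pull the uniform $C_2\epsilon_n^2$ bound outside the infimum before applying the hypothesis, which sidesteps having to speak of a (possibly non-attained) minimizer.
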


\subsection{Results for Mean-Field Variational Posteriors}

A special choice of $\mathcal{S}$ is the mean-field class of distributions. Not only does this class leads to computationally efficient algorithms such as coordinate ascent, but in this section, we will also show that the structure of this class leads to a convenient convergence analysis. We begin with its definition.

\begin{definition}[mean-field class]\label{def:mean field}
For parameters in a product space that can be written as $\theta=(\theta_1,\theta_2,...,\theta_m)$ with some $1\leq m\leq \infty$, the mean-field variational family is defined as 
\[\mathcal{S}_{\rm MF} = \left\{Q: dQ(\mathbf{\theta}) = \prod_{j=1}^m dQ_j(\theta_j)\right\}.\]
\end{definition}

The following theorem can be viewed as an application of Theorem \ref{thm:convergence2} to the mean-field class.

\begin{thm}\label{thm:mean field}
Suppose there exists a $\wt{Q}\in\mathcal{S}_{\rm MF}$ and a subset $\otimes_{j=1}^m\wt{\Theta}_j$, such that
\begin{equation}
\otimes_{j=1}^m\wt{\Theta}_j\subset\left\{\theta: D(P_0^{(n)}\| P_\theta^{(n)})\leq C_1n\epsilon_n^2, \quad\log\frac{d\wt{Q}(\theta)}{d\Pi(\theta)}\leq C_2n\epsilon_n^2\right\},\label{eq:mfc1}
\end{equation}
and
\begin{equation}
-\sum_{j=1}^m\log\wt{Q}_j(\wt{\Theta}_j)\leq C_3n\epsilon_n^2,\label{eq:mfc2}
\end{equation}
for some constants $C_1,C_2,C_3>0$. Then, we have
$$\inf_{Q\in\mathcal{S}_{\rm MF}}R(Q)\leq (C_1+C_2+C_3)\epsilon_n^2.$$
\end{thm}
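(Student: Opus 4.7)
The plan is to instantiate Theorem~\ref{thm:convergence2} by exhibiting an explicit mean-field distribution $Q^\star\in\mathcal{S}_{\rm MF}$ that is supported on $\otimes_{j=1}^m\wt{\Theta}_j$ and whose KL divergence to $\Pi$ is of order $n\epsilon_n^2$. Concretely, I would define the product measure
\[
dQ^\star(\theta)=\prod_{j=1}^m dQ^\star_j(\theta_j),\qquad dQ^\star_j(\theta_j)=\frac{\mathbf{1}_{\wt{\Theta}_j}(\theta_j)}{\wt{Q}_j(\wt{\Theta}_j)}\, d\wt{Q}_j(\theta_j),
\]
i.e.\ the coordinate-wise restriction of $\wt{Q}$ to $\wt{\Theta}_j$. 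By construction $Q^\star$ is again a product measure and $\supp(Q^\star)\subset\otimes_j\wt{\Theta}_j\subset\mathcal{C}$ with $\mathcal{C}=\{\theta:D(P_0^{(n)}\|P_\theta^{(n)})\leq C_1n\epsilon_n^2\}$ by hypothesis~\eqref{eq:mfc1}. The condition $-\sum_j\log\wt{Q}_j(\wt{\Theta}_j)\leq C_3n\epsilon_n^2$ guarantees in particular that each $\wt{Q}_j(\wt{\Theta}_j)>0$, so the normalization is well defined.

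The main step is to bound $D(Q^\star\|\Pi)$ by the decomposition $\frac{dQ^\star}{d\Pi}=\frac{dQ^\star}{d\wt{Q}}\cdot\frac{d\wt{Q}}{d\Pi}$, valid because the hypothesis $\log(d\wt{Q}/d\Pi)\leq C_2n\epsilon_n^2$ on $\otimes_j\wt{\Theta}_j$ implicitly requires $\wt{Q}\ll\Pi$ there. On $\otimes_j\wt{\Theta}_j$ we have $\log\frac{dQ^\star}{d\wt{Q}}=-\sum_{j=1}^m\log\wt{Q}_j(\wt{\Theta}_j)$, so
\[
D(Q^\star\|\Pi)=\int_{\otimes_j\wt{\Theta}_j}\left(\log\frac{dQ^\star}{d\wt{Q}}+\log\frac{d\wt{Q}}{d\Pi}\right)dQ^\star\leq -\sum_{j=1}^m\log\wt{Q}_j(\wt{\Theta}_j)+C_2n\epsilon_n^2\leq(C_2+C_3)n\epsilon_n^2,
\]
using \eqref{eq:mfc1} pointwise on $\otimes_j\wt{\Theta}_j$ for the second term and \eqref{eq:mfc2} for the first. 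Hence $Q^\star\in\mathcal{S}_{\rm MF}\cap\mathcal{E}$ satisfies the hypothesis~\eqref{eq:C4*} of Theorem~\ref{thm:convergence2} with constant $C_2+C_3$, and an application of that theorem yields $\inf_{Q\in\mathcal{S}_{\rm MF}}R(Q)\leq(C_1+C_2+C_3)\epsilon_n^2$.

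The only nontrivial point is the factorization $\log(dQ^\star/d\wt{Q})=-\sum_j\log\wt{Q}_j(\wt{\Theta}_j)$ on the product set: this is immediate because both $Q^\star$ and $\wt{Q}$ are mean-field, and the coordinate-wise Radon--Nikodym derivatives multiply. The rest reduces to two pointwise bookkeeping inequalities. I do not foresee any technical obstacle; this is essentially a direct verification that the restricted mean-field candidate $Q^\star$ realises the sufficient condition in Theorem~\ref{thm:convergence2}, and the key design choice is to break the support requirement and the KL bound coordinate-wise, which is exactly what \eqref{eq:mfc1} and \eqref{eq:mfc2} are engineered for.
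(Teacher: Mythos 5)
Your proposal is correct and follows essentially the same route as the paper: the paper also takes the coordinate-wise restriction of $\wt{Q}$ to $\otimes_j\wt{\Theta}_j$ and bounds $D(Q\|\Pi)$ via the same decomposition $Q\log\frac{dQ}{d\wt{Q}}+Q\log\frac{d\wt{Q}}{d\Pi}$, using \eqref{eq:mfc2} and \eqref{eq:mfc1} respectively. The only cosmetic difference is that the paper bounds $R(Q)$ directly (checking $QD(P_0^{(n)}\|P_\theta^{(n)})\leq C_1n\epsilon_n^2$ from the support condition) rather than invoking Theorem~\ref{thm:convergence2}, which amounts to the same computation.
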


Note that the condition (\ref{eq:mfc2}) can also be written as
$$\wt{Q}\left(\otimes_{j=1}^m\wt{\Theta}_j\right)\geq \exp\left(-C_3n\epsilon_n^2\right).$$
In other words, Theorem \ref{thm:mean field} gives an interesting ``distribution mass" type of characterization for $\inf_{Q\in\mathcal{S}}R(Q)$. Checking (\ref{eq:mfc2}) is very similar to checking the ``prior mass" condition (\ref{eq:C3}), and is usually not hard in many examples. We only need to make sure that $\wt{Q}$ is not too far away from the prior $\Pi$ in the sense of (\ref{eq:mfc1}). In fact, if the prior $\Pi$ belongs to the class $\mathcal{S}_{\rm MF}$, then one can take $\wt{Q}=\Pi$, and the conditions of Theorem \ref{thm:mean field} simply become a ``prior mass" condition $\Pi\left(\otimes_{j=1}^m\wt{\Theta}_j\right)\geq \exp\left(-C_3n\epsilon_n^2\right)$, with the choice of $\otimes_{j=1}^m\wt{\Theta}_j$ being a subset of the KL-neighborhood $\left\{\theta: D(P_0^{(n)}\| P_\theta^{(n)})\leq C_1n\epsilon_n^2\right\}$. A more general characterization of the variational approximation error under model selection setting through a prior mass condition will be studied in Section \ref{sec:var-con}.

\section{Applications}\label{sec:application}

In this section, we consider several examples to illustrate the theory developed in Section \ref{sec:method}. 

\subsection{Gaussian Sequence Model}\label{sec:gsm}

Consider observations generated by a Gaussian sequence model,
\begin{equation}\label{eq:gauss}
Y_j = \theta_j +\frac{1}{\sqrt{n}}Z_j,\qquad Z_j\overset{i.i.d}{\sim}N(0,1),\qquad j\geq 1.
\end{equation}
We use the notation $P_{\theta}^{(n)}=\otimes_jN(\theta_j,n^{-1})$ for the distribution above. Our goal is to use variational Bayes methods to estimate the true parameter $\theta^*$ that belongs to the following Sobolev ball,
\begin{equation}\label{eq:sobolev}
\Theta_{\alpha}(B) = \left\{\theta=(\theta_j)_{j=1}^\infty: \sum_{j=1}^\infty j^{2\alpha}\theta_j^2\leq B^2\right\}.
\end{equation}
Here, the smoothness $\alpha>0$ and the radius $B>0$ are considered as constants throughout the paper.
The loss function for this problem is $L(P_{\theta}^{(n)}, P_{\theta^*}^{(n)}) = n\|\theta-\theta^*\|^2$, which is a natural choice for the Gaussian sequence model.

The prior distribution $\theta\sim\Pi$ is described through the following sampling process.
\begin{enumerate}
\item Sample $k\sim\pi$;
\item Conditioning on $k$, sample $\theta_j\sim f_j$ for all $j\in[k]$, and set $\theta_j=0$ for all $j>k$.
\end{enumerate}
In other words, the prior on $\theta$ is a mixture of product measures,
\begin{equation}
d\Pi(\theta)=\sum_{k=1}^{\infty}\pi(k)\prod_{j=1}^kf_j(\theta_j)\prod_{j>k}\delta_0(\theta_j)d\theta.\label{eq:refer-later-sieve}
\end{equation}
Priors of similar forms are also considered in \cite{rivoirard2012posterior,gao2016rate,gao2015general,rousseau2017asymptotic}. Direct calculation implies that the posterior is also in the form of a mixture of product measures.

Consider the variational posterior $\wh{Q}$ defined by (\ref{eq:VB}) with $\mathcal{S}=\mathcal{S}_{\rm MF}$. That is, we seek a data-dependent measure in a more tractable form of a product measure. 
In most cases, the variational posterior does not have a closed form and needs to be solved by coordinate ascent  algorithms \citep{blei2017variational}. However, for the Gaussian sequence model (\ref{eq:gauss}) with the prior distribution (\ref{eq:refer-later-sieve}), one can write down the exact form of the mean-field variational posterior distribution.
\begin{thm}\label{thm:VB of gauss}
Consider the variational posterior $\wh{Q}$ induced by the likelihood (\ref{eq:gauss}), the prior (\ref{eq:refer-later-sieve}) and the mean-field variational set $\mathcal{S}_{\rm MF}$.
The distribution $\wh{Q}$ is a product measure with the density of each coordinate specified by
\begin{equation}\label{eq:VB gauss}
q_j = \left\{\begin{array}{ll}\wt f_j, &j<\wt k,\\ \wt p\delta_0+(1-\wt p)\wt f_{\wt k},&j = \wt k,\\ \delta_0,&j>\wt k.\end{array}\right.
\end{equation}
where
$$\wt{f}_j(\theta_j)\propto f_j(\theta_j)\exp\left(-\frac{n}{2}(\theta_j-Y_j)^2\right),$$
$$\wt{p}=\frac{\pi(k-1|Y)}{\pi(k-1|Y)+\pi(k|Y)},$$
and
\begin{equation}
\wt{k}=\argmax_k\left(\pi(k-1|Y)+\pi(k|Y)\right).\label{eq:k-tilde}
\end{equation}
The number $\pi(k|Y)$ is the posterior probability of the model dimension, and according to Bayes formula, it is
$$\pi(k|Y)\propto \pi(k)\prod_{j\leq k}\int f_j(\theta_j)\exp\left(-\frac{n(\theta_j-Y_j)^2}{2}\right)d\theta_j \prod_{j>k}\exp\left(-\frac{nY_j^2}{2}\right).$$
\end{thm}
In other words, the mean-field variational posterior $\wh{Q}$ is nearly equivalent to a thresholding rule. It estimates all $\theta_j^*$ by $0$ after $\wt{k}$ and applies the usual posterior distribution for each coordinate before $\wt{k}$. A mixed strategy is applied to the $\wt{k}$th coordinate. The effective model dimension $\wt{k}$ is found in a data-driven way through (\ref{eq:k-tilde}).

Next, we will show that even though the posterior itself is not a product measure, using $\wh{Q}$ from the mean-field class still gives us a rate-optimal contraction result. 
The conditions on the prior distributions are summarized below.
\begin{itemize}
\item There exist some constants $C_1,C_2>0$ such that
\begin{equation}\label{eq:gauss1-condition1}
\sum_{j=k}^\infty\pi(j)\leq C_1\exp(-C_2k),\text{ for all }k.
\end{equation} 
\item There exist some constants $C_3,C_4>0$ such that for $k_0=\left\lceil\left(\frac{n}{\log n}\right)^{\frac{1}{2\alpha+1}}\right\rceil$,
\begin{equation}\label{eq:gauss1-condition2}
\pi(k_0)\geq C_3\exp(-C_4k_0\log k_0).
\end{equation}
\item For the $k_0$ defined above, there exist some constants $c_0\in\mathbb{R}$ and $c_1>0$ such that
\begin{equation}\label{eq:gauss1-condition3}
-\log f_j(x)\leq c_0+c_1j^{2\alpha+1}x^2,\qquad\text{for all $j\leq k_0$ and $x\in\mathbb{R}$}.
\end{equation}
\end{itemize}
These three conditions on $\Pi$ include a large class of prior distributions. We remark that even though (\ref{eq:gauss1-condition3}) involves $\alpha$, it does not mean that one needs to know $\alpha$ when defining the prior $\Pi$. For example, the choice that $\pi(k)\propto e^{-\tau k}$ and $f_j$ being $N(0,\sigma^2)$ for some constants $\tau,\sigma^2>0$ easily satisfies all the three conditions (\ref{eq:gauss1-condition1})-(\ref{eq:gauss1-condition3}).

Conditions (\ref{eq:gauss1-condition1})-(\ref{eq:gauss1-condition3}) will be used to derive the four conditions in Theorem \ref{thm:general2}. To be specific, (\ref{eq:C1}) and (\ref{eq:C2}) are consequences of (\ref{eq:gauss1-condition1}) (see Lemma \ref{lem:gauss1} in the appendix), and (\ref{eq:C3}) and (\ref{eq:C4}) can be derived from (\ref{eq:gauss1-condition2}) and (\ref{eq:gauss1-condition3}) (see Lemma \ref{lem:gauss1-2} in the appendix).
Then, by Theorem \ref{thm:general2}, we obtain the following result.

\begin{thm}\label{thm:gauss1}
Consider the prior $\Pi$ that satisfies (\ref{eq:gauss1-condition1})-(\ref{eq:gauss1-condition3}).
Then, for any $\theta^*\in\Theta_{\alpha}(B)$, we have 
\[P_{\theta^*}^{(n)}\wh Q\|\theta-\theta^*\|^2\lesssim n^{-\frac{2\alpha}{2\alpha+1}}(\log n)^{\frac{2\alpha}{2\alpha+1}},\]
where $\wh Q$ is the variational posterior defined by (\ref{eq:VB}) with $\mathcal{S}=\mathcal{S}_{\rm MF}$.
\end{thm}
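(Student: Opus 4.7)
I would apply Theorem~\ref{thm:general2} with $\epsilon_n^2 = n^{-2\alpha/(2\alpha+1)}(\log n)^{2\alpha/(2\alpha+1)}$, chosen so that $n\epsilon_n^2 \asymp k_0 \log k_0$ where $k_0 = \lceil(n/\log n)^{1/(2\alpha+1)}\rceil$. Since $P_\theta^{(n)} = \bigotimes_j N(\theta_j,1/n)$, every R\'enyi divergence $D_\rho(P_{\theta^*}^{(n)}\|P_\theta^{(n)})$ is a constant multiple (depending on $\rho$) of $n\|\theta-\theta^*\|^2$, which also equals the loss $L$. Consequently each of the four conditions (\ref{eq:C1})--(\ref{eq:C4}) becomes a statement about the $\ell_2$ geometry of the sieve prior $\Pi$ in (\ref{eq:refer-later-sieve}).

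\textbf{Conditions (A1), (A2).} I would take the sieve $\Theta_n(\epsilon)=\{\theta:\theta_j=0\text{ for all } j> \lceil c n\epsilon^2\rceil\}$ for a constant $c$ small enough. The exponential tail assumption (\ref{eq:gauss1-condition1}) then yields $\Pi(\Theta_n(\epsilon)^c)\le e^{-Cn\epsilon^2}$ uniformly in $\epsilon>\epsilon_n$, which is (\ref{eq:C2}). On the sieve the model is a $\lceil c n\epsilon^2\rceil$-dimensional Gaussian location family, for which the standard Ghosal--Ghosh--van der Vaart construction (a minimum-distance test combined with a covering of a large Euclidean ball at scale $\epsilon$) produces a test with exponentially small errors in $n\epsilon^2$, giving (\ref{eq:C1}).

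\textbf{Conditions (A3), (A4).} Both reduce to the same ``mass in a box'' estimate. Restrict the mixture (\ref{eq:refer-later-sieve}) to the slice $k=k_0$, which carries weight $\pi(k_0)\ge e^{-C_4 k_0\log k_0}$ by (\ref{eq:gauss1-condition2}). The tail of $\theta^*$ is harmless since $\sum_{j>k_0}(\theta_j^*)^2\le B^2 k_0^{-2\alpha}\lesssim \epsilon_n^2$ by the Sobolev constraint. For the first $k_0$ coordinates, I would lower bound $\prod_{j\le k_0}\int_{|\theta_j-\theta_j^*|\le \delta}f_j(\theta_j)\,d\theta_j$ with $\delta^2\asymp \epsilon_n^2/k_0$, using the pointwise bound $f_j(x)\ge e^{-c_0 - c_1 j^{2\alpha+1}x^2}$ from (\ref{eq:gauss1-condition3}). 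The logarithm of this product is controlled by $k_0\log k_0$ coming from the $-\log(2\delta)$ terms together with $\sum_{j\le k_0} j^{2\alpha+1}((\theta_j^*)^2+\delta^2)\lesssim k_0 B^2 + k_0$ from the quadratic exponent; all pieces are $O(n\epsilon_n^2)$, yielding (\ref{eq:C3}) with $\rho=2$. For (\ref{eq:C4}) I would invoke Theorem~\ref{thm:mean field} with $\wt Q$ the product measure $\prod_{j\le k_0}f_j(\theta_j)\,d\theta_j\cdot\prod_{j>k_0}\delta_0(\theta_j)$ and $\wt\Theta_j=[\theta_j^*-\delta,\theta_j^*+\delta]$ for $j\le k_0$, $\wt\Theta_j=\{0\}$ otherwise. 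On the support of $\wt Q$ only the $k=k_0$ slice of $\Pi$ charges, so $\log(d\wt Q/d\Pi)=-\log\pi(k_0)\lesssim n\epsilon_n^2$, which gives (\ref{eq:mfc1}); (\ref{eq:mfc2}) follows from the same box computation as for (\ref{eq:C3}).

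\textbf{Main obstacle.} The delicate part is the bookkeeping in (A3) and (A4), where (\ref{eq:gauss1-condition3}) costs a factor $j^{2\alpha+1}$ while the Sobolev constraint only controls $\sum j^{2\alpha}(\theta_j^*)^2$. The saving comes from the crude inequality $\sum_{j\le k_0} j^{2\alpha+1}(\theta_j^*)^2\le k_0\sum_j j^{2\alpha}(\theta_j^*)^2\le k_0 B^2$ together with the identity $n\epsilon_n^2\asymp k_0\log k_0$, which absorbs the extra factor of $k_0$ up to a logarithm. Matching this against $n\epsilon_n^2$ (rather than a slower rate) is the only place where the specific form of $\epsilon_n$ is used, and everything else is a fairly mechanical application of the ``prior mass and testing'' scheme to the sieve prior.
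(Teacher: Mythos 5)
Your proposal follows the paper's proof essentially step for step: the same $\epsilon_n$, the same sieve $\Theta_n(\epsilon)=\{\theta:\theta_j=0\text{ for }j>\lceil cn\epsilon^2\rceil\}$ with the Ghosal--Ghosh--van der Vaart test for (A1)--(A2), and the same construction of $\wt Q = \prod_{j\le k_0}f_j\cdot\prod_{j>k_0}\delta_0$ with boxes $\wt\Theta_j$ around $\theta_j^*$ fed into Theorem~\ref{thm:mean field} for (A3)--(A4); the key trick $\sum_{j\le k_0}j^{2\alpha+1}\theta_j^{*2}\le k_0\sum_j j^{2\alpha}\theta_j^{*2}$ absorbing the extra power of $j$ against $k_0\log k_0\asymp n\epsilon_n^2$ is exactly what the paper does. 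The only cosmetic difference is the box half-width --- you take $\delta^2\asymp\epsilon_n^2/k_0=\log n/n$ whereas the paper takes $\delta=n^{-1/2}$ --- and both choices verify (\ref{eq:mfc1}) and (\ref{eq:mfc2}) with the same $O(k_0\log n)$ bookkeeping.
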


It is well known that the minimax rate of estimating $\theta^*$ in $\Theta_{\alpha}(B)$ is $n^{-\frac{2\alpha}{2\alpha+1}}$ \citep{johnstone2011gaussian}. Using a mean-field variational posterior, we achieve the minimax rate up to a logarithmic factor. In fact, the following proposition demonstrates that this rate cannot be improved for a very general class of priors.
\begin{proposition}\label{prop:lower-GSM}
Consider the prior $\Pi$ specified in (\ref{eq:refer-later-sieve}). Assume that $\max_j\|f_j\|_{\infty}\leq a$ and $\pi(k)$ is nonincreasing over $k$. Then, we have
$$\sup_{\theta^*\in\Theta_{\alpha}(B)}P_{\theta^*}^{(n)}\wh Q\|\theta-\theta^*\|^2\gtrsim n^{-\frac{2\alpha}{2\alpha+1}}(\log n)^{\frac{2\alpha}{2\alpha+1}},$$
where $\wh Q$ is the variational posterior defined by (\ref{eq:VB}) with $\mathcal{S}=\mathcal{S}_{\rm MF}$.
\end{proposition}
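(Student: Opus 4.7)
Plan.

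The plan is to exhibit a least-favorable $\theta^* \in \Theta_\alpha(B)$ whose coordinates are spread over enough indices that the sieve prior, constrained simultaneously by the nonincreasing $\pi(k)$ and the density cap $\|f_j\|_{\infty} \leq a$, cannot concentrate posterior mass near $\theta^*$ at better than the claimed rate. Set $k_n = \lceil (n/\log n)^{1/(2\alpha+1)} \rceil$ and take $\theta^*_j = c\, k_n^{-\alpha - 1/2}$ for $j \in (k_n, 2k_n]$ and $\theta^*_j = 0$ otherwise, where $c$ is a small absolute constant. A direct verification gives $\sum_j j^{2\alpha}(\theta^*_j)^2 \leq c^2 2^{2\alpha} \leq B^2$ (so $\theta^* \in \Theta_\alpha(B)$) and $\|\theta^*\|^2 \asymp k_n^{-2\alpha} \asymp n^{-2\alpha/(2\alpha+1)}(\log n)^{2\alpha/(2\alpha+1)}$. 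Crucially, every nonzero coordinate of $\theta^*$ has index strictly larger than $k_n$.

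The structural fact driving the argument is: since $\pi$ is nonincreasing and sums to one, $k\pi(k) \leq \sum_{j\leq k}\pi(j) \leq 1$, so $\pi(k) \leq 1/k$ and $-\log \pi(k) \geq \log k$. Combined with $\max_j \|f_j\|_\infty \leq a$, the $\Pi$-mass of any $L^2$-ball of radius $\delta$ around any $k$-sparse $\theta$ is at most $\pi(k)(2a\delta)^k$. This prior-mass upper bound is the only place both hypotheses enter. By Jensen coordinatewise,
$$\wh Q \|\theta - \theta^*\|^2 \ge \sum_{j \in (k_n, 2k_n]} \bigl(\wh Q \theta_j - \theta^*_j\bigr)^2,$$
so it suffices to show that, with $P_{\theta^*}^{(n)}$-probability bounded away from zero, a constant fraction of the indices $j \in (k_n, 2k_n]$ satisfy $\wh Q \theta_j \leq \theta^*_j / 2$, yielding a total contribution of order $k_n \cdot (\theta^*_j)^2 \asymp k_n^{-2\alpha}$, which is the required lower bound.

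The shrinkage $\wh Q \theta_j \leq \theta^*_j / 2$ follows from the explicit form of the mean-field variational posterior established in Theorem \ref{thm:gauss2}: each $\wh Q_j$ is a mixture of $\delta_0$ and a continuous (approximately Gaussian) component centered near $Y_j$, with inclusion weight $\hat w_j$ determined by the ratio of marginal likelihoods on the active vs.\ inactive branches, which in turn is controlled by the prior-mass bound above. Under $P_{\theta^*}^{(n)}$ the per-coordinate signal strength is $n(\theta^*_j)^2/2 \asymp \log n / 2$, which by the calibration of $k_n$ sits exactly at the detection threshold against the $\log j \asymp \log n/(2\alpha+1)$ prior penalty, so $\hat w_j$ is bounded away from $1$ on a constant fraction of $j \in (k_n, 2k_n]$; standard Gaussian concentration of $\sum_j Y_j^2$ converts this into a uniform statement. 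The main obstacle is making this threshold analysis rigorous and uniform over the entire family of nonincreasing $\pi$ allowed by the hypothesis, since $-\log \sum_{k \geq j} \pi(k)$ is only lower-bounded by the crude $\log k$ estimate; the explicit formula in Theorem \ref{thm:gauss2} expresses $\wh Q$ directly through the cumulative tail $\sum_{k \geq j} \pi(k)$ and the Gaussian marginals, which is what keeps this calculation tractable.
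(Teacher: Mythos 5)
There is a genuine gap: your description of the structure of $\wh{Q}$ is incorrect, and the per-coordinate threshold argument built on it does not apply. Theorem~\ref{thm:VB of gauss} (not Theorem~\ref{thm:gauss2}, which you cite) shows the mean-field variational posterior for this model is a hard-threshold rule at a single data-dependent index $\tilde{k}$: coordinates $j<\tilde{k}$ get the continuous density $\tilde{f}_j$, coordinates $j>\tilde{k}$ get exactly $\delta_0$, and only the one coordinate $j=\tilde{k}$ is a mixture $\tilde{p}\delta_0+(1-\tilde{p})\tilde{f}_{\tilde{k}}$. There is no per-coordinate inclusion weight $\hat{w}_j$ and no coordinatewise spike-and-slab structure, so the claim that a constant fraction of the indices in $(k_n,2k_n]$ each satisfy a soft-shrinkage inequality $\wh{Q}\theta_j\leq \theta_j^*/2$, driven by a per-coordinate detection threshold against the penalty $\log j$, cannot be derived from the actual form of $\wh{Q}$. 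What determines whether $\wh{Q}$ zeroes out a coordinate is whether its index exceeds the single threshold $\tilde{k}$, not a coordinatewise marginal likelihood comparison.

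Even if you correct the structural picture and use the hard-threshold form, your specific construction has a calibration issue. You place the signal on $(k_n,2k_n]$ with $k_n=\lceil(n/\log n)^{1/(2\alpha+1)}\rceil$, but Theorem~\ref{thm:tilde k} only gives $P_{\theta^*}^{(n)}\tilde{k}\leq C(n/\log n)^{1/(2\alpha+1)}$ with an unspecified constant $C$, and Markov's inequality only controls $\mathbb{P}(\tilde{k}>2C(n/\log n)^{1/(2\alpha+1)})$. If $C>1$, it is not guaranteed that $\tilde{k}$ sits below your signal band with constant probability. The paper sidesteps both issues by placing a \emph{single} spike of magnitude $B\lceil 2\bar{k}\rceil^{-\alpha}$ at index $\lceil 2\bar{k}\rceil$, where $\bar{k}=C(n/\log n)^{1/(2\alpha+1)}$ is the bound from Theorem~\ref{thm:tilde k}. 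Then $\mathbb{P}(\tilde{k}\leq 2\bar{k})\geq 1/2$ by Markov, on which event the $\lceil 2\bar{k}\rceil$-th coordinate of $\wh{Q}$ is exactly $\delta_0$ and the squared error is at least $\theta^{*2}_{\lceil 2\bar{k}\rceil}\asymp n^{-2\alpha/(2\alpha+1)}(\log n)^{2\alpha/(2\alpha+1)}$. This single-spike choice automatically adapts to the unknown constant in Theorem~\ref{thm:tilde k} and avoids the detection-threshold analysis entirely. Your observation that $\pi$ nonincreasing forces $-\log\pi(k)\geq\log k$ is correct and is indeed the mechanism behind the bound on $\tilde{k}$, but it should be fed into a proof of Theorem~\ref{thm:tilde k}, not into a per-coordinate shrinkage estimate.
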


On the other hand, the extra logarithmic factor can actually be removed by a rescaling of the prior. Details of this improvement are given in Appendix \ref{sec:remove-log}.

\subsection{Infinite Dimensional Exponential Families}\label{sec:inf-exp}

In this section, we study another interesting variational family. The Gaussian mean-field family is defined as
\begin{equation}\label{eq:gauss VB}
\mathcal{S}_{\rm G} = \left\{Q = \otimes_{j}N(\mu_j,\sigma_j^2): \mu_j\in\R, \sigma_j^2\geq 0\right\}.
\end{equation}
This class offers better interpretability of the results because every distribution in $\mathcal{S}_G$ is fully determined by a sequence of mean and variance parameters. Note that we allow $\sigma_j^2$ to be zero and $N(\mu_j,0)$ is understood as the delta measure $\delta_{\mu_j}$ on $\mu_j$.

The application of $\mathcal{S}_G$ is illustrated by an infinite dimensional exponential family model.
We define the probability measure $P_{\theta}$ by
\begin{equation}\label{eq:density model}
\frac{dP_{\theta}}{d\ell} = \exp\left(\sum_{j=0}^\infty\theta_j\phi_j-c(\theta)\right),
\end{equation}
where $\ell$ denotes the Lebesgue measure on $[0,1]$, $\phi_j$ is the $j$th Fourier basis function of $L^2[0,1]$, and $c(\theta)$ is given by
\[c(\theta) = \log\int_0^1\exp\left(\sum_{j=0}^\infty\theta_j\phi_j(x)\right)dx.\]
Since $\phi_0(x ) = 1$ and $\theta_0$ can take arbitrary values without changing $P_{\theta}$, we simply set $\theta_0=0$. In other words, $P_{\theta}$ is fully parameterized by $\theta=(\theta_1,\theta_2,...)$.
Given i.i.d. observations from $P_{\theta^*}^n$, our goal is to estimate $P_{\theta^*}$, where $\theta^*$ is assumed to belong to the Sobolev ball $\Theta_{\alpha}(B)$ defined in (\ref{eq:sobolev}). The loss function is chosen as $n$ times the squared Hellinger distance $L(P_{\theta}^{n},P_{\theta^*}^{n})=nH^2(P_\theta, P_{\theta^*})$.

We consider a prior distribution $\Pi$ that is similar to the one used in Section \ref{sec:gsm}. Its sampling process is described as follows.
\begin{enumerate}
\item Sample $k\sim\pi$;
\item Conditioning on $k$, sample $\theta_j\sim f_j$ for all $j\in[k]$, and set $\theta_j=0$ for all $j>k$.
\end{enumerate}
We impose the following conditions on the prior $\Pi$.
\begin{itemize}
\item There exist some constants $C_1,C_2>0$ such that
\begin{equation}\label{eq:density-condition1}
\sum_{j=k}^\infty\pi(j)\leq C_1\exp(-C_2k\log k),\text{ for all }k.
\end{equation} 
\item There exist some constants $C_3,C_4>0$ such that for $k_0=\left\lceil\left(\frac{n}{\log n}\right)^{\frac{1}{2\alpha+1}}\right\rceil$
\begin{equation}\label{eq:density-condition2}
\pi(k_0)\geq C_3\exp(-C_4k_0\log k_0).
\end{equation}
\item There exist some constants $c_0\in\mathbb{R}$ and $c_1,\beta>0$ such that
\begin{equation}\label{eq:density-condition3}
-\log f_j(x)\geq c_0+c_1|x|^{\beta},
\end{equation}
for all $x\in\R$ and $j\in[k_0]$ with $k_0$ defined above.
\item For the $k_0$ defined above, there exist some constants $c_0'\in\mathbb{R}$ and $c_1'>0$ such that
\begin{equation}\label{eq:density-condition4}
-\log f_j(x)\leq c_0'+c_1'j^{2\alpha+1}x^2,\qquad\text{for all $j\leq k_0$ and $x\in\mathbb{R}$.}
\end{equation}
\end{itemize}
The conditions (\ref{eq:density-condition1})-(\ref{eq:density-condition4}) are satisfied by a large class of prior distributions. For example, one can choose $k\sim\text{Poisson}(\tau)$ and $f_j$ being the density of $N(0,\sigma^2)$ for some constants $\tau,\sigma^2>0$, and then the four conditions are easily satisfied.

\begin{thm}\label{thm:density1}
Consider the prior $\Pi$ that satisfies (\ref{eq:density-condition1})-(\ref{eq:density-condition4}). Then, for any $\theta^*\in\Theta_{\alpha}(B)$ with some $\alpha>1/2$, we have
\[P_{\theta^*}^{n}\wh Q H^2(P_{\theta},P_{\theta^*})\lesssim n^{-\frac{2\alpha}{2\alpha+1}}(\log n)^{\frac{2\alpha}{2\alpha+1}},\]
where $\wh Q$ is the variational posterior defined by (\ref{eq:VB}) with $\mathcal{S}=\mathcal{S}_{\rm G}$.
\end{thm}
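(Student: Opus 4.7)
The plan is to invoke Theorem \ref{thm:general2} with rate $\epsilon_n^2=n^{-2\alpha/(2\alpha+1)}(\log n)^{2\alpha/(2\alpha+1)}$, loss $L(P_\theta^n,P_{\theta^*}^n)=nH^2(P_\theta,P_{\theta^*})$, and variational class $\mathcal{S}=\mathcal{S}_{\rm G}$. With $k_0=\lceil(n/\log n)^{1/(2\alpha+1)}\rceil$ as in (\ref{eq:density-condition2}), one has $n\epsilon_n^2\asymp k_0\log k_0$, which sets the natural scale throughout.

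For (\ref{eq:C1})--(\ref{eq:C2}) I would take the sieve $\Theta_n(\epsilon)=\{\theta:\theta_j=0\text{ for }j>K(\epsilon),\ |\theta_j|\leq M\text{ otherwise}\}$ with $K(\epsilon)\asymp n\epsilon^2/\log n$ and $M$ a sufficiently large constant. The complement mass splits into $\sum_{k>K(\epsilon)}\pi(k)$, controlled by (\ref{eq:density-condition1}), and $\{\max_j|\theta_j|>M\}$, controlled by the thin-tail bound (\ref{eq:density-condition3}) via Markov applied to $\exp(c_1|\theta_j|^\beta)$. Since $\alpha>1/2$, the Sobolev embedding gives uniform control of $\|\sum\theta_j\phi_j\|_\infty$ on $\Theta_n(\epsilon)$, hence the metric entropy bound $\log N(\epsilon,\Theta_n(\epsilon),H)\lesssim K(\epsilon)\log(1/\epsilon)\lesssim n\epsilon^2$, and the standard Le Cam/Ghosal--Ghosh--van der Vaart construction produces an exponentially powerful test. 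For the prior mass condition (\ref{eq:C3}) with $\rho=2$, I would restrict $\Pi$ to the stratum $\{k=k_0\}$ and to a small cube $\{|\theta_j-\theta_j^*|\leq\delta\}$ around $\theta^*$: on this cube a Taylor expansion of the log-partition yields $\chi^2(P_{\theta^*}\|P_\theta)\lesssim\|\theta-\theta^*\|^2+\|\sum_{j>k_0}\theta_j^*\phi_j\|_\infty^2=O(\epsilon_n^2)$ for $\delta\asymp\epsilon_n/\sqrt{k_0}$ (again using $\alpha>1/2$ for the tail Sobolev embedding), and (\ref{eq:density-condition2}) combined with the quadratic upper bound (\ref{eq:density-condition4}) lower bounds the prior mass of this cube by $\exp(-Cn\epsilon_n^2)$.

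The crux is condition (\ref{eq:C4}). I would take $\wt Q=\bigotimes_{j=1}^{k_0}N(\theta_j^*,\sigma^2)\otimes\bigotimes_{j>k_0}\delta_0\in\mathcal{S}_{\rm G}$ (allowed since $\delta_0=N(\cdot,0)$), with $\sigma^2$ polynomially small, e.g., $\sigma^2=1/n$. Because $\wt Q$ charges only the $\{k=k_0\}$ stratum of the prior,
\[D(\wt Q\|\Pi)=-\log\pi(k_0)+\sum_{j=1}^{k_0}D\bigl(N(\theta_j^*,\sigma^2)\,\big\|\,f_j\bigr),\]
where the first term is $\lesssim k_0\log k_0\lesssim n\epsilon_n^2$ by (\ref{eq:density-condition2}), and each summand equals $\E_{\wt Q_j}[-\log f_j]-\tfrac12\log(2\pi e\sigma^2)$, bounded via (\ref{eq:density-condition4}) by $c_0'+c_1'j^{2\alpha+1}\bigl((\theta_j^*)^2+\sigma^2\bigr)+\tfrac12\log(1/\sigma^2)$. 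The remaining piece $\wt Q[D(P_0^n\|P_\theta^n)]=n\wt Q[D(P_{\theta^*}\|P_\theta)]$ is handled by the local exp-family bound $D(P_{\theta^*}\|P_\theta)\lesssim\|\theta-\theta^*\|^2$ on a uniformly bounded set (again requiring $\alpha>1/2$), yielding $\wt Q\|\theta-\theta^*\|^2=\sum_{j>k_0}(\theta_j^*)^2+k_0\sigma^2\lesssim k_0^{-2\alpha}+k_0\sigma^2\lesssim\epsilon_n^2$. The main obstacle is the joint calibration of $\sigma^2$: it must be small enough that both $nk_0\sigma^2$ and $\sigma^2\sum_{j\leq k_0}j^{2\alpha+1}\lesssim\sigma^2k_0^{2\alpha+2}$ remain controlled, yet large enough that the $k_0$ copies of the Gaussian entropy $-\tfrac12\log\sigma^2$ do not exceed $n\epsilon_n^2$; the choice $\sigma^2\asymp 1/n$ balances these costs, each contributing on the order of $k_0\log k_0\asymp n\epsilon_n^2$. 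The four verified conditions then feed into Theorem \ref{thm:general2} to deliver the claimed Hellinger rate.
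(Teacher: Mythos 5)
Your route is the paper's route: verify (\ref{eq:C1})--(\ref{eq:C4}) for Theorem \ref{thm:general2} with the same $\epsilon_n$, restrict the prior to the stratum $k=k_0$ for the prior-mass condition, and take the same variational witness $\wt Q=\otimes_{j\le k_0}N(\theta_j^*,n^{-1})\otimes\otimes_{j>k_0}\delta_0\in\mathcal{S}_{\rm G}$ with the same $D(\wt Q\|\Pi)$ decomposition. However, two steps fail as written. First, in the sieve for (\ref{eq:C1})--(\ref{eq:C2}) you truncate at a \emph{constant} $M$; condition (\ref{eq:density-condition3}) then only yields $\Pi(\max_{j\le K(\epsilon)}|\theta_j|>M)\lesssim K(\epsilon)e^{-cM^{\beta}}$, which does not decay like $e^{-Cn\epsilon^2}$ (it grows with $n$), so (\ref{eq:C2}) is violated. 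The truncation level must grow with $n\epsilon^2$, e.g. $M\asymp(n\epsilon^2)^{1/\beta}$ as in Lemma \ref{lem:density1}; the entropy bound still goes through because $H(P_\theta,P_{\theta'})\lesssim\|\theta-\theta'\|_1\le\sqrt{K}\|\theta-\theta'\|$ (Lemma \ref{lem:equiv1}), and note your appeal to a Sobolev embedding on the sieve is vacuous since the sieve is a hypercube not contained in $\Theta_\alpha(B)$ (similarly, in (\ref{eq:C3}) the tail term should be controlled through $\sum_{j>k_0}\theta_j^{*2}\lesssim k_0^{-2\alpha}$ and the $\ell_1$-norm in the exponent of Lemma \ref{lem:equiv2}, not through $\|\sum_{j>k_0}\theta_j^*\phi_j\|_\infty^2$, which is of order $k_0^{1-2\alpha}\gg\epsilon_n^2$).

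Second, and this is the crux you flagged, your bound on $\wt Q\,[D(P_{\theta^*}\|P_\theta)]$ is unsupported: you invoke ``$D(P_{\theta^*}\|P_\theta)\lesssim\|\theta-\theta^*\|^2$ on a uniformly bounded set,'' but the support of $\wt Q$ is all of $\mathbb{R}^{k_0}$ (each coordinate is a nondegenerate Gaussian), there is no bounded set to restrict to, and truncating the Gaussians would take you outside $\mathcal{S}_{\rm G}$ — which is exactly why the paper cannot use Theorem \ref{thm:convergence2} or Theorem \ref{thm:mean field} here. The correct inequality carries an exponential factor, $D(P_{\theta^*}\|P_\theta)\lesssim e^{3\sqrt{2}\|\theta-\theta^*\|_1}\|\theta-\theta^*\|^2$ (Lemma \ref{lem:equiv2}), and the missing step is to show that this factor has bounded expectation under $\wt Q$: one must check $\mathbb{E}\exp\bigl(3\sqrt{2}\sum_{j\le k_0}|Z_j|/\sqrt{n}\bigr)=O(1)$ and $\mathbb{E}\,Z_1^2\exp\bigl(3\sqrt{2}\sum_{j\le k_0}|Z_j|/\sqrt{n}\bigr)=O(1)$, which is precisely where $\alpha>1/2$ (so that $k_0=O(\sqrt{n})$) enters the paper's proof of Lemma \ref{lem:density2}. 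Without this exponential-moment control, the claim $\wt Q\,[D(P_{\theta^*}\|P_\theta)]\lesssim k_0/n+\sum_{j>k_0}\theta_j^{*2}$ does not follow. The remaining ingredients of your proposal — the prior-mass cube at the stratum $k_0$ and the bound on $D(\wt Q\|\Pi)$ via (\ref{eq:density-condition2}) and (\ref{eq:density-condition4}) — match the paper and are fine.
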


The theorem shows that the Gaussian mean-field variational posterior is able to achieve the minimax rate $n^{-\frac{2\alpha}{2\alpha+1}}$ up to a logarithmic factor. We remark that the same result also holds for the mean-field variational posterior defined with $\mathcal{S}_{\rm MF}$. This is because $\mathcal{S}_{\rm G}\subset\mathcal{S}_{\rm MF}$, and thus $\inf_{Q\in\mathcal{S}_{\rm MF}}R(Q)\leq \inf_{Q\in\mathcal{S}_{\rm G}}R(Q)$. Compared with the class $\mathcal{S}_{\rm MF}$, the objective function using the parametric family $\mathcal{S}_{\rm G}$ can be optimized by algorithms such as stochastic gradient descent over the parameters $(\mu_j,\sigma_j^2)$. The objective function can be greatly simplified according to the general mean-field solution given in Theorem \ref{thm:VB-sieve-general}.

\subsection{Piecewise Constant Model}\label{sec:pcm}

The previous two sections consider examples of the mean-field variational set and its variant. In this section, we use another example to illustrate a situation where the mean-field variational set only gives a trivial rate. On the other hand, we show that alternative variational classes with appropriate dependence structures are able to achieve the optimal rate.

We consider the following piecewise constant model,
\begin{equation}\label{eq:PC-model1}
X_i = \theta_i +\sigma Z_i,\quad i\in[n],
\end{equation}
where $Z_i\sim N(0,1)$ independently for all $i\in[n]$. We assume $n\geq 2$ throughout the section. The true parameter $\theta^*$ is assumed to belong to the class $\Theta_{k^*}(B)=\left\{\theta\in\Theta_{k^*}:\|\theta\|_{\infty}\leq B\right\}$,
where for a general $k\in[n]$,
\begin{eqnarray}
\nonumber\Theta_{k} &=& \Bigg\{\theta\in\R^n:\mbox{ there exist $\{a_j\}_{j=0}^k$ and $\{\mu_j\}_{j=1}^k$ such that}\\
\label{eq:PC-model2}&&0 = a_0\leq a_1\leq\cdots\leq a_k = n, \mbox{ and $\theta_i = \mu_j$ for all $i\in(a_{j-1}:a_j]$}\Bigg\}.
\end{eqnarray}
Here for any two integers $a<b$, we use $(a:b]$ to denote all integers from $a+1$ to $b$. We assume both $B>0$ and $\sigma^2>0$ are constants throughout this section. A vector $\theta^*\in\Theta_{k^*}(B)$ is a piecewise constant signal with at most $k^*$ pieces. We use $P_{\theta}^{(n)}$ to denote the probability distribution of $N(\theta,\sigma^2I_n)$ in this section.

The piecewise constant model is widely studied in the literature of change-point analysis. Recently, the minimax rate of the class $\Theta_{k^*}$ is derived by \cite{gao2017minimax}. When $2< k^*\leq n^{1-\delta}$ for some constant $\delta\in (0,1)$, the minimax rate is $\inf_{\wh{\theta}}\sup_{\theta^*\in\Theta_{k^*}}\mathbb{E}_{\theta^*}^{(n)}\|\wh{\theta}-\theta^*\|^2\asymp k^*\log n$. With an extra constraint on the infinity norm, the minimax rate for $\Theta_{k^*}(B)$ is still $k^*\log n$, with a slight modification of the proof in \cite{gao2017minimax}.
Since $D_\rho(P_\theta^{(n)}, P_{\theta'}^{(n)}) = \frac{\rho}{2\sigma^2}\|\theta-\theta'\|^2$ in this case, it is natural to choose the loss function as $L(P_\theta^{(n)}, P_{\theta^*}^{(n)}) = \|\theta-\theta^*\|^2$.

We put a prior distribution $\Pi$ on the parameter $\theta$. Consider $\Pi$ that has the following sampling process.
\begin{enumerate}
\item Sample $w\sim \text{Beta}(\alpha_0,\beta_0)$;
\item Conditioning on $w$,  sample $z_i\sim \text{Bernoulli}(w)$ for $i=2,3,...,n$;
\item Conditioning on $(z_2,...,z_n)$,  sample $\theta_1\sim g$, and then for $i=2,3,...,n$, sample $\theta_i$ according to $\theta_i\sim g$ if $z_i=1$ and $\theta_i=\theta_{i-1}$ if $z_i=0$.
\end{enumerate}

We first consider variational inference via the mean-field class, defined as
\[\mathcal{S}_{\rm MF} = \left\{Q:dQ(\theta) = \prod_{i=1}^ndQ_i(\theta_i)\right\}.\]
We also define $\mathcal{S} = \mathcal{S}_{\rm MF}^{\rm joint}$ on the joint distribution of $(w,z,\theta)$ by
\begin{eqnarray}
\nonumber\mathcal{S}_{\rm MF}^{\rm joint} &=& \Bigg\{Q: dQ(w,z,\theta)=dQ^{(w)}(w)dQ^{(z)}(z)dQ^{(\theta)}(\theta), \\
\nonumber&& \qquad\qquad dQ^{(z)}(z)=\prod_{i=2}^ndQ_i^{(z)}(z_i), Q^{(\theta)}\in \mathcal{S}_{\rm MF}\Bigg\}.
\end{eqnarray}
The variational posteriors $\wh Q_{\rm MF}$ and $\wh Q_{\rm MF}^{\rm joint}$ are given by (\ref{eq:VB}) with variational classes defined above respectively\footnote{To be rigorous, the posterior distribution $\Pi(\cdot|X^{(n)})$ used in $D(Q\|\Pi(\cdot|X^{(n)}))$ are the marginal posterior of $\theta$ and the joint posterior of $(w,z,\theta)$, respectively.}. Interestingly, for the piecewise constant model, both $\wh{Q}_{\rm MF}$ and $\wh{Q}_{\rm MF}^{\rm joint}$ give a trivial rate.

\begin{thm}\label{thm:PC-MF}
For the prior $\Pi$ specified above with any $g$ absolutely continuous with respect to the Lebesgue measure, we have
\[\sup_{\theta^*\in\Theta_{k^*}(B)}P_{\theta^*}^{(n)}\wh Q_{\rm MF}\|\theta-\theta^*\|^2= \sup_{\theta^*\in\Theta_{k^*}(B)}P_{\theta^*}^{(n)}\wh Q_{\rm MF}^{\rm joint}\|\theta-\theta^*\|^2\gtrsim  n,\]
for any $k^*\in[n]$, where $\wh Q_{\rm MF}$ and $\wh Q_{\rm MF}^{\rm joint}$ are the variational posteriors defined by (\ref{eq:VB}) with $\mathcal{S}=\mathcal{S}_{\rm MF}$ and $\mathcal{S} = \mathcal{S}_{\rm MF}^{\rm joint}$, respectively.
\end{thm}
The result of Theorem \ref{thm:PC-MF} shows that the mean-field variational posteriors $\wh{Q}_{\rm MF}$ and $\wh{Q}_{\rm MF}^{\rm joint}$ are unable to achieve a better rate than simply estimating $\theta^*$ by the naive estimator $\wh{\theta}=X$. The proof, given in Appendix \ref{subsec:PC-MF}, reveals the reason of this phenomenon. Since the independence structure of the two classes fails to capture the underlying dependence structure of the parameter space $\Theta_{k^*}(B)$, the variational posterior distributions are equivalent to the posterior distribution induced by the prior $\Pi = \otimes_{i=1}^n g$, and therefore the condition (\ref{eq:C4}) is violated. Note that this is the first negative result in the literature on the statistical convergence of the mean-field approximation.

In order to achieve the minimax rate of the space $\Theta_{k^*}(B)$, it is necessary to introduce some dependence structure in the variational class. One of the simplest classes of dependent distributions is the class of first-order Markov chains, defined by
$$\mathcal{S}_{\rm MC}=\left\{Q:dQ(\theta)=dQ_1(\theta_1)\prod_{i=2}^n dQ_i(\theta_i|\theta_{i-1})\right\}.$$
The class $\mathcal{S}_{\rm MC}$ introduces a natural dependence structure for the piecewise constant model, and it is compatible with the prior distribution $\Pi$, because conditioning on the change point pattern $z$, the prior distribution of $\theta | z$ belongs to the class $\mathcal{S}_{\rm MC}$.  We also introduce a similar variational class on the joint distribution of $(w,z,\theta)$, defined by
\begin{eqnarray}
\nonumber\mathcal{S}_{\rm MC}^{\rm joint} &=& \Bigg\{Q: dQ(w,z,\theta)=dQ^{(w)}(w)dQ^{(z)}(z)dQ^{(\theta)}(\theta), \\
\nonumber&& \qquad\qquad dQ^{(z)}(z)=\prod_{i=2}^ndQ_i^{(z)}(z_i), Q^{(\theta)}\in \mathcal{S}_{\rm MC}\Bigg\}.
\end{eqnarray}
Besides the distribution of $\theta$ restricted to $\mathcal{S}_{\rm MC}$, the distributions of $w$ and $z$ are both in the mean-field classes.

In order to derive the rates for the variational posterior distributions induced by $\mathcal{S}_{\rm MC}$ and $\mathcal{S}_{\rm MC}^{\rm joint}$, we impose the following conditions on the prior distribution $\Pi$.
\begin{itemize}
\item There exist some constants $C_2> C_1>1$ such that
\begin{equation}\label{eq:MC-condition1}
(n+\alpha_0)n^{C_1}\leq \beta_0 \leq \alpha_0n^{C_2}-n.
\end{equation}
\item There exists a constant $c>0$ such that 
\begin{equation}\label{eq:MC-condition2}
g(x)\geq c,\mbox{ for all $|x|\leq B+1$}.
\end{equation}
\end{itemize}
According to Theorem \ref{thm:general2}, we get the following result. 

\begin{thm}\label{thm:PC-MC}
Consider a prior distribution $\Pi$ that satisfies (\ref{eq:MC-condition1}) and (\ref{eq:MC-condition2}).
Then, for any $\theta^*\in\Theta_{k^*}(B)$, we have
\[P_{\theta^*}^{(n)}\wh Q_{\rm MC}\|\theta-\theta^*\|^2\lesssim k^*\log n,\]
\[P_{\theta^*}^{(n)}\wh Q_{\rm MC}^{\rm joint}\|\theta-\theta^*\|^2\lesssim k^*\log n,\]
where $\wh Q_{\rm MC}$ and $\wh Q_{\rm MC}^{\rm joint}$ are the variational posterior distributions defined by (\ref{eq:VB}) with $\mathcal{S}=\mathcal{S}_{\rm MC}$ and $\mathcal{S}=\mathcal{S}_{\rm MC}^{\rm joint}$, respectively.
\end{thm}
Theorem \ref{thm:PC-MC} shows that both $\wh{Q}_{\rm MC}$ and $\wh Q_{\rm MC}^{\rm joint}$ are able to achieve the minimax rate of the problem. This example illustrates the importance of the choice of the variational class. According to Theorem \ref{thm:convergence}, the  rate of a variational posterior is upper bounded by $\epsilon_n^2$, the rate of the true posterior, plus $\gamma_n^2$, the variational approximation error. The choice of $\mathcal{S}_{\rm MF}$ for the piecewise constant model leads to a very large $\gamma_n^2$, and thus a trivial rate in Theorem \ref{thm:PC-MF}. On the other hand, the variational approximation errors given by the classes $\mathcal{S}_{\rm MC}$ and $\mathcal{S}_{\rm MC}^{\rm joint}$ are small, which are dominated by the minimax rate.

Though the statistical properties of the two classes $\mathcal{S}_{\rm MC}$ and $\mathcal{S}_{\rm MC}^{\rm joint}$ are both satisfactory, the class $\mathcal{S}_{\rm MC}^{\rm joint}$ enjoys a computational advantage, and the solution $\wh Q_{\rm MC}^{\rm joint}$ can be computed exactly via dynamic programming. In order to characterize the solution $\wh Q_{\rm MC}^{\rm joint}$, we consider the following discrete optimization problem:
\begin{eqnarray}\label{eq:opt_MC}
\nonumber && \max_{1\leq k\leq n}\Bigg\{\max_{0=a_0< a_1<\cdots< a_k=n}\sum_{j=1}^k\log\int g(\theta)\exp\left(-\frac{1}{2}\sum_{i\in(a_{j-1}:a_j]}(X_i-\theta)^2\right)d\theta \\
\label{eq:knots} &&\qquad\qquad\qquad + \log\left(\Gamma(k-1+\alpha_0)\Gamma(n-k+\beta_0)\right)\Bigg\}.
\end{eqnarray}
The solution of (\ref{eq:knots}) is denoted as the sequence $0=\wh{a}_0< \wh{a}_1<\cdots <\wh{a}_{\wh{k}}=n$. We remark that under the condition (\ref{eq:MC-condition1}), the penalty term of (\ref{eq:knots}) comes from the fact that
$$-\log\frac{\Gamma(k-1+\alpha_0)\Gamma(n-k+\beta_0)\Gamma(\alpha_0+\beta_0)}{\Gamma(n-1+\alpha_0+\beta_0)\Gamma(\alpha_0)\Gamma(\beta_0)}\asymp k\log n,$$
which coincides with the minimax rate.
\begin{thm}\label{thm:PC-solution}
Let the maximizer of (\ref{eq:knots}) be $(\wh{a}_0,\wh{a}_1,...,\wh{a}_{\wh{k}})$.
For $$d\wh{Q}_{\rm MC}^{\rm joint}(w,z,\theta)=d\wh{Q}^{(w)}(w)d\wh{Q}^{(z)}(z)d\wh{Q}^{(\theta)}(\theta),$$ the distributions $\wh{Q}^{(w)}$, $\wh{Q}^{(z)}$ and $\wh{Q}^{(\theta)}$ are specified as follows.
\begin{enumerate}
\item Under $\wh{Q}^{(z)}$, $z_{\wh{a}_j+1}=1$ for $j=1,...,\wh{k}-1$, and $z_i=0$ elsewhere with probability $1$.
\item We have $\wh{Q}^{(w)}=\text{Beta}(\wh{k}+\alpha_0-1,n-\wh{k}+\beta_0)$.
\item We have $d\wh{Q}^{(\theta)}(\theta)=d\wh{Q}_1^{(\theta)}(\theta_1)\prod_{i=2}^nd\wh{Q}_i^{(\theta)}(\theta_i|\theta_{i-1})$, where
$$\begin{cases}
d\wh{Q}_1^{(\theta)}(\theta_1)\propto g(\theta_1)\exp\left(-\frac{1}{2}\sum_{i\in(\wh{a}_0:\wh{a}_1]}(X_i-\theta_1)^2\right)d\theta_1, \\
d\wh{Q}_i^{(\theta)}(\theta_i|\theta_{i-1}) \propto g(\theta_i)\exp\left(-\frac{1}{2}\sum_{l\in(\wh{a}_{j-1}:\wh{a}_j]}(X_l-\theta_i)^2\right)d\theta_i, & i=\wh{a}_{j-1}+1, j>1,\\
d\wh{Q}_i^{(\theta)}(\theta_i|\theta_{i-1}) = \delta_{\theta_{i-1}}(\theta_i)d\theta_i, & \text{otherwise}.
\end{cases}$$
\end{enumerate}
\end{thm}

By Theorem \ref{thm:PC-solution}, in order to get $\wh Q_{\rm MC}^{\rm joint}$, it is sufficient to solve (\ref{eq:knots}). This can be done through a dynamic programming given in Algorithm \ref{alg:dp}. To simplify the notation, we define
\begin{equation}
S_{(a:b]}=\log \int g(\theta)\exp\left(-\frac{1}{2}\sum_{i\in(a:b]}(X_i-\theta)^2\right)d\theta, \label{eq:partial-sum}
\end{equation}
for any integers $0\leq a< b\leq n$.

\begin{algorithm}[H]
\DontPrintSemicolon
\SetKwInOut{Input}{Input}\SetKwInOut{Output}{Output}
\Input{The data $X_1,...,X_n$.}
\Output{The set of knots $A_{\wh{k},n}=\{\wh a_1,\cdots,\wh a_{\wh k-1}\}$.} 
\nl For $j$ in $1:n$, set $A_{1,j}=\emptyset$, and compute\;
\qquad$B_{1,j}=S_{(0:j]}$.

\nl For $k$ in $2:n$\;
\qquad For $j$ in $k:n$, compute\;
\qquad\qquad $B_{k,j}=\max_{k-1\leq m\leq j-1}\left\{B_{k-1,m} + S_{(m:j]}\right\}$,\;
\qquad\qquad $a_{k,j}=\argmax_{k-1\leq m\leq j-1}\left\{B_{k-1,m} + S_{(m:j]}\right\}$,\;
\qquad\qquad $A_{k,j}=A_{k-1,a_{k,j}}\cup \{a_{k,j}\}$.\;
 
 \nl Compute\;
\qquad $\wh{k}=\argmax_{1\leq k\leq n}\left\{B_{k,n} + \log\left(\Gamma(k-1+\alpha_0)\Gamma(n-k+\beta_0)\right)\right\}$.\;
\caption{Computation of (\ref{eq:knots})}\label{alg:dp}
\end{algorithm}

We note that the computational cost of the dynamic programming above is $O(n^3)$ (see \cite{friedrich2008complexity}), and for any integers $0\leq a< b\leq n$, (\ref{eq:partial-sum}) has a closed form as long as we use a conjugate $g(\cdot)$.

\section{Variational Bayes with Model Selection}\label{sec:var-con}

\subsection{General Settings}\label{sec:gmf-setting}

In this section, we consider a general form of probability models
$$\mathcal{M} = \left\{P_{k,\theta^{(k)}}^{(n)}: k\in\mathcal{K}, \theta^{(k)}\in\Theta^{(k)}\right\}.$$
Here, the probability $P_{k,\theta^{(k)}}^{(n)}$ is determined by an index $k$ and a parameter $\theta^{(k)}$. We assume that the set $\mathcal{K}$ is either countable or finite. For a given $k$, the probability $P_{k,\theta^{(k)}}^{(n)}$ is parametrized by a $\theta^{(k)}$ in a parameter space $\Theta^{(k)}$ that is indexed by this $k$. Without loss of generality, we assume that the parameter $\theta^{(k)}$ can be written in a blockwise structure
$$\theta^{(k)} = (\theta_1^{(k)},\cdots, \theta_{m_k}^{(k)}).$$
Note that the dimension of $\theta^{(k)}$ may vary with $k$.

The model $\mathcal{M}$ is very natural for many applications. One can think of $k$ as a model dimension index, which determines the complexity of the parameter space $\Theta^{(k)}$. A leading example is the mixture density model, where $k$ stands for the number of components.

To model the hierarchical structure of $(k,\theta^{(k)})$, one naturally uses a hierarchical prior distribution, which is specified through the following sampling process:
\begin{enumerate}
\item Firstly, sample $k\sim \pi$ from $\mathcal{K}$;
\item Conditioning on $k$, sample $\theta^{(k)}$ from the probability measure $\Pi^{(k)}$, and $\Pi^{(k)}$ has a product structure
\begin{equation}\label{eq:prod structure}
d\Pi^{(k)}(\theta^{(k)}) = \prod_{j=1}^{m_k}d\Pi_j^{(k)}(\theta_j^{(k)}).
\end{equation}
\end{enumerate}

For variational inference, we consider a mean-field class that naturally takes advantage of the structure of the prior distribution. For a given $k\in\mathcal{K}$, the corresponding mean-field class is defined as
\begin{equation}\label{eq:GMF3}
\mathcal{S}_{\rm MF}^{(k)} =\left\{Q^{(k)}: dQ^{(k)}(\theta^{(k)}) = \prod_{j=1}^{m_k}dQ_j^{(k)}(\theta_j^{(k)})\right\}. 
\end{equation}
In order to select the best model from the data, we consider optimizing the evidence lower bound (ELBO). With the notation $p(X^{(n)}|\theta^{(k)})$ standing for the joint likelihood function, the marginal likelihood given a model $k\in\mathcal{K}$ is defined by
\begin{equation}
p(X^{(n)}|k)=\int p(X^{(n)}|\theta^{(k)})d\Pi^{(k)}(\theta^{(k)}).\label{eq:marginal-k}
\end{equation}
Then, a straightforward model selection procedure is to maximize $\log\left(p(X^{(n)}|k)\pi(k)\right)$ over $k\in\mathcal{K}$. In order to overcome the intractability of the integral (\ref{eq:marginal-k}), we instead optimize a lower bound, which is given by
\begin{eqnarray}
\nonumber && \log\left(p(X^{(n)}|k)\pi(k)\right) \\
\label{eq:ELBO-selection} &\geq& \int\log p(X^{(n)}|\theta^{(k)})dQ^{(k)}(\theta^{(k)}) - D\left(Q^{(k)}\|{\Pi}^{(k)}\right) + \log \pi(k),
\end{eqnarray}
which can be derived by a direct application of Jensen's inequality. Denote the right hand side of (\ref{eq:ELBO-selection}) by $F(Q^{(k)},k)$, and we will solve the following optimization problem,
\begin{equation}
\max_{k\in\mathcal{K}}\max_{Q^{(k)}\in \mathcal{S}_{\rm MF}^{(k)}}F(Q^{(k)},k). \label{eq:ELBO-select-object}
\end{equation}
Finally, the solution to (\ref{eq:ELBO-select-object}) leads to the variational posterior distribution $\wh{Q}=\wh{Q}^{(\wh{k})}$ that we use in a model selection context. A similar variational approximation to the tempered posterior in the model selection setting was studied by \cite{cherief2018consistency}.

\subsection{Convergence Rates}\label{sec:rates-ms}

Assume the observation $X^{(n)}$ is generated from a probability measure $P_0^{(n)}$, and $\wh Q=\wh{Q}^{(\wh{k})}$ is the variational posterior that is a solution to (\ref{eq:ELBO-select-object}). For the general settings described above, we show that the variational approximation error can be automatically controlled by a prior mass condition. Let $\Pi$ be the prior distribution on $P_{k,\theta^{(k)}}$ induced by the sampling process of $(k,\theta^{(k)})$.
\begin{thm}\label{thm:general-MF}
Suppose $\epsilon_n$ is a sequence that satisfies $n\epsilon_n^2\geq 1$. Let $\rho>1$ be a constant and $C_2, C_3>0$ be constants. We assume that there exists a $k_0\in\mathcal{K}$ and a subset $\Theta^{(k_0)} = \otimes_{j=1}^{m_{k_0}}\Theta_j^{(k_0)}\subset\left\{\theta^{(k_0)}: D_{\rho}\left(P_0^{(n)}\|P_{k_0, \theta^{(k_0)}}^{(n)}\right)\leq C_3n\epsilon_n^2\right\}$, such that
\begin{equation}\label{eq:C3*}
-\log\pi(k_0)-\sum_{j=1}^{m_{k_0}}\log\Pi_j^{(k_0)}\left(\Theta_j^{(k_0)}\right)\leq C_2n\epsilon_n^2,\tag{C3*}
\end{equation}
where $\pi(k_0)$ and $\Pi_j^{(k_0)}$ are defined in the prior sampling procedure. Moreover, assume that the conditions (\ref{eq:C1}) and (\ref{eq:C2}) hold for all $\epsilon>\epsilon_n$ with respect to prior procedure $\Pi$ and some constant $C>C_2+C_3+2$. 
Then for the variational posterior $\wh Q^{(\wh k)}$ defined as the solution of (\ref{eq:ELBO-select-object}), we have
\begin{equation}\label{eq:mean convergence1}
P_0^{(n)}\wh Q^{(\wh k)}L(P_{\wh{k},\theta^{(\wh{k})}}^{(n)},P_0^{(n)})\lesssim n\epsilon_n^2.
\end{equation}
\end{thm}

Theorem \ref{thm:general-MF} characterizes the convergence rate of mean-field variational posterior with model selection using the conditions (\ref{eq:C1}), (\ref{eq:C2}) and (\ref{eq:C3*}). Given the structure of the prior distribution, an equivalent way of writing (\ref{eq:C3*}) is
$$\Pi\left(\left\{P_{k,\theta^{(k)}}: k=k_0, \theta^{(k_0)}\in{\Theta}^{(k_0)}\right\}\right)\geq\exp\left(-C_2n\epsilon_n^2\right),$$
for the factorized structure of $\Theta^{(k_0)}$.
Therefore, our three conditions (\ref{eq:C1}), (\ref{eq:C2}) and (\ref{eq:C3*}) still fall into the ``prior mass and testing" framework, and directly correspond to the three conditions in \cite{ghosal2000convergence} for convergence rates of the true posterior.

An interesting special case is when the set $\mathcal{K}$ is a singleton. Then, for a product prior measure and the mean-field variational class, the condition (\ref{eq:C3*}) is reduced to (\ref{eq:super-prior-mass}) discussed in Section \ref{sec:intro}.

\subsection{Density Estimation via Location-Scale Mixtures}\label{sec:mixture-gmf}

In this section, we consider the location-scale mixture model as an application of the theory. The location-scale mixture density is defined as
\begin{equation}\label{eq:mixture}
p(x|k,\theta^{(k)}) = \sum_{j=1}^kw_j\psi_\sigma(x-\mu_j),
\end{equation}
where $k\in\mathbb{N}_+$, $\theta^{(k)} = (\mu, w,\sigma)$ with $\sigma>0$, $\mu = (\mu_1,\cdots,\mu_k)\in\R^k$, $w = (w_1,\cdots, w_k)\in\Delta_k = \left\{w\in\R^k: w_j\geq 0\mbox{ for $1\leq j\leq k$ and }\sum_{j=1}^kw_j = 1\right\}$ and
\begin{equation}\label{eq:kernel}
\psi_\sigma(x) = \frac{1}{2\sigma\Gamma\left(1+\frac{1}{p}\right)}\exp(-(|x|/\sigma)^p),
\end{equation} 
for some positive even integer $p$. The kernel $\psi_{\sigma}(\cdot)$ has a pre-specified form, for example, Gaussian density when $p=2$, while the parameters $k$ and $\theta^{(k)} = (w,\mu,\sigma)$ are to be learned from the data.

The location-scale mixture model (\ref{eq:mixture}) can be written as a special example of the general probability models introduced in Section \ref{sec:gmf-setting}.
In this case, the countable set $\mathcal{K}$ is the positive integer set $\mathbb{N}_+$. The parameter space indexed by $k$ is defined as
\begin{eqnarray}
\label{eq:space-gmf-mixture}\Theta^{(k)} &=& \Big\{\theta^{(k)} = (\mu,w,\sigma): \mu = (\mu_1,\cdots, \mu_k)\in\R^k, \\
\nonumber&& \qquad\qquad w = (w_1,\cdots, w_k)\in\Delta_k, \sigma\in\R_+\Big\}.
\end{eqnarray}

Given i.i.d. observations $X_1,...,X_n$ sampled from some density function $f_0$, our goal is to estimate the density $f_0$ through the location-scale mixture model (\ref{eq:mixture}). We denote the probability distribution of the mixture density $p(x|k,\theta^{(k)})$ as $P_{k,\theta^{(k)}}$ and a probability distribution with a general density $f$ as $P_{f}$. In the paper \cite{kruijer2010adaptive}, a Bayesian procedure is proposed and a nearly minimax optimal convergence rate is derived for the true posterior distribution. We will follow the same setting in \cite{kruijer2010adaptive}, but analyze the variational posterior.

We first specify the prior distribution $\Pi$ through the following sampling process:
\begin{enumerate}
\item Sample the number of mixtures $k\sim\pi$;
\item
Conditioning on $k$, sample the location parameters $\mu_1,\cdots, \mu_k$ independently from $p_\mu$, sample the weights $w = (w_1,\cdots, w_k)$ from $p_w^{(k)}$, and then sample the precision parameter $\tau=\sigma^{-2}$ from $p_{\tau}$.
\end{enumerate}

In order to optimize (\ref{eq:ELBO-select-object}) in the variational Bayes framework, we specify the blockwise structure (\ref{eq:GMF3}) in this case as
\begin{equation}\label{eq:GMF3*}
\mathcal{S}_{\rm MF}^{(k)} = \left\{Q^{(k)}: dQ^{(k)}(\theta^{(k)}) = dQ_\sigma(\sigma)dQ_w^{(k)}(w)\prod_{j=1}^kdQ_{\mu_j}(\mu_j)\right\}.
\end{equation}

Note that we do not factorize $dQ_w^{(k)}(w)$ because of the constraint $\sum_{j=1}^kw_j=1$. The variational posterior distribution is defined as $\wh{Q}=\wh{Q}^{(\wh k)}$ that solves (\ref{eq:ELBO-select-object}). The loss function here is chosen as $n$ times squared Hellinger distance, i.e., $L(P_f^n, P_{f_0}^n) = nH^2(P_f, P_{f_0})$.

In order that $\wh{Q}$ enjoys a good convergence rate, we need conditions on the prior distribution and the true density function $f_0$. We first list the conditions on the prior.
\begin{enumerate}
\item There exist constants $C_1, C_2>0$, such that
\begin{equation}\label{eq:mixture-condition1}
\sum_{m = k}^\infty\pi(m)\leq C_1\exp(-C_2k\log k),
\end{equation}
for all $m>0$. There exist constants $t,C_3, C_4>0$, such that
\begin{equation}\label{eq:mixture-condition4}
\pi(k_0)\geq C_3\exp(-C_4k_0\log k_0),
\end{equation}
for all $n^{\frac{1}{2\alpha+1}}\leq k_0\leq n^{\frac{1}{2\alpha+1}+t}$.
\item There exist constants $c_1, c_2, c_3>0$, such that
\begin{equation}\label{eq:mixture-condition2}
\int_{-\infty}^{-x_0}p_\mu(x)dx +\int_{x_0}^\infty p_\mu(x)dx\leq c_1\exp(-c_2x_0^{c_3}),
\end{equation}
for all $x_0>0$ and constants $c_4, c_5, c_6$, such that
\begin{equation}\label{eq:mixture-condition5}
p_\mu(x)\geq c_4\exp(-c_5|x|^{c_6}),
\end{equation}
for all $x$.
\item There exist constants $t,d_1, d_2, d_3>0$, such that
\begin{equation}\label{eq:mixture-condition6}
\int_{w\in\Delta_{k_0}(w_0, \epsilon)}p_w^{(k_0)}(x)dx\geq d_1\exp\left(-d_2k_0(\log k_0)^{d_3}\log\left(\frac{1}{\epsilon}\right)\right),
\end{equation}
for all $w_0\in\Delta_{k_0}$ and $n^{\frac{1}{2\alpha+1}}\leq k_0\leq n^{\frac{1}{2\alpha+1}+t}$, where $\Delta_{k_0}(w_0,\epsilon)=\{w\in\Delta_{k_0}: \|w-w_0\|_1\leq\epsilon\}$.
\item There exist constants $b_0,b_1,b_2,b_3>0$, such that
\begin{equation}\label{eq:mixture-condition3}
\|p_\tau\|_\infty<b_0,\qquad \int_{\tau_0}^\infty p_\tau(x)dx\leq b_1\exp(-b_2|\tau_0|^{b_3}),
\end{equation}
for all $\tau_0>0$. There exist constants $b_4, b_5>0$ and a constant $b_6\in (0, 1]$ that satisfy
\begin{equation}\label{eq:mixture-condition7}
p_\tau(x)\geq b_4\exp(-b_5|x|^{b_6}),
\end{equation}
for all $x>0$.
\end{enumerate}

The conditions on the prior distribution are quite general. For example, one can choose $k\sim\text{Poisson}(\xi_0)$, $\mu_j\sim N(0,\sigma_0^2)$, $w\sim\rm{Dir}(\alpha_0,\alpha_0,\cdots,\alpha_0)$ and $\tau\sim\Gamma(a_0, b_0)$ for some positive constants $\xi_0,\sigma_0, \alpha_0, a_0, b_0$. Then, the conditions above are all satisfied.

Next, we list the conditions on the true density function $f_0$:
\begin{enumerate}[label=B\arabic*]
\item \label{eq:B1}(Smoothness) The logarithmic density function $\log f_0$ is assumed to be locally $\alpha$-H\"older smooth. In other words, for the derivative $l_j(x) = \frac{d^j}{dx^j}\log f_0(x)$, there exists a polynomial $L(\cdot)$ and a constant $\gamma>0$ such that,
\begin{equation}\label{eq:true-condition1}
|l_{\floor{\alpha}}(x)-l_{\floor{\alpha}}(y)|\leq L(x)|x-y|^{\alpha-\floor{\alpha}},
\end{equation}
for all $x,y$ that satisfies $|x-y|\leq\gamma$. Here, the degree and the coefficients of the polynomial $L(\cdot)$ are all assumed to be constants.
Moreover, the derivative $l_j(x)$ satisfies the bound $\int |l_j(x)|^{\frac{2\alpha+\epsilon}{j}}f_0(x)dx<s_{\max}$ for all $j=1,...,\floor{\alpha}$ with  some constants $\epsilon, s_{\max}>0$.
\item \label{eq:B2}(Tail) There exist positive constants $T$, $\xi_1$, $\xi_2$, $\xi_3$ such that
\begin{equation}\label{eq:true-condition3}
f_0(x)\leq \xi_1e^{-\xi_2|x|^{\xi_3}},
\end{equation}
for all $|x|\geq T$.
\item \label{eq:B3}(Monotonicity) There exist constants $x_m<x_M$ such that $f_0$ is nondecreasing on $(-\infty, x_m)$ and is nonincreasing on $(x_M,\infty)$. Without loss of generality, we assume $f_0(x_m) = f_0(x_M) = c$ and $f_0(x)\geq c$ for all $x_m<x<x_M$ with some constant $c>0$.
\end{enumerate}
These conditions are exactly the same as in \cite{kruijer2010adaptive} and similar conditions are also considered in \cite{maugis2013adaptive}. The conditions allow a well-behaved approximation to the true density by a location-scale mixture.
 There are many density functions that satisfy the conditions (\ref{eq:B1})-(\ref{eq:B3}), for which we refer to \cite{kruijer2010adaptive}.

The convergence rate of the variational posterior is given by the following theorem.
\begin{thm}\label{thm:mixture}
Consider i.i.d. observations generated by $P_{f_0}^n$, and the density function $f_0$ satisfies 
conditions (\ref{eq:B1})-(\ref{eq:B3}). For the prior that satisfies
 (\ref{eq:mixture-condition1})-(\ref{eq:mixture-condition7}), we have
\[P_{f_0}^n\wh{Q} H^2(P_{\wh{k},\theta^{(\wh{k})}}, P_{f_0})\lesssim n^{-\frac{2\alpha}{2\alpha+1}}(\log n)^{\frac{2\alpha r}{2\alpha+1}},\]
where $\wh{Q}=\wh Q^{(\wh k)}$ is the solution of (\ref{eq:ELBO-select-object}), and $r = \frac{p}{\min\{p,\xi_3\}}+\max\{d_3+1, \frac{c_6}{\min\{p,\xi_3\}}\}$, with $p,\xi_3, c_6, d_3$  defined in (\ref{eq:kernel}), (\ref{eq:true-condition3}), (\ref{eq:mixture-condition5}) and (\ref{eq:mixture-condition6}), respectively.
\end{thm}

The proof of Theorem \ref{thm:mixture} largely follows the arguments in  \cite{kruijer2010adaptive} that are used to establish the corresponding result  for the true posterior distribution, thanks to the fact that Theorem \ref{thm:general-MF} requires three very similar ``prior mass and testing" conditions to that of \cite{ghosal2000convergence}. The only difference is that function approximations via location-scale mixtures need to be analyzed under a stronger divergence $D_{\rho}(\cdot\|\cdot)$ for some $\rho>1$. For this reason, the proof of Theorem \ref{thm:mixture} relies on the construction of a surrogate density function $\wt{f}_0$. We first apply Theorem \ref{thm:general-MF} and establish a convergence rate under $\wt{f}_0$. Then, the conclusion is transferred to $f_0$ with a change-of-measure argument. Details of the proof are given in Appendix \ref{sec:pf-mixture}.

\subsection{Dealing with Latent Variables}\label{sec:latent}

For the mixture model considered in Section \ref{sec:mixture-gmf}, we discuss a variation of the variational Bayes approach (\ref{eq:ELBO-select-object}) by including latent variables. This facilitates computation and leads to a simple coordinate ascent algorithm that has closed-form updates. In the setting of mixture model, our approach is adaptive to the unknown number of components, and can be regarded as an extension of \cite{yang2017alpha,pati2017statistical} for variational inference with latent variables.

Since $p(X^{(n)}|k,\theta^{(k)})=\prod_{i=1}^n\sum_{j=1}^kw_j\psi_{\sigma}(X_i-\mu_j)$ with $\theta^{(k)}=(\mu,w,\sigma)$, we can write
$$p(X^{(n)}|\theta^{(k)})=\sum_{z^{(k)}\in[k]^n}p(X^{(n)}|z^{(k)},\theta^{(k)})w^{(k)}(z^{(k)}),$$
where $p(X^{(n)}|z^{(k)},\theta^{(k)})=\prod_{i=1}^n\prod_{j=1}^k\psi_{\sigma}(X_i-\mu_j)^{\mathbf{1}{\left\{z_i^{(k)}=j\right\}}}$, and the probability of $z_i^{(k)}=j$ is $w_j$ under $w^{(k)}(\cdot)$.
We use the notation $\bar{\Pi}^{(k)}$ for the joint distribution of $(z^{(k)},\theta^{(k)})$, and then the marginal likelihood (\ref{eq:marginal-k}) can be written as
$$p(X^{(n)}|k)=\int p(X^{(n)}|z^{(k)},\theta^{(k)})d\bar{\Pi}^{(k)}(z^{(k)},\theta^{(k)}).$$
Similar to (\ref{eq:ELBO-selection}), the evidence lower bound with the latent variables is given by
\begin{eqnarray}
\nonumber && \log\left(p(X^{(n)}|k)\pi(k)\right) \\
\label{eq:ELBO-latent} &\geq& \int\log p(X^{(n)}|z^{(k)},\theta^{(k)})d\bar{Q}^{(k)}(z^{(k)},\theta^{(k)}) - D(\bar{Q}^{(k)}\|\bar{\Pi}^{(k)}) + \log \pi(k).
\end{eqnarray}
The right hand side of (\ref{eq:ELBO-latent}) is shorthanded by $\bar{F}(\bar{Q}^{(k)},k)$. Define
$$\bar{\mathcal{S}}_{\rm MF}^{(k)}=\left\{\bar Q^{(k)}:d\bar Q^{(k)}(z^{(k)},\theta^{(k)})=\prod_{i=1}^ndQ^{(k)}_z(z_i)dQ_{\sigma}(\sigma)dQ_w^{(k)}(w)\prod_{j=1}^kdQ_{\mu_j}(\mu_j)\right\}.$$
Then, we solve the following optimization problem,
\begin{equation}
\max_k\max_{\bar{Q}^{(k)}\in \bar{\mathcal{S}}_{\rm MF}^{(k)}}\bar{F}(\bar{Q}^{(k)},k). \label{eq:objective-mixture}
\end{equation}
The solution to (\ref{eq:objective-mixture}) leads to the variational posterior distribution $\wh{Q}=\wh{Q}^{(\wh{k})}_{\rm latent}$. It is worth noting that even though $\wh{Q}$ is a joint distribution of $(z,\mu,w,\sigma)$, the posterior inference only relies on the marginal of $(\mu,w,\sigma)$, since the parametrization of the density $f(\cdot)$ in (\ref{eq:mixture}) does not depend on the latent variables. The existence of the latent variables only facilitates computation.
\begin{thm}\label{thm:mixture-latent}
Consider i.i.d. observations generated by $P_{f_0}^n$, and the density function $f_0$ satisfies 
conditions (\ref{eq:B1})-(\ref{eq:B3}). For the prior that satisfies
 (\ref{eq:mixture-condition1})-(\ref{eq:mixture-condition7}), we have
\[P_{f_0}^n\wh{Q} H^2(P_{\wh{k},\theta^{(\wh{k})}}, P_{f_0})\lesssim n^{-\frac{2\alpha}{2\alpha+1}}(\log n)^{\frac{2\alpha r}{2\alpha+1}},\]
where $\wh Q=\wh{Q}^{(\wh{k})}_{\rm latent}$ is the solution to (\ref{eq:objective-mixture}), and $r = \frac{p}{\min\{p,\xi_3\}}+\max\{d_3+1, \frac{c_6}{\min\{p,\xi_3\}}\}$, with $p,\xi_3, c_6, d_3$  defined in (\ref{eq:kernel}), (\ref{eq:true-condition3}), (\ref{eq:mixture-condition5}) and (\ref{eq:mixture-condition6}), respectively.
\end{thm}

Theorem \ref{thm:mixture-latent} shows that the variational posterior with latent variables achieves the same contraction rate as in Theorem \ref{thm:mixture}. In fact, the two variational lower bounds (\ref{eq:ELBO-selection}) and (\ref{eq:ELBO-latent}) satisfy the following relation,
$$\log\left(p(X^{(n)}|k)\pi(k)\right)\geq \max_{Q^{(k)}\in \mathcal{S}_{\rm MF}^{(k)}}F(Q^{(k)},k) \geq \max_{\bar{Q}^{(k)}\in \bar{\mathcal{S}}_{\rm MF}^{(k)}}\bar{F}(\bar{Q}^{(k)},k),$$
which implies that the introduction of latent variables makes the variational approximation looser. On the other hand, Theorem \ref{thm:mixture-latent} shows that the worse variational approximation does not compromise the statistical convergence rate. Moreover, with the help of latent variables, $\wh{Q}^{(\wh{k})}_{\rm latent}$ can be computed via standard variational inference algorithms. Details of the computational issues are given in Appendix \ref{sec:CAVI-mixture}.

\section{Discussion}\label{sec:discussion}

\subsection{Variational Bayes and Empirical Bayes}\label{sec:EB}

In this section, we discuss an intriguing relation between variational Bayes and empirical Bayes in the context of sieve priors. We consider a nonparametric model $P_{\theta}^{(n)}$ with an infinite dimensional parameter $\theta=(\theta_j)\in\otimes_{j=1}^{\infty}\Theta_j\subset\mathbb{R}^{\infty}$. This includes the Gaussian sequence model and the infinite dimensional exponential family discussed in Section \ref{sec:application}, as well as nonparametric regression and spectral density estimation. For each dimension, we assume $\Theta_j=\Theta_{j1}\cup \Theta_{j2}$ and $\Theta_{j1}\cap \Theta_{j2}=\emptyset$. Then, a sieve prior $\theta\sim\Pi$ is specified by the following sampling process.
\begin{enumerate}
\item Sample $k\sim \pi$;
\item Conditioning on $k$, sample $\theta_j\sim f_{j1}$ for all $j\in[k]$, and sample $\theta_j\sim f_{j2}$ for all $j>k$.
\end{enumerate}
We assume that the densities $f_{j1}$ and $f_{j2}$ satisfy $\int_{\Theta_{j1}}f_{j1}=1$ and $\int_{\Theta_{j2}}f_{j2}=1$. A leading example of the sieve prior is case of $\Theta_{j1}=\mathbb{R}\backslash\{0\}$ and $\Theta_{j2}=\{0\}$, as is used in Section \ref{sec:gsm} and Section \ref{sec:inf-exp}.

An empirical Bayes procedure maximizes $e^{m_k(X^{(n)})}\pi(k)$\footnote{The canonical form of empirical Bayes has a flat prior on $k$.}, where
$$m_k(X^{(n)})=\log\int p(X^{(n)}|\theta)\prod_{j\leq k}f_{j1}(\theta_j)\prod_{j>k}f_{j2}(\theta_j)d\theta$$
is the logarithm of marginal likelihood. With the maximizer $\wh{k}$, the empirical Bayes posterior is defined as
\begin{equation}
d\wh{Q}_{\rm EB}(\theta) \propto p(X^{(n)}|\theta)\prod_{j\leq \wh{k}}f_{j1}(\theta_j)\prod_{j>\wh{k}}f_{j2}(\theta_j)d\theta.\label{eq:EB-post-def}
\end{equation}
Compared with a hierarchical Bayes approach, the empirical Bayes procedure does not need to evaluate the posterior distribution of $k$, and thus in many cases is easier to implement.

We also study mean-field approximation of the posterior distribution. In order to characterize its form, we need a few definitions. 
For any $g=(g_j)_{j=1}^{\infty}$, define
$$m_k(X^{(n)};g)= \int \prod_{j=1}^{\infty}g_j(\theta_j)\log p(X^{(n)}|\theta)d\theta - \sum_{j\leq k}D(g_j\|f_{j1}) - \sum_{j>k}D(g_j\|f_{j2}).$$
By Jensen's inequality, we observe that
\begin{equation}
m_k(X^{(n)}) \geq m_k(X^{(n)},g), \label{eq:EB-VB-ELBO}
\end{equation}
for any $g$. We also define the density classes $\mathcal{G}_{j1}=\left\{g\geq 0:\int g=\int_{\Theta_{j1}}g=1\right\}$ and $\mathcal{G}_{j2}=\left\{g\geq 0:\int g=\int_{\Theta_{j2}}g=1\right\}$.
The next theorem gives the exact form of the mean-field variational posterior.
\begin{thm}\label{thm:VB-sieve-general}
Consider the variational posterior $\wh{Q}_{\rm VB}$ induced by the sieve prior and  the mean-field variational set $\mathcal{S}_{\rm MF}$. The distribution $\wh{Q}_{\rm VB}$ is a product measure with the density of each coordinate specified by
$$q_j=\begin{cases}
\wt{g}_{j1}^{(\wt{k})}, & j<\wt{k}, \\
(1-\wt{p})\wt{g}_{j1}^{(\wt{k})} + \wt{p}\wt{g}_{j2}^{(\wt{k})}, & j=\wt{k}, \\
\wt{g}_{j2}^{(\wt{k})}, & j>\wt{k},
\end{cases}$$
where for each given $k$, $(\wt{g}_{j1}^{(k)})_{j=1}^{k}$ and $(\wt{g}_{j2}^{(k)})_{j=k}^{\infty}$ maximize the following objective function,
\begin{equation}
\pi(k-1)e^{m_{k-1}\left(X^{(n)},(g_{j1})_{j=1}^{k-1}\cup (g_{j2})_{j=k}^{\infty}\right)}+\pi(k)e^{m_k\left(X^{(n)},(g_{j1})_{j=1}^{k}\cup (g_{j2})_{j=k+1}^{\infty}\right)},\label{eq:general-VB-objective-k}
\end{equation}
under the constraints that $g_{j1}\in\mathcal{G}_{j1}$ and $g_{j2}\in\mathcal{G}_{j2}$ for all $j$,
$\wt{k}$ maximizes
\begin{equation}
\pi(k-1)e^{m_{k-1}\left(X^{(n)},(\wt g_{j1}^{(k)})_{j=1}^{k-1}\cup (\wt g_{j2}^{(k)})_{j=k}^{\infty}\right)}+\pi(k)e^{m_k\left(X^{(n)},(\wt g_{j1}^{(k)})_{j=1}^{k}\cup (\wt g_{j2}^{(k)})_{j=k+1}^{\infty}\right)},\label{eq:VB-k-obj}
\end{equation}
and finally,
$$\wt{p}=\frac{\pi(\wt k-1)e^{m_{\wt k-1}\left(X^{(n)},(\wt g_{j1}^{(\wt k)})_{j=1}^{\wt k-1}\cup (\wt g_{j2}^{(\wt k)})_{j=\wt k}^{\infty}\right)}}{\pi(\wt k-1)e^{m_{\wt k-1}\left(X^{(n)},(\wt g_{j1}^{(\wt k)})_{j=1}^{\wt k-1}\cup (\wt g_{j2}^{(\wt k)})_{j=\wt k}^{\infty}\right)}+\pi(\wt k)e^{m_{\wt k}\left(X^{(n)},(\wt g_{j1}^{(\wt k)})_{j=1}^{\wt k}\cup (\wt g_{j2}^{(\wt k)})_{j=\wt k+1}^{\infty}\right)}}.$$
\end{thm}

The result of Theorem \ref{thm:VB-sieve-general} also applies to the class $\mathcal{S}_{\rm G}$ discussed in Section \ref{sec:inf-exp} with $\mathcal{G}_{j1}$ replaced by the Gaussian class.
We note that Theorem \ref{thm:VB-sieve-general} can be viewed as an extension of Theorem \ref{thm:VB of gauss}. In fact, if the likelihood function can be factorized over each coordinate of $\theta$, the form of $\wh{Q}_{\rm VB}$ can be greatly simplified.
\begin{corollary}\label{cor:VB-sieve-factorize}
Under the same setting of Theorem \ref{thm:VB-sieve-general}, if we further assume that $p(X^{(n)}|\theta)=\prod_{j=1}^{\infty}p(X_j^{(n)}|\theta_j)$, then we will have
$$\wt{g}_{j1}^{(\wt{k})}(\theta_j)\propto f_{j1}(\theta_j)p(X_j^{(n)}|\theta_j)\mathbf{1}_{\{\theta_j\in\Theta_{j1}\}},$$
$$\wt{g}_{j2}^{(\wt{k})}(\theta_j)\propto f_{j2}(\theta_j)p(X_j^{(n)}|\theta_j)\mathbf{1}_{\{\theta_j\in\Theta_{j2}\}},$$
\begin{equation}
\wt{k}=\argmax_k\left(\pi(k-1|X^{(n)})+\pi(k|X^{(n)})\right), \label{eq:VB-k-obj-ind}
\end{equation}
and
$$\wt{p}=\frac{\pi(k-1|X^{(n)})}{\pi(k-1|X^{(n)})+\pi(k|X^{(n)})},$$
where
\[\pi(k|X^{(n)})\propto\pi(k)\prod_{j=1}^k\int_{\Theta_{j1}}f_{j1}(\theta_j)p(X^{(n)})d\theta_j\prod_{j = k+1}^{\infty}\int_{\Theta_{j2}}f_{j2}(\theta_j)p(X^{(n)}|\theta_j)d\theta_j.\]
\end{corollary}
In light of Theorem \ref{thm:VB-sieve-general}, we can compare the variational Bayes approach and the empirical Bayes approach, especially the definitions of $\wt{k}$ and $\wh{k}$. The empirical Bayes chooses the best model by maximizing $e^{m_k(X^{(n)})}\pi(k)$, or equivalently $\pi(k|X^{(n)})$, while the variational Bayes maximizes (\ref{eq:VB-k-obj}). There are two major differences. The first difference is that empirical Bayes uses the exact marginal likelihood function $m_k(X^{(n)})$ and variational Bayes uses a mean-field approximation of $m_k(X^{(n)})$. We remark that in the case of likelihood that can be factorized, the mean-field approximation is exact, which leads to (\ref{eq:VB-k-obj-ind}). The second difference is that empirical Bayes maximizes the posterior probability of the $k$th model, but the variational Bayes maximizes the sum of the posterior probabilities (or their mean-field approximations) of the $(k-1)$th and the $k$th models.

Despite the two differences, the empirical Bayes approach and the variational Bayes approach have a lot in common. Both are random probability distributions that summarize the information in data and prior. Both select a sub-model according to very similar criteria.
To close this section, we show that with a special variational class, the induced variational posterior is exactly the empirical Bayes posterior.
\begin{thm}\label{thm:EB as VB}
Define the following set
\begin{eqnarray*}
\mathcal{S}_{\rm EB} &=& \Bigg\{Q: Q\left(\left(\otimes_{j\leq k}\Theta_{j1}\right)\bigotimes\left(\otimes_{j>k}\Theta_{j2}\right)\right) = 1\text{ for some integer }k
\Bigg\}.
\end{eqnarray*}
Then, the empirical Bayes posterior $\wh{Q}_{\rm EB}$ defined by (\ref{eq:EB-post-def}) is the variational posterior induced by the sieve prior and the variational class $\mathcal{S}_{\rm EB}$.
\end{thm}
The result of Theorem \ref{thm:EB as VB} shows that for sieve priors, one can view the empirical Bayes approach as a variational Bayes approach, which suggests that it may be possible to unify the theoretical analysis in this paper and the analysis of empirical Bayes procedures in \cite{rousseau2017asymptotic}.

\subsection{Variational Approximation as Regularization}\label{sec:better}

According to Theorem \ref{thm:convergence}, the convergence rate of the posterior is determined by the sum of $\epsilon_n^2$, the rate of the true posterior, and $\gamma_n^2$, the variational approximation error. Since $\epsilon_n^2+\gamma_n^2\geq \epsilon_n^2$, it seems that the convergence rate of variational posterior is always no faster than that of the true posterior. However, Theorem \ref{thm:convergence} just gives an upper bound. In this section, we give two examples, and we show that it is possible for a variational posterior to have a faster convergence rate than that of the true posterior.

\paragraph{Example 1}

We consider the setting of Gaussian sequence model (\ref{eq:gauss}). The true signal $\theta^*$ that generates the data is assumed to belong to the Sobolev ball $\Theta_{\alpha}(B)$. The prior distribution is specified as
$$\theta\sim d\Pi=\prod_{j\leq n} dN(0,j^{-2\beta-1})\prod_{j>n}\delta_0.$$
Note that a similar Gaussian process prior is well studied in the literature \citep{vanzanten2008rates,castillo2008lower}. We force all the coordinates after $n$ to be zero, so that the variational approximation through Kullback-Leibler divergence will not explode. For the specified prior, the posterior contraction rate is $n^{-\frac{2(\alpha\wedge\beta)}{2\beta+1}}$, and when $\beta=\alpha$, the optimal minimax rate $n^{-\frac{2\alpha}{2\alpha+1}}$ is achieved.

Consider the following variational class
$$\mathcal{S}_{[k]}=\left\{Q: dQ=\prod_{j\leq k}dQ_j\prod_{j=k+1}^ndN(0,e^{-jn})\prod_{j>n}\delta_0\right\},$$
for a given integer $k$.
It is easy to see that the variational posterior $\wh{Q}_{[k]}$ defined by (\ref{eq:VB}) with $\mathcal{S}=\mathcal{S}_{[k]}$ can be written as
$$d\wh{Q}_{[k]}=\prod_{j\leq k}dN\left(\frac{n}{n+j^{2\beta+1}}Y_j, \frac{1}{n+j^{2\beta+1}}\right)\prod_{j=k+1}^ndN(0,e^{-jn})\prod_{j>n}\delta_0.$$
In other words, the class $\mathcal{S}_{[k]}$ does not put any constraint on the first $k$ coordinates and shrink all the coordinates after $k$ to zero. Ideally, one would like to use $\delta_0$ for the coordinates after $k$. However, that would lead to $D\left(Q\|\Pi(\cdot|Y)\right)=\infty$ for all $Q\in\mathcal{S}_{[k]}$ given that the support of $\delta_0$ is a singleton. That is why we use $N(0,e^{-jn})$ instead.
The rate of $\wh{Q}_{[k]}$ for each $k$ is given by the following theorem.
\begin{thm}\label{thm:UB for S_k}
For the variational posterior $\wh{Q}_{[k]}$, we have
$$\sup_{\theta^*\in\Theta_{\alpha}(B)}\mathbb{P}_{\theta^*}^{(n)}\wh{Q}_{[k]}\|\theta-\theta^*\|^2\asymp \begin{cases}
\frac{k}{n} + k^{-2\alpha}, & k\leq n^{\frac{1}{2\beta+1}}, \\
n^{-\frac{2(\alpha\wedge \beta)}{2\beta+1}}, & k>n^{\frac{1}{2\beta+1}},
\end{cases}$$
where $\wh Q_{[k]}$ is the variational posterior defined by (\ref{eq:VB}) with $\mathcal{S}=\mathcal{S}_{[k]}$.
\end{thm}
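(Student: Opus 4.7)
The plan is to exploit the product structure of $\wh{Q}_{[k]}$ that is already recorded in the paragraph preceding the theorem statement. Because the likelihood $\otimes_j N(\theta_j,1/n)$, the prior (a tensor product of Gaussians truncated at index $n$), and the variational family $\mathcal{S}_{[k]}$ all factor across coordinates, the variational optimization decouples, and each marginal equals the true posterior $N\!\left(\tfrac{nY_j}{n+j^{2\beta+1}},\tfrac{1}{n+j^{2\beta+1}}\right)$ for $j\leq k$, the forced $N(0,e^{-jn})$ for $k<j\leq n$, and $\delta_0$ for $j>n$. A routine bias--variance calculation under $Y_j\sim N(\theta_j^*,1/n)$ then yields
\begin{equation*}
\mathbb{P}_{\theta^*}^{(n)}\wh{Q}_{[k]}(\theta_j-\theta_j^*)^2=\begin{cases} \dfrac{2n+j^{2\beta+1}+j^{4\beta+2}(\theta_j^*)^2}{(n+j^{2\beta+1})^2}, & j\leq k,\\ (\theta_j^*)^2+e^{-jn}, & k<j\leq n,\\ (\theta_j^*)^2, & j>n. \end{cases}
\end{equation*}
Summing and discarding the exponentially small $e^{-jn}$ terms leaves a $\theta^*$-free variance $V_k:=\sum_{j\leq k}\tfrac{2n+j^{2\beta+1}}{(n+j^{2\beta+1})^2}$ and a bias $B_k(\theta^*):=\sum_{j\leq k}\tfrac{j^{4\beta+2}(\theta_j^*)^2}{(n+j^{2\beta+1})^2}+\sum_{j>k}(\theta_j^*)^2$.

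For the upper bound I would treat the two regimes separately using the cutoff $j^{2\beta+1}\asymp n$. When $k\leq n^{1/(2\beta+1)}$ every $j\leq k$ has $j^{2\beta+1}\leq n$, hence $V_k\asymp k/n$; the in-window part of $B_k$ is at most $B^2\sum_{j\leq k}j^{4\beta+2-2\alpha}/n^2$, which is dominated by $k^{-2\alpha}$ on $\Theta_\alpha(B)$, and a Parseval-type tail bound gives $\sup_{\theta^*\in\Theta_\alpha(B)}\sum_{j>k}(\theta_j^*)^2\leq B^2 k^{-2\alpha}$, producing the rate $k/n+k^{-2\alpha}$. When $k>n^{1/(2\beta+1)}$, an integral approximation gives $V_k\asymp\sum_{j\geq 1}\tfrac{1}{n+j^{2\beta+1}}\asymp n^{-2\beta/(2\beta+1)}$, and splitting $B_k$ at $j_*=\lfloor n^{1/(2\beta+1)}\rfloor$ (using $\tfrac{j^{4\beta+2}}{(n+j^{2\beta+1})^2}\leq 1$ beyond $j_*$) bounds $B_k$ uniformly by a constant multiple of $n^{-2\alpha/(2\beta+1)}$, yielding the rate $n^{-2(\alpha\wedge\beta)/(2\beta+1)}$.

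For the matching lower bound I would use $V_k$ itself, which is attained at $\theta^*=0$, together with a one-coordinate construction to realize the bias. In the first regime, taking $\theta^*$ supported at $j_0=k+1$ with $\theta_{j_0}^*\asymp (k+1)^{-\alpha}B$ keeps $\theta^*\in\Theta_\alpha(B)$ and forces an error of order $k^{-2\alpha}$, so the supremum is $\gtrsim k/n+k^{-2\alpha}$. In the second regime, when $\alpha\geq\beta$ the variance bound $V_k\gtrsim n^{-2\beta/(2\beta+1)}$ already matches the claimed rate; when $\alpha<\beta$ I would place a single spike at $j_0\asymp n^{1/(2\beta+1)}\leq k$ with $\theta_{j_0}^*\asymp j_0^{-\alpha}B$, whose contribution $\tfrac{j_0^{4\beta+2}(\theta_{j_0}^*)^2}{(n+j_0^{2\beta+1})^2}$ is of order $n^{-2\alpha/(2\beta+1)}$, matching the upper bound.

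The calculations themselves are elementary; the main source of friction is the careful bookkeeping around the transition $j^{2\beta+1}\asymp n$, where the posterior variance $1/(n+j^{2\beta+1})$ switches between the $1/n$ regime and the $j^{-(2\beta+1)}$ regime and the shrinkage factor $j^{2\beta+1}/(n+j^{2\beta+1})$ switches from near zero to near one. Once this case distinction is cleanly set up, both regimes of the theorem follow directly from summing the coordinate-wise errors and picking the adversarial spikes above.
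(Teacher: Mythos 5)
Your overall plan mirrors the paper's proof: you invoke the explicit product form of $\wh{Q}_{[k]}$, decompose the risk coordinatewise into squared bias plus posterior variance (your displayed per-coordinate formula is correct), split at the threshold $j^{2\beta+1}\asymp n$, and obtain lower bounds from the $\theta^*$-free variance together with one-coordinate spikes at $k+1$, respectively at $j_0\asymp n^{1/(2\beta+1)}$ --- exactly the paper's route. However, one step of your upper bound fails as written. In the regime $k\le n^{1/(2\beta+1)}$ you bound the in-window bias by $B^2\sum_{j\le k}j^{4\beta+2-2\alpha}/n^2$ (i.e.\ you apply the coordinatewise inequality $\theta_j^{*2}\le B^2j^{-2\alpha}$ and then sum) and assert that this is dominated by $k^{-2\alpha}$. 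When $4\beta+2-2\alpha\ge 0$ the sum is of order $k^{4\beta+3-2\alpha}/n^2$, which for $k\asymp n^{1/(2\beta+1)}$ is of order $k^{1-2\alpha}$, exceeding both $k^{-2\alpha}$ and $k/n$ by a factor of $k$; concretely, with $\alpha=1/4$, $\beta=2$, $k=n^{1/5}$ your intermediate bound is $\asymp n^{1/10}\rightarrow\infty$ while the claimed rate is $n^{-1/10}\rightarrow 0$. So that inequality cannot deliver the theorem.

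The underlying quantity is fine; the fix is to use the Sobolev constraint globally rather than coordinatewise: $\sum_{j\le k}\frac{j^{4\beta+2}\theta_j^{*2}}{(n+j^{2\beta+1})^2}\le \frac{\max_{j\le k}j^{4\beta+2-2\alpha}}{n^2}\sum_{j\le k}j^{2\alpha}\theta_j^{*2}\le \frac{(1+k^{4\beta+2-2\alpha})B^2}{n^2}\le \frac{B^2}{n^2}+B^2k^{-2\alpha}$, where the last step uses $k^{4\beta+2}\le n^2$; this is precisely the paper's argument. The same care is needed in your regime $k>n^{1/(2\beta+1)}$ for the block $j\le j_*=\lfloor n^{1/(2\beta+1)}\rfloor$: pull out $\max_{j\le j_*}j^{4\beta+2-2\alpha}/n^2$ against $\sum_j j^{2\alpha}\theta_j^{*2}\le B^2$ to get $O(n^{-2\alpha/(2\beta+1)})$, since summing $j^{4\beta+2-2\alpha}$ there would overshoot by the same factor $n^{1/(2\beta+1)}$. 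With these corrections your argument coincides with the paper's proof; your variance estimates, tail bound, and lower-bound spike constructions are correct as stated.
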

Note that Theorem \ref{thm:UB for S_k} gives both upper and lower bounds for $\wh{Q}_{[k]}$. This makes the comparison between variational posterior and true posterior possible. Observe that when $k=\infty$, we have $\wh{Q}_{[\infty]}=\Pi(\cdot|Y)$, and the result is reduced to the posterior contraction rate $n^{-\frac{2(\alpha\wedge\beta)}{2\beta+1}}$ in \cite{castillo2008lower}.

Depending on the values of $\alpha,\beta$ and $k$, the rate for $\wh{Q}_{[k]}$ can be better than that of the true posterior. For example, when $\beta<\alpha$, the choice $k=n^{\frac{1}{2\alpha+1}}$ leads to the minimax rate $n^{-\frac{2\alpha}{2\alpha+1}}$, which is always faster than $n^{-\frac{2(\alpha\wedge\beta)}{2\beta+1}}$. This is because for a $\beta<\alpha$, the true posterior distribution undersmooths the data, but the variational class $\mathcal{S}_{[k]}$ with $k=n^{\frac{1}{2\alpha+1}}$ helps to reduce the extra variance resulted from undersmoothing by thresholding all the coordinates after $k$. On the other hand, when $\beta\geq \alpha$, an improvement through the variational class $\mathcal{S}_{[k]}$ is not possible. In this case, the true posterior has already overly smoothed the data, and the information loss cannot be recovered by the variational class.

\paragraph{Example 2} Consider the problem of sparse linear regression $y\sim N(X\beta^*,I_n)$, where $X$ is a design matrix of size $n\times p$ and $\beta^*$ belongs to the sparse set $\mathcal{B}(s)=\{\beta\in\mathbb{R}^p:\sum_{j=1}^p\mathbf{1}_{\{\beta_j\neq 0\}}\leq s\}$ for some $s\in[p]$.
The prior distribution on $\beta$ is specified by the Laplace density
$$\frac{d\Pi(\beta)}{d\beta}=\prod_{j=1}^p\left(\frac{\lambda}{2}e^{-\lambda|\beta_j|}\right).$$
Though the posterior distribution has a close connection to LASSO, it is proved in \cite{castillo2015bayesian} that the posterior distribution cannot adapt to the sparsity of $\beta^*$. In particular, the common choice of $\lambda$ in the theoretical analysis of LASSO only leads to a dense posterior.

In fact, it is known in the literature (e.g. \cite{bickel2009simultaneous}) that the LASSO, which is the posterior mode, achieves a nearly optimal rate over the class $\mathcal{B}(s)$.
We show that the posterior mode can be well approximated by applying a simple variational class. Consider the variational class
$$\mathcal{S}_{\tau^2}=\left\{N(\beta,\tau^2I_p):\beta\in\mathbb{R}^p\right\}.$$
Define $\wh{Q}_{\tau^2}$ to be the minimizer of $\min_{Q\in\mathcal{S}_{\tau^2}}D(Q\|\Pi(\cdot|y))$.
\begin{thm}\label{thm:lasso}
For any $\lambda>0$ and $\tau>0$, we have $\wh{Q}_{\tau^2}=N(\wh{\beta},\tau^2I_p)$, where
\begin{equation}
\wh{\beta} = \argmin_{\beta}\left\{\frac{1}{2}\|y-X\beta\|^2 + \lambda \sum_{j=1}^p \tau h(\beta_j/\tau)\right\}.\label{eq:lasso}
\end{equation}
The function $h$ is defined by $h(x)=2\phi(x)+x\left(\Phi(x)-\Phi(-x)\right)$ with $\Phi(x)=\mathbb{P}(N(0,1)\leq x)$ and $\phi(x)=\Phi'(x)$.
\end{thm}
Theorem \ref{thm:lasso} shows that the variational approximation is characterized by the penalized least-squares estimator (\ref{eq:lasso}). Observe that $h$ is a convex function, and it satisfies $\sup_{x\in\mathbb{R}}\Big|\tau h(x/\tau) - |x|\Big|=\tau\sqrt{\frac{2}{\pi}}$ (see Figure \ref{fig:fz}), and thus $\wh{\beta}$ will get arbitrarily close to the LASSO estimator as $\tau\rightarrow 0$. Therefore, even though the posterior does not have a good frequentist property, its variational approximation can recover a sparse signal.
\begin{figure}[H]
\begin{center}
\includegraphics[width=0.8\textwidth]{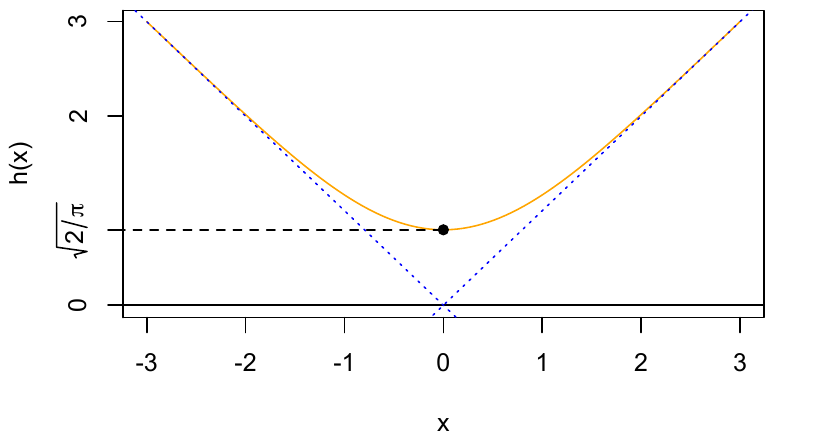}
\caption{The functions $h(x)$ (orange) and $|x|$ (blue).}
\label{fig:fz}
\end{center}
\end{figure}
By the fact that $\wh{Q}_{\tau^2}=N(\wh{\beta},\tau^2I_p)$, we have
\begin{equation}
\wh{Q}_{\tau^2}\|\beta-\beta^*\|^2 = \|\wh{\beta}-\beta^*\|^2 + p\tau^2.\label{eq:Pyth}
\end{equation}
Hence, a risk bound for the penalized least-squares estimator (\ref{eq:lasso}) directly leads to the convergence of the variational posterior. To present a bound for $\|\wh{\beta}-\beta^*\|^2$, we need to introduce some new notation. Let $S=\{j\in[p]:\beta_j^*\neq 0\}$ be the support of $\beta^*$. Define the restricted eigenvalue by
\begin{equation}
\kappa=\inf_{\{\Delta \neq 0: \|\Delta_{S^c}\|_1\leq 3\|\Delta_S\|_1\}}\frac{\frac{1}{\sqrt{n}}\|X\Delta\|}{\|\Delta\|}, \label{eq:RE}
\end{equation}
where $\|\Delta_S\|_1=\sum_{j\in S}|\Delta_j|$ and $\|\Delta_{S^c}\|_1$ is defined similarly. The same quantity (\ref{eq:RE}) also appears in the risk bound of LASSO \citep{bickel2009simultaneous}.
\begin{thm}\label{thm:VB-LASSO}
Assume $\|X_{*j}\|/\sqrt{n}\leq L$ for all $j\in[p]$ and $\kappa\leq L$ with some constant $L>0$.
Choose $\lambda=C\sqrt{n\log p}$ and $\tau=O\left(\frac{1}{np}\right)$ for some sufficiently large constant $C>0$. The solution to (\ref{eq:lasso}) satisfies
$$\|\wh{\beta}-\beta^*\|^2 \lesssim \frac{s\log p}{n\kappa^4},$$
with probability at least $1-p^{-C'}$ uniformly over $\|\beta^*\|_0\leq s$ for some constant $C'>0$.
As a consequence of (\ref{eq:Pyth}), we also have
$$\wh{Q}_{\tau^2}\|\beta-\beta^*\|^2\lesssim \frac{s\log p}{n\kappa^4},$$
with probability at least $1-p^{-C'}$.
\end{thm}
We note that $\frac{s\log p}{n\kappa^4}$ is the same rate of convergence of LASSO \citep{bickel2009simultaneous}. With $\tau$ chosen as small as $O\left(\frac{1}{np}\right)$, the statistical property of the variational posterior is very similar to that of the LASSO, and thus improves the original dense posterior distribution that is not suitable for sparse recovery.

\subsection{Model Misspecification}\label{sec:missp}

In this section, we present an extension of Theorem \ref{thm:convergence} in the context of model misspecification. We consider a data generating process $X^{(n)}\sim P_*^{(n)}$ that may not satisfies the conditions (\ref{eq:C1})-(\ref{eq:C3}). The following theorem shows that the convergence rate of the variational posterior will then have an extra term that characterizes the deviation of $P_*^{(n)}$ to the model specified by the likelihood.
\begin{thm}\label{thm:misspecification}
Suppose $\epsilon_n$ is a sequence that satisfies $n\epsilon_n^2\geq 1$. Assume that the conditions (\ref{eq:C1})-(\ref{eq:C3}) hold with $P_0^{(n)}$ replaced by $P_{\theta_0}^{(n)}$. 
Then for the variational posterior $\wh Q$ defined in (\ref{eq:VB}), we have
\begin{equation}
P_*^{(n)}\wh QL(P_\theta^{(n)}, P_{\theta_0}^{(n)})\leq M \left(n\left(\epsilon_n^2+\gamma_n^2\right) + D_2\big(P_*^{(n)}\|P_{\theta_0}^{(n)}\big)\right), \label{eq:misp-rate-3}
\end{equation}
for some constant $M$ only depending on $C_1,C$ and $\rho$ in (\ref{eq:C1})-(\ref{eq:C3}),
where the quantity $\gamma_n^2$ is defined as
\[\gamma_n^2 = 
\frac{1}{n}\inf_{Q\in\mathcal{S}}P_*^{(n)}D(Q||\Pi(\cdot|X^{(n)})).\]
\end{thm}

We note that here $\gamma_n^2$ is defined with respect to $P_*^{(n)}$ instead of $P_0^{(n)}$ in Theorem \ref{thm:convergence}. Theorem \ref{thm:convergence} can be viewed as a special case of Theorem \ref{thm:misspecification} with $P_0^{(n)}=P_*^{(n)}=P_{\theta_0}^{(n)}$. The extra term in the convergence rate that characterizes model misspecification is given by $D_2\big(P_*^{(n)}\|P_{\theta_0}^{(n)}\big)$. In fact, it can be replaced by any $\rho$-R\'enyi divergence with $\rho>1$.

Convergence rates of variational approximation to tempered posterior distributions under model misspecification have been studied by \cite{alquier2017concentration} (See their Theorem 2.7). Our results complement theirs by considering variational approximation to the ordinary posterior.

The next theorem gives sufficient conditions so that the variational approximation error $\gamma_n^2$ is dominated by the sum of the other two terms in (\ref{eq:misp-rate-3}). It can be viewed as an extension of Theorem \ref{thm:convergence2}.
\begin{thm}\label{thm:mis_MF}
Suppose there are constants $C_1,C_2>0$, such that
\begin{equation}\label{eq:C4**}
\inf_{Q\in\mathcal{S}\cap\mathcal{E}}D(Q\|\Pi)\leq C_1\left(n\epsilon_n^2+D_2\big(P_*^{(n)}\|P_{\theta_0}^{(n)}\big)\right),\tag{C4**}
\end{equation}
where $\mathcal{E}=\{Q:\supp(Q)\subset\mathcal{C}\}$ with
$$\mathcal{C}=\left\{\theta: D(P_*^{(n)}\|P_{\theta}^{(n)})\leq C_2\left(n\epsilon_n^2+D_2\big(P_*^{(n)}\|P_{\theta_0}^{(n)}\big)\right)\right\}.$$
Then, we have
$$n\gamma_n^2\leq (C_1+C_2)\left(n\epsilon_n^2+D_2\big(P_*^{(n)}\|P_{\theta_0}^{(n)}\big)\right).$$
\end{thm}

To end this section, we apply Theorem \ref{thm:misspecification} and Theorem \ref{thm:mis_MF} to the piecewise constant model discussed in Section \ref{sec:pcm} and derive oracle inequalities for the variational posterior distributions.
\begin{thm}\label{thm:PC-MC-misp}
Consider a prior distribution $\Pi$ that satisfies (\ref{eq:MC-condition1}) and (\ref{eq:MC-condition2}).
Then, for any $\theta^*\in \mathbb{R}^n$, we have
\[P_{\theta^*}^{(n)}\wh Q_{\rm MC}\|\theta-\theta^*\|^2\lesssim \min_{1\leq k\leq n}\left\{\inf_{\theta_0\in\Theta_k(B)}\|\theta^*-\theta_0\|^2+k\log n\right\},\]
\[P_{\theta^*}^{(n)}\wh Q_{\rm MC}^{\rm joint}\|\theta-\theta^*\|^2\lesssim \min_{1\leq k\leq n}\left\{\inf_{\theta_0\in\Theta_k(B)}\|\theta^*-\theta_0\|^2+k\log n\right\},\]
where the definitions of $\wh Q_{\rm MC}$ and $\wh{Q}_{\rm MC}^{\rm joint}$ are given in Theorem \ref{thm:PC-MC}.
\end{thm}

\section*{Acknowledgements}

The authors are grateful to an associate editor and two referees who give very insightful feedbacks that lead to the improvement of the paper.

\newpage

\appendix

\section{Additional Results}

\subsection{Sharp Convergence Rates for Gaussian Sequence Model}\label{sec:remove-log}

In this section, we consider a prior so that the logarithmic term in the convergence rate of Theorem \ref{thm:gauss1} can be removed. The sampling process of the prior is specified as follows.

\begin{enumerate}
\item Sample $k\sim\pi$;
\item Conditioning on $k$, sample $\sqrt{n}\theta_j\sim g_j$ for all $j\in[k]$, and set $\theta_j=0$ for all $j>k$.
\end{enumerate}
Obviously, this prior is the same as the previous one when $f_j(x)=\sqrt{n}g_j(\sqrt{n}x)$. However, the $\sqrt{n}$-scaling allows us to formulate conditions that help remove the logarithmic factor in Theorem \ref{thm:gauss1}. The same rescaling is also used in \cite{gao2016rate,gao2015general} to achieve sharp minimax rates. The following two conditions will be used to replace (\ref{eq:gauss1-condition2}) and (\ref{eq:gauss1-condition3}).
\begin{itemize}
\item There exist some constants $C_3,C_4>0$ such that for $k_0=\ceil{n^{\frac{1}{2\alpha+1}}}$,
\begin{equation}\label{eq:gauss2-condition2}
\pi(k_0)\geq C_3\exp(-C_4k_0).
\end{equation}
\item For the $k_0$ defined above, there exist constants $c_0\in\mathbb{R},c_1>0$ and $0<\beta<\frac{2}{2\alpha+1}$, such that
\begin{equation}\label{eq:gauss2-condition3}
-\log g_j(x)\leq c_0+c_1|x|^\beta,\qquad\text{for all $j\leq k_0$ and $x\in \mathbb{R}$}.
\end{equation}
\end{itemize}
The condition (\ref{eq:gauss2-condition2}) is similar to (\ref{eq:gauss1-condition2}), while (\ref{eq:gauss2-condition3}) is stronger compared with (\ref{eq:gauss1-condition3}). In general, one can choose $g_j$ to be a density with a heavy tail. As an example, one can easily check that $\pi(k)\propto e^{-\tau k}$ and $g_j(x)=\frac{1}{\pi\sigma\left(1+(x/\sigma)^2\right)}$ with constants $\tau,\sigma^2>0$ satisfy the two conditions.
Conditions (\ref{eq:C3}) and (\ref{eq:C4}) can be derived from (\ref{eq:gauss2-condition2}) and (\ref{eq:gauss2-condition3}) (see Lemma \ref{lem:gauss2-2}). This leads to the following result.

\begin{thm}\label{thm:gauss2}
Consider the prior $\Pi$ that satisfies (\ref{eq:gauss1-condition1}), (\ref{eq:gauss2-condition2}) and (\ref{eq:gauss2-condition3}).
Then, for any $\theta^*\in\Theta_{\alpha}(B)$, we have 
\[P_{\theta^*}^{(n)}\wh Q\|\theta-\theta^*\|^2\lesssim n^{-\frac{2\alpha}{2\alpha+1}},\]
where $\wh Q$ is the variational posterior defined by (\ref{eq:VB}) with $\mathcal{S}=\mathcal{S}_{\rm MF}$.
\end{thm}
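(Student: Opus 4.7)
The plan is to apply Theorem~\ref{thm:general2} with $\epsilon_n^2 = n^{-2\alpha/(2\alpha+1)}$ and $k_0 = \ceil{n^{1/(2\alpha+1)}}$, so that $n\epsilon_n^2 \asymp k_0$. Conditions (\ref{eq:C1}) and (\ref{eq:C2}) are unchanged from Theorem~\ref{thm:gauss1} and continue to follow from the tail bound (\ref{eq:gauss1-condition1}) via the sieve $\Theta_n(\epsilon)=\{\theta:\theta_j=0\text{ for }j>k(\epsilon)\}$ with a standard Gaussian likelihood-ratio test (this is Lemma~\ref{lem:gauss1}). The new content is the verification of (\ref{eq:C3}) and (\ref{eq:C4}) under the rescaled prior conditions (\ref{eq:gauss2-condition2}) and (\ref{eq:gauss2-condition3}), and I would build both from a single product neighborhood of $\theta^*$.

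Set $\tilde\Theta_j = [\theta_j^* - cn^{-1/2},\,\theta_j^* + cn^{-1/2}]$ for $j\leq k_0$ and $\tilde\Theta_j = \{0\}$ for $j>k_0$. On $\bigotimes_j\tilde\Theta_j$ the Sobolev bound yields
\[\|\theta-\theta^*\|^2 \leq k_0\cdot c^2/n + B^2 k_0^{-2\alpha}\lesssim \epsilon_n^2,\]
and since $D_2(P_{\theta^*}^{(n)}\|P_\theta^{(n)}) = n\|\theta-\theta^*\|^2$ for the Gaussian sequence model, both the R\'enyi-$2$ ball in (\ref{eq:C3}) and the KL ball in (\ref{eq:mfc1}) of Theorem~\ref{thm:mean field} are controlled. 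The crux is the prior-mass estimate. Under the rescaling $f_j(x) = \sqrt{n}\,g_j(\sqrt{n}x)$, a change of variables and (\ref{eq:gauss2-condition3}) give $-\log f_j(\tilde\Theta_j) \lesssim 1 + n^{\beta/2}|\theta^*_j|^\beta$. Writing $|\theta^*_j|^\beta = (j^{2\alpha}(\theta^*_j)^2)^{\beta/2}\,j^{-\alpha\beta}$ and applying H\"older with conjugates $(2/\beta,\,2/(2-\beta))$,
\[\sum_{j\leq k_0}|\theta^*_j|^\beta \leq B^\beta\Bigl(\sum_{j\leq k_0} j^{-2\alpha\beta/(2-\beta)}\Bigr)^{(2-\beta)/2}\lesssim k_0^{(2-\beta(2\alpha+1))/2},\]
where the inner sum grows polynomially precisely because $\beta<2/(2\alpha+1)$ forces $2\alpha\beta/(2-\beta)<1$. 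Multiplying by $n^{\beta/2}$ and using $k_0\asymp n^{1/(2\alpha+1)}$, the two exponents cancel exactly, giving $n^{\beta/2}\sum_{j\leq k_0}|\theta^*_j|^\beta \lesssim k_0$. Together with $-\log\pi(k_0)\lesssim k_0$ from (\ref{eq:gauss2-condition2}), this delivers $-\log\Pi(\bigotimes_j\tilde\Theta_j)\lesssim k_0\asymp n\epsilon_n^2$, establishing (\ref{eq:C3}) at $\rho=2$.

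For (\ref{eq:C4}) I would invoke Theorem~\ref{thm:mean field} with $\tilde Q\in\mathcal{S}_{\rm MF}$ defined by truncating and renormalizing $f_j$ on $\tilde\Theta_j$ for $j\leq k_0$ and setting $\tilde Q_j=\delta_0$ for $j>k_0$. Then $\tilde Q_j(\tilde\Theta_j)=1$ makes (\ref{eq:mfc2}) trivial, and on $\bigotimes_j\tilde\Theta_j$,
\[\log\frac{d\tilde Q}{d\Pi}(\theta)\leq -\log\pi(k_0) + \sum_{j\leq k_0}\bigl(-\log f_j(\tilde\Theta_j)\bigr)\lesssim n\epsilon_n^2\]
follows from the same computation, verifying (\ref{eq:mfc1}). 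Plugging these into Theorem~\ref{thm:general2} yields the minimax rate $n^{-2\alpha/(2\alpha+1)}$. The main obstacle is the H\"older step: it is the only place the sharp exponent $\beta<2/(2\alpha+1)$ enters, and it is precisely what removes the $\log k_0$ factor present in Theorem~\ref{thm:gauss1}.
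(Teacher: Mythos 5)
Your proposal is correct and follows essentially the same route as the paper: Lemma~\ref{lem:gauss1} for (\ref{eq:C1})--(\ref{eq:C2}), then the product neighborhood $\wt\Theta_j=[\theta_j^*\pm cn^{-1/2}]$ with $k_0=\ceil{n^{1/(2\alpha+1)}}$, the rescaling $f_j(x)=\sqrt n\,g_j(\sqrt n x)$, and the same two H\"older steps exploiting $\beta<2/(2\alpha+1)$ to get $-\log\pi(k_0)-\sum_{j\le k_0}\log\int_{\wt\Theta_j}f_j\lesssim k_0\asymp n\epsilon_n^2$, which yields both (\ref{eq:C3}) and (\ref{eq:C4}). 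The only (immaterial) difference is bookkeeping: you truncate and renormalize $\wt Q_j$ onto $\wt\Theta_j$ so that (\ref{eq:mfc2}) is trivial and the mass bound enters through (\ref{eq:mfc1}), whereas the paper takes $\wt Q=\prod_{j\le k_0}f_j\otimes\prod_{j>k_0}\delta_0$ and bounds the identical quantity via (\ref{eq:mfc2}).
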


\subsection{An Algorithm for (\ref{eq:objective-mixture})}\label{sec:CAVI-mixture}

In this section, we discuss how to optimize (\ref{eq:objective-mixture}). We first consider the problem $\max_{\bar{Q}^{(k)}\in \bar{\mathcal{S}}_{\rm MF}^{(k)}}\bar{F}(\bar{Q}^{(k)},k)$ for a fixed $k$. To solve this problem, a traditional method is to apply the coordinate ascent variational inference (CAVI). In order to obtain closed-form updates, we restrict ourselves to conjugate priors. In particular, we choose the kernel to be $\psi_{\sigma}(x)\propto e^{-\frac{x^2}{\sigma^2}}$, and priors to be 
\[p_{\mu_j}(\mu_j)\propto\exp\left(-\frac{\mu_j^2}{2\sigma_0^2}\right),\qquad p_{w}^{(k)}(w)\propto\prod_{j=1}^kw_j^{\alpha_0-1},\qquad p_\tau(\tau)\propto\tau^{a_0-1}\exp\left(-b_0\tau\right),\]
where $\tau = \sigma^{-2}$. By conjugacy, we can assume the variational posterior density for $(\mu, w,\tau, z)$ as $q(\mu, w, \tau, z) = \prod_{j=1}^kq_{\mu_j}(\mu_j)q_w(w)q_\tau(\tau)\prod_{i=1}^nq_{z_i}(z_i)$ with
\[q_{\mu_j}(\mu_j)\propto\exp\left(-\frac{(\mu_j-\wt\kappa_j)^2}{2\wt\sigma_j^2}\right),\qquad q_w(w)\propto\prod_{j=1}^kw_{j}^{\wt\alpha_j},\qquad q_\tau(\tau)\propto\tau^{\wt a-1}\exp(-\wt b\tau)\]
and
\[q_{z_i}(z_i) = \prod_{j=1}^kq_{ij}^{\1\{z_i = j\}},\]
where $\sum_{j=1}^kq_{ij} = 1$. Then, we only need to iteratively update the parameters as below.

\begin{enumerate}
\item Update $\wt\kappa_j, \wt\sigma_j^2$ by
$$\wt{\kappa}_j=\frac{\sum_{i=1}^nq_{ij}X_i}{\sum_{i=1}q_{ij} + \frac{\wt{b}}{2\wt{a}\sigma_0^2}}\text{ and }\wt{\sigma}_j^2=\frac{1}{\frac{2\wt a}{\wt b}\sum_{i=1}q_{ij} + \frac{1}{\sigma_0^2}}.$$
\item Update $\wt{\alpha}_1,...,\wt{\alpha}_k$ by
$$\wt{\alpha}_j=\alpha_0+\sum_{i=1}^nq_{ij}.$$
\item Update $\wt{a},\wt{b}$ by
$$\wt{a}=a_0+\frac{n}{2}\text{ and }\wt{b}=b_0+\sum_{i=1}^n\sum_{j=1}^kq_{ij}\left[(X_i-\wt{\kappa}_j)^2+\wt{\sigma}_j^2\right].$$
\item Update $q_{ij}$ by
$$q_{ij}\propto\exp\left(-\frac{\wt{a}}{\wt{b}}\left[(X_i-\wt{\kappa}_j)^2+\wt{\sigma}_j^2\right]+\text{digamma}(\wt{\alpha}_j)\right).$$
\end{enumerate}

The above iterations will approximately solve $\max_{\bar{Q}^{(k)}\in \bar{\mathcal{S}}_{\rm MF}^{(k)}}\bar{F}(\bar{Q}^{(k)},k)$ for a fixed $k$. The solution is parametrized by $\wt{\kappa}_j^{(k)},(\wt{\sigma}_j^2)^{(k)},\wt{\alpha}_j^{(k)},\wt{a}^{(k)},\wt{b}^{(k)},q_{ij}^{(k)}$. To select the best $k$, we then need to evaluate the objective function (\ref{eq:objective-mixture}), which is equivalent to plugging the values of $\wt{\kappa}_j^{(k)},(\wt{\sigma}_j^2)^{(k)},\wt{\alpha}_j^{(k)},\wt{a}^{(k)},\wt{b}^{(k)},q_{ij}^{(k)}$ into the right hand side of (\ref{eq:ELBO-latent}). This leads to the objective function
\begin{equation}
\bar{F}\left(\left(\wt{\kappa}_j^{(k)},(\wt{\sigma}_j^2)^{(k)},\wt{\alpha}_j^{(k)},\wt{a}^{(k)},\wt{b}^{(k)},q_{ij}^{(k)}\right),k\right),\label{eq:lazy}
\end{equation}
which can be calculated with a closed form by conjugacy. Finally, we choose $\wh{k}$ that maximize (\ref{eq:lazy}).

Note for each fixed $k$, computing (\ref{eq:lazy}) is straightforward and efficient by CAVI. The bottleneck of the algorithm is that one needs to evaluate (\ref{eq:lazy}) for every $k$. However, in terms of achieving the same statistical convergence rate given by Theorem \ref{thm:mixture-latent}, this is not necessary. Even if the variational posterior selects the best $k$ from a much smaller set $\mathcal{K}=\{1,2,4,...,2^{\ceil{\log_2n}}\}$ according to (\ref{eq:objective-mixture}), the same rate in Theorem \ref{thm:mixture-latent} can still be achieved with a slight modification of the proof. Therefore, one only needs to compute (\ref{eq:lazy}) for all $k\in\{1,2,4,...,2^{\ceil{\log_2n}}\}$.

\subsection{Beyond the Kullback-Leibler Approximation}

Modern variational approximation methods are not limited to the approximation by Kullback-Leibler divergence. For example, \cite{li2016renyi} proposed a generalized variational inference method using R\'enyi divergence and derived a corresponding evidence lower bound.  Though alternative divergences may
be hard to optimize, they may give better approximations \citep{minka2005divergence,opper2005expectation}.

It is possible to generalize our results to variational approximation using other criterions. We first introduce a $D_*$-variational posterior.

\begin{definition}[$D_*$-variational posterior]
Let $\mathcal{S}$ be a family of distributions. The $D_*$-variational posterior is defined as
\begin{equation}\label{eq:general_variational}
\wh Q_* = \argmin_{Q\in\mathcal{S}}D_*(Q\|\Pi(\cdot|X^{(n)})).
\end{equation}
\end{definition}

Then we state a result that extends Theorem \ref{thm:convergence} to the $D_*$-variational posterior distribution.
\begin{thm}\label{thm:convergence-general}
Suppose $D_*$ is a divergence such that $D_*(P_1\|P_2)\geq 0$ for all probability measures $P_1$ and $P_2$. Assume $D_*(P_1\|P_2)\geq D(P_1\|P_2)$ for any $P_1\in\mathcal{S}$ and any $P_2$, and the conditions (\ref{eq:C1})-(\ref{eq:C3}) in Theorem \ref{thm:convergence} hold. Then for the $D_*$-variational posterior $\wh Q_*$ defined in (\ref{eq:general_variational}), we have
\begin{equation}\label{eq:mean convergence-general}
P_0^{(n)}\wh Q_*L(P_\theta^{(n)},P_0^{(n)})\leq M n(\epsilon_n^2+\gamma_{n}^{*2}),
\end{equation}
for some constant $M>0$,
where the quantity $\gamma_n^2$ is defined as
\[\gamma_n^{*2} = 
\frac{1}{n}\inf_{Q\in\mathcal{S}}P_0^{(n)}D_*(Q||\Pi(\cdot|X^{(n)})).\]
\end{thm}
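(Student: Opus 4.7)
The plan is to revisit the proof of Theorem \ref{thm:convergence} and observe that the Kullback--Leibler minimization property of $\wh Q$ is used in only one specific place: to bound the expected KL divergence between the approximating measure and the true posterior. Once this step is isolated, the rest of the argument goes through for \emph{any} data-dependent measure, and in particular for $\wh Q_*$.

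First, I would verify that the proof of Theorem \ref{thm:convergence} produces a bound of the generic form
\[
P_0^{(n)} Q\,L(P_\theta^{(n)},P_0^{(n)})\;\leq\; M n\epsilon_n^2 \;+\; C\, P_0^{(n)} D\bigl(Q\,\|\,\Pi(\cdot|X^{(n)})\bigr)
\]
that is valid for any probability measure $Q$ that is absolutely continuous with respect to $\Pi(\cdot|X^{(n)})$. The natural route is the Donsker--Varadhan variational representation
\[
Q[f]\;\leq\; D\bigl(Q\,\|\,\Pi(\cdot|X^{(n)})\bigr)\;+\;\log\int e^{f}\,d\Pi(\cdot|X^{(n)}),
\]
applied with $f = \lambda L(P_\theta^{(n)},P_0^{(n)})$ for a suitably chosen $\lambda>0$. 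Taking expectation under $P_0^{(n)}$ and using (\ref{eq:C1}), (\ref{eq:C2}) to control the exponential moment on the sieve $\Theta_n(\epsilon)$, together with (\ref{eq:C3}) to lower-bound the denominator of the posterior (so that $\int e^{f}d\Pi(\cdot|X^{(n)})$ is not too large), delivers the desired inequality. Note that nothing in this derivation requires $Q$ to be a KL-minimizer; the minimization is only invoked afterwards, to replace $P_0^{(n)} D(Q\|\Pi(\cdot|X^{(n)}))$ by $n\gamma_n^2$.

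The new ingredient for the $D_*$-variational posterior is the estimate
\[
P_0^{(n)} D\bigl(\wh Q_*\,\|\,\Pi(\cdot|X^{(n)})\bigr)
\;\leq\; P_0^{(n)} D_*\bigl(\wh Q_*\,\|\,\Pi(\cdot|X^{(n)})\bigr)
\;\leq\; \inf_{Q\in\mathcal{S}} P_0^{(n)} D_*\bigl(Q\,\|\,\Pi(\cdot|X^{(n)})\bigr)
\;=\; n\gamma_n^{*2},
\]
where the first inequality uses the assumption $D_*(P_1\|P_2)\geq D(P_1\|P_2)$ for $P_1\in\mathcal{S}$ applied pointwise at $P_1=\wh Q_*\in\mathcal{S}$ and $P_2=\Pi(\cdot|X^{(n)})$, and the second inequality uses the defining $D_*$-minimizing property of $\wh Q_*$. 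Substituting $Q=\wh Q_*$ into the generic bound from the previous paragraph yields
\[
P_0^{(n)} \wh Q_*\,L(P_\theta^{(n)},P_0^{(n)})\;\leq\; Mn\epsilon_n^2 + Cn\gamma_n^{*2}\;\leq\; M'n\bigl(\epsilon_n^2+\gamma_n^{*2}\bigr),
\]
which is the conclusion of Theorem \ref{thm:convergence-general}.

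Since the substantive probabilistic content is already contained in the proof of Theorem \ref{thm:convergence}, the only real work is the bookkeeping in the first paragraph, namely tracking the proof of Theorem \ref{thm:convergence} carefully enough to isolate the generic $Q$-bound before the KL-minimization step is used. This is the main (minor) obstacle: in case the original argument instead derives the bound by going directly through $\wh Q$, one may have to rewrite a few lines so that the derivation is measure-agnostic. Once this is done, the hypothesis $D_*\geq D$ on $\mathcal{S}$ and the $D_*$-optimality of $\wh Q_*$ plug in essentially verbatim for the corresponding KL step, and the rest of the proof carries over without modification.
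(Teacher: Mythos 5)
Your proposal is correct and follows essentially the same route as the paper: the paper's proof applies Lemma \ref{lem:basic ineq} (the Donsker--Varadhan-type bound) directly to $\wh Q_*$, replaces the KL-optimality step with the chain $D(\wh Q_*\|\Pi(\cdot|X^{(n)}))\leq D_*(\wh Q_*\|\Pi(\cdot|X^{(n)}))\leq D_*(Q\|\Pi(\cdot|X^{(n)}))$ for all $Q\in\mathcal{S}$, and then reuses the exponential-moment control from the proof of Theorem \ref{thm:convergence} under (\ref{eq:C1})--(\ref{eq:C3}), exactly as you describe. Your observation that the earlier argument is measure-agnostic until the minimization step is precisely why the paper can conclude with ``following the same argument in the proof of Theorem \ref{thm:convergence}.''
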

Theorem \ref{thm:convergence-general} is a generalization of Theorem \ref{thm:convergence} for a divergence $D_*$ that is not smaller than the Kullback-Leibler divergence. Examples of applications include all R\'enyi divergence with $\rho\geq 1$ and the $\chi^2$-divergence. Divergence functions that are not necessarily larger than the Kullback-Leibler require new techniques to analyze, and will be considered as an interesting future project.

\section{Proofs}\label{sec:proof}

\subsection{Proof of Theorem \ref{thm:convergence}}

This section gives the proof of Theorem \ref{thm:convergence}, which is divided into several lemmas. We first give an inequality that uses the basic property of the KL-divergence.

\begin{lemma}\label{lem:basic ineq}
For any function $f\geq 0$ and two probability measure $P$ and $Q$, we have
\[\int f(x)dQ(x)\leq D(Q\|P)+\log\int\exp(f(x))dP(x).\]
\end{lemma}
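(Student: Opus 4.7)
The plan is to derive this from the nonnegativity of the Kullback--Leibler divergence by a clever choice of reference measure, a standard argument known as the Donsker--Varadhan variational representation. I would first handle the trivial cases: if $Q$ is not absolutely continuous with respect to $P$, then $D(Q\|P) = +\infty$ and the inequality is vacuous; if $\int e^{f} dP = +\infty$, the right side is $+\infty$ and again there is nothing to prove. So I may assume $Q \ll P$ and $Z := \int e^{f(x)} dP(x) \in (0,\infty)$.

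Next, I would introduce the tilted probability measure $R$ defined by $dR = Z^{-1} e^{f} dP$. Then I compute $D(Q\|R)$ by expanding the log-Radon--Nikodym derivative:
\[
\log\frac{dQ}{dR}(x) = \log\frac{dQ}{dP}(x) - f(x) + \log Z,
\]
so that integrating against $Q$ gives
\[
D(Q\|R) = D(Q\|P) - \int f(x)\, dQ(x) + \log\int e^{f(x)} dP(x).
\]
Since $D(Q\|R) \geq 0$ by Gibbs' inequality (or Jensen applied to $-\log$), rearranging yields exactly the claimed bound.

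The argument is essentially a one-line computation once the right reference measure is written down, so the only ``obstacle'' is handling the degenerate cases cleanly; there is no real analytic difficulty. I would note in passing that equality holds iff $Q = R$, which is the source of the Gibbs variational principle used repeatedly in the sequel to bound posterior expectations of likelihood ratios.
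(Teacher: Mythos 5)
Your proof is correct and is essentially the same as the paper's: both introduce the tilted measure $dR \propto e^{f}\,dP$, expand $D(Q\|R)$ to obtain the identity $D(Q\|P)+\log\int e^f\,dP = D(Q\|R)+\int f\,dQ$, and conclude from $D(Q\|R)\geq 0$. Your treatment of the degenerate cases is a welcome addition, but the core argument matches the paper's.
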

\begin{proof}
By the definition of KL-divergence, we have
\begin{eqnarray*}
&& D(Q\|P)+\log\int\exp(f(x))dP(x) \\
& = &\int \log\left(\frac{dQ(x)\int \exp(f(y))dP(y)}{dP(x)}\right)dQ(x)\\
& = &\int \log\left(\frac{dQ(x)\int \exp(f(y))dP(y)}{\exp(f(x))dP(x)}\right)dQ(x)+\int f(x)dQ(x)\\
& = &D(Q\|\wt P)+\int f(x)dQ(x)\\
&\geq& \int f(x)dQ(x),
\end{eqnarray*}
where $\wt P$ is a probability measure given by
\[d\wt P(x) = \frac{\exp(f(x))dP(x)}{\int\exp(f(y))dP(y)}.\]
\end{proof}
Then, we can use the inequality in Lemma \ref{lem:basic ineq} to derive a useful bound for $P_0^{(n)}\wh QL(P_\theta^{(n)}, P_0^{(n)})$.
\begin{lemma}\label{lem:general1}
For the $\wh Q$ defined in (\ref{eq:VB}), we have
\begin{eqnarray*}
&& P_0^{(n)}\wh QL(P_\theta^{(n)}, P_0^{(n)}) \\
&\leq& \inf_{a>0}\frac{1}{a}\left(\inf_{Q\in\mathcal{S}}P_0^{(n)}D( Q\|\Pi(\cdot|X^{(n)}))+\log P_0^{(n)}\Pi(\exp(aL(P_\theta^{(n)}, P_0^{(n)}))|X^{(n)})\right).
\end{eqnarray*}
\end{lemma}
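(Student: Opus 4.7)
The plan is to apply Lemma~\ref{lem:basic ineq} in a direct way. First I would take the variational posterior $\wh Q$ as the distribution $Q$ in Lemma~\ref{lem:basic ineq}, take the true posterior $\Pi(\cdot|X^{(n)})$ in the role of $P$, and choose the test function $f(\theta)=aL(P_\theta^{(n)}, P_0^{(n)})$ for an arbitrary $a>0$. This immediately yields the pointwise (in $X^{(n)}$) bound
\[a\,\wh Q L(P_\theta^{(n)}, P_0^{(n)})\leq D(\wh Q\|\Pi(\cdot|X^{(n)})) + \log \Pi\!\left(\exp(aL(P_\theta^{(n)}, P_0^{(n)}))\,\big|\,X^{(n)}\right).\]

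Next I would exploit the defining optimality of $\wh Q$. Since $\wh Q=\argmin_{Q\in\mathcal{S}}D(Q\|\Pi(\cdot|X^{(n)}))$, the first term on the right-hand side may be replaced by $D(Q\|\Pi(\cdot|X^{(n)}))$ for any $Q\in\mathcal{S}$. Dividing by $a$, this gives
\[\wh Q L(P_\theta^{(n)}, P_0^{(n)})\leq \frac{1}{a}\!\left(D(Q\|\Pi(\cdot|X^{(n)})) + \log \Pi(\exp(aL(P_\theta^{(n)}, P_0^{(n)}))|X^{(n)})\right),\]
still pointwise in $X^{(n)}$ and valid for every $Q\in\mathcal{S}$ and every $a>0$.

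Finally, I would take the expectation $P_0^{(n)}$ on both sides. The $D(Q\|\Pi(\cdot|X^{(n)}))$ term becomes $P_0^{(n)}D(Q\|\Pi(\cdot|X^{(n)}))$ directly. For the $\log$-expectation term, the concavity of $\log$ together with Jensen's inequality yields
\[P_0^{(n)}\log \Pi(\exp(aL)|X^{(n)})\leq \log P_0^{(n)}\Pi(\exp(aL)|X^{(n)}).\]
Taking the infimum first over $Q\in\mathcal{S}$ and then over $a>0$ on the right-hand side completes the proof. There is no real obstacle here; the only subtlety is the use of Jensen's inequality to move $P_0^{(n)}$ inside the logarithm, which gives the correct form of the stated bound.
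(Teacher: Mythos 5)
Your proposal is correct and follows essentially the same route as the paper's proof: apply Lemma \ref{lem:basic ineq} with $f=aL$, use the optimality of $\wh Q$ to replace $D(\wh Q\|\Pi(\cdot|X^{(n)}))$ by $D(Q\|\Pi(\cdot|X^{(n)}))$ for an arbitrary $Q\in\mathcal{S}$, take $P_0^{(n)}$-expectation, and apply Jensen's inequality to move the expectation inside the logarithm before optimizing over $a$ and $Q$. The only difference (dividing by $a$ before rather than after taking expectation) is immaterial.
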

\begin{proof}
By Lemma \ref{lem:basic ineq}, we have
$$a\wh QL(P_\theta^{(n)}, P_0^{(n)})\leq D( \wh{Q}\|\Pi(\cdot|X^{(n)}))+\log\Pi(\exp(aL(P_\theta^{(n)}, P_0^{(n)}))|X^{(n)}),$$
for all $a>0$. By the definition of $\wh Q$, we have
\[D(\wh Q\|\Pi(\cdot|X^{(n)}))\leq D(Q\|\Pi(\cdot|X^{(n)})),\]
for all $Q\in\mathcal{S}$. Taking expectation on both sides, we have
$$aP_0^{(n)}\wh QL(P_\theta^{(n)}, P_0^{(n)})\leq P_0^{(n)}D( Q\|\Pi(\cdot|X^{(n)}))+P_0^{(n)}\log\Pi(\exp(aL(P_\theta^{(n)}, P_0^{(n)}))|X^{(n)}).$$
Using Jensen's inequality, we get
$$P_0^{(n)}\log\Pi(\exp(aL(P_\theta^{(n)}, P_0^{(n)}))|X^{(n)})\leq \log P_0^{(n)}\Pi(\exp(aL(P_\theta^{(n)}, P_0^{(n)}))|X^{(n)}).$$
Therefore,
$$P_0^{(n)}\wh QL(P_\theta^{(n)}, P_0^{(n)})\leq \frac{1}{a}\left(P_0^{(n)}D( Q\|\Pi(\cdot|X^{(n)}))+\log P_0^{(n)}\Pi(\exp(aL(P_\theta^{(n)}, P_0^{(n)}))|X^{(n)})\right).$$
The proof is complete by taking minimum over $a>0$ and $Q\in\mathcal{S}$.
\end{proof}

In order to bound $P_0^{(n)}\Pi(\exp(aL(P_\theta^{(n)}, P_0^{(n)}))|X^{(n)})$, we need the following lemma on the posterior tail probability. Its proof is similar to the one used in \cite{ghosal2000convergence}.

\begin{lemma}\label{lem:strong convergence}
Under the conditions of Theorem \ref{thm:convergence}, we have
\[P_0^{(n)}\Pi\left(L(P_\theta^{(n)}, P_0^{(n)})>C_1n\epsilon^2|X^{(n)}\right)\leq\exp(-Cn\epsilon^2)+\exp(-\lambda n\epsilon^2)+2\exp(-n\epsilon^2),\]
for all $\epsilon\geq \epsilon_n$, where $\lambda = \rho-1$ for $\rho$ in (\ref{eq:C3}).
\end{lemma}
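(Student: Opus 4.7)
The plan is to follow the classical ``prior mass and testing'' scheme of \cite{ghosal2000convergence}, decomposing the posterior probability of $\{L \ge C_1 n\epsilon^2\}$ into three pieces controlled by the test $\phi_n$, the sieve complement $\Theta_n(\epsilon)^c$, and the remainder inside $\Theta_n(\epsilon)$. Writing $R_\theta=dP_\theta^{(n)}/dP_0^{(n)}$, the bound I would start from is
\begin{equation*}
\Pi\pth{L\ge C_1n\epsilon^2\,|\,X^{(n)}}\;\le\;\phi_n+\frac{N_1+N_2}{D},
\end{equation*}
where $N_1=\int_{\Theta_n(\epsilon)\cap\{L\ge C_1n\epsilon^2\}}(1-\phi_n)R_\theta d\Pi$, $N_2=\int_{\Theta_n(\epsilon)^c}R_\theta d\Pi$, and $D=\int R_\theta d\Pi$. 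By Fubini and (\ref{eq:C1}), $P_0^{(n)}\phi_n\le e^{-Cn\epsilon^2}$ and $P_0^{(n)}N_1\le e^{-Cn\epsilon^2}$. By (\ref{eq:C2}), $P_0^{(n)}N_2=\Pi(\Theta_n(\epsilon)^c)\le e^{-Cn\epsilon^2}$.

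The crux is the lower bound for $D$, which is where the R\'enyi strengthening in (\ref{eq:C3}) enters. Let $B=\{\theta:D_\rho(P_0^{(n)}\|P_\theta^{(n)})\le C_3n\epsilon_n^2\}$, so $\Pi(B)\ge e^{-C_2n\epsilon_n^2}$. Setting $\lambda=\rho-1>0$, the defining identity of the R\'enyi divergence gives, for any $\theta\in B$,
\begin{equation*}
P_0^{(n)}\qth{R_\theta^{-\lambda}}=\int\pth{\frac{dP_0^{(n)}}{dP_\theta^{(n)}}}^{\rho-1}dP_0^{(n)}=\exp\pth{\lambda D_\rho(P_0^{(n)}\|P_\theta^{(n)})}\le e^{\lambda C_3 n\epsilon_n^2}.
\end{equation*}
I would then invoke Markov's inequality at the threshold $\Pi(B)e^{\lambda C_3n\epsilon_n^2+\lambda n\epsilon^2}$ to conclude that, with $P_0^{(n)}$-probability at least $1-e^{-\lambda n\epsilon^2}$, the quantity $\int_B R_\theta^{-\lambda}d\Pi/\Pi(B)$ is at most $e^{\lambda C_3n\epsilon_n^2+\lambda n\epsilon^2}$. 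Applying Jensen's inequality to the convex function $x\mapsto x^{-\lambda}$ on $B$ (normalized by $\Pi(B)$) inverts this bound into $D\ge\int_B R_\theta d\Pi\ge\Pi(B)e^{-C_3n\epsilon_n^2-n\epsilon^2}\ge e^{-(C_2+C_3+1)n\epsilon^2}$, using $\epsilon_n\le\epsilon$.

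Combining everything, on the good denominator event,
\begin{equation*}
\Pi\pth{L\ge C_1n\epsilon^2\,|\,X^{(n)}}\;\le\;\phi_n+e^{(C_2+C_3+1)n\epsilon^2}(N_1+N_2),
\end{equation*}
so taking $P_0^{(n)}$-expectation, adding the failure probability $e^{-\lambda n\epsilon^2}$ of the denominator event, and using $C>C_2+C_3+2$ to absorb the inflation factor into the exponent, yields the three stated terms $e^{-Cn\epsilon^2}+e^{-\lambda n\epsilon^2}+2e^{-n\epsilon^2}$. The main obstacle, and the one novel piece compared with \cite{ghosal2000convergence}, is the denominator control: the standard KL-based argument (bounding $\log(dP_0/dP_\theta)$ via its mean and variance) is replaced by the single Markov-plus-Jensen step above, which is precisely where the strengthening from KL to $D_\rho$ with $\rho>1$ pays off, and also where the $\epsilon$ versus $\epsilon_n$ trade-off in the Markov threshold must be chosen carefully to obtain $e^{-\lambda n\epsilon^2}$ rather than $e^{-\lambda n\epsilon_n^2}$.
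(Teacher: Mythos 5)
Your proposal is correct and follows essentially the same argument as the paper's own proof: the same three-way decomposition into test, sieve complement, and numerator inside the sieve over the denominator, with the denominator controlled by applying Markov's inequality together with Jensen's inequality to $\int_B (dP_\theta^{(n)}/dP_0^{(n)})^{-\lambda}\,d\tilde\Pi$, where $\tilde\Pi$ is the prior restricted and renormalized to the R\'enyi neighborhood $B$. The only cosmetic difference is the order of the two steps (the paper applies Jensen before Markov, you apply Markov before Jensen), which yields the identical event and the identical failure probability $e^{-\lambda n\epsilon^2}$.
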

\begin{proof}
We first define the sets
\[U_n = \left\{\theta: L(P_\theta^{(n)}, P_0^{(n)})>C_1n\epsilon^2\right\},\qquad K_n = \left\{\theta:D_{1+\lambda}(P_0^{(n)}\|P_\theta^{(n)})\leq C_3n\epsilon_n^2\right\}.\]
We also define the event
\[A_n = \left\{X^{(n)}: \int\frac{dP_{\theta}^{(n)}}{dP_0^{(n)}}(X^{(n)})d\wt\Pi(\theta)\leq\exp(-(C_3+1)n\epsilon^2)\right\},\]
where the probability measure $\wt{\Pi}$ is defined as $\wt\Pi(B) = \frac{\Pi(B\cap K_n)}{\Pi(K_n)}$. Let $\Theta_n(\epsilon)$ and $\phi_n$ be the set and the testing function in (\ref{eq:C1}). Then, we bound $P_0^{(n)}\Pi(U_n|X^{(n)})$ by
\begin{eqnarray*}
&&P_0^{(n)}\Pi(U_n|X^{(n)})\\
&\leq&P_0^{(n)}\phi_n + P_0^{(n)}(A_n) +P_0^{(n)}(1-\phi_n)\Pi(U_n|X^{(n)})\mathbf{1}_{A_n^c}\\
& = &P_0^{(n)}\phi_n + P_0^{(n)}(A_n) +
P_0^{(n)}\frac{\int_{U_n}\frac{dP_{\theta}^{(n)}}{dP_0^{(n)}}(X^{(n)})d\Pi(\theta)}{\int \frac{dP_{\theta}^{(n)}}{dP_0^{(n)}}(X^{(n)})d\Pi(\theta)}(1-\phi_n)\mathbf{1}_{A_n^c}.
\end{eqnarray*}
We will give bounds for the three terms above respectively.
By (\ref{eq:C1}),
\begin{equation}\label{part1}
P_0^{(n)}\phi_n\leq \exp(-Cn\epsilon^2).
\end{equation}
Using the definitions of $A_n$, we have
\begin{eqnarray}\label{part2}
P_0^{(n)}(A_n)& = & P_0^{(n)}\left(\left(\int\frac{dP_{\theta}^{(n)}}{dP_0^{(n)}}(X^{(n)})d\wt\Pi(\theta)\right)^{-\lambda}>\exp(\lambda(C_3+1)n\epsilon^2)\right)\nonumber\\
&\leq&\exp(-\lambda (C_3+1)n\epsilon^2)P_0^{(n)}\left(\int\frac{dP_{\theta}^{(n)}}{dP_0^{(n)}}(X^{(n)})d\wt\Pi(\theta)\right)^{-\lambda }
\nonumber\\
&\leq&\exp(-\lambda (C_3+1)n\epsilon^2)\int\left(\int \frac{(dP_0^{(n)})^{1+\lambda}}{(dP_\theta^{(n)})^{\lambda }}\right)d\wt\Pi(\theta)\nonumber\\
& = &\exp(-\lambda (C_3+1)n\epsilon^2)\int\exp(\lambda  D_{1+\lambda}(P_0^{(n)}\|P_\theta^{(n)}))d\wt\Pi(\theta)\nonumber\\
& \leq & \exp(-\lambda (C_3+1)n\epsilon^2+\lambda  C_3n\epsilon_n^2)\nonumber\\
& \leq &\exp(-\lambda n\epsilon^2).
\end{eqnarray}
Now we analyze the third term.
On the event $A_n^c$, we have
\[\int\frac{dP_{\theta}^{(n)}}{dP_0^{(n)}}(X^{(n)})d\Pi(\theta)\geq\Pi(K_n)\int\frac{dP_{\theta}^{(n)}}{dP_0^{(n)}}(X^{(n)})d\wt\Pi(\theta)\geq\exp(-(C_2+C_3+1)n\epsilon^2),\]
where the last inequality is by (\ref{eq:C3}).
Then,
it follows that
\begin{eqnarray*}
&&P_0^{(n)}\frac{\int_{U_n}\frac{dP_{\theta}^{(n)}}{dP_0^{(n)}}(X^{(n)})d\Pi(\theta)}{\int \frac{dP_{\theta}^{(n)}}{dP_0^{(n)}}(X^{(n)})d\Pi(\theta)}(1-\phi_n)\mathbf{1}_{A_n^c}\\
&\leq&\exp((C_3+C_2+1)n\epsilon^2)P_0^{(n)}\int_{U_n}\frac{dP_{\theta}^{(n)}}{dP_0^{(n)}}(X^{(n)})(1-\phi_n)d\Pi(\theta)\\
& \leq &\exp((C_3+C_2+1)n\epsilon^2)\left[\int_{U_n\cap \Theta_n(\epsilon)}P_\theta^{(n)}(1-\phi_n)d\Pi(\theta)+\Pi(\Theta_n(\epsilon)^c)\right]\\
&\leq&\exp((C_3+C_2+1)n\epsilon^2)(\exp(-Cn\epsilon^2)+\exp(-Cn\epsilon^2)),
\end{eqnarray*}
where the last inequality is by (\ref{eq:C1}) and (\ref{eq:C2}).
Since $C> C_3+C_2+2$, we obtain the bound
\begin{equation}\label{part3}
P_0^{(n)}\frac{\int_{U_n}\frac{dP_{\theta}^{(n)}}{dP_0^{(n)}}(X_i)d\Pi(\theta)}{\int \frac{dP_{\theta}^{(n)}}{dP_0^{(n)}}(X_i)d\Pi(\theta)}(1-\phi_n)\mathbf{1}_{A_n^c}\leq 2\exp(-n\epsilon^2).
\end{equation}
Combining the bounds (\ref{part1}), (\ref{part2}) and (\ref{part3}), we have
\[P_0^{(n)}\Pi(U_n|X^{(n)})\leq\exp(-Cn\epsilon^2)+\exp(-\lambda n\epsilon^2)+2\exp(-n\epsilon^2).\]
 \end{proof}

Next, we derive a moment generating function bound for a sub-exponential random variable.
\begin{lemma}\label{lem:sub-exp}
Suppose the random variable $X$ satisfies
\[\mathbb{P}(X\geq t)\leq c_1\exp(-c_2t),\]
for all $t\geq t_0>0$.
Then, for any $0<a\leq \frac{1}{2}c_2$,
\[\mathbb{E}\exp(aX)\leq \exp(at_0)+c_1.\]
\end{lemma}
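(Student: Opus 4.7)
The plan is to reduce $\mathbb{E}\exp(aX)$ to a tail integral, split it at $t_0$, and use the sub-exponential bound on the tail piece while trivially bounding the near piece.

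First I would invoke the layer-cake identity together with the substitution $u=\exp(at)$, which for $a>0$ gives
\begin{equation*}
\mathbb{E}\exp(aX) \;=\; \int_0^\infty \mathbb{P}\!\left(\exp(aX)>u\right)du \;=\; a\int_{-\infty}^\infty \mathbb{P}(X>t)\exp(at)\,dt.
\end{equation*}
(The Jacobian $du = a\exp(at)\,dt$ brings out the factor of $a$, and $\exp(aX)>\exp(at)$ is equivalent to $X>t$ since $a>0$.) Next I split the integral at $t_0$ and handle each piece separately.

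For $t\le t_0$, I use the trivial bound $\mathbb{P}(X>t)\le 1$ to get
\begin{equation*}
a\int_{-\infty}^{t_0}\mathbb{P}(X>t)\exp(at)\,dt \;\le\; a\int_{-\infty}^{t_0}\exp(at)\,dt \;=\; \exp(at_0).
\end{equation*}
For $t>t_0$, the hypothesis $\mathbb{P}(X\ge t)\le c_1\exp(-c_2 t)$ applies. Since $a\le c_2/2<c_2$, the exponential $\exp((a-c_2)t)$ is integrable, and
\begin{equation*}
a\int_{t_0}^\infty \mathbb{P}(X>t)\exp(at)\,dt \;\le\; ac_1\int_{t_0}^\infty \exp((a-c_2)t)\,dt \;=\; \frac{ac_1}{c_2-a}\exp((a-c_2)t_0).
\end{equation*}
The restriction $a\le c_2/2$ forces $a/(c_2-a)\le 1$, and $\exp((a-c_2)t_0)\le 1$ because $a<c_2$ and $t_0>0$. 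Therefore this second piece is bounded above by $c_1$, and adding the two contributions yields the desired inequality $\mathbb{E}\exp(aX)\le\exp(at_0)+c_1$.

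There is no real obstacle here; the only subtle point is making sure the layer-cake substitution is carried out correctly at $t\to-\infty$ (where $\exp(at)\to 0$ keeps the integral finite despite $\mathbb{P}(X>t)\to 1$) and verifying that the constraint $a\le c_2/2$ is exactly what is needed to make the factor $a/(c_2-a)$ at most $1$. A minor sanity check is that if instead one used $a$ strictly less than $c_2$ with no further restriction, the same argument gives the weaker bound $\exp(at_0)+\frac{ac_1}{c_2-a}$, which recovers the stated inequality precisely when $a\le c_2/2$.
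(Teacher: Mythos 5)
Your proof is correct and is essentially the same argument as the paper's: both are tail-integral (layer-cake) computations split at the threshold where the sub-exponential bound becomes valid, with the near piece contributing $\exp(at_0)$ and the far piece bounded by $\frac{ac_1}{c_2-a}\exp((a-c_2)t_0)\leq c_1$ using $a\leq c_2/2$. The only difference is cosmetic — you integrate in the $t$ variable after substituting $u=\exp(at)$, while the paper integrates directly in $y=\exp(at)$ with the power-law integrand $y^{-c_2/a}$.
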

\begin{proof}
Set $Y = \exp(aX)$ for some $0<a\leq\frac{1}{2}c_2$. Then, for any $M_0>0$.
\begin{eqnarray*}
&&\mathbb{E} Y\leq M_0+\int_{M_0}^\infty \mathbb{P}(Y\geq y)dy \\
&=& M_0 +\int_{M_0}^\infty \mathbb{P}\left(X\geq\frac{1}{a}\log y\right)dy\leq M_0+c_1\int_{M_0}^{\infty}y^{-c_2/a}dy.
\end{eqnarray*}
Choose $M_0 = \exp(at_0)$, and then since $a\leq\frac{1}{2}c_2$, we have
\[\mathbb{E} Y\leq\exp(at_0)+\frac{c_1a}{c_2-a}\exp((a-c_2)t_0)\leq \exp(at_0)+c_1\exp(-at_0)\leq\exp(at_0)+c_1.\]
\end{proof}

Now we are ready to prove Theorem \ref{thm:convergence}.

\begin{proof}[Proof of Theorem \ref{thm:convergence}]
By Lemma \ref{lem:strong convergence}, we have
$$P_0^{(n)}\Pi\left(L(P_{\theta}^{(n)},P_0^{(n)})>t|X^{(n)}\right)\leq c_1\exp(-c_2t),$$
for all $t\geq t_0$. Here, $c_1=4$, $c_2=\min\left\{\lambda,1\right\}/C_1$ as $C>C_1+C_2+2>1$ and $t_0=C_1n\epsilon_n^2$. Then, by Lemma \ref{lem:sub-exp}, we have
$$P_0^{(n)}\Pi\left(\exp\left(aL(P_{\theta}^{(n)},P_0^{(n)})\right)|X^{(n)}\right)\leq \exp\left(aC_1n\epsilon_n^2\right)+4,$$
for all $a\leq \min\left\{\lambda,1\right\}/(2C_1)$. Taking $a=\min\left\{\lambda,1\right\}/(2C_1)$ and using Lemma \ref{lem:general1}, we get
\begin{eqnarray*}
P_0^{(n)}\wh{Q}L(P_{\theta}^{(n)},P_0^{(n)})&\leq &\frac{n\gamma_n^2+\log(4+e^{aC_1n\epsilon_n^2})}{a}\leq \frac{n\gamma_n^2}{a}+C_1n\epsilon_n^2+\frac{4e^{-aC_1n\epsilon_n^2}}{a}\\
&\leq& Mn(\gamma_n^2+\epsilon_n^2),
\end{eqnarray*}
with some $M>0$ that only depends on $C,C_1,\lambda$.
\end{proof}

\subsection{Proofs of Theorem \ref{thm:convergence2}, Theorem \ref{thm:mean field} and Theorem \ref{thm:general-MF}}

\begin{proof}[Proof of Theorem \ref{thm:convergence2}]
For any $Q\in\mathcal{S}\cap\mathcal{E}$, we have $\supp(Q)\subset\mathcal{C}$, and thus $QD(P_0^{(n)}\|P_{\theta}^{(n)})\leq C_2n\epsilon_n^2$. By (\ref{eq:C4*}), we have $D(Q\|\Pi)\leq C_1n\epsilon_n^2$. Therefore, $R(Q)\leq (C_1+C_2)n\epsilon_n^2$, and the proof is complete.
\end{proof}

\begin{proof}[Proof of Theorem \ref{thm:mean field}]
It is sufficient to find a $Q\in\mathcal{S}_{\rm MF}$ and bound
$$R(Q)=\frac{1}{n}\left(D(Q\|\Pi)+QD(P_0^{(n)}\|P_{\theta}^{(n)})\right).$$
We choose $Q$ to be the product measure $dQ(\theta)=\prod_{j=1}^mdQ_j(\theta_j)$, with
$$Q_j(B_j)=\frac{\wt{Q}_j(B_j\cap \wt{\Theta}_j)}{\wt{Q}_j(\wt{\Theta}_j)}.$$
Then, it is easy to see that $Q\in\mathcal{S}_{\rm MC}$ and $\supp(Q)\subset \otimes_{j=1}^m\wt{\Theta}_j$. By (\ref{eq:mfc1}), we have
$$QD(P_0^{(n)}\|P_{\theta}^{(n)})\leq C_1n\epsilon_n^2.$$
Moreover, we can write $D(Q\|\Pi)$ as below
\[D(Q\|\Pi) = Q\log\frac{dQ}{d\wt{Q}}+Q\log\frac{d\wt{Q}}{d\Pi},\]
where 
$$Q\log\frac{dQ}{d\wt{Q}}=-\sum_{j=1}^m\log\wt{Q}_j(\wt{\Theta}_j)\leq C_3n\epsilon_n^2,$$
by (\ref{eq:mfc2}), and
$$Q\log\frac{d\wt{Q}}{d\Pi}\leq C_2n\epsilon_n^2,$$
by (\ref{eq:mfc1}). Hence, we obtain the desired bound.
\end{proof}

To show Theorem \ref{thm:general-MF}, we need a model selection version of Lemma \ref{lem:general1}:

\begin{lemma}\label{lem:general1_model_selection}
For $\wh Q^{(\wh k)}$ defined as the solution of (\ref{eq:ELBO-select-object}),
\begin{eqnarray*} 
&&P_0^{(n)}\left[\wh Q^{(\wh k)}L\left(P_{\wh k,\theta^{(\wh k)}}^{(n)}, P_0^{(n)}\right)\right]\\
&\leq& \inf_{a>0}\frac{1}{a}\left[\min_{k\in\mathcal{K}}\min_{Q^{(k)}\in\mathcal{S}_{\rm MF}^{(k)}}\left\{D\left(Q^{(k)}\|\Pi^{(k)}\right)+Q^{(k)}D\left(P_0^{(n)}\|P_{k,\theta^{(k)}}^{(n)}\right)-\log\pi(k)\right\}\right.\\
&&\left.+P_0^{(n)}\log \Pi\left(\exp\left(aL(P_{k,\theta^{(k)}}^{(n)}, P_0^{(n)})\right)\Big|X^{(n)}\right)\right],
\end{eqnarray*}
where $\Pi$ is the prior distribution on $P_{k,\theta^{(k)}}$ induced by the sampling process
of $(k,\theta^{(k)})$.
\end{lemma}

\begin{proof}
We use $p_0^{(n)}$, $p_{k,\theta^{(k)}}^{(n)}$ to denote the densities of $P_0^{(n)}$, $P_{k,\theta^{(k)}}^{(n)}$. A lower bound can be directly derived from the right hand side minus the left hand side. For any $a>0$, any $k\in\mathcal{K}$, and any $Q^{(k)}\in\mathcal{S}_{\rm MF}^{(k)}$, we have
\begin{eqnarray*}
&&D\left(Q^{(k)}\|\Pi^{(k)}\right)+Q^{(k)}D\left(P_0^{(n)}\|P_{k,\theta^{(k)}}^{(n)}\right)-\log\pi(k)\\
&&-aP_0^{(n)}\left[\wh Q^{(\wh k)}L\left(P_{\wh k,\theta^{(\wh k)}}^{(n)}, P_0^{(n)}\right)\right]\\
&=& P_0^{(n)}\left(-F(Q^{(k)},k)+\log p_0^{(n)}(X^{(n)})\right)-aP_0^{(n)}\left[\wh Q^{(\wh k)}L\left(P_{\wh k,\theta^{(\wh k)}}^{(n)}, P_0^{(n)}\right)\right]\\
&\geq& P_0^{(n)}\left(-F(\wh Q^{(\wh k)},\wh k)+\log p_0^{(n)}(X^{(n)})\right)-aP_0^{(n)}\left[\wh Q^{(\wh k)}L\left(P_{\wh k,\theta^{(\wh k)}}^{(n)}, P_0^{(n)}\right)\right]\\
&=&P_0^{(n)}D\left(\wh Q^{(\wh k)}\|\Pi^{(\wh k)}\right)+P_0^{(n)}\wh Q^{(\wh k)}\log\frac{p_0^{(n)}(X^{(n)})}{p_{\wh k,\theta^{(\wh k)}}^{(n)}(X^{(n)})}-P_0^{(n)}\log\pi(\wh k)\\
&&-aP_0^{(n)}\left[\wh Q^{(\wh k)}L\left(P_{\wh k,\theta^{(\wh k)}}^{(n)}, P_0^{(n)}\right)\right]\\
&=&P_0^{(n)}\left[\wh Q^{(\wh k)}\log\frac{d\wh Q^{(\wh k)}(\theta^{(\wh k)})p_0^{(n)}(X^{(n)})}
{\pi(\wh k)d\Pi^{(\wh k)}(\theta^{(\wh k)})p_{\wh k,\theta^{(\wh k)}}^{(n)}(X^{(n)})\exp\left(aL\left(P_{\wh k,\theta^{(\wh k)}}^{(n)}, P_0^{(n)}\right)\right)}\right]\\
&=&D\left(P_0^{(n)}\|P_\Pi^{(n)}\right)\\
&&+P_0^{(n)}\left[\wh Q^{(\wh k)}\log\frac{d\wh Q^{(\wh k)}(\theta^{(\wh k)})p_\Pi^{(n)}(X^{(n)})}
{\pi(\wh k)d\Pi^{(\wh k)}(\theta^{(\wh k)})p_{\wh k,\theta^{(\wh k)}}^{(n)}(X^{(n)})\exp\left(aL\left(P_{\wh k,\theta^{(\wh k)}}^{(n)}, P_0^{(n)}\right)\right)}\right]\\
&=& D\left(P_0^{(n)}\|P_\Pi^{(n)}\right) + P_0^{(n)}D\left(\wh Q^{(\wh k)}\|\wt\Pi^{(\wh k)}\right)\\
&&-P_0^{(n)}\log\frac{\int\pi(\wh k)p_{\wh k,\theta^{(\wh k)}}^{(n)}(X^{(n)})\exp\left(aL\left(P_{\wh k,\theta^{(\wh k)}}^{(n)}, P_0^{(n)}\right)\right)d\Pi^{(\wh k)}(\theta^{(\wh k)})}{p_\Pi^{(n)}(X^{(n)})}\\
&\geq&-P_0^{(n)}\log\frac{\sum_{k\in\mathcal{K}}\int\pi( k)p_{ k,\theta^{(k)}}^{(n)}(X^{(n)})\exp\left(aL\left(P_{ k,\theta^{( k)}}^{(n)},P_0^{(n)}\right)\right)d\Pi^{(k)}(\theta^{(k)})}{p_\Pi^{(n)}(X^{(n)})}\\
&=&-P_0^{(n)}\log\Pi\left(\exp\left(aL\left(P_{k,\theta^{(k)}}^{(n)},P_0^{(n)}\right)\right)\Big|X^{(n)}\right),
\end{eqnarray*}
where $P_\Pi^{(n)}$ is the probability measure with the density $p_{\Pi}^{(n)}$ with
$$
p_{\Pi}^{(n)}(X^{(n)}) = \sum_{k\in\mathcal{K}}\pi(k)\int p_{k,\theta^{(k)}}^{(n)}(X^{(n)})d\Pi^{(k)}(\theta^{(k)})=\int p_{k,\theta^{(k)}}^{(n)}d\Pi\left(P_{k,\theta^{(k)}}^{(n)}\right),
$$
and
\[d\wt\Pi^{(k)}(\theta^{(k)}) = \frac{d\Pi^{(k)}(\theta^{(k)})p_{k,\theta^{(k)}}^{(n)}(X^{(n)})\exp\left(aL\left(P_{k,\theta^{(k)}}^{(n)}, P_0^{(n)}\right)\right)}{\int p_{k,\theta^{(k)}}^{(n)}(X^{(n)})\exp\left(aL\left(P_{k,\theta^{(k)}}^{(n)}, P_0^{(n)}\right)\right)d\Pi^{(k)}(\theta^{(k)})}.\]
The proof is complete.
\end{proof}

\begin{proof}[Proof of Theorem \ref{thm:general-MF}]
By Lemma \ref{lem:general1_model_selection}, we have
\begin{eqnarray*}
&&P_0^{(n)}\left[\wh Q^{(\wh k)}L\left(P_{\wh k,\theta^{(\wh k)}}^{(n)},P_0^{(n)}\right)\right]\\
&\leq& \inf_{a>0}\frac{1}{a}\left[\min_{k\in\mathcal{K}}\min_{Q^{(k)}\in\mathcal{S}_{\rm MF}^{(k)}}\left\{D\left(Q^{(k)}\|\Pi^{(k)}\right)+Q^{(k)}D\left(P_0^{(n)}\|P_{k,\theta^{(k)}}^{(n)}\right)-\log\pi(k)\right\}\right.\\
&&\left.+P_0^{(n)}\log \Pi\left(\exp\left(aL\left(P_{k,\theta^{(k)}}^{(n)}, P_0^{(n)}\right)\right)\Big|X^{(n)}\right)\right].
\end{eqnarray*}
Now we analyze each term on the right hand side. 
By Jensen's Inequality together with Lemma \ref{lem:strong convergence} and Lemma \ref{lem:sub-exp}, we have
\begin{eqnarray*}
&&P_0^{(n)}\log \Pi\left(\exp\left(aL\left(P_{k,\theta^{(k)}}^{(n)},P_0^{(n)}\right)\right)\Big|X^{(n)}\right)\\
&\leq&\log P_0^{(n)}\Pi\left(\exp\left(aL\left(P_{k,\theta^{(k)}}^{(n)},P_0^{(n)}\right)\right)\Big|X^{(n)}\right)\lesssim n\epsilon_n^2,
\end{eqnarray*}
with some small constant $a>0$.
This is because the conditions (\ref{eq:C1}) and (\ref{eq:C2}) with respect to prior $\Pi$ hold by assumption, and (\ref{eq:C3}) is implied by (\ref{eq:C3*}) with the argument
\begin{eqnarray*}
&&\Pi\left(\left\{P_{k,\theta^{(k)}}^{(n)}: D_{\rho}\Big(P_0^{(n)}\|P_{k,\theta^{(k)}}^{(n)}\Big)\leq C_3n\epsilon_n^2\right\}\right) \\
&\geq& \Pi\left(\left\{P_{k,\theta^{(k)}}: k=k_0, \theta^{(k_0)}\in{\Theta}^{(k_0)}\right\}\right)\\ 
&\geq& \pi(k_0)\Pi^{(k_0)}(\Theta^{(k_0)})\geq\exp\left(-C_2n\epsilon_n^2\right).
\end{eqnarray*}
For the remaining terms, we choose $k = k_0$ and $dQ^{(k_0)} = \frac{d\Pi^{(k_0)}\1_{\Theta^{(k_0)}}}{\Pi^{(k_0)}(\Theta^{(k_0)})}$. According to prior structure, $Q^{(k_0)}\in\mathcal{S}_{\rm MF}^{(k_0)}$, and
\begin{eqnarray*}
&&Q^{(k_0)}D\left(P_0^{(n)}\|P_{k_0,\theta^{(k_0)}}^{(n)}\right)\leq\max_{\theta^{(k)}\in\Theta^{(k_0)}}D\left(P_0^{(n)}\|P_{k_0,\theta^{(k_0)}}^{(n)}\right)\\
&\leq&\max_{\theta^{(k)}\in\Theta^{(k_0)}}D_\rho\left(P_0^{(n)}\|P_{k_0,\theta^{(k_0)}}^{(n)}\right)\lesssim n\epsilon_n^2.
\end{eqnarray*}
We also have
\[D\left(Q^{(k_0)}\|\Pi^{(k_0)}\right)-\log\pi(k_0) = -\sum_{j=1}^{m_{k_0}}\Pi_j^{(k_0)}(\Theta_j^{(k_0)})-\log\pi(k_0)\lesssim n\epsilon_n^2.\]
Hence, we obtain the desired result.
\end{proof}

\subsection{Proofs of Theorem \ref{thm:VB of gauss}, Proposition \ref{prop:lower-GSM}, Theorem \ref{thm:gauss1} and Theorem \ref{thm:gauss2}}

\begin{proof}[Proof of Theorem \ref{thm:VB of gauss}]
Theorem \ref{thm:VB of gauss} can be regarded as a simple application of Corollary \ref{cor:VB-sieve-factorize} with $\Theta_{j1} = \R\backslash\{0\}$, $\Theta_{j2} = \{0\}$ and $p(X_j^{(n)}|\theta_j)\propto\exp\left(-\frac{n}{2}(X_j-\theta_j)^2\right)$. The proof of Corollary \ref{cor:VB-sieve-factorize} will be given in Section \ref{sec:pf-EB}.
\end{proof}

To show Proposition \ref{prop:lower-GSM}, the following lemma is needed.

\begin{lemma}\label{lem:tilde k}
For the prior distribution $\Pi$ defined in (\ref{eq:refer-later-sieve}), we assume that $\max_j\|f_j\|_{\infty}\leq a$ and $\pi(k)$ is nonincreasing over $k$. Then, we have
$$P_{\theta^*}^{(n)}\widetilde{k}\lesssim \left(\frac{n}{\log n}\right)^{\frac{1}{2\alpha+1}},$$
for any $\theta^*\in\Theta_{\alpha}(B)$, where $\mathbb{P}_{\theta} = \otimes_{j=1}^\infty N(\theta_j,n^{-1/2})$.
\end{lemma}

\begin{proof}
We use the notation
$$W_j=\int f_j(\theta_j)\exp\left(-\frac{n(\theta_j-Y_j)^2}{2}\right)d\theta_j.$$
By the condition $\|f_j\|_{\infty}\leq a$, we have $W_j\leq a\sqrt{\frac{2\pi}{n}}\leq 1$. Define the objective function
$$L(k)=\sum_{j<k}\log\frac{1}{W_j}+\sum_{j>k}\frac{nY_j^2}{2}-\log\left(\pi(k-1)\exp\left(-\frac{nY_k^2}{2}\right)+\pi(k)Z_k\right).$$
It is easy to check that
$$\widetilde{k}=\argmax_k\left(\pi(k-1|Y)+\pi(k|Y)\right)=\argmin_kL(k).$$
To give a bound for $\widetilde{k}$, we first study the difference $L(k_1)-L(k_2)$ for any $k_1<k_2$. We use the inequalities
\[\log\left(\frac{\pi(k-1)\exp(-\frac{n}{2}Y_k^2)+\pi(k)W_k}{\pi(k-1)+\pi(k)}\right)\leq\max\left\{-\frac{n}{2}Y_k^2, \log W_k\right\}\leq 0,\]
and
\[\log\left(\frac{\pi(k-1)\exp(-\frac{n}{2}Y_k^2)+\pi(k)W_k}{\pi(k-1)+\pi(k)}\right)\geq\min\left\{-\frac{n}{2}Y_k^2, \log W_k\right\}\geq -\frac{n}{2}Y_k^2+\log W_k.\]
Then, we have
\begin{eqnarray*}
L(k_1) - L(k_2) &\leq& \sum_{j=k_1}^{k_2}\frac{nY_j^2}{2} + \sum_{j=k_1+1}^{k_2-1}\log W_j  + \log\left(\frac{\pi(k_2-1)+\pi(k_2)}{\pi(k_1-1)+\pi(k_1)}\right) \\
&\leq& \sum_{j=k_1}^{k_2}\frac{nY_j^2}{2} - (k_2-k_1-1)\left(\frac{1}{2}\log n-\log(a\sqrt{2\pi})\right) \\
&\leq& n\sum_{j=k_1}^{k_2}\theta_j^{*2} + \sum_{j=k_1}^{k_2}Z_j^2 - (k_2-k_1-1)\left(\frac{1}{2}\log n-\log(a\sqrt{2\pi})\right) \\
&\leq& nB^2k_1^{-2\alpha} + \sum_{j=k_1}^{k_2}Z_j^2 - (k_2-k_1-1)\left(\frac{1}{2}\log n-\log(a\sqrt{2\pi})\right),
\end{eqnarray*}
where $Z_j\sim N(0,1)$.
Now we bound $P_{\theta^*}^{(n)}\widetilde{k}$ by
\begin{equation}
P_{\theta^*}^{(n)}\widetilde{k}\leq Ck_0 + \sum_{l>Ck_0}lP_{\theta^*}^{(n)}(\widetilde{k}=l),\label{eq:exp-tail}
\end{equation}
where $k_0=\ceil{\left(\frac{n}{\log n}\right)^{\frac{1}{2\alpha+1}}}$, and $C$ is some  large constant. For each $l>Ck_0$,
\begin{eqnarray*}
{P}_{\theta^*}^{(n)}(\widetilde{k}=l) &\leq& {P}_{\theta^*}^{(n)}\left(L(l)\leq L(k_0)\right) \\
&\leq& \mathbb{P}\left(nB^2k_0^{-2\alpha} + \sum_{j=k_0}^{l}Z_j^2 - (l-k_0-1)\left(\frac{1}{2}\log n-\log(a\sqrt{2\pi})\right)\geq 0\right) \\
&\leq& \mathbb{P}\left(\sum_{j=k_0}^{l}Z_j^2\geq (l-k_0-1)\left(\frac{1}{2}\log n-\log(a\sqrt{2\pi})\right)-C_1\left(\frac{n}{\log n}\right)^{\frac{2\alpha}{2\alpha+1}}\right)  \\
&\leq& \mathbb{P}\left(\sum_{j=k_0}^{l}Z_j^2\geq c(l-k_0-1)\log n\right),
\end{eqnarray*}
where the last inequality is by the fact that $C_1\left(\frac{n}{\log n}\right)^{\frac{2\alpha}{2\alpha+1}}$ is of a smaller order than $(l-k_0-1)\log n$. Finally, a standard chi-squared tail bound gives
$$P_{\theta^*}^{(n)}(\widetilde{k}=l)\lesssim \exp\left(-C'(l-k_0)\log n\right).$$
Using (\ref{eq:exp-tail}) and summing over $l$, we get $P_{\theta^*}^{(n)}\widetilde{k}\lesssim k_0$, and the proof is complete.
\end{proof}

\begin{proof}[Proof of Proposition \ref{prop:lower-GSM}]
According to Theorem \ref{thm:VB of gauss}, the variational posterior $\wh{Q}$ is a product measure, and for any coordinate after a $\widetilde{k}$, the component is $\delta_0$.
By Theorem \ref{lem:tilde k}, we know that $P_{\theta^*}^{(n)}\widetilde{k}\leq C\left(\frac{n}{\log n}\right)^{\frac{1}{2\alpha+1}}$. Use the notation $\bar{k}=C\left(\frac{n}{\log n}\right)^{\frac{1}{2\alpha+1}}$. Then, we have $P_{\theta^*}^{(n)}\left(\widetilde{k}>2\bar{k}\right)\leq 1/2$ by Markov inequality. Consider a $\theta^*$ with every entry zero except that $\theta_{\ceil{2\bar{k}}}^*=B\ceil{2\bar{k}}^{-\alpha}$. It is easy to check that $\theta^*\in\Theta_{\alpha}(B)$. For this $\theta^*$, we have
\begin{eqnarray*}
P_{\theta^*}^{(n)}\wh{Q}\|\theta-\theta^*\|^2 &\geq& P_{\theta^*}^{(n)}\wh{Q}(\theta_{\ceil{2\bar{k}}}-\theta_{\ceil{2\bar{k}}}^*)^2\1_{\{\widetilde{k}\leq 2\bar{k}\}} \\
&=& \theta_{\ceil{2\bar{k}}}^{*2}P_{\theta^*}^{(n)}\left(\widetilde{k}\leq 2\bar{k}\right) \\
&\geq& \frac{1}{2}\theta_{\ceil{2\bar{k}}}^{*2} \\
&\asymp&n^{-\frac{2\alpha}{2\alpha+1}}(\log n)^{\frac{2\alpha}{2\alpha+1}}.
\end{eqnarray*}
Thus, the proof is complete.
\end{proof}

The proofs of Theorem \ref{thm:gauss1} and Theorem \ref{thm:gauss2} will be split into the following three lemmas. Recall that we use the loss $L(P_{\theta}^{(n)}, P_{\theta^*}^{(n)})=n\|\theta-\theta^*\|^2$ for this model.
\begin{lemma}\label{lem:gauss1}
For the prior $\Pi$ that satisfies (\ref{eq:gauss1-condition1}), the conditions (\ref{eq:C1}) and (\ref{eq:C2}) hold for all $\epsilon\geq n^{-1/2}$.
\end{lemma}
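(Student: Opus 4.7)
The plan is to use a standard sieve-plus-testing argument tailored to the Gaussian sequence model. Since the prior (\ref{eq:refer-later-sieve}) is supported on vectors whose nonzero coordinates are the first $k$ ones, with $k\sim\pi$, the natural sieve is the set of vectors with bounded effective dimension. Concretely, I will take
\[
\Theta_n(\epsilon)=\bigl\{\theta:\theta_j=0\text{ for all }j>K_n\bigr\},\qquad K_n=\lceil D\,n\epsilon^2\rceil,
\]
where $D>0$ is a constant chosen at the end. Under this choice, $\Pi(\Theta_n(\epsilon)^c)=\sum_{k>K_n}\pi(k)$, so condition (\ref{eq:gauss1-condition1}) immediately gives $\Pi(\Theta_n(\epsilon)^c)\le C_1\exp(-C_2K_n)\le \exp(-Cn\epsilon^2)$ as soon as $D\ge C/C_2$ and $n\epsilon^2\ge 1$. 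This verifies (\ref{eq:C2}).

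For (\ref{eq:C1}) I will exploit the closed-form structure of Gaussian likelihood-ratio tests. Restricted to $\Theta_n(\epsilon)$, the parameter lives in the $K_n$-dimensional subspace $\R^{K_n}\times\{0\}$, so the problem reduces to finite-dimensional testing. The loss $L(P_\theta^{(n)},P_{\theta^*}^{(n)})=n\|\theta-\theta^*\|^2\ge C_1'n\epsilon^2$ is equivalent to $\|\theta-\theta^*\|\ge \sqrt{C_1'}\epsilon$, so I decompose the alternative into peeling shells
\[
S_s=\bigl\{\theta\in\Theta_n(\epsilon):2^s\sqrt{C_1'}\epsilon\le\|\theta-\theta^*\|<2^{s+1}\sqrt{C_1'}\epsilon\bigr\},\qquad s=0,1,2,\ldots
\]
Each $S_s$ sits in a $K_n$-dimensional ball of radius $R_s:=2^{s+1}\sqrt{C_1'}\epsilon$ centered at $\theta^*$, and can be covered by at most $12^{K_n}$ Euclidean balls of radius $R_s/4$ by a standard volume argument. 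For each covering center $\theta_i$, which satisfies $\|\theta_i-\theta^*\|\gtrsim R_s$, use the Neyman--Pearson test $\phi_i(Y)=\mathbf{1}\{\langle Y-(\theta^*+\theta_i)/2,\theta_i-\theta^*\rangle>0\}$, which by a one-dimensional Gaussian tail bound satisfies
\[
P_{\theta^*}^{(n)}\phi_i\;\vee\;\sup_{\|\theta-\theta_i\|\le R_s/4}P_\theta^{(n)}(1-\phi_i)\le\exp(-c\,n R_s^2)
\]
for a numerical constant $c>0$. Take $\phi_n$ to be the maximum of all these tests across all shells and covering points.

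A union bound then gives
\[
P_{\theta^*}^{(n)}\phi_n+\sup_{\theta\in\Theta_n(\epsilon),\;L\ge C_1'n\epsilon^2}P_\theta^{(n)}(1-\phi_n)\le 2\sum_{s\ge0}12^{K_n}\exp(-c\,n\,4^s C_1'\epsilon^2).
\]
The geometric sum is dominated by its first term, and substituting $K_n\le 2Dn\epsilon^2$ produces the bound $\exp\bigl((2D\log 12-cC_1')n\epsilon^2\bigr)$ up to a harmless constant. By first choosing $C_1'$ large enough that $cC_1'/2>C+2D\log 12$ (this is allowed since $C_1'$ is a free constant of our choosing), condition (\ref{eq:C1}) follows with the prescribed $C$. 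The main bookkeeping difficulty is simultaneously accommodating (\ref{eq:C2}), which forces $D\gtrsim C/C_2$, and the testing bound, which prefers $D$ small relative to $C_1'$; but since $C_1'$ can be taken arbitrarily large, all constants can be aligned consistently, and the argument goes through for every $\epsilon\ge n^{-1/2}$.
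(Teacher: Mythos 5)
Your proof takes the same route as the paper's: the sieve $\Theta_n(\epsilon)$ is the set of sequences that vanish past an effective dimension of order $n\epsilon^2$, condition (\ref{eq:C2}) is read directly off the tail bound (\ref{eq:gauss1-condition1}), and condition (\ref{eq:C1}) is handled by a covering-and-testing argument over shells. The paper delegates this last step to Lemma 5 of \cite{ghosal2007convergence} and Theorem 7.1 of \cite{ghosal2000convergence}; you unpack it from scratch in the Gaussian setting, which is a legitimate and more self-contained alternative.

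There is one small but real slip in your covering step. You cover the shell $S_s$ (inner radius $R_s/2$, outer radius $R_s$) with balls of radius $R_s/4$ centered at points $\theta_i$, and assert the two-point test $\phi_i$ satisfies $\sup_{\|\theta-\theta_i\|\le R_s/4}P_\theta^{(n)}(1-\phi_i)\le\exp(-cnR_s^2)$. The Neyman--Pearson test you wrote controls alternatives $\theta$ only out to distance $\|\theta_i-\theta^*\|/4$ from the center $\theta_i$: the relevant lower bound on the test statistic's mean under $P_\theta^{(n)}$ is $\|\theta_i-\theta^*\|^2/2-\|\theta-\theta_i\|\,\|\theta_i-\theta^*\|$, which is nonnegative but not bounded away from zero when $\|\theta_i-\theta^*\|=R_s/2$ and $\|\theta-\theta_i\|=R_s/4$. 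For such a boundary configuration, $P_\theta^{(n)}(1-\phi_i)$ is only bounded by $1/2$, not exponentially small. The fix is simply to use covering radius $R_s/8$ (or, equivalently, $\eta/4$ where $\eta$ is the inner radius of the shell), which increases $12^{K_n}$ to a larger constant raised to $K_n$ but leaves the rest of your constant bookkeeping intact. With that correction, your argument is complete and matches the paper's in substance.
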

\begin{proof}
Given any $\epsilon\geq n^{-1/2}$ and any $C>0$, we define
$$\Theta_n(\epsilon)=\left\{\theta=(\theta_j): \theta_j=0,\text{ for all }j>Cn\epsilon^2/C_2\right\}.$$
Then, by (\ref{eq:gauss1-condition1}), we have
$$\Pi(\Theta_n(\epsilon)^c)\leq \Pi(k>Cn\epsilon^2/C_2)\lesssim \exp\left(-Cn\epsilon^2\right).$$
This proves (\ref{eq:C2}). To show (\ref{eq:C1}), we consider the following testing problem,
$$H_0:\theta=\theta^*,\quad H_1:\theta\in\Theta_n(\epsilon)\text{ and }\|\theta-\theta^*\|^2\geq \wt{C}\epsilon^2.$$
Define $N(\delta,S,d)$ as the $\delta$-covering number of a set $S$ under a metric $d$. Then, according to Lemma 5 in \cite{ghosal2007convergence} and Theorem 7.1 in \cite{ghosal2000convergence}, it is sufficient to establish the bound
$$\log N\left(\epsilon/8,\{\theta\in\Theta_n(\epsilon):\|\theta-\theta^*\|\leq\epsilon\},\|\cdot\|\right)\lesssim n\epsilon^2.$$
This is obviously true given a standard volume ratio calculation in a Euclidean space of dimension $\ceil{Cn\epsilon^2/C_2}$. Then, by Theorem 7.1 in \cite{ghosal2000convergence}, there exists a testing procedure $\phi_n$ such that (\ref{eq:C1}) holds. Note that the testing error can be arbitrarily small given a sufficiently large $\wt{C}>0$.
\end{proof}

\begin{lemma}\label{lem:gauss1-2}
Assume $\theta^*\in\Theta_\alpha(B)$.
For the prior $\Pi$ that satisfies (\ref{eq:gauss1-condition2}) and (\ref{eq:gauss1-condition3}), the conditions (\ref{eq:C3}) and (\ref{eq:C4}) hold for $\epsilon_n=n^{-\frac{\alpha}{2\alpha+1}}(\log n)^{\frac{\alpha}{2\alpha+1}}$.
\end{lemma}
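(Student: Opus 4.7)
The first observation is that for the Gaussian sequence model, the R\'enyi divergence is exactly $D_\rho(P_{\theta^*}^{(n)}\|P_\theta^{(n)}) = \frac{\rho n}{2}\|\theta-\theta^*\|^2$, so both (\ref{eq:C3}) and (\ref{eq:C4}) reduce to statements about the prior mass near $\theta^*$ in the $\ell^2$ metric. The guiding heuristic is that the three relevant scales, $k_0\log k_0$, $k_0\log n$, and $n\epsilon_n^2$, are all of the same order $n^{1/(2\alpha+1)}(\log n)^{2\alpha/(2\alpha+1)}$, which is exactly what makes $k_0 = \lceil(n/\log n)^{1/(2\alpha+1)}\rceil$ the right sieve level.

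For condition (\ref{eq:C3}), I would condition on $k=k_0$ and choose product-type sets around $\theta^*$. Restrict to the event $\{k=k_0\}\cap\bigcap_{j\leq k_0}\{|\theta_j-\theta_j^*|\leq \delta\}$ with $\delta = c\epsilon_n/\sqrt{k_0}$. Since $\theta^*\in\Theta_\alpha(B)$ implies $\sum_{j>k_0}(\theta_j^*)^2\leq k_0^{-2\alpha}B^2\asymp \epsilon_n^2$, the bound $\|\theta-\theta^*\|^2\leq k_0\delta^2 + k_0^{-2\alpha}B^2 \lesssim \epsilon_n^2$ holds on this event, giving control of $D_\rho$. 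Using (\ref{eq:gauss1-condition3}) to lower bound $f_j$ by $e^{-c_0-c_1 j^{2\alpha+1}x^2}$ and the elementary bound $x^2\leq 2(\theta_j^*)^2+2\delta^2$ on the integration window, I get
\[
-\log\int_{\theta_j^*-\delta}^{\theta_j^*+\delta}f_j(x)\,dx\leq c_0 - \log(2\delta) + 2c_1j^{2\alpha+1}(\theta_j^*)^2 + 2c_1j^{2\alpha+1}\delta^2.
\]
Summing over $j\leq k_0$: the term $2c_1\sum_j j^{2\alpha+1}(\theta_j^*)^2\leq 2c_1k_0B^2$ is $O(k_0)$; the term $2c_1k_0^{2\alpha+1}\delta^2 = 2c_1k_0^{2\alpha}\epsilon_n^2$ is $O(1)$; and $-k_0\log(2\delta)\asymp k_0\log n \asymp n\epsilon_n^2$ dominates. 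Combined with $-\log\pi(k_0)\leq C_4 k_0\log k_0\asymp n\epsilon_n^2$ from (\ref{eq:gauss1-condition2}), this yields $\Pi(\|\theta-\theta^*\|^2\leq C'\epsilon_n^2)\geq \exp(-C_2 n\epsilon_n^2)$, which is (\ref{eq:C3}).

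For condition (\ref{eq:C4}), I would invoke Theorem \ref{thm:mean field}. Take $\wt Q = (\otimes_{j\leq k_0}f_j)\otimes(\otimes_{j>k_0}\delta_0)$, i.e.\ the conditional prior given $k=k_0$, which is already in $\mathcal{S}_{\rm MF}$. The Radon--Nikodym derivative $d\wt Q/d\Pi$ equals $1/\pi(k_0)$ on the ($\wt Q$-full) set $\{\theta_j\neq 0\text{ for }j\leq k_0,\theta_j=0\text{ for }j>k_0\}$, so $\log(d\wt Q/d\Pi)\leq -\log\pi(k_0)\leq C_4 k_0\log k_0\lesssim n\epsilon_n^2$. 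Choose $\wt\Theta_j=[\theta_j^*-\delta,\theta_j^*+\delta]$ for $j\leq k_0$ with $\delta = c\epsilon_n/\sqrt{k_0}$, and $\wt\Theta_j=\{0\}$ for $j>k_0$. Then $\otimes_j\wt\Theta_j$ lies in the $\epsilon_n^2$-KL-neighborhood of $P_{\theta^*}^{(n)}$ by the same $\ell^2$-decomposition as before, and $-\sum_{j\leq k_0}\log\wt Q_j(\wt\Theta_j) = -\sum_{j\leq k_0}\log\int_{\wt\Theta_j}f_j$ is bounded exactly by the computation in the previous paragraph, giving $\lesssim n\epsilon_n^2$. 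Hence Theorem \ref{thm:mean field} yields $\inf_{Q\in\mathcal{S}_{\rm MF}}R(Q)\lesssim \epsilon_n^2$, which is (\ref{eq:C4}).

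The main obstacle is quantitative book-keeping rather than strategy: one must check that the choice $\delta\asymp\epsilon_n/\sqrt{k_0}$ simultaneously makes the Gaussian-type penalty $\sum_j j^{2\alpha+1}\delta_j^2$ negligible and the entropy term $-k_0\log\delta$ absorbed into $n\epsilon_n^2$. This is where the logarithmic factor in $k_0$ and hence in $\epsilon_n$ enters; with the faster rate $k_0 = \lceil n^{1/(2\alpha+1)}\rceil$ the sum $k_0\log(1/\delta)\asymp k_0\log n$ would strictly exceed $n\epsilon_n^2$ under (\ref{eq:gauss1-condition3}), explaining why the sharper Lemma \ref{lem:gauss2-2} needs the stronger tail condition (\ref{eq:gauss2-condition3}).
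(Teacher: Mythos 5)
Your plan is correct and essentially reproduces the paper's argument: the same $k_0=\lceil (n/\log n)^{1/(2\alpha+1)}\rceil$, the same $\wt Q=(\otimes_{j\le k_0}f_j)\otimes(\otimes_{j>k_0}\delta_0)$ with $\log(d\wt Q/d\Pi)\le-\log\pi(k_0)$, a coordinatewise hyperrectangle around $\theta^*$ fed into Theorem \ref{thm:mean field} for (\ref{eq:C4}), and the same prior-mass computation (via $D_2(P_{\theta^*}^{(n)}\|P_\theta^{(n)})=n\|\theta-\theta^*\|^2$) for (\ref{eq:C3}). The only cosmetic differences are your window half-width $c\epsilon_n/\sqrt{k_0}\asymp n^{-1/2}\sqrt{\log n}$ versus the paper's $n^{-1/2}$, and bounding $-\log\int_{\wt\Theta_j}f_j$ by the pointwise lower bound from (\ref{eq:gauss1-condition3}) rather than by Jensen's inequality; both yield the same $k_0\log n\asymp n\epsilon_n^2$ budget.
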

\begin{proof}
We first show (\ref{eq:C4}). We will apply Theorem \ref{thm:mean field} by constructing a $\wt{Q}\in\mathcal{S}_{\rm MF}$ and $\otimes_j\wt{\Theta}_j$ that satisfy the conditions (\ref{eq:mfc1}) and (\ref{eq:mfc2}).
Define $\wt\Theta_j = [\theta_j^*-n^{-1/2},\theta_j^*+n^{-1/2}]$ for all $j\leq k_0$ and $\wt{\Theta}_j=\{0\}$ for all $j>k_0$, where $k_0=\left\lceil\left(\frac{n}{\log n}\right)^{\frac{1}{2\alpha+1}}\right\rceil$ is the same as defined in \ref{eq:gauss1-condition2}. We also define the measure $\wt{Q}$ by
$$d\wt{Q}(\theta)=\prod_{j=1}^{k_0}f_j(\theta_j)\prod_{j>k_0}\delta_0(\theta_j)d\theta.$$
It is easy to see that $\wt{Q}\in\mathcal{S}_{\rm MF}$.
For any $\theta\in\otimes_j\wt\Theta_j$, we have
\begin{equation}\label{eq:gauss1-part1}
D_2(P_{\theta^*}^{(n)}\| P_\theta^{(n)}) =  2D(P_{\theta^*}^{(n)}\| P_\theta^{(n)})= n\|\theta-\theta^*\|^2 \leq k_0\lesssim n\epsilon_n^2,
\end{equation}
and
\begin{equation}\label{eq:gauss1-part2}
\log\frac{d\wt Q(\theta)}{d\Pi(\theta)} \leq \log\frac{1}{\pi(k_0)}\leq -\log C_3+C_4k_0\log k_0\lesssim n\epsilon_n^2.
\end{equation}
Therefore, the condition (\ref{eq:mfc1}) holds.
To check the condition (\ref{eq:mfc2}), we use the bound
\begin{eqnarray*}
&&-\sum_{j=1}^\infty\log\wt Q_j(\wt\Theta_j) = -\sum_{j=1}^{k_0}\log\wt Q_j(\wt\Theta_j) = -\sum_{j=1}^{k_0}\log\int_{\theta_j^*-n^{-1/2}}^{\theta_j^*+n^{-1/2}}f_j(x)dx\\
&\leq&-k_0\log(2n^{-1/2})-\frac{1}{2n^{-1/2}}\sum_{j=1}^{k_0}\int_{\theta_j^*-n^{-1/2}}^{\theta_j^*+n^{-1/2}}\log f_j(x)dx,
\end{eqnarray*}
where we have used Jensen's inequality above. We are going to bound each of the integral above using (\ref{eq:gauss1-condition3}). For any $j\leq k_0$, we have
\begin{eqnarray*}
&& -\frac{1}{2n^{-1/2}}\int_{\theta_j^*-n^{-1/2}}^{\theta_j^*+n^{-1/2}}\log f_j(x)dx \leq c_0+c_1j^{2\alpha+1}(3\theta_j^{*2}+n^{-1}) \\
&\leq&c_0+3c_1k_0j^{2\alpha}\theta_j^{*2}+c_1k_0^{2\alpha+1}n^{-1}\leq c_0+c_1+3c_1k_0j^{2\alpha}\theta_j^{*2}.
\end{eqnarray*}
Hence, we get
\begin{equation}
-\sum_{j=1}^\infty\log\wt Q_j(\wt\Theta_j)\leq \frac{1}{2}k_0\log n+(c_0+c_1-\log 2)k_0+3c_1k_0\sum_{j}j^{2\alpha}\theta_j^{*2}\lesssim n\epsilon_n^2,\label{eq:gauss1-part3}
\end{equation}
which implies that (\ref{eq:mfc2}) holds. The condition (\ref{eq:C4}) is thus proved by applying Theorem \ref{thm:mean field}.

Finally, we derive the condition (\ref{eq:C3}). In view of (\ref{eq:gauss1-part1}), there is a constant $C>0$, such that
\begin{eqnarray*}
&& -\log\Pi\left(D_2(P_{\theta^*}^{(n)}\| P_\theta^{(n)})\leq Cn\epsilon_n^2\right) \\
&\leq& -\log\pi(k_0) - \log\wt{Q}\left(D_2(P_{\theta^*}^{(n)}\| P_\theta^{(n)})\leq Cn\epsilon_n^2\right) \\
&\leq& -\log\pi(k_0) -\sum_{j=1}^\infty\log\wt Q_j(\wt\Theta_j) \lesssim n\epsilon_n^2.
\end{eqnarray*}
The last inequality above is by (\ref{eq:gauss1-part2}) and (\ref{eq:gauss1-part3}). Hence, the proof is complete.
\end{proof}

\begin{lemma}\label{lem:gauss2-2}
Assume $\theta^*\in\Theta_\alpha(B)$.
For the prior $\Pi$ that satisfies (\ref{eq:gauss2-condition2}) and (\ref{eq:gauss2-condition3}), the conditions (\ref{eq:C3}) and (\ref{eq:C4}) hold for $\epsilon_n=n^{\frac{2\alpha}{2\alpha+1}}$.
\end{lemma}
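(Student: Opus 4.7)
The plan is to mirror the strategy of Lemma \ref{lem:gauss1-2}: apply Theorem \ref{thm:mean field} to derive (\ref{eq:C4}), and deduce (\ref{eq:C3}) as a byproduct of the same prior-mass lower bound. Set $k_0=\ceil{n^{1/(2\alpha+1)}}$, so that $k_0\asymp n\epsilon_n^2$. Take $\wt Q$ to be the $k=k_0$ slice of the mixture prior, i.e.\ $d\wt Q(\theta)=\prod_{j\leq k_0}f_j(\theta_j)\prod_{j>k_0}\delta_0(\theta_j)d\theta$ with $f_j(x)=\sqrt{n}g_j(\sqrt{n}x)$, and choose $\wt\Theta_j=[\theta_j^*-n^{-1/2},\theta_j^*+n^{-1/2}]$ for $j\leq k_0$ and $\wt\Theta_j=\{0\}$ otherwise. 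For any $\theta\in\otimes_j\wt\Theta_j$, $\|\theta-\theta^*\|^2\leq k_0 n^{-1}+\sum_{j>k_0}(\theta_j^*)^2\leq k_0 n^{-1}+B^2 k_0^{-2\alpha}\asymp\epsilon_n^2$ by the Sobolev constraint on the tail, which gives the required bound on $D(P_{\theta^*}^{(n)}\|P_\theta^{(n)})=\frac{n}{2}\|\theta-\theta^*\|^2$. The mixture components of $\Pi$ are mutually singular (they concentrate on coordinate subspaces of different dimensions), so $d\wt Q/d\Pi=1/\pi(k_0)$ on the support of $\wt Q$, and (\ref{eq:gauss2-condition2}) yields $-\log\pi(k_0)\leq C_4 k_0+O(1)\lesssim n\epsilon_n^2$, completing (\ref{eq:mfc1}).

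The main technical step is (\ref{eq:mfc2}). After the substitution $x=\sqrt{n}\theta_j$,
$$\wt Q_j(\wt\Theta_j)=\int_{\sqrt{n}\theta_j^*-1}^{\sqrt{n}\theta_j^*+1}g_j(x)\,dx,$$
and Jensen's inequality combined with (\ref{eq:gauss2-condition3}) gives $-\log\wt Q_j(\wt\Theta_j)\lesssim 1+n^{\beta/2}|\theta_j^*|^\beta$. The critical move is to bound $\sum_{j\leq k_0}|\theta_j^*|^\beta$. Writing $|\theta_j^*|^\beta=(j^\alpha|\theta_j^*|)^\beta\cdot j^{-\alpha\beta}$ and applying H\"older's inequality with conjugate exponents $2/\beta$ and $2/(2-\beta)$ against $\sum_j j^{2\alpha}(\theta_j^*)^2\leq B^2$ yields
$$\sum_{j\leq k_0}|\theta_j^*|^\beta\leq B^\beta\left(\sum_{j\leq k_0}j^{-2\alpha\beta/(2-\beta)}\right)^{(2-\beta)/2}.$$
The assumption $\beta<2/(2\alpha+1)$ is equivalent to $2\alpha\beta/(2-\beta)<1$, so the inner sum grows like $k_0^{1-2\alpha\beta/(2-\beta)}$, whence $\sum_{j\leq k_0}|\theta_j^*|^\beta\lesssim k_0^{1-\beta(2\alpha+1)/2}$. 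Since $k_0\asymp n^{1/(2\alpha+1)}$, the powers of $n$ line up as $n^{\beta/2}\cdot k_0^{1-\beta(2\alpha+1)/2}=k_0\asymp n\epsilon_n^2$. Summing the per-coordinate estimate then gives $-\sum_j\log\wt Q_j(\wt\Theta_j)\lesssim k_0+n^{\beta/2}\sum_{j\leq k_0}|\theta_j^*|^\beta\lesssim n\epsilon_n^2$, which establishes (\ref{eq:mfc2}) and hence (\ref{eq:C4}) via Theorem \ref{thm:mean field}.

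Finally, (\ref{eq:C3}) with $\rho=2$ follows from $D_2(P_{\theta^*}^{(n)}\|P_\theta^{(n)})=n\|\theta-\theta^*\|^2$ and the lower bound $\Pi(\|\theta-\theta^*\|^2\leq C\epsilon_n^2)\geq\pi(k_0)\prod_{j\leq k_0}\wt Q_j(\wt\Theta_j)$, whose negative logarithm is exactly $-\log\pi(k_0)-\sum_{j\leq k_0}\log\wt Q_j(\wt\Theta_j)\lesssim n\epsilon_n^2$ from the two estimates established above. I expect the H\"older computation in the second paragraph to be the principal obstacle: the exponent threshold $\beta<2/(2\alpha+1)$ in (\ref{eq:gauss2-condition3}) is precisely what makes the seemingly large factor $n^{\beta/2}$ absorb into the target rate, so the algebraic cancellation has to be tracked with some care to confirm that the rescaled prior indeed removes the extra logarithmic factor present under (\ref{eq:gauss1-condition3}).
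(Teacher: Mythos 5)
Your proposal is correct and matches the paper's own argument essentially step for step: the same choice of $k_0=\ceil{n^{1/(2\alpha+1)}}$, the same $\wt Q$ and hyperrectangle $\otimes_j\wt\Theta_j$, the same Jensen bound followed by the two H\"older inequalities exploiting $\beta<\frac{2}{2\alpha+1}$, and the same derivation of (\ref{eq:C3}) from $\pi(k_0)\prod_{j\le k_0}\wt Q_j(\wt\Theta_j)$. No gaps to report.
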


\begin{proof}
The proof is essentially the same as that of Lemma \ref{lem:gauss1-2}. We define $\wt{Q}\in\mathcal{S}_{\rm MF}$ and $\otimes_j\wt{\Theta}_j$ in the same way except that $k_0=\ceil{n^{\frac{1}{2\alpha+1}}}$. Then, by the same calculation, we have for any $\theta\in\otimes_j\wt{\Theta}_j$,
$$D_2(P_{\theta^*}^{(n)}\| P_\theta^{(n)}) =  2D(P_{\theta^*}^{(n)}\| P_\theta^{(n)})\lesssim n\epsilon_n^2,\quad\text{and}\quad\log\frac{d\wt Q(\theta)}{d\Pi(\theta)} \lesssim n\epsilon_n^2.$$
Therefore, the condition (\ref{eq:mfc1}) holds. For any $j\leq k_0$,
\begin{eqnarray*}
-\log\wt Q_j(\wt{\Theta}_j) &=& -\log\int_{\sqrt{n}\theta_j^*-1}^{\sqrt{n}\theta_j^*+1}g_j(x)dx \\
&\leq& c_0-\log 2 - \log\int_{\sqrt{n}\theta_j^*-1}^{\sqrt{n}\theta_j^*+1}\frac{1}{2}\exp(-c_1|x|^{\beta})dx \\
&\leq& c_0-\log 2 + \frac{c_1}{2}\int_{\sqrt{n}\theta_j^*-1}^{\sqrt{n}\theta_j^*+1}|x|^{\beta}dx
\end{eqnarray*}
By H\"{o}lder's inequality,
\begin{eqnarray*}
\int_{\sqrt{n}\theta_j^*-1}^{\sqrt{n}\theta_j^*+1}|x|^{\beta}dx&\leq&\left(\int_{\sqrt{n}\theta_j^*-1}^{\sqrt{n}\theta_j^*+1}x^2\right)^{\beta/2}\left(\int_{\sqrt{n}\theta_j^*-1}^{\sqrt{n}\theta_j^*+1}1\right)^{(2-\beta)/2}\\
&= &[2(n\theta_j^{*2}+1)]^{\frac{\beta}{2}}\cdot 2^{\frac{2-\beta}{2}} = 2(n\theta_j^{*2}+1)^{\beta/2}\\
&\leq&4(n\theta_j^{*2})^{\beta/2}+4.
\end{eqnarray*}
Therefore,
$$-\sum_{j=1}^{k_0}\log\wt Q_j(\wt{\Theta}_j)\leq (4+c_0-\log 2)k_0 + 4n^{\beta/2}\sum_{j=1}^{k_0}|\theta_j^*|^{\beta}.$$
Using H\"{o}lder's inequality again, we get
\[\sum_{j=1}^{k_0}|\theta_j^*|^\beta\leq\left(\sum_{j=1}^{k_0}j^{2\alpha}\theta_j^{*2}\right)^{\beta/2}\left(\sum_{j=1}^{k_0}j^{-\frac{2\alpha\beta}{2-\beta}}\right)^{1-\beta/2}.\]
Set $t = \frac{2\alpha\beta}{2-\beta}$. As $0<\beta<\frac{2}{2\alpha+1}$, we have $t\in (0,1)$. Then
\[\sum_{j=1}^{k_0}j^{-t}\leq 1+\int_1^{k_0}x^{-t}dx = \frac{k_0^{1-t}+t}{1-t}<c_2k_0^{1-t}.\]
Thus,
$$
\sum_{j=1}^{k_0}|\theta_j^*|^\beta\leq B^\beta c_2k_0^{(1-t)\frac{2-\beta}{2}} \lesssim n^{1/(2\alpha+1)-\beta/2}.
$$
This leads to the desired bound $-\sum_{j=1}^{k_0}\log\wt Q_j(\wt{\Theta}_j)\lesssim n\epsilon_n^2$ in (\ref{eq:mfc2}). The condition (\ref{eq:C4}) is thus proved by applying Theorem \ref{thm:mean field}.

The condition (\ref{eq:C3}) can be derived in the same way as in the proof of Lemma \ref{lem:gauss1-2}.
\end{proof}

\begin{proof}[Proofs of Theorem \ref{thm:gauss1} and Theorem \ref{thm:gauss2}]
The results are directly implied by Lemma \ref{lem:gauss1}, Lemma \ref{lem:gauss1-2} and Lemma \ref{lem:gauss2-2}. 
\end{proof}

\subsection{Proof of Theorem \ref{thm:density1}}

For Theorem \ref{thm:density1}, the loss function is $L(P_{\theta}^n,P_{\theta^*}^n)=nH^2(P_{\theta},P_{\theta^*})$.
We split the proof of Theorem \ref{thm:density1} into following two lemmas.

\begin{lemma}\label{lem:density1}
Assume $\theta^*\in\Theta_\alpha(B)$ for $\alpha>1/2$. For the prior $\Pi$ that satisfies (\ref{eq:density-condition1}) and (\ref{eq:density-condition3}), the conditions (\ref{eq:C1}) and (\ref{eq:C2}) hold for all $\epsilon\geq \left(\frac{\log n}{n}\right)^{\frac{\alpha}{2\alpha+1}}$.
\end{lemma}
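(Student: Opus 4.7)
The plan is to construct, for each $\epsilon\geq\epsilon_n$, a sparsity-plus-magnitude sieve
\[\Theta_n(\epsilon)=\left\{\theta:\theta_j=0\text{ for all }j>K_n(\epsilon),\ |\theta_j|\leq M_n(\epsilon)\text{ for all }j\right\},\]
verify (\ref{eq:C2}) from the two tail hypotheses on the prior, and obtain (\ref{eq:C1}) from a Hellinger covering number bound feeding into the standard test-construction in Theorem 7.1 of \cite{ghosal2000convergence}. I would choose $K_n(\epsilon)$ of order $n\epsilon^2/\log(n\epsilon^2)$, so that by (\ref{eq:density-condition1}),
$\Pi(k>K_n(\epsilon))\leq C_1\exp(-C_2K_n(\epsilon)\log K_n(\epsilon))\leq\exp(-Cn\epsilon^2)$
for an appropriately large implicit constant. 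For the magnitude truncation, (\ref{eq:density-condition3}) gives $f_j(x)\leq e^{-c_0-c_1|x|^\beta}$; integrating yields $\Pi(|\theta_j|>M)\leq c\exp(-c_1M^\beta/2)$, so taking $M_n(\epsilon)=(Cn\epsilon^2)^{1/\beta}$ and union-bounding over the at most $K_n(\epsilon)$ possibly-nonzero coordinates delivers (\ref{eq:C2}) in full.

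For (\ref{eq:C1}), the key ingredient is a Hellinger-to-sup-norm estimate for the exponential family that avoids the customary exponential factor $e^{\|g_\theta\|_\infty}$. Writing $H^2(P_\theta,P_{\theta'})=1-e^{-a}$ with $a=\tfrac{c(\theta)+c(\theta')}{2}-c\bigl(\tfrac{\theta+\theta'}{2}\bigr)$ and $g_\theta=\sum_j\theta_j\phi_j$, a second-order Taylor expansion of the convex cumulant gives $a=\tfrac{1}{16}(\theta-\theta')^\top[\nabla^2c(\bar\theta_1)+\nabla^2c(\bar\theta_2)](\theta-\theta')$ for intermediate points $\bar\theta_i$. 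Using $\nabla^2c(\xi)=\Cov_{P_\xi}(\phi)$, each quadratic form equals $\Var_{P_{\bar\theta_i}}(g_\theta-g_{\theta'})$, and the trivial pointwise bound $|g_\theta-g_{\theta'}|\leq\|g_\theta-g_{\theta'}\|_\infty$ gives the clean inequality
\[H^2(P_\theta,P_{\theta'})\leq\tfrac{1}{8}\|g_\theta-g_{\theta'}\|_\infty^2.\]
Since $\|\phi_j\|_\infty\leq\sqrt{2}$ for the Fourier basis, $\|g_\theta-g_{\theta'}\|_\infty\leq\sqrt{2}\|\theta-\theta'\|_1\leq\sqrt{2K_n(\epsilon)}\|\theta-\theta'\|_2$ on the sieve, and hence $\log N(\epsilon,\Theta_n(\epsilon),H)\leq K_n(\epsilon)\log(CM_n(\epsilon)K_n(\epsilon)/\epsilon)$ by the standard $\ell_2$ volume argument on $[-M_n(\epsilon),M_n(\epsilon)]^{K_n(\epsilon)}$. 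Whenever $\epsilon\geq\epsilon_n=(\log n/n)^{\alpha/(2\alpha+1)}$ one has $\log(M_n(\epsilon)K_n(\epsilon)/\epsilon)\asymp\log n$, so the bound is $\lesssim n\epsilon^2$ and the required test follows.

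The main obstacle is establishing the clean Hellinger bound just described. A naive estimate of the Hessian quadratic form by $\|p_{\bar\theta}\|_\infty\|\theta-\theta'\|_2^2$ introduces a factor $e^{O(\|g_{\bar\theta}\|_\infty)}$, and on the sieve one only has $\|g_\theta\|_\infty\leq\sqrt{2}K_n(\epsilon)M_n(\epsilon)$, which is polynomial in $n$ and would destroy the covering estimate. Replacing the $L^2$ variance bound by the deterministic pointwise bound $\Var(g_\theta-g_{\theta'})\leq\|g_\theta-g_{\theta'}\|_\infty^2$ removes this factor and is precisely what lets the covering and prior-mass bounds balance at rate $n\epsilon^2$. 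The assumption $\alpha>1/2$ enters only through the embedding $\Theta_\alpha(B)\subset\ell_1$, which ensures $g_{\theta^*}$ is uniformly bounded and thus $P_{\theta^*}$ is a well-defined density; it plays no role in the sieve or covering construction itself.
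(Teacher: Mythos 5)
Your proposal is correct and follows essentially the same route as the paper: the same sieve (dimension truncated at order $n\epsilon^2/\log(n\epsilon^2)$, magnitude truncated at $(n\epsilon^2)^{1/\beta}$), the same prior-tail computation for (\ref{eq:C2}), and the same entropy bound fed into Theorem 7.1 of \cite{ghosal2000convergence} to produce the test in (\ref{eq:C1}). The only real difference is in the auxiliary Hellinger--$\ell_1$ comparison: the paper's Lemma \ref{lem:equiv1} gets $H(P_\theta,P_{\theta'})\lesssim\|\theta-\theta'\|_1$ from the sup-norm bound on the log-likelihood ratio under the restriction $\|\theta-\theta'\|_1\leq 1/\sqrt{2}$ (which suffices, since only points within the mesh are compared), whereas you derive the unrestricted bound $H^2(P_\theta,P_{\theta'})\leq\frac{1}{8}\|g_\theta-g_{\theta'}\|_\infty^2$ from the cumulant's Hessian and the variance-by-sup-norm estimate -- a clean and valid variant that yields the same $\sqrt{k_n}\,\ell_2$ comparison and the same $\lesssim n\epsilon^2$ entropy bound.
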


\begin{lemma}\label{lem:density2}
Assume $\theta^*\in\Theta_\alpha(B)$ for $\alpha>1/2$. For the prior $\Pi$ that satisfies (\ref{eq:density-condition2}) and (\ref{eq:density-condition4}), the conditions (\ref{eq:C3}) and (\ref{eq:C4}) hold for $\epsilon_n^2=\left(\frac{\log n}{n}\right)^{\frac{2\alpha}{2\alpha+1}}$.
\end{lemma}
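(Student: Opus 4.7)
I verify (\ref{eq:C3}) and (\ref{eq:C4}) separately, in each case using the Sobolev truncation at level $k_0=\lceil(n/\log n)^{1/(2\alpha+1)}\rceil$. The Sobolev assumption gives the truncation bias $\sum_{j>k_0}\theta_j^{*2}\leq k_0^{-2\alpha}B^2\asymp\epsilon_n^2$, and the choice of $k_0$ satisfies $k_0\log k_0\asymp n\epsilon_n^2$. Since $\alpha>1/2$, $\theta^*$ is uniformly bounded in both $\ell_\infty$ and $\ell_1$ by Cauchy--Schwarz. The crucial analytic input for the exponential family (\ref{eq:density model}) with Fourier basis is the identity $D_2(P_{\theta^*}\|P_\theta)=c(\theta)+c(2\theta^*-\theta)-2c(\theta^*)$, whose second-order Taylor expansion, combined with the uniformly bounded Hessian (the Fisher information of the bounded $\phi_j$), yields the local quadratic bound $D_2(P_{\theta^*}\|P_\theta)\lesssim\|\theta-\theta^*\|^2$ uniformly over any fixed $\ell_1$-ball around $\theta^*$.

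\textbf{Verification of (\ref{eq:C3}).} Take $\rho=2$ and set the product rectangle $\wt\Theta=\prod_{j\leq k_0}[\theta_j^*-\delta,\theta_j^*+\delta]\times\prod_{j>k_0}\{0\}$ with $\delta=(nk_0)^{-1/2}$. For $\theta\in\wt\Theta$, $\|\theta-\theta^*\|^2\leq k_0\delta^2+\|\theta^*_{>k_0}\|^2\lesssim\epsilon_n^2$, so the local quadratic bound gives $D_2(P_{\theta^*}^n\|P_\theta^n)\lesssim n\epsilon_n^2$. For the prior mass, the $k=k_0$ component of the mixture dominates: applying Jensen's inequality to $-\log f_j$ and invoking (\ref{eq:density-condition4}) yields $-\log\int_{|x|\leq\delta}f_j(\theta_j^*+x)\,dx\leq\log(1/(2\delta))+c_0'+c_1'j^{2\alpha+1}(\theta_j^{*2}+\delta^2/3)$. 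Summing over $j\leq k_0$, using $\sum_{j\leq k_0}j^{2\alpha+1}\theta_j^{*2}\leq k_0B^2$ and $k_0^{2\alpha+2}\delta^2=k_0^{2\alpha+1}/n\asymp 1/\log n$, and adding $-\log\pi(k_0)\lesssim k_0\log k_0$ from (\ref{eq:density-condition2}), gives $-\log\Pi(\wt\Theta)\lesssim k_0\log n\asymp n\epsilon_n^2$, which is (\ref{eq:C3}).

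\textbf{Verification of (\ref{eq:C4}).} Since Theorem \ref{thm:mean field} restricts a $\wt Q$ to a product box, which leaves $\mathcal{S}_{\rm G}$, I exhibit a $Q\in\mathcal{S}_{\rm G}$ directly. Let $Q=\prod_{j\leq k_0}N(\theta_j^*,\sigma_j^2)\prod_{j>k_0}\delta_0$ with $\sigma_j^2=(nj^{2\alpha+1})^{-1}$. Because $Q$ is absolutely continuous with respect to the $k=k_0$ component of the mixture prior,
\[D(Q\|\Pi)\leq-\log\pi(k_0)+\sum_{j\leq k_0}D\!\bigl(N(\theta_j^*,\sigma_j^2)\,\|\,f_j\bigr),\]
and combining the Gaussian entropy with (\ref{eq:density-condition4}) bounds each KL term by $-\tfrac12\log(2\pi e\sigma_j^2)+c_0'+c_1'j^{2\alpha+1}(\theta_j^{*2}+\sigma_j^2)$. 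The dominant piece $\tfrac12\sum\log(1/\sigma_j^2)\asymp k_0\log n\asymp n\epsilon_n^2$, and the remaining summands are $O(n\epsilon_n^2)$ by the same estimates as in the previous paragraph. For the expected divergence, the local quadratic bound on the bulk event $\{\|\theta-\theta^*\|_\infty\leq 1\}$ together with $Q\|\theta-\theta^*\|^2=\sum\sigma_j^2+\|\theta^*_{>k_0}\|^2=O(\epsilon_n^2)$ gives $QD(P_{\theta^*}^n\|P_\theta^n)\lesssim n\epsilon_n^2$, so $R(Q)\lesssim\epsilon_n^2$, proving (\ref{eq:C4}).

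\textbf{Main obstacle.} The delicate step is justifying the local quadratic bound on $D(P_{\theta^*}\|P_\theta)$ over the unbounded support of the Gaussian $Q$. The remedy is the crude global estimate $D(P_{\theta^*}\|P_\theta)\leq D_2(P_{\theta^*}\|P_\theta)=c(\theta)+c(2\theta^*-\theta)-2c(\theta^*)\leq 2\sqrt 2(\|\theta\|_1+\|\theta^*\|_1)$, which follows from $\|\sum_j\theta_j\phi_j\|_\infty\leq\sqrt 2\|\theta\|_1$ and the Jensen bound $c(\theta^*)\geq 0$. Since $\sigma_j^2\leq 1/n$, a union bound yields $Q(\|\theta-\theta^*\|_\infty>1)\leq 2k_0 e^{-n/2}$, and Cauchy--Schwarz against the linearly-growing bound makes the tail contribution to $QD(P_{\theta^*}\|P_\theta)$ exponentially small in $n$. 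This closes the argument and yields Lemma \ref{lem:density2}.
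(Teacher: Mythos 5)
Your verification of (\ref{eq:C3}) is correct and essentially the paper's argument: the product rectangle around $\theta^*$ truncated at $k_0$, the prior-mass computation via Jensen's inequality and (\ref{eq:density-condition4}), and your cumulant identity $D_2(P_{\theta^*}\|P_\theta)=c(\theta)+c(2\theta^*-\theta)-2c(\theta^*)$ with a second-order Taylor expansion is an equivalent route to the paper's Lemma \ref{lem:equiv2}, which gives $D_2(P_{\theta^*}\|P_\theta)\lesssim e^{3\sqrt{2}\|\theta-\theta^*\|_1}\|\theta-\theta^*\|^2$; on your rectangle $\|\theta-\theta^*\|_1=O(1)$, so the constant is uniform and the bound $n\epsilon_n^2$ follows. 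The estimate $D(Q\|\Pi)\leq-\log\pi(k_0)+\sum_{j\leq k_0}D\bigl(N(\theta_j^*,\sigma_j^2)\|F_j\bigr)\lesssim k_0\log n$ in the (\ref{eq:C4}) part is also fine.

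The gap is in the bulk step of your verification of (\ref{eq:C4}). Your quadratic bound on $D(P_{\theta^*}\|P_\theta)$ carries a constant of the form $\exp(C\|\theta-\theta^*\|_1)$ and, as you state yourself, is uniform only over fixed $\ell_1$-balls. But your bulk event is the $\ell_\infty$-ball $\{\|\theta-\theta^*\|_\infty\leq 1\}$, on which $\|\theta-\theta^*\|_1$ can be of order $k_0\rightarrow\infty$, so the constant can be as large as $e^{Ck_0}$ and the claimed conclusion $Q\bigl[D(P_{\theta^*}\|P_\theta)\mathbf{1}_{\{\|\theta-\theta^*\|_\infty\leq1\}}\bigr]\lesssim Q\|\theta-\theta^*\|^2\asymp\epsilon_n^2$ does not follow; $e^{Ck_0}\epsilon_n^2$ is far too large. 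The mismatch is not cosmetic: no quadratic bound with a uniform constant holds on an $\ell_\infty$-ball, since $\|\sum_j h_j\phi_j\|_\infty$ is controlled only through $\|h\|_1$. Two repairs are available with exactly the tools you already use: (i) split instead on $\{\|\theta-\theta^*\|_1\leq 1\}$; with your variances $\sigma_j^2=(nj^{2\alpha+1})^{-1}$ one has $E_Q\sum_{j\leq k_0}|\theta_j-\theta_j^*|\lesssim n^{-1/2}$ and $\sum_{j>k_0}|\theta_j^*|=o(1)$, and sub-Gaussian concentration gives $Q(\|\theta-\theta^*\|_1>1)\leq e^{-cn/k_0}$, after which your crude linear bound makes the tail negligible; or (ii) avoid truncation altogether and bound $Q\bigl[e^{C\|\theta-\theta^*\|_1}\|\theta-\theta^*\|^2\bigr]$ directly by Gaussian moment generating functions, which is what the paper does (with $\sigma_j^2=n^{-1}$, exploiting $k_0/\sqrt{n}=O(1)$ for $\alpha>1/2$; see (\ref{eq:chi-bound}) and (\ref{eq:chi-bound2})). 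With either repair your construction yields $R(Q)\lesssim\epsilon_n^2$ and the lemma follows.
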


Before proving these two lemmas, we need the following two results that establish relations between different divergence functions for the exponential family model.

\begin{lemma}\label{lem:equiv1}
If $\|\theta-\theta'\|_1\leq \frac{1}{\sqrt{2}}$, then
\[H(P_\theta, P_{\theta'})\leq 2\sqrt{2}\|\theta-\theta'\|_1.\]
\end{lemma}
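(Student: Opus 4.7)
The plan is to exploit the exponential-family structure and reduce the Hellinger bound to a second-order estimate on the log-partition function $c$. First I would write out the Hellinger affinity in closed form: a direct computation using $\frac{dP_\theta}{d\ell}(x) = \exp(\sum_j \theta_j \phi_j(x) - c(\theta))$ gives
\[
\int \sqrt{dP_\theta\, dP_{\theta'}} \;=\; \exp\!\Bigl(c(\bar\theta) - \tfrac{1}{2}c(\theta) - \tfrac{1}{2}c(\theta')\Bigr), \qquad \bar\theta := \tfrac{1}{2}(\theta + \theta'),
\]
so $H^2(P_\theta,P_{\theta'}) = 1 - e^{-\Delta}$ with $\Delta := \tfrac12(c(\theta)+c(\theta')) - c(\bar\theta) \ge 0$, the Jensen gap of the convex function $c$. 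Applying the elementary inequality $1 - e^{-x}\le x$ for $x\ge 0$ reduces the task to bounding $\Delta$.

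Next, I would Taylor-expand $c$ to second order around $\bar\theta$. Writing $\theta - \bar\theta = -(\theta' - \bar\theta) = (\theta - \theta')/2$, the first-order terms cancel in the sum $c(\theta)+c(\theta')-2c(\bar\theta)$, and the mean-value form of the remainder yields
\[
\Delta \;\le\; \tfrac{1}{8}\,(\theta - \theta')^{\top}\, \nabla^2 c(\xi)\, (\theta - \theta')
\]
for some $\xi$ on the segment $[\theta,\theta']$. The Hessian of $c$ at any $\xi$ is the covariance matrix of the basis functions $(\phi_j)$ under $P_\xi$, so the quadratic form above equals $\Var_{P_\xi}\!\bigl(\sum_j (\theta_j - \theta'_j)\phi_j(X)\bigr)$, which is at most $\mathbb{E}_{P_\xi}\!\bigl[(\sum_j (\theta_j-\theta'_j)\phi_j(X))^2\bigr]$.

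To finish, I use the uniform bound $\|\phi_j\|_\infty \le \sqrt{2}$ for the trigonometric Fourier basis on $[0,1]$, which gives the pointwise estimate $|\sum_j (\theta_j - \theta'_j)\phi_j(x)| \le \sqrt{2}\,\|\theta - \theta'\|_1$. Combining the previous displays,
\[
H^2(P_\theta,P_{\theta'}) \;\le\; \Delta \;\le\; \tfrac{1}{8}\cdot 2\,\|\theta-\theta'\|_1^2 \;=\; \tfrac{1}{4}\|\theta - \theta'\|_1^2,
\]
so $H(P_\theta,P_{\theta'}) \le \tfrac{1}{2}\|\theta-\theta'\|_1$, which is in fact stronger than the stated $2\sqrt{2}\|\theta-\theta'\|_1$ bound. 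The hypothesis $\|\theta - \theta'\|_1 \le 1/\sqrt{2}$ plays a secondary role: it ensures that the pointwise fluctuation $\sum_j (\theta_j-\theta'_j)\phi_j(x)$ stays within a bounded range, so that $c$ is finite and smooth along the segment and the covariance bound above is finite; the main technical obstacle is simply justifying the Taylor expansion in this infinite-dimensional setting, which follows once one observes that the sum defining the exponent is absolutely convergent uniformly in $x$ under this hypothesis.
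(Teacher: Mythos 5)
Your argument is correct, but it takes a genuinely different route from the paper. The paper's proof never touches the log-partition function: it uses the sup-norm bound $\left\|\log\frac{dP_\theta}{dP_{\theta'}}\right\|_\infty\leq 2\sqrt{2}\|\theta-\theta'\|_1$ (the same inequality it reuses later for Lemma \ref{lem:equiv2}), writes $H^2(P_\theta,P_{\theta'})=\frac12\int\bigl(\sqrt{dP_\theta/dP_{\theta'}}-1\bigr)^2dP_{\theta'}$, and bounds the integrand pointwise via the monotonicity of $(e^x-1)/x$; the hypothesis $\|\theta-\theta'\|_1\leq 1/\sqrt{2}$ is exactly what keeps the exponent at most $1$ so that $e^x-1\leq 2x$ applies, yielding the constant $2\sqrt{2}$. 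You instead use the exact Hellinger affinity identity for exponential families, reduce the problem to the Jensen gap $\Delta$ of the log-partition function $c$, and control it by a second-order bound with $\nabla^2 c$ equal to a covariance, which you dominate using $\|\phi_j\|_\infty\leq\sqrt 2$. What your route buys is a sharper constant ($H\leq\frac12\|\theta-\theta'\|_1$) and the observation that the smallness hypothesis is only needed for well-posedness along the segment, not for the inequality itself; what the paper's route buys is elementarity (no differentiation under the integral sign, no Hessian identity) and economy, since the key sup-norm estimate is already needed elsewhere. The only step you should make fully rigorous is the infinite-dimensional Taylor expansion: restrict to $g(t)=c\bigl(\theta'+t(\theta-\theta')\bigr)$, note that $\sum_j(\theta_j-\theta'_j)\phi_j$ is a bounded function so that $g$ is twice differentiable with $g''(t)=\Var_{P_{\xi(t)}}\bigl(\sum_j(\theta_j-\theta'_j)\phi_j\bigr)$ by dominated convergence, and use $g(1)+g(0)-2g(1/2)\leq\frac14\sup_t g''(t)$; this replaces your single mean-value point $\xi$ (strictly the remainder involves two points, one on each half of the segment) and the rest of your computation goes through unchanged.
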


\begin{proof}
We first give some uniform bounds that are well known for exponential family density functions (see \cite{rivoirard2012posterior}). For any $\theta,\theta'$, we have
\begin{equation}
\left\|\log\frac{dP_{\theta}}{dP_{\theta'}}\right\|_{\infty} \leq 2\sqrt{2}\|\theta-\theta'\|_1.\label{eq:inf-log-ratio}
\end{equation}
We start from the left hand side of the inequality:
\begin{eqnarray*}
&&H^2(P_\theta, P_{\theta'})^2 = \frac{1}{2}\int\left(\sqrt{\frac{dP_{\theta}}{dP_{\theta'}}}-1\right)^2dP_{\theta'}\\
&\leq&\frac{1}{2}\int\left(\exp(\sqrt{2}\|\theta-\theta'\|_1)-1\right)^2dP_{\theta'}+\frac{1}{2}\int\left(\exp(-\sqrt{2}\|\theta-\theta'\|_1)-1\right)^2dP_{\theta'}\\
&\leq&\frac{1}{2}\int 8\|\theta-\theta'\|_1^2dP_{\theta'}+\frac{1}{2}\int 8\|\theta-\theta'\|_1^2dP_{\theta'}\\
&=&8\|\theta-\theta'\|_1^2,
\end{eqnarray*}
where we have applied the property that $\frac{e^x-1}{x}$ is monotonically increasing for all $x$. Then it follows that
$H(P_\theta, P_{\theta'})\leq 2\sqrt{2}\|\theta-\theta'\|_1$.
\end{proof}

\begin{lemma}\label{lem:equiv2}
For any $\theta$ and any $\theta^*\in\Theta_{\alpha}(B)$ with $\alpha>1/2$, we have
\begin{eqnarray*}
&& C_0^{-1}\exp\left(-3\sqrt{2}\|\theta^*-\theta\|_1\right)\|\theta^*-\theta\|^2\leq 2H^2(P_{\theta^*},P_{\theta})\leq D(P_{\theta^*}\|P_{\theta})\\
&& \leq D_2(P_{\theta^*}\|P_{\theta})\leq C_0\exp\left(3\sqrt{2}\|\theta^*-\theta\|_1\right)\|\theta^*-\theta\|^2,
\end{eqnarray*}
where the constant $C_0>0$ only depends on $\alpha$ and $B$.
\end{lemma}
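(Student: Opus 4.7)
The plan is to reduce everything to the cumulant generating function
\[ W(t) = \log \int e^{tf}\,dP_{\theta^*}, \qquad f(x) = \sum_{j\geq 1} \delta_j \phi_j(x), \quad \delta = \theta-\theta^*, \]
which satisfies $W(0)=0$, $W(1) = c(\theta)-c(\theta^*)$, and, by the exponential tilting identity, $W''(t) = \mathrm{Var}_{P_{\theta^*+t\delta}}(f)$ (since the tilted density $e^{tf-W(t)}$ relative to $P_{\theta^*}$ matches $dP_{\theta^*+t\delta}/dP_{\theta^*}$). The middle inequalities $2H^2\leq D\leq D_2$ are given by Proposition~\ref{prop:div}, so only the two outer bounds need proof.

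Uniform density bounds. Using $\|\phi_j\|_\infty\leq\sqrt{2}$ and the exponential-family form, one gets $e^{-2\sqrt{2}\|\eta\|_1}\leq (dP_\eta/d\ell)(x)\leq e^{2\sqrt{2}\|\eta\|_1}$ for all $x\in[0,1]$. Since $\alpha>1/2$, Cauchy--Schwarz gives $\|\theta^*\|_1\leq C_\alpha B$, so for $\eta = \theta^*+t\delta$, $|t|\leq 1$, $\|\eta\|_1\leq C_\alpha B+\|\delta\|_1$. Combined with the orthonormality identity $\mathrm{Var}_\ell(f) = \|\delta\|^2$ and the comparison $\mathrm{Var}_p(f)\geq(\mathrm{ess\,inf}(dp/dq))\cdot \mathrm{Var}_q(f)$, this sandwiches
\[ C_0^{-1}e^{-2\sqrt{2}\|\delta\|_1}\|\delta\|^2 \;\leq\; W''(t) \;\leq\; C_0\, e^{2\sqrt{2}\|\delta\|_1}\|\delta\|^2, \qquad t\in[-1,1], \]
for a constant $C_0$ depending only on $\alpha,B$.

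Upper bound on $D_2$. A direct computation gives $D_2(P_{\theta^*}\|P_\theta) = W(1)+W(-1)$ (as $\chi^2(P_{\theta^*}\|P_\theta) = e^{W(1)+W(-1)}-1$). Taylor's theorem around $0$ yields
\[ W(1)+W(-1) = \int_0^1 (1-s)\bigl[W''(s)+W''(-s)\bigr]\,ds \;\leq\; \sup_{|t|\leq 1} W''(t), \]
and plugging in the upper bound on $W''$ (with the slack $2\sqrt{2}\leq 3\sqrt{2}$) gives the desired bound on $D_2$.

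Lower bound on $2H^2$. Using $\int e^{h/2}dP_{\theta^*} = e^{W(1/2)-W(1)/2}$ yields $H^2 = 1-e^{-\Delta}$ where $\Delta = W(1)/2-W(1/2) = \tfrac12\int_0^1 \min(s,1-s)\,W''(s)\,ds$. The lower bound on $W''$ then gives $\Delta\gtrsim e^{-2\sqrt{2}\|\delta\|_1}\|\delta\|^2$. I then split into two cases: if $\Delta\leq 1$, use $1-e^{-\Delta}\geq\Delta/2$ to conclude $2H^2\gtrsim e^{-2\sqrt{2}\|\delta\|_1}\|\delta\|^2$; if $\Delta>1$, then $2H^2\geq 2(1-e^{-1})$ is bounded below by a constant, and the right-hand side $C_0^{-1}e^{-3\sqrt{2}\|\delta\|_1}\|\delta\|^2$ is uniformly bounded above because $\|\delta\|^2\leq\|\delta\|_1^2$ and $\sup_{x\geq 0} x^2 e^{-3\sqrt{2}x}<\infty$, so the inequality holds after enlarging $C_0$. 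The main subtlety is precisely this two-case analysis, since the RHS of the lemma is not simply bounded by $\Delta$ throughout; everything else is a clean computation with the cumulant generating function.
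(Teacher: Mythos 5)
Your proof is correct, and it takes a genuinely different route from the paper's. The paper imports its key inequalities from Lemma 3.2 of \cite{gao2016rate}: it introduces the KL-variance $V(P_{\theta^*},P_{\theta})$, sandwiches it between $e^{-\|\log dP_{\theta^*}/d\ell\|_\infty}\|\theta-\theta^*\|^2$ and $4H^2(P_{\theta^*},P_\theta)e^{(3/2)\|\log dP_\theta/dP_{\theta^*}\|_\infty}$ to get the Hellinger lower bound, and for the upper bound expands $D_2=\log\bigl(1+\sum_{l\geq 1}\frac{1}{l!}\int dP_{\theta^*}(\log\frac{dP_{\theta^*}}{dP_\theta})^l\bigr)$ to show $D_2\leq D\,e^{2\sqrt 2\|\theta-\theta^*\|_1}$ and then bounds $D$ itself by $e^{c\|\theta-\theta^*\|_1+c'\|\theta^*\|_1}\|\theta-\theta^*\|^2$. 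You instead exploit the exponential-family structure directly via the cumulant generating function $W(t)=c(\theta^*+t\delta)-c(\theta^*)$ along the segment, which yields exact identities $D_2=W(1)+W(-1)$ and $H^2=1-e^{-\Delta}$ with $\Delta=\tfrac12\int_0^1\min(s,1-s)W''(s)\,ds$, and then a two-sided bound on $W''(t)=\mathrm{Var}_{P_{\theta^*+t\delta}}(f)$ from the uniform density bounds $e^{-2\sqrt2\|\eta\|_1}\leq dP_\eta/d\ell\leq e^{2\sqrt2\|\eta\|_1}$ together with $\mathrm{Var}_\ell(f)=\|\delta\|^2$; both approaches ultimately rest on the same two ingredients, namely $\|\log(dP_\theta/dP_{\theta'})\|_\infty\leq 2\sqrt2\|\theta-\theta'\|_1$ and $\|\theta^*\|_1\lesssim B$ for $\alpha>1/2$. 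What your route buys is self-containedness (no appeal to an external lemma, no series manipulation comparing $D_2$ with $D$) and exact formulas that make the constants transparent; the two-case analysis on $\Delta$ is indeed necessary because $H^2$ saturates, and your handling of the large-$\Delta$ case via $\sup_{x\geq 0}x^2e^{-3\sqrt2 x}<\infty$ is the right fix (the paper's borrowed inequality absorbs this issue into the factor $e^{(3/2)\|\cdot\|_\infty}$). The only implicit assumption, shared with the paper, is that one may take $\|\delta\|_1<\infty$ (otherwise the claimed inequalities are vacuous), which guarantees $f$ is bounded and $W$ is smooth, justifying the Taylor expansions and differentiation under the integral.
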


\begin{proof}
For any $\theta^*\in\Theta_{\alpha}(B)$, we have $\left\|\log\frac{dP_{\theta^*}}{d\ell}\right\|_{\infty}\leq 2\sqrt{2}\|\theta^*\|_1$. Since
\begin{equation}
\|\theta^*\|_1^2\leq \left(\sum_{j=1}^\infty j^{-2\alpha}\right)\left(\sum_{j=1}^\infty j^{2\alpha}\theta_j^{*2}\right)\leq B^2\gamma_{\alpha},\label{eq:l1-bound}
\end{equation}
where $\gamma_{\alpha}=\sum_{j=1}^\infty j^{-2\alpha}=O(1)$ for $\alpha>1/2$. This gives
\begin{equation}
\left\|\log\frac{dP_{\theta^*}}{d\ell}\right\|_{\infty}\leq 2\sqrt{2}\gamma_{\alpha}^{1/2}B.\label{eq:inf-log-den}
\end{equation}

Now we proceed to show Lemma \ref{lem:equiv2}. Given the result of Proposition \ref{prop:div}, it is sufficient to prove the first and the last inequalities. Define
$$V(P_{\theta^*},P_{\theta})=\int\left(\log\frac{dP_{\theta^*}}{dP_{\theta}}-D(P_{\theta^*}\|P_{\theta})\right)^2dP_{\theta^*}.$$
Following the argument in the proof of Lemma 3.2 in \cite{gao2016rate}, we have
$$e^{-\left\|\log\frac{dP_{\theta^*}}{d\ell}\right\|_{\infty}}\|\theta^*-\theta\|^2\leq V(P_{\theta^*},P_{\theta})\leq 4H^2(P_{\theta^*},P_{\theta})e^{3/2\|\log\frac{dP_{\theta}}{dP_{\theta^*}}\|_{\infty}}.$$
By (\ref{eq:inf-log-ratio}) and (\ref{eq:inf-log-den}), we have
$$C_0^{-1}\|\theta-\theta^*\|^2\leq 2H^2(P_{\theta^*},P_{\theta})\exp\left(3\sqrt{2}\|\theta-\theta^*\|_1\right),$$
for $C_0 = 2\exp(2\sqrt{2}\gamma_\alpha^{1/2}B)$, which implies the first inequality.

For the last inequality, we have
\begin{eqnarray*}
D_2(P_{\theta^*}\|P_{\theta})
&=&\log\left(\int dP_{\theta^*}\exp\left(\log\frac{dP_{\theta^*}}{dP_\theta}\right)\right)\\
& = &\log\left(1+\sum_{l=1}^\infty\frac{1}{l!}\int dP_{\theta^*}\left(\log\frac{dP_{\theta^*}}{dP_\theta}\right)^l\right)\\
&\leq&\log\left(1+D(P_{\theta^*}\| P_{\theta})\sum_{l=1}^\infty\frac{1}{l!}\left\|\log\frac{dP_{\theta^*}}{dP_\theta}\right\|_\infty^{l-1}\right)\\
&\leq& D(P_{\theta^*}\| P_\theta)\exp\left(\left\|\log\frac{dP_{\theta^*}}{dP_\theta}\right\|_\infty\right) \\
&\leq& D(P_{\theta^*}\| P_\theta)e^{2\sqrt{2}\|\theta-\theta^*\|_1},
\end{eqnarray*}
where we have used the inequality that $\frac{e^x-1}{x}\leq e^x$ for all $x>0$ and the last inequality is by (\ref{eq:inf-log-ratio}). By the same argument in the proof of Lemma 3.2 in \cite{gao2016rate}, we have
$$D(P_{\theta^*}\| P_\theta)\leq e^{\sqrt{2}\|\theta-\theta^*\|_1+2\sqrt{2}\|\theta^*\|_1}\|\theta-\theta^*\|^2.$$
Therefore, we obtain the bound
$$D_2(P_{\theta^*}\|P_{\theta})\leq e^{3\sqrt{2}\|\theta-\theta^*\|_1+2\sqrt{2}\|\theta^*\|_1}\|\theta-\theta^*\|^2,$$
which implies the desired result by (\ref{eq:l1-bound}).
\end{proof}

Now we are ready to prove Lemma \ref{lem:density1} and Lemma \ref{lem:density2}.
\begin{proof}[Proof of Lemma \ref{lem:density1}]
Given any $\epsilon\geq \left(\frac{\log n}{n}\right)^{\frac{2\alpha}{2\alpha+1}}$, we define the set
$$\Theta_n(\epsilon)=\left\{\theta=(\theta_j): \theta_j\in[-w_n,w_n]\text{ for }1\leq j\leq k_n, \theta_j=0\text{ for }j>k_n\right\},$$
where $w_n=(\wt{C}n\epsilon^2)^{1/\beta}$ and $k_n=\left\lceil\frac{\wt{C}n\epsilon^2}{\log(n\epsilon^2)}\right\rceil$. We bound $\Pi(\Theta_n(\epsilon)^c)$ by
\begin{eqnarray*}
\Pi(\Theta_n(\epsilon)^c) &\leq& \Pi(k>k_n) + \sum_{j=1}^{k_n}\Pi(k = j)\sum_{i=1}^j\Pi(|\theta_i|>w_n|k = j) \\
&\leq& \Pi(k>k_n) + \sum_{j=1}^{k_n}\Pi(k = j)\sum_{i=1}^j\int_{|x|>w_n}f_i(x)dx \\
&\leq& \Pi(k>k_n) + \sum_{j=1}^{k_n}\int_{|x|>w_n}f_j(x)dx \\
&\leq& \Pi(k>k_n) + \sum_{j=1}^{k_n}e^{-c_1w_n^{\beta}/2}\int e^{c_1|x|^{\beta}/2}f_j(x)dx \\
&\leq& \Pi(k>k_n) + \sum_{j=1}^{k_n}e^{-c_1w_n^{\beta}/2}\int e^{c_1|x|^{\beta}/2-c_0-c_1|x|^{\beta}}dx \\
&\lesssim& \exp(-C_2k_n\log k_n) + k_n\exp\left(-c_1\wt{C}n\epsilon^2/2\right),
\end{eqnarray*}
where we have used the conditions (\ref{eq:density-condition1}) and (\ref{eq:density-condition3}). Therefore, for any $C>0$, we can choose a sufficiently large $\wt{C}$, such that $\Pi(\Theta_n(\epsilon)^c) \lesssim\exp(-Cn\epsilon^2)$, which proves (\ref{eq:C2}).

To prove (\ref{eq:C1}), we consider the following testing problem,
$$H_0:\theta=\theta^*,\quad H_1:\theta\in\Theta_n(\epsilon)\text{ and }H(P_{\theta}, P_{\theta^*})\geq C'\epsilon.$$
By Theorem 7.1 in \cite{ghosal2000convergence}, it is sufficient to establish the bound
$$\log N(\epsilon,\{P_{\theta}:\theta\in\Theta_n(\epsilon)\},H)\lesssim n\epsilon^2.$$
Note that for any $\theta,\theta'\in\Theta_n(\epsilon)$, we have $\|\theta-\theta'\|_1\leq \sqrt{k_n}\|\theta-\theta'\|$. Therefore, by Lemma \ref{lem:equiv1},
$$H(P_{\theta},P_{\theta'})\lesssim \|\theta-\theta'\|_1\leq \sqrt{k_n}\|\theta-\theta'\|,$$
when $\|\theta-\theta'\|_1\leq \frac{1}{\sqrt{2}}$. This means as long as $\|\theta-\theta'\|\leq k_n^{-1/2}(\epsilon\wedge 2^{-1/2})$, we have $H(P_{\theta},P_{\theta'})\lesssim \epsilon$. Thus, there exists a constant $c'$, such that
\begin{eqnarray*}
&& \log N(\epsilon,\{P_{\theta}:\theta\in\Theta_n(\epsilon)\},H) \\
&\leq& \log N\left(c'k_n^{-1/2}(\epsilon\wedge 2^{-1/2}),\{\theta\in\mathbb{R}^{k_n}:\|\theta\|^2\leq k_nw_n^2\},\|\cdot\|\right)\\
&\lesssim&k_n\log\left(\frac{k_nw_n}{c'(\epsilon\wedge 2^{-1/2})}\right)\\
&\lesssim&k_n\log(n\epsilon^2)\asymp n\epsilon^2,
\end{eqnarray*}
where we have used the condition $\epsilon\geq \left(\frac{\log n}{n}\right)^{\frac{\alpha}{2\alpha+1}}$ in the last two steps above. 

It implies the existence of a testing function that satisfies (\ref{eq:C1}). The testing error can be made arbitrarily small by choosing a sufficiently large $C'$. Hence, the proof is complete.
\end{proof}

\begin{proof}[Proof of Lemma \ref{lem:density2}]
In the first part of the proof, we derive (\ref{eq:C3}).
We take $k_0=\ceil{\left(n/\log n\right)^{\frac{1}{2\alpha+1}}}$.
Define $\wt{\Theta}=\otimes_j\wt{\Theta}_j$, where $\wt\Theta_j = [\theta_j^*-n^{-1/2},\theta_j^*+n^{-1/2}]$ for all $j\leq k_0$ and $\wt{\Theta}_j=\{0\}$ for all $j>k_0$.
Then, by Lemma \ref{lem:equiv1}, for all $\theta\in\wt{\Theta}$,
\begin{eqnarray*}
D_2(P_{\theta^*}\|P_{\theta})&\leq&  C_0\exp(3\sqrt{2}\|\theta^*-\theta\|_1)\|\theta-\theta^*\|^2\\
& = &C_0\exp\left(3\sqrt{2}\left(\frac{k_0}{\sqrt{n}}+\sum_{j>k_0}|\theta_j^*|\right)\right)\left(\frac{k_0}{n}+\sum_{j>k_0}\theta_j^{*2}\right)\\
&\leq&C_0\exp\left(3\sqrt{2}\left(n^{\frac{1-2\alpha}{2+4\alpha}}+B\gamma_\alpha^{1/2}\right)\right)\left(\frac{k_0}{n}+k_0^{-2\alpha}B^2\right)\\
&\lesssim&n\epsilon_n^2.
\end{eqnarray*}
where we have use the condition $\alpha>1/2$.

Therefore, it is sufficient to lower bound $\Pi(\wt{\Theta})$, which has been done in the proof of Lemma \ref{lem:gauss1-2}.

Now we will derive (\ref{eq:C4}).
Rather than using the results of Theorem \ref{thm:convergence2} or Theorem \ref{thm:mean field}, we will construct a $Q\in\mathcal{S}_{\rm G}$ and bound $R(Q)$ directly.
Note that in the current setting, we have
 \[R(Q) = \frac{1}{n}D(Q\|\Pi)+QD(P_{\theta^*}\| P_\theta).\]
 For $k_0=\ceil{\left(n/\log n\right)^{\frac{1}{2\alpha+1}}}$, define $Q=\otimes_jQ_j$, where $Q_j=N(\theta_j^*,n^{-1})$ for $j\leq k_0$ and $Q_j=N(0,0)$ for $j>k_0$. Then, it is easy to see that $Q\in\mathcal{S}_{\rm G}$.
 
We first give a bound for $D(Q\|\Pi)$. Let $F_j$ denote the probability distribution with density function $f_j$. Then, we have
$$D(Q\|\Pi)  \leq \log\frac{1}{\pi(k_0)}+\sum_{j=1}^{k_0}D\left(N(\theta_j^*,n^{-1})\|F_j\right),$$
where the first term on the right hand side above can be bounded as
$$\log\frac{1}{\pi(k_0)}\lesssim k_0\log k_0\lesssim n\epsilon_n^2,$$
according to the condition (\ref{eq:density-condition2}). For any $j\leq k_0$, we use $\psi_j$ to denote the density function of $N(\theta_j^*,n^{-1})$. Then, by (\ref{eq:density-condition4}), we have
\begin{eqnarray*}
D\left(N(\theta_j^*,n^{-1})\|F_j\right) &=& \int\psi_j\log\psi_j-\int\psi_j\log f_j \\
&\leq& \int\psi_j\log\psi_j + c_0' + c_1'j^{2\alpha+1}\int\phi_j(x)x^2dx \\
&=& \frac{1}{2}\log\left(\frac{n}{2\pi e}\right) + c_0' + c_1'j^{2\alpha+1}(n^{-1}+\theta_j^{*2}).
\end{eqnarray*}

Since $\theta^*\in\Theta_{\alpha}(B)$, we have
$$\sum_{j=1}^{k_0}D\left(N(\theta_j^*,n^{-1})\|F_j\right)\lesssim k_0\log n\lesssim n\epsilon_n^2.$$
Therefore, we have obtained $D(Q\|\Pi)\lesssim n\epsilon_n^2$.

We then derive a bound for $QD(P_{\theta^*}\| P_\theta)$.
For $j\leq k_0$, we write $\theta_j = \theta_j^*+\frac{1}{\sqrt{n}}Z_j$ where $Z_j\sim N(0,1)$. Then according to Lemma \ref{lem:equiv2}, it follows that
\begin{eqnarray}\label{eq:toshow}
\nonumber QD(P_{\theta^*}\| P_\theta) &\lesssim& Q\exp\left(3\sqrt{2}\|\theta-\theta^*\|_1\right)\|\theta-\theta^*\|^2 \\
\nonumber &=&Q\left[e^{3\sqrt{2}\sum_{j=1}^{k_0}|\theta_j-\theta_j^*|}\left(\sum_{j=1}^{k_0}(\theta_j-\theta_j^*)^2+\sum_{j>k_0}\theta_j^{*2}\right)\right] \\
&=& \mathbb{E}e^{3\sqrt{2}\sum_{j=1}^{k_0}|Z_j|/\sqrt{n}}\sum_{j=1}^{k_0}Z_j^2/n + \left(\sum_{j>k_0}\theta_j^{*2}\right)\mathbb{E}e^{3\sqrt{2}\sum_{j=1}^{k_0}|Z_j|/\sqrt{n}},
\end{eqnarray}
where the last inequality is by (\ref{eq:l1-bound}). Suppose we can show
\begin{equation}
\mathbb{E}e^{3\sqrt{2}\sum_{j=1}^{k_0}|Z_j|/\sqrt{n}} = O(1),\label{eq:chi-bound}
\end{equation}
and
\begin{equation}
\mathbb{E}Z_1^2e^{3\sqrt{2}\sum_{j=1}^{k_0}|Z_j|/\sqrt{n}} = O(1).\label{eq:chi-bound2}
\end{equation}
Then, up to a constant, (\ref{eq:toshow}) can be bounded by
$$\frac{k_0}{n}+\sum_{j>k_0}\theta_j^{*2}\lesssim\epsilon_n^2,$$
which further implies $QD(P_{\theta^*}\| P_\theta)\lesssim \epsilon_n^2$. 

To complete the proof, we show (\ref{eq:chi-bound}). We have
\begin{eqnarray*}
 \mathbb{E}e^{3\sqrt{2}\sum_{j=1}^{k_0}|Z_j|/\sqrt{n}} &\leq& \mathbb{E}\exp\left(\frac{3\sqrt{2}}{\sqrt{n}}\sum_{j=1}^{k_0}(1+Z_j^2)\right) \\
&=& \exp\left(\frac{3\sqrt{2}k_0}{\sqrt{n}}\right)\mathbb{E}\exp\left(\frac{3\sqrt{2}}{\sqrt{n}}\chi_{k_0}^2\right) \\
&=& \exp\left(\frac{3\sqrt{2}k_0}{\sqrt{n}}\right)\left(1-\frac{6\sqrt{2}}{\sqrt{n}}\right)^{-\frac{k_0}{2}}.
\end{eqnarray*}
Since $\alpha>1/2$, we have $k_0/\sqrt{n}=O(1)$, and thus (\ref{eq:chi-bound}) holds. For (\ref{eq:chi-bound2}), we have
$$\mathbb{E}Z_1^2e^{3\sqrt{2}\sum_{j=1}^{k_0}|Z_j|/\sqrt{n}}=\left(\mathbb{E}Z_1^2e^{3\sqrt{2}|Z_1|/\sqrt{n}}\right)\left(\mathbb{E}e^{3\sqrt{2}\sum_{j=2}^{k_0}|Z_j|/\sqrt{n}}\right).$$
Note that $\mathbb{E}Z_1^2e^{3\sqrt{2}|Z_1|/\sqrt{n}}=O(1)$, and $\mathbb{E}e^{3\sqrt{2}\sum_{j=2}^{k_0}|Z_j|/\sqrt{n}}$ shares the same bound for (\ref{eq:chi-bound}). This implies (\ref{eq:chi-bound2}) also holds.
\end{proof}

\begin{proof}[Proof of Theorem \ref{thm:density1}]
The result is immediately implied by Lemma \ref{lem:density1} and Lemma \ref{lem:density2}
in view of Theorem \ref{thm:general2}.
\end{proof}

\subsection{Proofs of Theorem \ref{thm:PC-MF}, Theorem \ref{thm:PC-MC} and Theorem \ref{thm:PC-solution}}\label{subsec:PC-MF}

\begin{proof}[Proof of Theorem \ref{thm:PC-MF}]
Recall that $\Theta_k$ is the space of piecewise constant vectors with at most $k$ pieces. Then, we have the partition
$$\mathbb{R}^n = \Theta_{n-1} \cup (\Theta_n\backslash\Theta_{n-1}).$$
First of all, we consider $\mathcal{S} = \mathcal{S}_{\rm MF}$. Suppose the measure $Q\in\mathcal{S}_{\rm MF}$ and $D(Q\|\Pi)<\infty$, then the support of $Q$ must be a subset of the support of $\Pi$. Note that the distributions $g_i$'s are all absolutely continuous. That is, for any singleton $x$, $\Pi(\theta_j = x) = 0$, which indicates that $Q(\theta_j = x) = 0$ for any singleton $x$. Thus, $Q$ is continuous in each coordinate and for any $j\in[n-1]$, $Q(\theta_j = \theta_{j+1})=\int Q(\theta_j = \theta_{j+1}=x)dx = \int  Q(\theta_j = x)Q(\theta_{j+1}=x)dx = 0$. Therefore, 
$$Q\left(\Theta_{n-1}\right)=Q\left(\text{there exists a $j\in[n-1]$, such that }\theta_j=\theta_{j+1}\right)=0,$$
because otherwise the independent structure of $Q$ would imply a delta measure for some coordinate, which leads to $D(Q\|\Pi)=\infty$.
This implies that $Q$ is supported on $\Theta_n\backslash\Theta_{n-1}$. Therefore,
$$D(Q\|\Pi)=\int \log\frac{\prod_{i=1}^nq_i(\theta_i)}{\Pi(\Theta_n\backslash\Theta_{n-1})\prod_{i=1}^ng(\theta_i)}dQ(\theta),$$
where $q_i(\theta_i) = \frac{dQ_i(\theta_i)}{d\theta_i}$. Then, by the definition of $\mathcal{S}_{{\rm MF}}$ and the independent structure of $P_{\theta}^{(n)}$, we have
\begin{eqnarray*}
\wh{Q}_{{\rm MF}} &=& \argmin_{Q\in\mathcal{S}_{\rm MF}}\left\{D(Q\|\Pi) + Q D(P_{\theta^*}^{(n)}\|P_{\theta}^{(n)})\right\} \nonumber\\
&=& \argmin_{Q:\frac{dQ(\theta)}{d\theta}=\prod_{i=1}^nq_i(\theta_i)}\left\{Q\sum_{i=1}^n\left(\log\frac{q_i(\theta_i)}{g(\theta_i)}+D\left(N(\theta_i^*,\sigma^2)\|N(\theta_i,\sigma^2)\right)\right)\right\}.
\end{eqnarray*}
This gives
\[\frac{d\wh Q_{\rm MF}(\theta)}{d\theta}\propto \prod_{i=1}^n g(\theta_i)\exp\left(-\frac{(\theta_i-X_i)^2}{2\sigma^2}\right).\]
In other words, the mean-field variational posterior $\wh{Q}_{{\rm MF}}$ is a product measure, and on each coordinate, it equals the posterior distribution induced by the prior $g_i$. Now we give a lower bound for $P_{\theta^*}^{(n)}\wh{Q}_{{\rm MF}}\|\theta-\theta^*\|^2$. Since $\|\theta-\theta^*\|^2=\sum_{i=1}^n(\theta_i-\theta_i^*)^2$, we have
$$P_{\theta^*}^{(n)}\wh{Q}_{{\rm MF}}\|\theta-\theta^*\|^2=\sum_{i=1}^nP_{\theta_i^*}\mathbb{E}\left((\theta_i-\theta_i^*)^2|X_i\right),$$
where we use $\mathbb{E}(\cdot|X_i)$ to stand for the posterior expectation of $\theta_i$ with the prior $\theta_i\sim g_i$. By Jensen's inequality,
$$\mathbb{E}\left((\theta_i-\theta_i^*)^2|X_i\right)\geq \left(\mathbb{E}(\theta_i|X_i)-\theta^*\right)^2.$$
Therefore,
\begin{eqnarray*}
&& \sup_{\theta^*\in\Theta_k(B)}P_{\theta^*}^{(n)}\wh{Q}_{{\rm MF}}\|\theta-\theta^*\|^2 \\ &\geq& \sup_{\theta^*\in\Theta_1(B)}\sum_{i=1}^nP_{\theta_i^*}\left(\mathbb{E}(\theta_i|X_i)-\theta_i^*\right)^2 \\
&\geq& \frac{1}{2}\sum_{i=1}^nP_{\theta_i^*=-B}\left(\mathbb{E}(\theta_i|X_i)-\theta_i^*\right)^2 + \frac{1}{2}\sum_{i=1}^nP_{\theta_i^*=B}\left(\mathbb{E}(\theta_i|X_i)-\theta_i^*\right)^2 \\
&=& \frac{1}{2}\sum_{i=1}^n\left(P_{\theta_i^*=-B}\left(\mathbb{E}(\theta_i|X_i)-\theta_i^*\right)^2+P_{\theta_i^*=B}\left(\mathbb{E}(\theta_i|X_i)-\theta_i^*\right)^2\right)  \\
&\geq& \sum_{i=1}^nB^2\int\min\left(dN(B,\sigma^2),dN(-B,\sigma^2)\right) \\
&\gtrsim& n.
\end{eqnarray*}

Next, we consider $\mathcal{S} = \mathcal{S}_{\rm MF}^{\rm joint}$. As $\wh Q_{\rm MF}^{\rm joint}\in\mathcal{S}_{\rm MF}^{\rm joint}$, we can assume
\[d\wh Q_{\rm MF}^{\rm joint}(w, z, \theta) = d\wh Q^{(w)}(w)\prod_{i=1}^nd\wh Q_i^{(z)}(z)\prod_{i=1}^nd\wh Q_i^{(\theta)}(\theta_i).\]
For the same reason, $\prod_{i=1}^nd\wh Q^{(\theta)}(\theta_i)$ is supported on $\Theta_n\backslash\Theta_{n-1}$.  The joint distribution of prior is written as
\[\frac{\Gamma(\alpha_0+\beta_0)}{\Gamma(\alpha_0)\Gamma(\beta_0)}w^{\alpha_0+\sum_{i=2}^nz_i-1}(1-w)^{\beta_0+n-2-\sum_{i=2}^nz_i}g(\theta_1)\prod_{i=2}^n g(\theta_i)^{z_i}\delta_{\theta_{i-1}}^{1-z_i}.\]
Thus, conditioning on $\theta\in\Theta_n\backslash\Theta_{n-1}$, $z_i = 1$ for all $2\leq i\leq n$. In other words, $\wh Q_i^{(z)}(z_i = 1) =1$ for all $2\leq i\leq n$. Plug it in the definition of $\wh Q_{\rm MF}^{\rm joint}$, we have
\begin{eqnarray*}
&&\left(\wh Q^{(\theta)}, \wh Q^{(w)}\right)\\
& =& \argmin_{\substack{(Q^{(\theta)}, Q^{(w)})\\ dQ^{(\theta)}=\prod_{i=1}^nq_i^{(\theta)}(\theta_i)d\theta\\ dQ^{(w)} = q^{(w)}(w)dw}}\left\{Q^{(w)}\log\frac{q^{(w)}}{\pi(w)w^{n-1}}\right.\\
&&\left.+Q^{(\theta)}\sum_{i=1}^n\left(\log\frac{q_i^{(\theta)}(\theta_i)}{g(\theta_i)}+D\left(N(\theta_i^*,\sigma^2)\|N(\theta_i,\sigma^2)\right)\right)\right\}.
\end{eqnarray*}
This gives $\frac{d\wh Q^{(w)}(w)}{dw}\propto\pi(w)w^{n-1}$ and $\wh Q^{(\theta)}(\theta) = \wh Q_{\rm MF}(\theta)$. It implies that 
\begin{eqnarray*}
 &&\sup_{\theta^*\in\Theta_k(B)}P_{\theta^*}^{(n)}\wh{Q}_{{\rm MF}}^{\rm joint}\|\theta-\theta^*\|^2 =\sup_{\theta^*\in\Theta_k(B)}P_{\theta^*}^{(n)}\wh Q^{(\theta)}\|\theta-\theta^*\|^2\\
 &=&\sup_{\theta^*\in\Theta_k(B)}P_{\theta^*}^{(n)}\wh{Q}_{{\rm MF}}\|\theta-\theta^*\|^2 \gtrsim n
\end{eqnarray*}
The proof is complete.
\end{proof}

\begin{proof}[Proof of Theorem \ref{thm:PC-MC}]
This theorem is a special case of Theorem \ref{thm:PC-MC-misp}, whose proof is given in Section \ref{sec:pf-misp}.
\end{proof}

\begin{proof}[Proof of Theorem \ref{thm:PC-solution}]
If $D\left(Q(w,z,\theta)\|\Pi(w, z, \theta|Y)\right)<\infty$, we will have $\supp(Q)\subseteq\supp(\Pi(\cdot|Y))\subseteq\supp(\Pi)$. For $Q\in\mathcal{S}_{\rm MF}^{\rm joint}$, as $\Pi(z_i = 0, \theta_i\neq\theta_{i-1}) = \Pi(z_i = 1, \theta_i= \theta_{i-1}) = 0$, we can conclude that $Q_i^{(z)}(z_i = 0)Q_i^{(\theta)}(\theta_i\neq\theta_{i-1}|\theta_{i-1}) = 0$ and $Q_i^{(z)}(z_i = 1)Q_i^{(\theta)}(\theta_i = \theta_{i-1}|\theta_{i-1}) = 0$. In other words, the conclusion leads to $Q_i^{(z)}(z_i = 1) = 0$, $Q_i^{(\theta)}(\theta_i\neq\theta_{i-1}|\theta_{i-1}) = 0$ or $Q_i^{(z)}(z_i = 1) = 1$, $Q_i^{(\theta)}(\theta_i=  \theta_{i-1}|\theta_{i-1}) = 0$.

Thus, we can define a set $S\subseteq\{2,3,\cdots, n\}$, such that for $i\not\in S$, $Q_i^{(z)}(z_i = 1) = 0$ and $dQ_i^{(\theta)}(\theta_i|\theta_{i-1}) = \delta_{\theta_{i-1}}(\theta_i)d\theta_i$, whereas for $i\in S$, $Q_i^{(z)}(z_i = 1) = 1$ and $dQ_i^{(\theta)}(\theta_i|\theta_{i-1}) = q_i^{(\theta)}(\theta_i|\theta_{i-1})d\theta_i$, a continuous density function. Then we can write
\[\frac{dQ(w,z ,\theta)}{dwd\theta} = q^{(w)}(w)q_1^{(\theta)}(\theta_1)\prod_{i\in S}\1_{z_i = 1}(z_i)q_i^{(\theta)}(\theta_i|\theta_{i-1})\prod_{i\not\in S}\1_{z_i = 0}(z_i)\delta_{\theta_{i-1}}(\theta_i).\]
Plug it into $D(Q(w,z,\theta)\|\Pi(w,z,\theta|Y))$, and we get
\begin{eqnarray*}
&&D\left(Q(w,z,\theta)\|\Pi(w,z,\theta|Y)\right) \\
&=&\int q^{(w)}(w)\log\frac{q^{(w)}(w)}{\pi(w)w^{|S|}(1-w)^{n-1-|S|}}dw\\
&&+\int_{\Theta(S)}q_1^{(\theta)}(\theta_1)\prod_{i\in S}q_i^{(\theta)}(\theta_i|\theta_{i-1})\prod_{i\not\in S}\delta_{\theta_{i-1}}(\theta_i)\\
&&\times\log\frac{q_1^{(\theta)}(\theta_1)\prod_{i\in S}q_i^{(\theta)}(\theta_i|\theta_{i-1})\prod_{i\not\in S}\delta_{\theta_{i-1}}(\theta_i)}{g(\theta_1)\prod_{i\in S}g(\theta_i)\prod_{i\not\in S}\delta_{\theta_{i-1}}(\theta_i)\exp\left(-\frac{1}{2}\sum_{i=1}^n(Y_i-\theta_i)^2\right)}d\theta,
\end{eqnarray*}
where
\[\Theta(S) = \{\theta|\theta_i = \theta_{i-1}\mbox{ for }i\not\in S\mbox{ and }\theta_i\neq\theta_{i-1}\mbox{ for }i\in S\}.\]
Then
\begin{eqnarray}\label{eq:KL_joint}
\nonumber&&\min_{Q\in\mathcal{S}_{\rm MC}^{\rm joint}}D\left(Q(w,z,\theta)\|\Pi(w,z,\theta|Y)\right)\\
\nonumber&\Leftrightarrow&\min_{S}\left\{\min_{Q^{(\theta)}\in\mathcal{S}_{\rm MC}, Q^{(w)}}\left\{\int q^{(w)}(w)\log\frac{q^{(w)}(w)}{\pi(w)w^{|S|}(1-w)^{n-1-|S|}}dw\right.\right.\\
&&\left.\left.+\int_{\Theta(S)}q^{(\theta)}(\theta)\log\frac{q^{(\theta)}(\theta)}{g(\theta_1)\prod_{i\in S}g(\theta_i)\prod_{i\not\in S}\delta_{\theta_{i-1}}(\theta_i)\exp\left(-\frac{1}{2}\sum_{i=1}^n(Y_i-\theta_i)^2\right)}d\theta\right\}\right\},\nonumber\\
\end{eqnarray}
For a given set $S = \{a_1+1, a_2+1,\cdots, a_{k-1}+1\}$ with $0 = a_0<a_1<\cdots <a_{k-1}<a_k = n$, we first solve the minimization over $q^{(w)}$ and $q^{(\theta)}$. The solutions without constraint that $Q^{(\theta)}\in\mathcal{S}_{\rm MC}$ are given by
\[\wh q^{(w)}(w) = \frac{\Gamma(n-1+\alpha_0+\beta_0)}{\Gamma(k-1+\alpha_0)\Gamma(n-k+\beta_0)}w^{k+\alpha_0-2}(1-w)^{n-k+\beta_0-1},\]
and 
\begin{eqnarray*}
\wh q^{(\theta)}(\theta) &=& \frac{g(\theta_1)\prod_{i\in S}g(\theta_i)\prod_{i\not\in S}\delta_{\theta_{i-1}}(\theta_i)\exp\left(-\frac{1}{2}\sum_{i=1}^n(Y_i-\theta_i)^2\right)}
{\int g(\theta_1)\prod_{i\in S}g(\theta_i)\prod_{i\not\in S}\delta_{\theta_{i-1}}(\theta_i)\exp\left(-\frac{1}{2}\sum_{i=1}^n(Y_i-\theta_i)^2\right)d\theta}\\
&=&\prod_{j = 1}^{k}\frac{g(\theta_{a_{j-1}+1})\exp\left(-\frac{1}{2}\sum_{i = a_{j-1}+1}^{a_j}(Y_i-\theta_{a_{j-1}+1})^2\right)}{\int g(\theta_{a_{j-1}+1})\exp\left(-\frac{1}{2}\sum_{i = a_{j-1}+1}^{a_j}(Y_i-\theta_{a_{j-1}+1})^2\right)d\theta_{a_{j-1}+1}}\prod_{i\not\in S}\delta_{\theta_{i-1}}(\theta_i).
\end{eqnarray*}
As $\wh Q^{(\theta)}$ obtained above is still in the variational set $\mathcal{S}_{\rm MC}$, this is a valid solution to (\ref{eq:KL_joint}) for a specific set $S$, which implies that 
\[\wh Q^{(w)} = {\rm Beta}(k-1+\alpha_0,n-k+\beta_0),\]
and
$$\begin{cases}
d\wh{Q}_1^{(\theta)}(\theta_1)\propto g(\theta_1)\exp\left(-\frac{1}{2}\sum_{i\in(a_0:{a}_1]}(X_i-\theta_1)^2\right)d\theta_1, \\
d\wh{Q}_i^{(\theta)}(\theta_i|\theta_{i-1}) \propto g(\theta_i)\exp\left(-\frac{1}{2}\sum_{l\in({a}_{j-1}:{a}_j]}(X_l-\theta_i)^2\right)d\theta_i, & i={a}_{j-1}+1,j>1,\\
d\wh{Q}_i^{(\theta)}(\theta_i|\theta_{i-1}) = \delta_{\theta_{i-1}}(\theta_i)d\theta_i, & \text{otherwise.}
\end{cases}$$
Now the only thing is to show that $\widehat k$ and $\widehat a_1,\cdots, \widehat a_{k-1}$ are the solution of (\ref{eq:opt_MC}). Plug $\wh Q^{(w)}$ and $\wh Q^{(\theta)}$ into (\ref{eq:KL_joint}), and then
\begin{eqnarray}\label{eq:Q_w}
\nonumber&&\int \wh q^{(w)}(w)\log\frac{\wh q^{(w)}(w)}{\pi(w)w^{|S|}(1-w)^{n-1-|S|}}dw\\
&=& \log\frac{\Gamma(n-1+\alpha_0+\beta_0)}{\Gamma(k-1+\alpha_0)\Gamma(n-k+\beta_0)}-\log\frac{\Gamma(\alpha_0+\beta_0)}{\Gamma(\alpha_0)\Gamma(\beta_0)}
\end{eqnarray}
and
\begin{eqnarray}\label{eq:Q_theta}
&&\int_{\Theta(S)}q^{(\theta)}(\theta)\log\frac{q^{(\theta)}(\theta)}{g(\theta_1)\prod_{i\in S}g(\theta_i)\prod_{i\not\in S}\delta_{\theta_{i-1}}(\theta_i)\exp\left(-\frac{1}{2}\sum_{i=1}^n(Y_i-\theta_i)^2\right)}d\theta\nonumber\\
\nonumber&=&\int_{\Theta(S)}\wh q_1^{(\theta)}(\theta_1)\prod_{i\in S}\wh q_i^{(\theta)}(\theta_i|\theta_{i-1})\prod_{i\not\in S}\delta_{\theta_{i-1}}(\theta_i)\\
\nonumber&&\times\log\frac{\wh q_1^{(\theta)}(\theta_1)\prod_{i\in S}\wh q_i^{(\theta)}(\theta_i|\theta_{i-1})\prod_{i\not\in S}\delta_{\theta_{i-1}}(\theta_i)}{g(\theta_1)\prod_{i\in S}g(\theta_i)\prod_{i\not\in S}\delta_{\theta_{i-1}}(\theta_i)\exp\left(-\frac{1}{2}\sum_{i=1}^n(Y_i-\theta_i)^2\right)}d\theta\\
\nonumber&=&\sum_{j = 1}^{k}\log\left(\int g(\theta_{a_j+1})\exp\left(-\frac{1}{2}\sum_{i = a_{j-1}+1}^{a_j}(Y_i-\theta_{a_{j-1}+1})^2\right)d\theta_{a_{j-1}+1}\right)\\
\nonumber&=&\sum_{j = 1}^{k}\log\left(\int g(\theta)\exp\left(-\frac{1}{2}\sum_{i = a_{j-1}+1}^{a_{j}}(Y_i-\theta)^2\right)d\theta\right).\\
\end{eqnarray}
Plug (\ref{eq:Q_w}) and (\ref{eq:Q_theta}) into (\ref{eq:KL_joint}), and the optimization problem becomes (\ref{eq:opt_MC}). The proof is complete.  
\end{proof}

\subsection{Proofs of Theorem \ref{thm:mixture} and \ref{thm:mixture-latent}}\label{sec:pf-mixture}

To prove Theorem \ref{thm:mixture}, we first establish an upper bound of $P_{\wt f_0}^n\wh{Q}H^2(P_{\wh{k},\theta^{(\wh{k})}},P_{\wt f_0})$ by applying Theorem \ref{thm:general-MF} for a $\wt f_0$ that is constructed to be close to $f_0$. Then, with a change-of-measure argument, we derive a bound for
$P_{f_0}^n\wh{Q}H^2(P_{\wh{k},\theta^{(\wh{k})}},P_{f_0})$. The construction of the surrogate density function $\wt f_0$ is given by the following lemma.
\begin{lemma}\label{lem:mixture2}
Suppose that the true density $f_0$ satisfies conditions (\ref{eq:B1})-(\ref{eq:B3}). For a constant $H_1> 2\alpha$, we define $\wt f_0(x) = \frac{f_0(x)\mathbf{1}_{E_{\sigma_0}}(x)}{\int_{E_{\sigma_0}}f_0(x)dx}$ with  $E_{\sigma_0} = \{x:f_0(x)\geq\sigma_0^{H_1}\}$. For a constant $\xi_4\leq\min\{\xi_3,p\}$ and a sufficiently small $\sigma_0>0$, there exists a finite mixture $p(x|k_{\sigma_0},\theta_{\sigma_0})$ with $k_{\sigma_0} = O(\sigma_0^{-1}|\log\sigma_0|^{p/\xi_4})$ and $\theta_{\sigma_0} = (\mu_{\sigma_0}, w_{\sigma_0},\sigma_0)$,  such that
\begin{equation}\label{eq:C3 mixture}
D_2\left(P_{\wt f_0}\|P_{k_{\sigma_0},\theta_{\sigma_0}}\right) = O(\sigma_0^{2\alpha}).
\end{equation}
Moreover, (\ref{eq:C3 mixture}) holds for all mixtures $p(x|k_{\sigma_0},(\mu,w,\sigma))$ such that $\sigma\in[\sigma_0,\sigma_0+\sigma_0^{H_1+2\alpha+2}]$, $\|\mu-\mu_{\sigma_0}\|_1\leq\sigma_0^{H_1+2\alpha+2}$ and $w\in\Delta_{k_{\sigma_0}}(w_{\sigma_0}, \sigma_0^{H_1+2\alpha+1})$.
\end{lemma}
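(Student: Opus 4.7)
The plan is to follow the construction in \cite{kruijer2010adaptive}, but upgrade the control from the Hellinger/KL distance that appears there to the R\'enyi-$2$ divergence $D_2$. The main point is that truncating $f_0$ to the set $E_\sigma$ produces a surrogate $\tilde f_0$ that is bounded below by a polynomial power of $\sigma$ on its support, and this lower bound is exactly what is needed to turn a pointwise $L^\infty$-approximation by the mixture into a bound on $\chi^2(P_{\tilde f_0}\|P_m)$, from which $D_2=\log(1+\chi^2)$ follows.

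First I would localize the problem. Using the tail assumption \ref{eq:B2}, the set $E_\sigma=\{f_0\geq \sigma^{H_1}\}$ is contained in $[-T_\sigma,T_\sigma]$ with $T_\sigma\asymp(\log\sigma^{-1})^{1/\xi_3}$, and the normalizing constant $Z_\sigma=\int_{E_\sigma}f_0\to 1$. Consequently $\tilde f_0(x)\geq \sigma^{H_1}/Z_\sigma$ for $x\in E_\sigma$ and $\tilde f_0(x)\leq (1+o(1))f_0(x)$ everywhere. Next I would place the component centers $\mu_\sigma=(\mu_1,\dots,\mu_{k_\sigma})$ on a regular grid of spacing $\Delta_\sigma\asymp\sigma(\log\sigma^{-1})^{-1/\xi_4+1/p}$ over a slight enlargement of $E_\sigma$ of radius $R_\sigma\asymp\sigma(\log\sigma^{-1})^{1/\xi_4}$; this yields exactly $k_\sigma\asymp(T_\sigma+R_\sigma)/\Delta_\sigma\asymp\sigma^{-1}(\log\sigma^{-1})^{p/\xi_4}$ centers. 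Define the weights $w_j$ as the integrals of $\tilde f_0$ against a partition of unity adapted to these centers (Voronoi cells work), so that $w_\sigma\in\Delta_{k_\sigma}$ automatically.

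The core of the proof is a pointwise approximation estimate
\[
\bigl|\tilde f_0(x)-m(x;k_\sigma,\mu_\sigma,w_\sigma,\sigma)\bigr|\lesssim \sigma^{\alpha}\bigl(L(x)+1\bigr)\tilde f_0(x)+\text{exponentially small tail},
\]
valid on the effective support of the kernel sum. This estimate is obtained by a Taylor expansion of $\log f_0$ of order $\floor\alpha$ around each $\mu_j$, using the H\"older condition \ref{eq:B1} for the remainder and the kernel moments of $\psi_\sigma$ to eliminate the low-order terms; the exponential factor in (\ref{eq:kernel}) kills the contributions from outside a $\sigma(\log\sigma^{-1})^{1/p}$ neighborhood. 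This is exactly the step carried out in Section 4 of \cite{kruijer2010adaptive}, and I would import it essentially verbatim, except for adding a bookkeeping step to check that the construction also guarantees $m(x)\geq\frac12\tilde f_0(x)$ on $E_\sigma$ and $m(x)\geq c\sigma^{H_1}e^{-(|x|/\sigma)^p}$ off of it (by retaining non-trivial weight on the boundary centers). With these two estimates in hand, I would write
\[
D_2(P_{\tilde f_0}\|P_m)=\log\!\left(1+\int\frac{(\tilde f_0-m)^2}{m}\right)
\leq 2\int\frac{(\tilde f_0-m)^2}{\tilde f_0}\,dx+(\text{tail})
\]
and control the main term by the pointwise estimate and the integrability condition on $L$ in \ref{eq:B1}, yielding $O(\sigma^{2\alpha})$. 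The tail contribution is absorbed because $m$ there is at least $\sigma^{H_1}$ times a Gaussian-type factor while $\tilde f_0$ vanishes off $E_\sigma$.

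For the robustness statement, I would prove a Lipschitz estimate in the parameters of the form
\[
\|m(\cdot;k_\sigma,\mu,w,\sigma')-m(\cdot;k_\sigma,\mu_\sigma,w_\sigma,\sigma)\|_\infty
\lesssim \sigma^{-1}\bigl(\|w-w_\sigma\|_1+\sigma^{-1}\|\mu-\mu_\sigma\|_1+\sigma^{-1}|\sigma'-\sigma|\bigr),
\]
obtained by bounding $\|\psi_\sigma\|_\infty\lesssim\sigma^{-1}$, $\|\psi_\sigma'\|_\infty\lesssim\sigma^{-2}$ and $\|\partial_\sigma\psi_\sigma\|_\infty\lesssim\sigma^{-2}$, and combining them multiplicatively. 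The chosen perturbation scale $\sigma^{H_1+2\alpha+2}$ is precisely what is needed so that this uniform perturbation is $o(\sigma^{H_1+2\alpha})$, which in turn is $o(\sigma^{2\alpha}\tilde f_0)$ uniformly on $E_\sigma$. Plugging into the chi-squared bound again yields $D_2(P_{\tilde f_0}\|P_{m'})=O(\sigma^{2\alpha})$ for all such $m'$.

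The part I expect to be the main obstacle is not the Taylor-expansion argument, which is standard, but the simultaneous requirement of a pointwise upper bound on $\tilde f_0/m$ and a small $L^2$ approximation error. In \cite{kruijer2010adaptive} only a Hellinger bound is needed, so a one-sided control suffices; here the denominator $m$ in $\chi^2$ must be kept from being much smaller than $\tilde f_0$ on the full support of $\tilde f_0$, including the transition region near $\partial E_\sigma$ where $\tilde f_0$ is of order $\sigma^{H_1}$. The condition $H_1>2\alpha$ is precisely what makes $\sigma^{2\alpha}/\sigma^{H_1}$ still summable against the $L^1$-weight $\tilde f_0$, so the argument closes.
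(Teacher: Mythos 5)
There is a genuine gap in the central approximation step. Your construction takes the component centers on a grid and sets the weights to be local masses of $\tilde f_0$ itself (a partition-of-unity discretization), so the resulting mixture is essentially a discretization of the convolution $K_\sigma \tilde f_0$. Since the kernel $\psi_\sigma$ in (\ref{eq:kernel}) is a nonnegative symmetric density, its second moment does not vanish, so the bias $K_\sigma \tilde f_0-\tilde f_0$ is of order $\sigma^{2}$ (times $f_0''$), not $\sigma^{\alpha}$; for $\alpha>2$ your claimed pointwise bound $|\tilde f_0-m|\lesssim\sigma^{\alpha}(L+1)\tilde f_0$ cannot hold for this choice of weights, and the Taylor-expansion/``kernel moments'' argument does not rescue it because only odd-order terms cancel. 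What Section 4 of \cite{kruijer2010adaptive} actually provides is an approximation of $f_0$ by $K_\sigma h_\alpha$ for a \emph{corrected} mixing density $h_\alpha\neq f_0$ (built by an iterative bias-correction scheme), so ``importing it verbatim'' is incompatible with taking the weights to be local masses of $\tilde f_0$. This is not a bookkeeping issue: the whole point of the lemma is to reach accuracy $\sigma^{2\alpha}$ in $D_2$ for arbitrary smoothness $\alpha$, and that requires the corrected density.

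For comparison, the paper's proof first upgrades Kruijer's Theorem 1 to a $\chi^2$-type statement (Lemma \ref{lem:mixture-approx1}): $\int f_0^2/(K_\sigma h_\alpha)=1+O(\sigma^{2\alpha})$. It then truncates $h_\alpha$ to a second set $\{f_0\geq\sigma^{H_2}\}$ with $H_2>H_1$ large, invokes Lemma 12 of \cite{kruijer2010adaptive} to approximate $K_\sigma\tilde h_\alpha$ uniformly on $[-a_\sigma,a_\sigma]$ within $\sigma^{-1}\exp(-C_0|\log\sigma|^{p/\xi_4})=O(\sigma^{H_1+2\alpha})$ by a finite mixture with $k_\sigma=O(\sigma^{-1}|\log\sigma|^{p/\xi_4})$ components, and finally bounds $D_2(P_{\tilde f_0}\|P_m)$ by factoring $\tilde f_0^2/m$ on $E_\sigma$ into four ratios
$(\tilde f_0^2/f_0^2)\cdot\bigl(f_0^2/(K_\sigma h_\alpha)\bigr)\cdot\bigl(K_\sigma h_\alpha/(K_\sigma\tilde h_\alpha)\bigr)\cdot\bigl(K_\sigma\tilde h_\alpha/m\bigr)$,
each of which is $1+O(\sigma^{2\alpha})$, the last one precisely because $K_\sigma\tilde h_\alpha\gtrsim f_0\gtrsim\sigma^{H_1}$ on $E_\sigma$ dominates the uniform discretization error. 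You did correctly identify the two genuinely delicate points — keeping the denominator bounded below by a power of $\sigma$ on $E_\sigma$, and the perturbation step where $\sigma^{H_1+2\alpha+2}$-size changes in $(\mu,w,\sigma)$ give an $O(\sigma^{H_1+2\alpha})$ sup-norm change (that part of your sketch matches the paper's use of Kruijer's Lemma 3) — but without the corrected density $h_\alpha$ and a $\chi^2$ analogue of Kruijer's Theorem 1, the main estimate does not close.
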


With the definition of $\wt f_0$ and its property given by Lemma \ref{lem:mixture2}, we can bound $P_{\wt f_0}^n\wh{Q}H^2(P_{\wh{k},\theta^{(\wh{k})}},P_{\wt f_0})$ by checking the conditions (\ref{eq:C1}), (\ref{eq:C2}) and (\ref{eq:C3*}) in Theorem \ref{thm:general-MF}. This argument is split into the next two lemmas.
\begin{lemma}\label{lem:mixture1}
For the prior $\Pi$ that satisfies conditions (\ref{eq:mixture-condition1}), (\ref{eq:mixture-condition2}) and (\ref{eq:mixture-condition3}), the conditions (\ref{eq:C1}) and (\ref{eq:C2}) hold for $L(P^{(n)}, P_0^{(n)}) = nH^2\left(P, P_0\right)$ and all $\epsilon>n^{\delta}$ with some constant $\delta>-1/2$ with respect to $P_0^{(n)}=P_{\wt f_0}^n$ for any $\sigma_0\rightarrow 0$ and $P^{(n)} = P_{k,\theta^{(k)}}^n$.
\end{lemma}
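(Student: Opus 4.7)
The plan is to follow the standard sieve-and-testing construction used elsewhere in the paper and originally developed for location-scale mixtures in Ghosal-Ghosh-van der Vaart (2000) and Kruijer-Rousseau-van der Vaart (2010); the substitution $P_0^{(n)}=P_{\tilde f_0}^n$ in place of $P_{f_0}^n$ is immaterial since $\tilde f_0$ is also a probability density. I will construct a sieve
\[
\Theta_n(\epsilon) = \Big\{(k,\mu,w,\sigma) : k \leq k_n,\ \max_{j\leq k}|\mu_j| \leq \mu_n,\ \underline\sigma_n \leq \sigma \leq \overline\sigma_n\Big\},
\]
with $k_n \asymp n\epsilon^2/\log(n\epsilon^2)$ matched to (\ref{eq:mixture-condition1}), $\mu_n \asymp (n\epsilon^2)^{1/c_3}$ matched to (\ref{eq:mixture-condition2}), $\underline\sigma_n \asymp (n\epsilon^2)^{-1/(2b_3)}$ matched to the upper tail of $p_\tau$ in (\ref{eq:mixture-condition3}), and $\overline\sigma_n$ exponentially large in $n\epsilon^2$ to absorb the merely polynomial lower tail $\Pi(\tau<\tau_*)\leq b_0\tau_*$ coming from $\|p_\tau\|_\infty\leq b_0$.

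Verification of (\ref{eq:C2}) is then a routine union bound: the four events $\{k>k_n\}$, $\{\max_j|\mu_j|>\mu_n\}$, $\{\sigma>\overline\sigma_n\}$, $\{\sigma<\underline\sigma_n\}$ cover $\Theta_n(\epsilon)^c$, and each of their prior probabilities is at most $\tfrac14\exp(-Cn\epsilon^2)$ under the three prior conditions at the stated parameter scales, with any $C>0$ achievable by choosing the implicit constants large enough.

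For (\ref{eq:C1}), the loss $L=nH^2(P_\cdot,P_{\tilde f_0})$ turns the Type-II requirement into distinguishing $P_{\tilde f_0}$ from any $P_f$ with $f\in\Theta_n(\epsilon)$ and $H(P_f,P_{\tilde f_0})\gtrsim\epsilon$. Theorem 7.1 of Ghosal-Ghosh-van der Vaart (2000) then supplies such a test once we establish an entropy bound $\log N(\epsilon,\{P_m:m\in\Theta_n(\epsilon)\},H)\lesssim n\epsilon^2$. Using the standard Lipschitz estimate for location-scale mixtures (e.g.\ Lemma A.3 of Ghosal-van der Vaart 2007), $H(P_m,P_{m'})$ is controlled linearly in $\|\mu-\mu'\|_\infty/\underline\sigma_n$, $\|w-w'\|_1$, and $|\sigma-\sigma'|/\underline\sigma_n$; over the large-$\sigma$ portion of the sieve the mixtures are close to uniform on a bounded region and contribute only logarithmically to the entropy. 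The resulting total, of order $k_n\log(\mu_n/(\epsilon\underline\sigma_n))$, is $\lesssim n\epsilon^2$ under the chosen scales.

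The main obstacle is the three-way balance among $\underline\sigma_n$, $\overline\sigma_n$, and the entropy: $\underline\sigma_n$ must be small enough for the $\tau$-upper-tail to be $\exp(-Cn\epsilon^2)$ but large enough that $\log(1/\underline\sigma_n)$ is subpolynomial in $n\epsilon^2$; $\overline\sigma_n$ must be exponentially large to defeat the polynomial-in-$\overline\sigma_n^{-2}$ lower-tail bound, which can only be tolerated because the large-$\sigma$ regime of the sieve is essentially one-dimensional and thus cheap to cover. The restriction $\epsilon>n^\delta$ with $\delta>-1/2$ ensures $n\epsilon^2\to\infty$, so that all the exponentially-small tail bounds and the entropy bound are simultaneously effective.
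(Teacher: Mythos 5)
Your proposal is correct and follows essentially the same route as the paper: the identical sieve with $k_n\asymp n\epsilon^2/\log(n\epsilon^2)$, $\mu$-range polynomial in $n\epsilon^2$ matched to (\ref{eq:mixture-condition2}), $\sigma$ bounded below polynomially via the $\tau$-tail and above by $\exp(n\epsilon^2/2)$ via $\|p_\tau\|_\infty\leq b_0$, the same four-event union bound for (\ref{eq:C2}), and Theorem 7.1 of Ghosal--Ghosh--van der Vaart with a metric-entropy bound of order $k_n\log(n\epsilon^2)\lesssim n\epsilon^2$ for (\ref{eq:C1}), with $\epsilon>n^{\delta}$, $\delta>-1/2$ playing exactly the role you describe. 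The only cosmetic differences are that the paper bounds Hellinger entropy via $H^2\leq\TV$ and the total-variation Lipschitz estimate (Lemma 3 of Kruijer et al.) and simply covers the $\sigma$-interval on an absolute grid (contributing $O(n\epsilon^2)$ rather than your logarithmic count, which is equally sufficient), neither of which changes the argument.
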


\begin{lemma}\label{lem:mixture3}
Suppose that the true density $f_0$ satisfies conditions (\ref{eq:B1})-(\ref{eq:B3}), and the prior $\Pi$ satisfies conditions (\ref{eq:mixture-condition4}), (\ref{eq:mixture-condition5}), (\ref{eq:mixture-condition6}) and (\ref{eq:mixture-condition7}). Then the condition (\ref{eq:C3*}) holds for Theorem \ref{thm:mixture} with respect to $P_0^{(n)}=P_{\wt f_0}^n$. Here, the density $\wt f_0$ is defined in Lemma \ref{lem:mixture2} with $\sigma_0$ chosen as $n^{-\frac{1}{2\alpha+1}}(\log n)^{\frac{r}{2\alpha+1}}$ and the rate is $\epsilon_n = n^{-\frac{\alpha}{2\alpha+1}}(\log n)^{\frac{\alpha r}{2\alpha+1}}$ with $r$ given in Theorem \ref{thm:mixture}.
\end{lemma}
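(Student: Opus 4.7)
The plan is to verify condition (\ref{eq:C3*}) with $\rho=2$ by combining the uniform R\'enyi approximation from Lemma \ref{lem:mixture2} with direct prior-mass computations on a carefully chosen product block. First I would calibrate the bandwidth $\sigma_n$ by matching the approximation error to the target rate, i.e., $\sigma_n^{2\alpha}\asymp\epsilon_n^2$, which gives $\sigma_n\asymp n^{-1/(2\alpha+1)}(\log n)^{r/(2\alpha+1)}$, and set $k_0:=k_{\sigma_n}=O(\sigma_n^{-1}|\log\sigma_n|^{p/\xi_4})$ with the optimal choice $\xi_4=\min\{p,\xi_3\}$. Lemma \ref{lem:mixture2} then produces a reference mixture $(\mu_{\sigma_n},w_{\sigma_n},\sigma_n)$ such that \emph{every} mixture whose parameters lie in the prescribed $\ell^1$-neighborhood satisfies $D_2(P_{\tilde f_0}\|P_m)=O(\sigma_n^{2\alpha})$; by i.i.d.\ tensorization of R\'enyi-$2$, this inflates to $D_2(P_{\tilde f_0}^n\|P_m^n)\lesssim n\epsilon_n^2$ uniformly over the neighborhood.

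Next, in the blockwise parameterization (\ref{eq:GMF3*}) with precision $\tau=\sigma^{-2}$, weight vector $w$, and coordinates $\mu_1,\ldots,\mu_{k_0}$, I would take
\[
\tilde\Theta_\tau=\bigl\{\sigma^{-2}:\sigma\in[\sigma_n,\sigma_n+\sigma_n^{H_1+2\alpha+2}]\bigr\},\quad
\tilde\Theta_w=\Delta_{k_0}\bigl(w_{\sigma_n},\sigma_n^{H_1+2\alpha+1}\bigr),
\]
\[
\tilde\Theta_{\mu_j}=\bigl[\mu_{\sigma_n,j}-\sigma_n^{H_1+2\alpha+2}/k_0,\ \mu_{\sigma_n,j}+\sigma_n^{H_1+2\alpha+2}/k_0\bigr],\quad j\in[k_0].
\]
The coordinate widths on $\mu_j$ are chosen so that the triangle inequality forces $\|\mu-\mu_{\sigma_n}\|_1\le\sigma_n^{H_1+2\alpha+2}$; hence the product set lies entirely inside the R\'enyi-$2$ neighborhood of Lemma \ref{lem:mixture2}, and a fortiori inside $\{D_2(P_{\tilde f_0}^n\|P_{k_0,\theta^{(k_0)}}^n)\le C_3 n\epsilon_n^2\}$.

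Finally I would estimate the four prior-mass factors in (\ref{eq:C3*}). By (\ref{eq:mixture-condition4}), $-\log\pi(k_0)\lesssim k_0\log k_0$. The volume of $\tilde\Theta_\tau$ is of order $\sigma_n^{H_1+2\alpha-1}$, so combined with the lower bound (\ref{eq:mixture-condition7}) we obtain $-\log p_\tau(\tilde\Theta_\tau)\lesssim\log\sigma_n^{-1}+\sigma_n^{-2b_6}$. Condition (\ref{eq:mixture-condition6}) yields $-\log p_w^{(k_0)}(\tilde\Theta_w)\lesssim k_0(\log k_0)^{d_3}\log\sigma_n^{-1}$ directly. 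For the location factors, the truncation defining $\tilde f_0$ together with the tail bound (B2) forces all centers to satisfy $|\mu_{\sigma_n,j}|\lesssim(\log n)^{1/\xi_3}$, so (\ref{eq:mixture-condition5}) gives $\sum_{j=1}^{k_0}-\log p_\mu(\tilde\Theta_{\mu_j})\lesssim k_0\log(k_0\sigma_n^{-(H_1+2\alpha+2)})+k_0(\log n)^{c_6/\xi_3}$. Substituting the orders of $\sigma_n$ and $k_0$, the dominant contribution is $k_0\cdot(\log n)^{\max\{d_3+1,\,c_6/\min\{p,\xi_3\}\}}$, which matches $n\epsilon_n^2$ precisely when $r=p/\min\{p,\xi_3\}+\max\{d_3+1,c_6/\min\{p,\xi_3\}\}$, as claimed in Theorem \ref{thm:mixture}.

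The main obstacle will be the log-factor bookkeeping: $r$ must simultaneously absorb the $|\log\sigma_n|^{p/\xi_4}$ factor that $k_0$ inherits from Lemma \ref{lem:mixture2}, the $(\log n)^{d_3}$ from the Dirichlet-type prior on $w$, and the $(\log n)^{c_6/\xi_3}$ from displacing $\mu$-priors to support $\tilde f_0$, while the $\sigma_n^{-2b_6}$ penalty coming from the precision prior must be verified to remain subordinate to $n\epsilon_n^2$; the latter is a direct consequence of $b_6\le 1$ and the polynomial order of $\sigma_n$, but must be checked by plugging in the final $\epsilon_n$.
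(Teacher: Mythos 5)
Your proposal follows the paper's proof essentially step by step: calibrate $\sigma_n$ via $\sigma_n^{2\alpha}\asymp\epsilon_n^2$, invoke Lemma \ref{lem:mixture2} with $\xi_4=\min\{p,\xi_3\}$, build the identical product block $\wt\Theta^{(k_0)}=\wt\Theta_\mu^{(k_0)}\otimes\wt\Theta_w^{(k_0)}\otimes\wt\Theta_\sigma^{(k_0)}$, and bound the four prior-mass factors against conditions (\ref{eq:mixture-condition4})--(\ref{eq:mixture-condition7}). The tensorization remark $D_2(P_{\tilde f_0}^n\|P_m^n)=nD_2(P_{\tilde f_0}\|P_m)$ is exactly what the paper uses implicitly, and the location and weight factors match the paper's up to notation.

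The one place that needs attention is the precision-prior penalty. Since the prior is placed on $\tau=\sigma^{-2}$, the image of $\wt\Theta_\sigma^{(k_0)}=[\sigma_n,\sigma_n+\sigma_n^{H_1+2\alpha+2}]$ is $\tau\in[(\sigma_n+\sigma_n^{H_1+2\alpha+2})^{-2},\sigma_n^{-2}]$, so (\ref{eq:mixture-condition7}) with $x\le\sigma_n^{-2}$ gives $-\log\wt\Pi_\sigma(\wt\Theta_\sigma^{(k_0)})\lesssim|\log\sigma_n|+\sigma_n^{-2b_6}$, which is what you write and is actually sharper than the $\sigma^{-b_6}$ that appears in the paper (the paper integrates over $[(\sigma+\cdots)^{-1},\sigma^{-1}]$ rather than the squared limits dictated by $\tau=\sigma^{-2}$). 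However, your subsequent claim that the subordination $\sigma_n^{-2b_6}\lesssim n\epsilon_n^2$ is ``a direct consequence of $b_6\le 1$'' is not right: with $\sigma_n\asymp n^{-1/(2\alpha+1)}(\log n)^{r/(2\alpha+1)}$ one has $\sigma_n^{-2b_6}\asymp n^{2b_6/(2\alpha+1)}(\log n)^{-2b_6 r/(2\alpha+1)}$ while $n\epsilon_n^2\asymp n^{1/(2\alpha+1)}(\log n)^{2\alpha r/(2\alpha+1)}$, so comparing polynomial powers requires $2b_6\le 1$, i.e.\ $b_6\le 1/2$, which is strictly stronger than the stated $b_6\in(0,1]$. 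The paper's chain $\sigma^{-b_6}\lesssim\sigma^{-1}$ does close under $b_6\le 1$, but only because of the missing factor of $2$ in the exponent; so if one corrects the exponent as you did, the condition on $b_6$ in (\ref{eq:mixture-condition7}) needs to be revisited. A second, smaller slip: you quote the location factor as $(\log n)^{c_6/\xi_3}$, but the centers in Lemma \ref{lem:mixture2} satisfy $|\mu_{\sigma,j}|\lesssim|\log\sigma|^{1/\xi_4}$ with $\xi_4=\min\{p,\xi_3\}$, so the correct exponent is $c_6/\xi_4$; these agree only when $p\ge\xi_3$. You use the correct $c_6/\min\{p,\xi_3\}$ in your final expression for $r$, so this is an internal inconsistency rather than a wrong conclusion.
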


We first prove Lemma \ref{lem:mixture2}, and then prove Lemma \ref{lem:mixture1} and Lemma \ref{lem:mixture3}. To facilitate the proof of Lemma \ref{lem:mixture2}, we introduce the following lemma, which is analogous to Theorem 1 in in \cite{kruijer2010adaptive}.

\begin{lemma}\label{lem:mixture-approx1}
Let $f_0$ be a density satisfying conditions (\ref{eq:B1})-(\ref{eq:B3}), and let $K_{\sigma_0}$ denote the convolution operator induced by the kernel $\psi_{\sigma_0}$. Then there exists a density $h_\alpha$ such that for a small enough $\sigma_0>0$,
\[\int \frac{f_0^2}{K_{\sigma_0} h_\alpha} = 1+O(\sigma_0^{2\alpha}).\]
\end{lemma}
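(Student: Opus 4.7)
The plan is to borrow the construction of the approximating density $h_\alpha$ from Theorem~1 of \cite{kruijer2010adaptive}, where $h_\alpha$ is built by multiplying $f_0$ by a polynomial correction in the derivatives of $\log f_0$ up to order $\lfloor \alpha \rfloor$ and then truncating so that $h_\alpha$ remains a valid probability density. The key pointwise identity that their construction yields is
\[K_\sigma h_\alpha(x) \;=\; f_0(x)\bigl(1+R_\sigma(x)\bigr),\]
with a uniform remainder bound $|R_\sigma(x)|\leq \sigma^\alpha L_\alpha(x)$, where $L_\alpha$ is a locally bounded function satisfying $\int f_0 L_\alpha^{2} < \infty$ thanks to the moment assumptions in \ref{eq:B1}.

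Given this construction, the $\chi^2$-type quantity follows from an elementary expansion. Since $K_\sigma h_\alpha$ and $f_0$ are both probability densities, the first-order term vanishes:
\[\int f_0\, R_\sigma \;=\; \int \bigl(K_\sigma h_\alpha - f_0\bigr) \;=\; 0.\]
Applying the identity $(1+u)^{-1} = 1 - u + u^2/(1+u)$ with $u = R_\sigma(x)$ therefore gives
\[\int \frac{f_0^{\,2}}{K_\sigma h_\alpha} \;=\; \int \frac{f_0}{1+R_\sigma} \;=\; 1 + \int \frac{f_0\, R_\sigma^{2}}{1+R_\sigma}.\]
For sufficiently small $\sigma$, on the bulk region where $L_\alpha(x)\leq \tfrac{1}{2}\sigma^{-\alpha}$ one has $|R_\sigma|\leq 1/2$, so the contribution of this part to the remainder is at most $2\sigma^{2\alpha}\int f_0 L_\alpha^{2} = O(\sigma^{2\alpha})$.

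The main obstacle will be handling the tail region where the uniform bound $|R_\sigma|\leq 1/2$ may fail. There I would use conditions \ref{eq:B2} and \ref{eq:B3}: the exponential decay of $f_0$ combined with the exponential tail of the kernel $\psi_\sigma$ forces $K_\sigma h_\alpha(x)\gtrsim f_0(x)$ outside a compact set, so the tail contribution to $\int f_0 R_\sigma^{2}/(1+R_\sigma)$ decays faster than any polynomial in $\sigma$. A secondary technical issue is ensuring that the candidate $h_\alpha$ is actually a probability density; as in \cite{kruijer2010adaptive}, this is achieved by inserting a cutoff near the lower level sets of $f_0$ and rescaling, which only introduces an additional $O(\sigma^{2\alpha})$ error in the identity above. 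Finally, for non-integer $\alpha$, the H\"older modulus $L(\cdot)$ of \ref{eq:true-condition1} must be $f_0$-square-integrable, which follows from the $(2\alpha+\epsilon)$-moment bounds on the log-derivatives in \ref{eq:B1}.
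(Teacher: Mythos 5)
Your algebraic reduction is correct as far as it goes: with $R_\sigma=(K_\sigma h_\alpha-f_0)/f_0$ one indeed has $\int f_0R_\sigma=0$ and $\int f_0^2/(K_\sigma h_\alpha)=1+\int f_0R_\sigma^2/(1+R_\sigma)$. The gap is in the input you attribute to \cite{kruijer2010adaptive}. Their construction does \emph{not} give a global pointwise bound $|R_\sigma(x)|\leq\sigma^{\alpha}L_\alpha(x)$ with a $\sigma$-independent, $f_0$-square-integrable envelope $L_\alpha$. The relative error is controlled only on a $\sigma$-dependent good set $A_\sigma\cap G_\sigma$, where $A_\sigma$ truncates the log-derivatives at level $|l_j(x)|\leq B\sigma^{-j}|\log\sigma|^{-j/p}$ and $G_\sigma=\{f_0\geq\sigma^{H_0}\}$, and even there the remainder contains terms of order $\sigma^{j}l_j(x)$ for $j<\alpha$. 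These are converted into an $O(\sigma^{2\alpha})$ second-moment contribution only by combining the truncation defining $A_\sigma$ with the moment conditions $\int|l_j|^{(2\alpha+\epsilon)/j}f_0<\infty$ of \ref{eq:B1}, i.e.\ moments of order $(2\alpha+\epsilon)/j$, not $2$. If all you had were square-integrability, the bulk term would be of order $\int f_0(\sigma^{j}l_j)^2=O(\sigma^{2j})\gg\sigma^{2\alpha}$ for $j<\lfloor\alpha\rfloor$, so the step ``bulk contribution $\leq 2\sigma^{2\alpha}\int f_0L_\alpha^2$'' is not justified.

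The treatment of the complement is also off. The bad set $A_\sigma^c\cup G_\sigma^c$ is not just a far tail (the region where the local H\"older/derivative control fails moves with $\sigma$), so \ref{eq:B2}--\ref{eq:B3} plus the kernel tails do not give $K_\sigma h_\alpha\gtrsim f_0$ there, and the claimed faster-than-polynomial decay is false in general: the correct order of this contribution is $O(\sigma^{2\alpha})$, obtained from the mass bound $\int_{A_\sigma^c\cup G_\sigma^c}f_0=O(\sigma^{2\alpha})$ (Lemma 2 of \cite{kruijer2010adaptive}) together with the uniform ratio bound $f_0/K_\sigma h_\alpha\leq M_0$, which is a property of the construction (their Remark 1, valid everywhere), not a consequence of tail decay. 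The paper's proof makes this explicit: after noting the quantity is automatically $\geq 1$, it decomposes $\int f_0^2/(K_\sigma h_\alpha)$ into $\int_{A_\sigma\cap G_\sigma}(f_0-K_\sigma h_\alpha)^2/(K_\sigma h_\alpha)$, $\int_{A_\sigma^c\cup G_\sigma^c}f_0^2/(K_\sigma h_\alpha)$, $\int_{A_\sigma^c\cup G_\sigma^c}(K_\sigma h_\alpha-f_0)$ and $\int_{A_\sigma\cap G_\sigma}f_0\leq 1$, bounding the first and third terms by the arguments in the proof of Theorem 1 of \cite{kruijer2010adaptive} and the second by $M_0\int_{A_\sigma^c\cup G_\sigma^c}f_0$. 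If you keep your identity, you still need to split along $A_\sigma\cap G_\sigma$ and invoke exactly this machinery; the identity alone does not shortcut it.
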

\begin{proof}
We set 
$G_{\sigma_0} = \{x:f_0(x)\geq\sigma_0^{H_0}\}$
and
\[A_{\sigma_0} = \{x:|l_j(x)|\leq B\sigma_0^{-j}|\log\sigma_0|^{-j/p}, j-1,\cdots, \floor{\alpha}, |L(x)|\leq B\sigma_0^{-\alpha}|\log\sigma_0|^{-\alpha/p}\}.\]
This is the same definition that appears in Lemma 1 of \cite{kruijer2010adaptive}.
Note that $\int \frac{f_0(x)^2}{K_{\sigma_0} h_\alpha(x)}dx-1\geq 0$, and we only need to derive an upper bound for this integral. We first have the following decomposition
\begin{eqnarray*}
&&\int \frac{f_0(x)^2}{K_{\sigma_0} h_\alpha(x)}dx=\int_{A_{\sigma_0}\cap G_{\sigma_0}}\frac{(f_0(x)-K_{\sigma_0} h_\alpha(x))^2}{K_{\sigma_0} h_\alpha(x)}dx\\
&&+\int_{A_{\sigma_0}^c\cup G_{\sigma_0}^c}\frac{f_0(x)^2}{K_{\sigma_0} h_\alpha(x)}dx+\int_{A_{\sigma_0}^c\cup G_{\sigma_0}^c}(K_{\sigma_0} h_\alpha(x)-f_0(x))dx+\int_{A_{\sigma_0}\cap G_{\sigma_0}}f_0(x)dx.
\end{eqnarray*}
The first and third terms can be bounded by $O(\sigma_0^{2\alpha})$ according to the same argument in the proof of Theorem 1 in \cite{kruijer2010adaptive} when $H_0$ is chosen to be large enough. For the second term, according to Remark 1 in \cite{kruijer2010adaptive}, we have $\frac{f_0(x)}{K_{\sigma_0} h_\alpha(x)}\leq M_0$ with some constant $M_0>0$ for all $x$. Then Lemma 2 in \cite{kruijer2010adaptive} implies
\[\int_{A_{\sigma_0}^c\cup G_{\sigma_0}^c}\frac{f_0(x)^2}{K_{\sigma_0} h_\alpha(x)}dx\leq M_0\int_{A_{\sigma_0}^c\cup G_{\sigma_0}^c}f_0(x)dx = O(\sigma_0^{2\alpha}).\]
The last term can be upper bounded by $1$. Summing up all the terms, we obtain the desired conclusion.
\end{proof}

\begin{proof}[Proof of Lemma \ref{lem:mixture2}]
The proof uses a slightly modified argument in the proof of Lemma 4 in \cite{kruijer2010adaptive}.
First of all, according to Lemma \ref{lem:mixture-approx1}, there exists a  density $h_\alpha$ such that
$\int\frac{f_0(x)^2}{K_{\sigma_0} h_\alpha(x)}dx = 1+O(\sigma_0^{2\alpha})$.
Define $E_{\sigma_0}' = \{x:f_0(x)\geq\sigma_0^{H_2}\}$, where $H_2>H_1$ is chosen to be large enough. Set $\tilde h_\alpha(x) = \frac{h_\alpha(x)\mathbf{1}_{E_{\sigma_0}'}(x)}{\int_{E_{\sigma_0}'} h_\alpha(x)dx}$. 
Define the number $a_{\sigma_0} = C_0|\log\sigma_0|^{1/\xi_4}$, with $\xi_4\leq\min\{\xi_3,p\}$ and some constant $C_0>0$. We choose $p(x|k_{\sigma_0},\theta_{\sigma_0})$ with $\theta_{\sigma_0} = (\mu_{\sigma_0}, w_{\sigma_0},\sigma_0)$ to be the finite mixture given by Lemma 12 in \cite{kruijer2010adaptive} that satisfies
\[\|K_{\sigma_0}\tilde h_\alpha-p_{k_{\sigma_0},\theta_{\sigma_0}}\|_\infty\leq\sigma_0^{-1}\exp(-C_0|\log\sigma_0|^{p/\xi_4}),\]
 for $x\in[-a_{\sigma_0}, a_{\sigma_0}]$, where $p_{k_{\sigma_0},\theta_{\sigma_0}}$ is the density of $P_{k_{\sigma_0},\theta_{\sigma_0}}$. We will show that this mixture density satisfies (\ref{eq:C3 mixture}).
We write
\[D_2(P_{\wt f_0}\|P_{k_{\sigma_0}, \theta_{\sigma_0}}) = \int\frac{\wt f_0(x)^2}{p_{k_{\sigma_0},\theta_{\sigma_0}}(x)}dx = \int_{E_{\sigma_0}}\frac{\wt f_0^2}{f_0^2}\frac{ f_0^2}{K_{\sigma_0} h_\alpha}\frac{K_{\sigma_0} h_\alpha}{K_{\sigma_0}\tilde h_\alpha}\frac{K_{\sigma_0}\tilde h_\alpha}{p_{k_{\sigma_0},\theta_{\sigma_0}}}. \]
The four ratios will be bounded separately.
\begin{enumerate}
\item According to (\ref{eq:B2}), we know that $\int f_0(x)^b dx = O(1)$, for any constant $b>0$. Since $H_1>2\alpha$,
\[\int_{E_{\sigma_0}^c}f_0(x)dx\leq (\sigma_0^{H_1})^{\frac{2\alpha}{H_1}}\int_{E_{\sigma_0}^c}f_0(x)^{1-\frac{2\alpha}{H_1}}dx = O(\sigma_0^{2\alpha}).\]
This leads to
\[\left|\frac{\wt f_0^2(x)}{f_0^2(x)}-1\right| = \left|\frac{1}{(1-\int_{E_{\sigma_0}^c}f_0(x)dx)^2}-1\right| \leq C_1\sigma_0^{2\alpha},\]
for a constant $C_1>0$ and all $x\in E_{\sigma_0}$.
\item For the second term, we have
\[\int_{E_{\sigma_0}}\frac{ f_0^2}{K_{\sigma_0} h_\alpha}dx = \int\frac{ f_0^2}{K_{\sigma_0} h_\alpha}dx-\int_{E_{\sigma_0}^c}\frac{f_0^2}{K_{\sigma_0} h_\alpha}dx .\]
Since $\frac{f_0(x)}{K_{\sigma_0} h_\alpha(x)}\leq M_0$ for a constant $M_0$ uniformly over $x$, 
\[\int_{E_{\sigma_0}^c}\frac{f_0^2}{K_{\sigma_0} h_\alpha}dx\leq M_0\int_{E_{\sigma_0}^c} f_0(x)dx = O(\sigma_0^{2\alpha}).\]
Combining with Lemma \ref{lem:mixture-approx1}, we conclude that
\[\left|\int_{E_{\sigma_0}}\frac{ f_0^2}{K_{\sigma_0} h_\alpha}dx - 1\right|\leq C_2\sigma_0^{2\alpha},\]
for a constant $C_2>0$.
\item By the same argument in the proof of Lemma 4 in \cite{kruijer2010adaptive}, we get
\[\left|\frac{K_{\sigma_0} h_\alpha(x)}{K_{\sigma_0}\tilde h_\alpha(x)}- 1\right|\leq C_3\sigma_0^{2\alpha}, \]
for a constant $C_3>0$ and all $x\in E_{\sigma_0}$.
\item According to the proof of Lemma 4 in \cite{kruijer2010adaptive}, we have $E_{\sigma_0}'\subset\{x: f_0(x)\geq c_0\sigma_0^{H_2}\}$ for some constant $c_0$. Because $\xi_4\leq\xi_3$, $E_{\sigma_0}'\subset[-a_{\sigma_0}, a_{\sigma_0}]$. This leads to the inequality $\|K_{\sigma_0}\tilde h_\alpha-p_{k_{\sigma_0},\theta_{\sigma_0}}\|_\infty\leq\sigma_0^{-1}\exp(-C_0|\log\sigma_0|^{p/\xi_4})$. Note that for any $x\in E_{\sigma_0}$, we have $K_{\sigma_0}(x)\tilde h_\alpha(x)\gtrsim K_{\sigma_0} h_\alpha(x)\gtrsim f_0(x)\gtrsim\sigma_0^{H_1}$ uniformly over $x\in E_{\sigma_0}$. Thus, for a sufficiently large $C_0$,
$$\sigma_0^{-1}\exp(-C_0|\log\sigma_0|^{p/\xi_4}) = \sigma^{C_0|\log\sigma_0|^{(p-\xi_4)/\xi_4}-1} = O(\sigma_0^{H_1+2\alpha}),$$
where we have used the condition $\xi_4\leq p$. Then we have
\[\left|\frac{K_{\sigma_0}\tilde h_\alpha(x)}{p(x|k_{\sigma_0},\theta_{\sigma_0})}-1\right| \leq \frac{\|K_{\sigma_0}\tilde h_\alpha-p_{k_{\sigma_0},\theta_{\sigma_0}}\|_\infty}{K_{\sigma_0}\tilde h_\alpha(x)-\|K_{\sigma_0}\tilde h_\alpha-p_{k_{\sigma_0},\theta_{\sigma_0}}\|_\infty}\leq C_4\sigma_0^{2\alpha}.
\]
for all $x\in E_{\sigma_0}$ with some constant $C_4>0$.
\end{enumerate}
Combining the bounds of all terms above, we get
\[\int\frac{\wt f_0^2(x)}{p(x|k_{\sigma_0},\theta_{\sigma_0})}dx = 1+O(\sigma_0^{2\alpha}),\]
which indicates that (\ref{eq:C3 mixture}) holds. 
When $\sigma\in[\sigma_0, \sigma_0^{H_1+2\alpha+2}]$, $\|\mu-\mu_{\sigma_0}\|_1\leq\sigma_0^{H_1+2\alpha+2}$ and $w\in\Delta_{k_{\sigma_0}}(w_{\sigma_0}, \sigma_0^{H_1+2\alpha+1})$, according to Lemma 3 in \cite{kruijer2010adaptive}, we have
\[\|p_{k_{\sigma_0},(\mu,w,\sigma)}-p_{k_{\sigma_0},(\mu_{\sigma_0}, w_{\sigma_0},\sigma_0)}\|_\infty = O(\sigma_0^{H_1+2\alpha}).\]
Then the four points listed above also hold, which means that (\ref{eq:C3 mixture}) is also satisfied for these $(k_{\sigma_0}, (\mu, w, \sigma))$. 
The proof is complete.
\end{proof}

Now we prove Lemma \ref{lem:mixture1} and Lemma \ref{lem:mixture3}.
\begin{proof}[Proof of Lemma \ref{lem:mixture1}]
We consider the set
\begin{equation}\label{eq:outlier-part}
\Theta_n(\epsilon) = \cup_{k=1}^{k_n}\Theta^{(k)}(\epsilon),
\end{equation}
where
\[\Theta^{(k)}(\epsilon) = \left\{P_{k,\theta^{(k)}}:\theta^{(k)}=(\mu,w,\sigma), \mu\in\otimes_{j=1}^k[-b_n, b_n],\sigma\in (m_\sigma, M_\sigma]\right\},\]
with $k_n = \left\lceil\frac{n\epsilon^2}{\log(n\epsilon^2)}\right\rceil$, $b_n = (n\epsilon^2)^{\frac{1}{c_3}}$, $m_\sigma = (n\epsilon^2)^{-\frac{1}{2b_3}}$ and $M_\sigma = \exp\left(\frac{1}{2}n\epsilon^2\right)$.
It's easy to see that
\[\Theta_n(\epsilon)^c\subseteq\left\{P_{k,\theta^{(k)}}:k>k_n\right\}\bigcup\left(\cup_{k=1}^{k_n}\wt\Theta^{(k)}(\epsilon)\right),\]
where $\wt\Theta^{(k)}(\epsilon) = \left\{P_{k,\theta^{(k)}}:\theta^{(k)} = (\mu,w,\sigma), \max_{j}|\mu_j|>B_n\text{ or }\sigma\not\in(m_\sigma, M_\sigma]\right\}$. Thus
\begin{eqnarray*}
\Pi(\Theta_n(\epsilon)^c)&\leq&\sum_{k>k_n}\pi(k)+\sum_{k = 1}^{k_n}\pi(k)\left[\Pi^{(k)}\left(\sigma\not\in(m_\sigma,M_{\sigma}]\right)+\Pi^{(k)}\left(\max_{1\leq j\leq k}|\mu_j|>b_n\right)\right]
\end{eqnarray*}
Now we derive an upper bound for each term.
\begin{enumerate}
\item Set $\tau = \sigma^{-2}$, and then for all $k\in\mathbb{N}_+$,
\begin{eqnarray*}
\Pi^{(k)}(\sigma\not\in(m_\sigma, M_\sigma]) &\leq& \int_0^{\exp\left(-n\epsilon^2\right)}p_\tau(\tau)d\tau + \int_{(n\epsilon^2)^{1/b_3}}^\infty p_\tau(\tau)d\tau\\
&\leq& b_0\exp(-n\epsilon^2)+b_1\exp(-b_2n\epsilon^2),
\end{eqnarray*}
where we have used the condition (\ref{eq:mixture-condition3}).
\item By the condition (\ref{eq:mixture-condition1}), we have
\[\sum_{k>k_n}\pi(k)\leq C_1\exp(-C_2k_n\log(k_n))\leq C_1\exp(-\wt C_2n\epsilon^2).\]
\item According to the conditions (\ref{eq:mixture-condition1}) and (\ref{eq:mixture-condition2}),
\begin{eqnarray*}
&&\sum_{k=1}^{k_n}\pi(k)\Pi^{(k)}(\max_j|\mu_j|>b_n)\leq \sum_{k=1}^{\infty}\pi(k)\Pi^{(k)}(\max_j|\mu_j|>b_n)\\
&\leq&\sum_{k=1}^\infty\pi(k)k\left(\int_{-\infty}^{-b_n}p_\mu(x)dx+\int_{b_n}^\infty p_\mu(x)dx\right)\\
&\leq&c_1\exp(-c_2n\epsilon^2)\sum_{m=1}^\infty\sum_{k=m}^\infty\pi(k)\\
&\leq&c_1\exp(-c_2n\epsilon^2)\sum_{m=1}^\infty C_1\exp(-C_2m\log m)\\
&\leq&\wt c_1\exp(-c_2n\epsilon^2).
\end{eqnarray*}
\end{enumerate}
Summing up the three bounds above, we have $\Pi(\Theta_n(\epsilon)^c)\lesssim\exp(-C_0n\epsilon^2)$
for some constant $C_0>0$. In order that the constant $C_0$ can be arbitrarily large, one can replace $\epsilon$ by $\wt{C}\epsilon$ for a sufficiently large $\wt{C}$ and use the same argument above. We therefore obtain (\ref{eq:C2}).

Now we start to show (\ref{eq:C1}). By Theorem 7.1 in \cite{ghosal2000convergence}, it is sufficient to bound the metric entropy
$$\log N(\epsilon,\Theta_n(\epsilon),H)\lesssim n\epsilon^2.$$
Since $H^2(P_1, P_2)\leq \TV(P_1, P_2)$, we have $N(\epsilon,\Theta_n(\epsilon),H)\leq N(\epsilon^2,\Theta_n(\epsilon),\TV)$.
According to (\ref{eq:outlier-part}),
\[N(\epsilon^2,\Theta_n(\epsilon), \TV)\leq\sum_{k=1}^{k_n}N(\epsilon^2,\Theta^{(k)}(\epsilon), \TV),\]
and thus it is sufficient to bound $N(\epsilon^2,\Theta^{(k)}(\epsilon), \TV)$ for each $k\in[k_n]$.

We use $\psi$ to denote $\psi_{\sigma}$ with $\sigma = 1$ in short. According to Lemma 3 in \cite{kruijer2010adaptive}, for any $P_{k,\theta}$ with $\theta = (\mu,w,\sigma)$ and $P_{k,\wt\theta}$ with $\wt\theta = (\wt\mu,\wt w,\wt\sigma)$ such that $P_{k,\theta}, P_{k,\wt\theta}\in\Theta^{(k)}(\epsilon)$, we have
\begin{eqnarray*}
\TV(P_{k,\theta},P_{k,\wt\theta})&\leq& \|w-\wt w\|_1+2\|\psi\|_\infty\sum_{i=1}^k\frac{w_i\wedge\wt w_i}{\sigma\wedge\wt\sigma}|\mu_i-\wt\mu_i|+\frac{|\sigma-\wt\sigma|}{\sigma\wedge\wt\sigma}\\
&\leq&\|w-\wt w\|_1+2\frac{\|\psi\|_\infty}{m_\sigma}\|\mu-\wt\mu\|_1+\frac{|\sigma-\wt\sigma|}{m_\sigma}.
\end{eqnarray*}
Based on the fact that $N(\epsilon, A\times B, d_1+d_2)\leq N(t\epsilon, A,d_1)\times N((1-t)\epsilon, B, d_2)$, we have
\begin{eqnarray*}
&&N(\epsilon^2, \Theta^{(k)}(\epsilon),\TV)\\
&\leq& N\left(\frac{\epsilon^2}{3},\Delta_k, \|\cdot\|_1\right)N\left(\frac{m_\sigma\epsilon^2}{6\|\psi\|_\infty}, [-b_n, b_n]^k,\|\cdot\|_1\right)N\left(\frac{m_\sigma\epsilon^2}{3}, (m_\sigma, M_\sigma], |\cdot|\right).
\end{eqnarray*}
Then, we use Lemma 5 in \cite{kruijer2010adaptive}, and obtain
\[N\left(\frac{\epsilon^2}{3},\Delta_k, \|\cdot\|_1\right)\leq \exp\left((k-1)\log\frac{15}{\epsilon^2}\right)\leq\exp(C_1k\log(n\epsilon^2)),\]
\[N\left(\frac{m_\sigma\epsilon^2}{6\|\psi\|_\infty}, [-b_n, b_n]^k,\|\cdot\|_1\right)\leq\frac{k!(b_n+\wt\epsilon)^k}{\wt\epsilon^k}\leq \exp(C_2k(\log k+\log n\epsilon^2)),\]
where $\tilde\epsilon = \frac{m_\sigma\epsilon^2}{6\|\psi\|_\infty}$, and
\[N\left(\frac{m_\sigma\epsilon^2}{3}, (m_\sigma, M_\sigma], |\cdot|\right)\leq\frac{M_\sigma}{m_\sigma\epsilon^2/3}\leq \exp(C_3n\epsilon^2),\]
for some constants $C_1, C_2, C_3>0$. Note that we have used the condition $\epsilon>n^\delta$ for some constant $\delta>-1/2$ to derive the above bounds.
Finally, we have
\[N(\epsilon^2, \Theta_n(\epsilon),\TV)\leq k_n\exp\left(C_1k_n\log n\epsilon^2+C_2k_n(\log k_n+\log n\epsilon^2)+C_3n\epsilon^2\right),\]
which leads to
\[\log N(\epsilon^2, \Theta_n(\epsilon), \TV)\lesssim k_n\log(n\epsilon^2) \lesssim n\epsilon^2.\]
The proof is complete.
\end{proof}

\begin{proof}[Proof of Lemma \ref{lem:mixture3}]
According to Lemma \ref{lem:mixture2}, there exist $k_{\sigma_0}$, $\theta_{\sigma_0} = (\mu_{\sigma_0}, w_{\sigma_0}, \sigma_0)$ such that (\ref{eq:C3 mixture}) holds. Then we set $k_0 = k_{\sigma_0}$ in Theorem \ref{thm:general-MF} and $\Theta^{(k_{\sigma_0})} = \Theta_\mu^{(k_{\sigma_0})}\otimes\Theta_w^{(k_{\sigma_0})}\otimes\Theta_{\sigma}^{(k_{\sigma_0})}$, where $\Theta_\mu^{(k_{\sigma_0})} = \otimes_{j=1}^{k_{\sigma_0}}\Theta_{\mu_j}^{(k_{\sigma_0})}$. 
To be specific, let $H_1$ be any fixed constant such that $H_1>2\alpha$, and then we define
\[\Theta_{\mu_j}^{(k_{\sigma_0})} = [\mu_{\sigma_0, j}-k_{\sigma_0}^{-1}\sigma_0^{H_1+2\alpha+2}, \mu_{\sigma_0, j}+k_{\sigma_0}^{-1}\sigma_0^{H_1+2\alpha+2}],\]
\[\Theta_{w}^{(k_{\sigma_0})} = \Delta_{k_{\sigma_0}}(w_{\sigma_0},\sigma_0^{H_1+2\alpha+1}),\]
and
\[\Theta_{\sigma}^{(k_{\sigma_0})} = [\sigma_0, \sigma_0+\sigma_0^{H_1+2\alpha+2}].\]
The conclusion of Lemma \ref{lem:mixture2} implies
\begin{equation}\label{eq:mixture-part1}
\Theta^{(k_{\sigma_0})}\subset\left\{(\mu,w,\sigma): nD_2\left(P_{\wt f_0}\| P_{k_{\sigma_0},\theta_{\sigma_0}}\right)\leq C_2n\sigma_0^{2\alpha}\right\},
\end{equation}
for a constant $C_2>0$. Choose $\sigma_0 =  n^{-\frac{1}{2\alpha+1}}(\log n)^{\frac{r}{2\alpha+1}}$, then $n^{\frac{1}{2\alpha+1}}\leq k_{\sigma_0}\leq n^{\frac{1}{2\alpha+1}+t}$ for any $t>0$ as $n\rightarrow\infty$.
Then the condition (\ref{eq:mixture-condition4}) implies
\begin{equation}\label{eq:mixture-part2}
-\log\pi(k_{\sigma_0}) \lesssim k_{\sigma_0}\log k_{\sigma_0}.
\end{equation} 
We also have
\[\Pi_{\mu_j}^{(k_{\sigma_0})}(\Theta_{\mu_j}^{(k_{\sigma_0})}) \geq \int_{\mu_{\sigma_0,j}-k_{\sigma_0}^{-1}\sigma_0^{H_1+2\alpha+2}}^{\mu_{\sigma_0,j}+k_{\sigma_0}^{-1}\sigma_0^{H_1+2\alpha+2}}p_\mu(x)dx.\]
According to the condition (\ref{eq:mixture-condition5}) and Lemma \ref{lem:mixture2}, we have $|\mu_j|\lesssim |\log\sigma_0|^{1/\xi_4}$ with $\xi_4\leq\min\{\xi_3, p\}$ as in Lemma \ref{lem:mixture2}. Then,
\[-\log\Pi_{\mu_j}^{(k_{\sigma_0})}(\Theta_{\mu_j}^{(k_{\sigma_0})})\lesssim |\log\sigma_0|+|\log\sigma_0|^{c_6/\xi_4} \lesssim |\log\sigma_0|^{\max\{1,c_6/\xi_4\}}. \]
By (\ref{eq:mixture-condition6}), we have
\[-\log\Pi_w^{(k_{\sigma_0})}(\Theta_{w}^{(k_{\sigma_0})})\lesssim k_{\sigma_0}(\log k_{\sigma_0})^{d_3}|\log\sigma_0|.\]
Finally, the condition (\ref{eq:mixture-condition7}) leads to
\[-\log\Pi_\sigma^{(k_{\sigma_0})}(\Theta_{\sigma}^{(k_{\sigma_0})}) \leq -\log\left(\int_{(\sigma_0+\sigma_0^{H_1+2\alpha+2})^{-1}}^{\sigma_0^{-1}}p_\tau(x)dx\right)\lesssim |\log\sigma_0|+\sigma_0^{-b_6}\lesssim\sigma_0^{-1}.\]
With the choice $\xi_4=\min\{p, \xi_3\}$ and $k_{\sigma_0} = O(\sigma_0^{-1}|\log\sigma_0|^{p/\xi_4})$, we have
\begin{eqnarray*}
-\log\pi(k_{\sigma_0})-\sum_{j=1}^{k_{\sigma_0}}\Pi_{\mu_j}(\Theta_{\mu_j}^{(k_{\sigma_0})})-\log\Pi_w^{(k_{\sigma_0})}(\Theta_w^{(k_{\sigma_0})})-\log\Pi_\sigma(\Theta_{\sigma}^{(k_{\sigma_0})})\leq C_3 \sigma_0^{-1}(\log\sigma_0)^r.
\end{eqnarray*}
where $r = \frac{p}{\min\{p,\xi_3\}}+\max\{d_3+1, \frac{c_6}{\min\{p,\xi_3\}}\}$. Plug in $\sigma_0= n^{-\frac{1}{2\alpha+1}}(\log n)^{\frac{r}{2\alpha+1}}$,we obtain (\ref{eq:C3*}) with respect to $\wt f_0$.
\end{proof}

Finally we prove Theorem \ref{thm:mixture}. 

\begin{proof}[Proof of Theorem \ref{thm:mixture}]
We bound $P_{f_0}^n\wh{Q}H^2(P_{\wh k,\theta^{(\wh k)}},P_{f_0})$ by
\begin{eqnarray*}
P_{f_0}^n\wh{Q}H^2(P_{\wh k,\theta^{(\wh k)}},P_{f_0}) &\leq& P_{\wt f_0}^n\wh{Q}H^2(P_{\wh k,\theta^{(\wh k)}},P_{f_0})+\TV(P_{f_0}^n,P_{\wt f_0}^n)\\
&\leq& 2P_{\wt f_0}^n\wh{Q}H^2(P_{\wh k,\theta^{(\wh k)}},P_{\wt f_0})+2H^2(P_{\wt f_0},P_{f_0})+\TV(P_{f_0}^n,P_{\wt f_0}^n).
\end{eqnarray*}
By Lemma \ref{lem:mixture1}, Lemma \ref{lem:mixture3} and Theorem \ref{thm:general-MF}, we have
\[P_{\wt f_0}^n\wh{Q}H^2(P_{\wh k, \theta^{(\wh k)}},P_{\wt f_0})\lesssim \sigma_0^{2\alpha},\]
for $\sigma_0= n^{-\frac{1}{2\alpha+1}}(\log n)^{\frac{r}{2\alpha+1}}$. 
Note that $\wt f_0(x) = \frac{f_0(x)\mathbf{1}_{E_{\sigma_0}}(x)}{\int_{E_{\sigma_0}}f_0(x)dx}$ with $E_{\sigma_0} = \{x: f_0(x)\geq \sigma_0^{H_1}\}$, and $R = \int_{E_{\sigma_0}^c}f_0(x)dx \leq \sigma_0^{H_1/2}\int_{E_{\sigma_0}^{c}}\sqrt{f_0(x)}dx =  O(\sigma_0^{H_1/2})$. Then,
\[H^2(P_{\wt f_0},P_{f_0}) = 1-\int\sqrt{f_0(x)\wt f_0(x)}dx = 1-\sqrt{1-R} = O(\sigma_0^{H_1/2}).\]
Moreover,
\[\TV(P_{f_0}^n,P_{\wt f_0}^n) = 1-(1-R)^n = O(nR) = O(n\sigma_0^{H_1/2}).\]
With the choice $H_1 = 8\alpha+4$, the proof is complete.
\end{proof}

Now we prove Theorem \ref{thm:mixture-latent}. We also use change of measure argument and show the concentration around $P_{\wt f_0}^{(n)}$ at first.

\begin{proof}[Proof of Theorem \ref{thm:mixture-latent}]
We first present a latent variable version of Lemma \ref{lem:general1_model_selection},
\begin{eqnarray}
\nonumber &&P_{\wt f_0}^n\left[\wh Q^{(\wh k)}nH\left(P_{\wh k,\theta^{(\wh k)}}, P_{\wt f_0}\right)^2\right] \\
\nonumber &\leq& \inf_{a>0}\frac{1}{a}\left[\min_{k\in\mathcal{K}}\min_{\bar Q^{(k)}\in\mathcal{\bar S}_{\rm MF}^{(k)}}\left\{D\left(\bar Q^{(k)}\|\bar\Pi^{(k)}\right) \right.\right.\\
\nonumber && \left.+\bar Q^{(k)}D\left(P_{\wt f_0}^n\|P(\cdot|k,z^{(k)},\theta^{(k)})\right)-\log\pi(k)\right\} \\
\label{eq:upper_all} &&\left.+P_{\wt f_0}^n\log \Pi\left(\exp\left(anH\left(P_{k,\theta^{(k)}}, P_{\wt f_0}\right)^2\right)\Big|X^{(n)}\right)\right],
\end{eqnarray}
where $\wt f_0$ is defined in the Lemma \ref{lem:mixture1}. The proof of this inequality follows the same argument as in the proof of Lemma \ref{lem:general1_model_selection} and thus we omit it.
Note that the parametrization of the density $p(x|k,\theta^{(k)})$ in (\ref{eq:mixture}) does not rely on the latent variables. Therefore, when the conditions of Theorem \ref{thm:mixture} are satisfied, for some small constant $a>0$, we have
\[P_{\wt f_0}^n\log \Pi\left(\exp\left(anH\left(P_{k,\theta^{(k)}}, P_{\wt f_0}\right)^2\right)\Big|X^{(n)}\right)\lesssim n^{\frac{1}{2\alpha+1}}(\log n)^{\frac{2\alpha r}{2\alpha+1}},\]
based on the Jensen's Inequality, Lemma \ref{lem:strong convergence} and Lemma \ref{lem:sub-exp}.

Now we need to choose some $k\in\mathcal{K}$ and $\bar{Q}^{(k)}\in\mathcal{S}_{\rm MF}^{(k)}$ to bound the remaining terms of (\ref{eq:upper_all}). We consider $d\bar Q^{(k)}(z^{(k)},\theta^{(k)}) = dQ_z^{(k)}(z^{(k)})dQ_{\theta}^{(k)}(\theta^{(k)})$ and $Q_{z}^{(k)}(z_i^{(k)}=j) = \gamma_{ij}$, where $\sum_{j=1}^k\gamma_{ij} = 1$. We sometimes shorthand $Q_{\theta}^{(k)}$ by $Q^{(k)}$ when the context is clear.
Write $z_{ij}^{(k)} = \1_{\{z_i^{(k)} = j\}}$, and we have
\begin{eqnarray*}
&&D\left(\bar Q^{(k)}\|\bar\Pi^{(k)}\right)+\bar Q^{(k)}D\left(P_{\wt f_0}^{n}\|P(\cdot|k,z^{(k)},\theta^{(k)})\right)-\log\pi(k)\\
&=&P_{\wt f_0}^{n}\sum_{z^{(k)}} \prod_{i=1}^n\prod_{j=1}^k\gamma_{ij}^{z_{ij}^{(k)}}\int\log\frac{\prod_{i=1}^n\prod_{j=1}^k\gamma_{ij}^{z_{ij}^{(k)}}dQ_\theta^{(k)}(\theta^{(k)})}{d\Pi^{(k)}(\theta^{(k)})\prod_{i=1}^n\prod_{j = 1}^k\left(w_{j}\psi_{\sigma}(X_i-\mu_j)\right)^{z_{ij}^{(k)}}}dQ_\theta^{(k)}(\theta^{(k)})\\
&&+P_{\wt f_0}^{n}\log p_{\wt f_0}^{n}(X^{(n)})-\log\pi(k)\\
&=&\sum_{i=1}^n\sum_{j=1}^k\gamma_{ij}\log\gamma_{ij}+D\left(Q^{(k)}\|\Pi^{(k)}\right)+P_{\wt f_0}^{n}\log p_{\wt f_0}^{n}(X^{(n)})-\log\pi(k)\\
&&-\sum_{z^{(k)}}\sum_{i,j}z_{ij}^{(k)}\prod_{i=1}^n\prod_{j=1}^k\gamma_{ij}^{z_{ij}^{(k)}}\int \log w_j\psi_{\sigma}(X_i-\mu_j)dQ^{(k)}(\theta^{(k)})\\
&=&\sum_{i=1}^n\sum_{j=1}^k\gamma_{ij}\log\frac{\gamma_{ij}}{\exp\left(\int \log w_j\psi_{\sigma}(X_i-\mu_j)dQ^{(k)}(\theta^{(k)}\right))}\\
&&+D\left(Q^{(k)}\|\Pi^{(k)}\right)+P_{\wt f_0}^{n}\log p_{\wt f_0}^{n}(X^{(n)})-\log\pi(k).
\end{eqnarray*}
Thus, the optimal choice of $\gamma_{ij}$ is that
\[\gamma_{ij} = \frac{\exp\left(\int \log w_j\psi_{\sigma}(X_i-\mu_j)dQ^{(k)}(\theta^{(k)})\right)}
{\sum_{r = 1}^k\exp\left(\int \log w_r\psi_{\sigma}(X_i-\mu_r)dQ^{(k)}(\theta^{(k)})\right)},\]
and we fix this choice as our $Q_z^{(k)}$.
We then have
\begin{eqnarray}\label{eq:upper1}
&&D\left(\bar Q^{(k)}\|\bar \Pi^{(k)}\right)+\bar{Q}^{(k)}D\left(P_{\wt f_0}^{n}\|P(\cdot|k,z^{(k)},\theta^{(k)})\right)-\log\pi(k)\\
&=&-\sum_{i=1}^n\log\left[\sum_{r = 1}^k\exp\left(\int\log w_r\psi_{\sigma}(X_i-\mu_r)dQ^{(k)}(\theta^{(k)})\right)\right]\nonumber\\
&&+D\left(Q^{(k)}\|\Pi^{(k)}\right)+P_{\wt f_0}^{n}\log p_{\wt f_0}^{n}(X^{(n)})-\log\pi(k).\nonumber
\end{eqnarray}
We now specify the choice of $k\in\mathcal{K}$ and $Q^{(k)}=Q_{\theta}^{(k)}$ in (\ref{eq:upper1}). According to Lemma \ref{lem:mixture2}, for $k  = k_{\sigma_0} = O(\sigma_0^{-1}|\log\sigma_0|^{p/\xi_4})$, when $\theta^{(k)} = (\mu, w,\sigma)$ such that
\[\sigma \in [\sigma_0, \sigma_0+\sigma_0^{H_1+2\alpha+2}],\qquad\|\mu-\mu_{\sigma_0}\|_1\leq\sigma_0^{H_1+2\alpha+2},\qquad w\in\Delta_{k_{\sigma_0}}(w_{\sigma_0}, \sigma_0^{H_1+2\alpha+1}),\]
we have
\begin{equation}\label{eq:D2}
D_2\left(P_{\wt f_0}\|P_{k,\theta^{(k)}}\right)\lesssim\sigma_0^{2\alpha}.
\end{equation}
Suppose $i_0 = \argmax_i w_{\sigma_0, i}$, then $w_{\sigma_0, i_0}\geq k_{\sigma_0}^{-1}$ and $w_{\sigma_0, i_0}-\frac{k_{\sigma_0}-1}{2k_{\sigma_0}}\sigma_0^{H_1+2\alpha+1}>\frac{1}{2k_{\sigma_0}}\sigma_0^{H_1+2\alpha+1}$ when $\sigma_0\rightarrow 0$. Then consider $w_{\sigma_0,j}^* = w_{\sigma_0, j}+\1_{\{j\neq i_0\}}\frac{1}{2k_{\sigma_0}}\sigma_0^{H_1+2\alpha+1}-\1_{\{j = i_0\}}\frac{k_{\sigma_0}-1}{2k_{\sigma_0}}\sigma_0^{H_1+2\alpha+1}$. Obviously, $w_{\sigma}^*\in\Delta_{k_{\sigma_0}}(w_{\sigma_0},\sigma_0^{H_1+2\alpha+1})$ and $w_{\sigma_0, i}^*\geq\frac{1}{2k_{\sigma_0}}\sigma_0^{H_1+2\alpha+1}$ for all $1\leq i\leq k_{\sigma_0}$.
Set 
\[\wt\Theta_{\mu_j}^{(k_{\sigma_0})} = [\mu_{\sigma_0,j}-k_{\sigma_0}^{-1}\sigma_0^{H_1+2\alpha+2}, \mu_{\sigma_0,j}+k_{\sigma_0}^{-1}\sigma_0^{H_1+2\alpha+2}],\]
\[\wt\Theta_{w}^{(k_{\sigma_0})} = \Delta_{k_{\sigma_0}}\left(w_{\sigma_0}^*,\frac{\nu}{2k_{\sigma_0}}\sigma_0^{H_1+2\alpha+1}\right).\]
\[\wt\Theta_{\sigma}^{(k_{\sigma_0})}= \left[\wt\sigma_0,\wt\sigma_0+\frac{1}{2}\wt\sigma_0^{H_1+2\alpha+2}\right],\]
where $\wt\sigma_0 = (1+\epsilon)\sigma_0$ with $\epsilon>0$ and $0<\nu<1$ to be determined later. Choose $k = k_{\sigma_0}$ and $dQ^{(k_{\sigma_0})}(\theta^{(k_{\sigma_0})})=dQ_w^{(k_{\sigma_0})}(w)dQ_\tau^{(k_{\sigma_0})}(\tau)\prod_{j=1}^{k_{\sigma_0}}dQ_{\mu_j}^{(k_{\sigma_0})}(\mu_j)$ in (\ref{eq:upper1}), where
\[dQ_w^{(k_{\sigma_0})}(w) =  \frac{p_w^{(k_{\sigma_0})}(w)\1_{\{w\in\wt\Theta_w^{(k_{\sigma_0})}\}}dw}{\int_{\wt\Theta_w^{(k_{\sigma_0})}}p_w^{(k_{\sigma_0})}(w)dw},\] 
\[dQ_{\mu_j}(\mu_j) = \frac{p_{\mu}(\mu_j)\1_{\{\mu_j\in\wt\Theta_{\mu_j}^{(k_{\sigma_0})}\}}d\mu_j}{\int_{\wt\Theta_{\mu_j}^{(k_{\sigma_0})}} p_{\mu}(\mu_j)d\mu_j},\qquad\text{for $1\leq j\leq k_{\sigma_0}$,}\]
\[dQ_\tau(\tau) =  \frac{p_\tau(\tau)\1_{\{\tau^{-1/2}\in\wt\Theta_{\sigma}^{(k_{\sigma_0})}\}}d\tau}{\int_{\tau^{-1/2}\in\wt\Theta_{\sigma}^{(k_{\sigma_0})}}p_\tau(\tau)d\tau}.\]
Now, we build an upper bound for 
\begin{equation}\label{eq:log-exp-term}
-\sum_{i=1}^n\log\left[\sum_{r = 1}^{k_{\sigma_0}}\exp\left(\int\log w_r\psi_{\sigma}(X_i-\mu_r)dQ^{(k_{\sigma_0})}(\theta^{(k_{\sigma_0})})\right)\right],
\end{equation}
 or equivalently, construct lower bounds for $\int\log w_r\psi_{\sigma}(X_i-\mu_r)dQ^{(k_{\sigma_0})}(\theta^{(k_{\sigma_0})})$ for all $1\leq i\leq n$ and $1\leq r\leq k_{\sigma_0}$.
Set $\tau_{\min}^{1/2} = (\wt\sigma_0+\frac{1}{2}\wt\sigma_0^{H_1+2\alpha+2})^{-1}$ and $\tau_{\max}^{1/2} = \wt\sigma_0^{-1}$, and then for any $w\in\wt\Theta_w^{(k_{\sigma_0})}$, $\mu_r\in\wt\Theta_{\mu_r}^{(k_{\sigma_0})}$, $\tau^{-1/2} = \sigma\in\wt\Theta_{\sigma_0}^{(k_{\sigma_0})}$, we have
\[w_r\geq w_{\sigma_0, r}^*-\frac{\nu}{2k_{\sigma_0}}\sigma_0^{H_1+2\alpha+1}\geq (1-\nu)w_{\sigma_0,r}^*,\]
and
\begin{eqnarray*}
\psi_{\sigma}(X_i-\mu_r) &=&\frac{\tau^{1/2}}{2\Gamma\left(1+1/p\right)}\exp\left(-\tau^{p/2}|X_i-\mu_r|^p\right)\\
&\geq&\frac{\tau_{\min}^{1/2}}{2\Gamma(1+1/p)}\exp\left(-\tau_{\max}^{p/2}\left[|X_i-\mu_{\sigma_0, r}|+{k_{\sigma_0}^{-1}}\sigma_0^{H_1+2\alpha+2}\right]^p\right).
\end{eqnarray*}
Now we build the upper bounds of $\left[|X_i-\mu_{\sigma_0, r}|+{k_{\sigma_0}^{-1}}\sigma_0^{H_1+2\alpha+2}\right]^p$ in two cases:
\begin{itemize}
\item If $k_{\sigma_0}^{-1}\sigma_0^{H_1+2\alpha+2}\leq\epsilon|X_i-\mu_{\sigma_0, r}|$, then
\[\left[|X_i-\mu_{\sigma_0, r}|+{k_{\sigma_0}^{-1}}\sigma_0^{H_1+2\alpha+2}\right]^p\leq(1+\epsilon)^p|X_i-\mu_{\sigma_0, r}|^p.\]
\item If $k_{\sigma_0}^{-1}\sigma_0^{H_1+2\alpha+2}>\epsilon|X_i-\mu_{\sigma_0, r}|$, then
\[\left[|X_i-\mu_{\sigma_0, r}|+{k_{\sigma_0}^{-1}}\sigma_0^{H_1+2\alpha+2}\right]^p\leq\left((1+\epsilon^{-1})k_{\sigma_0}^{-1}\sigma_0^{H_1+2\alpha+2}\right)^p.\]
\end{itemize}
Therefore, 
\[\left[|X_i-\mu_{\sigma_0, r}|+{k_{\sigma_0}^{-1}}\sigma_0^{H_1+2\alpha+2}\right]^p\leq(1+\epsilon)^p|X_i-\mu_{\sigma_0, r}|^p+\left((1+\epsilon^{-1})k_{\sigma_0}^{-1}\sigma_0^{H_1+2\alpha+2}\right)^p,\]
and then
\begin{eqnarray*}
&&\psi_{\sigma}(X_i-\mu_r)\\
&\geq&\exp\left(-\left(\tau_{\max}^{1/2}(1+\epsilon^{-1})k_{\sigma_0}^{-1}\sigma_0^{H_1+2\alpha+2}\right)^p\right)\\
&&\times\frac{\tau_{\min}^{1/2}}{2\Gamma(1+1/p)}\exp\left(-((1+\epsilon)^2\tau_{\max})^{p/2}|X_i-\mu_{\sigma_0, r}|^p\right)\\
&=&\frac{\tau_{\min}^{1/2}}{(1+\epsilon)\tau_{\max}^{1/2}}\exp\left(-\left(\tau_{\max}^{1/2}(1+\epsilon^{-1})k_{\sigma_0}^{-1}\sigma_0^{H_1+2\alpha+2}\right)^p\right)\psi_{\sigma_0}(|X_i-\mu_{k_{\sigma_0},r}|).
\end{eqnarray*}
where the last step we apply the fact that $(1+\epsilon)^{-1}\tau_{\max}^{-1/2} = (1+\epsilon)^{-1}\wt\sigma_0 = \sigma_0$. Thus, we have
\begin{eqnarray*}
&&\int\log w_r\psi_{\sigma}(X_i-\mu_r)dQ^{(k_{\sigma_0})}(\theta^{(k_{\sigma_0})})\\
&\geq&\log\left[(1-\nu)\frac{\tau_{\min}^{1/2}}{(1+\epsilon)\tau_{\max}^{1/2}}\exp\left(-\left(\tau_{\max}^{1/2}(1+\epsilon^{-1})k_{\sigma_0}^{-1}\sigma_0^{H_1+2\alpha+2}\right)^p\right)\right]+\log\psi_{\sigma_0}(|X_i-\mu_r|),
\end{eqnarray*}
so 
\begin{eqnarray*}
&&-\sum_{i=1}^n\log\left[\sum_{r = 1}^{k_{\sigma_0}}\exp\left(\int \log w_r\psi_{\sigma}(X_i-\mu_r)dQ^{(k_{\sigma_0})}(\theta^{({k_{\sigma_0}})})\right)\right]\\
&\leq&-n\log\left[(1-\nu)\frac{\tau_{\min}^{1/2}}{(1+\epsilon)\tau_{\max}^{1/2}}\exp\left(-\left(\tau_{\max}^{1/2}(1+\epsilon^{-1})k_{\sigma_0}^{-1}\sigma_0^{H_1+2\alpha+2}\right)^p\right)\right]\\
&&-\log p_{k_{\sigma_0},\theta^{*(k_{\sigma_0})}}^n(X^{(n)})
\end{eqnarray*}
where $\theta^{*(k_{\sigma_0})} = (\mu_{\sigma_0}, w_{\sigma_0}^*,\sigma_0)$. 

We plug the above upper bound into (\ref{eq:upper1}), and then
\begin{eqnarray*}
&&D\left(\bar Q^{(k_{\sigma_0})}\|\bar\Pi^{(k_{\sigma_0})}\right)+\bar Q^{(k_{\sigma_0})}D\left(P_{\wt f_0}^{n}\|P(\cdot|k_{\sigma_0},z^{(k_{\sigma_0})},\theta^{(k_{\sigma_0})})\right)-\log\pi(k_{\sigma_0})\\
&\leq&n\left[\log\frac{1+\epsilon}{1-\nu}+\frac{1}{2}\log\frac{(1+\epsilon)\tau_{\max}}{\tau_{\min}}+\left(\tau_{\max}^{1/2}(1+\epsilon^{-1}){k_{\sigma_0}^{-1}}\sigma_0^{H_1+2\alpha+2}\right)^p\right]\\
&&+D\left(P_{\wt f_0}^{n}\|P_{k_{\sigma_0}, \theta^{*(k_{\sigma_0})}}^{n}\right)+D\left(Q^{(k_{\sigma_0})}\|\Pi^{(k_{\sigma_0})}\right).
\end{eqnarray*}
Now we build the upper bound for each term in the right hand side above. For the first term, when $\nu\leq 1/2$, $\frac{1}{1-\nu}\leq 1+2\nu$, we have
\begin{eqnarray*}
&&\log\frac{1+\epsilon}{1-\nu}+\frac{1}{2}\log\frac{(1+\epsilon)\tau_{\max}}{\tau_{\min}}+\left(\tau_{\max}^{1/2}(1+\epsilon^{-1})k_{\sigma_0}^{-1}\sigma_0^{H_1+2\alpha+2}\right)^p\\
&\leq&
\log(1+\epsilon)(1+2\nu)+\frac{1}{2}\log(1+\epsilon)(1+\wt\sigma_0^{H_1+2\alpha+1})+\epsilon^{-p}|\log\sigma_0|^{-p^2/\xi_4}\sigma_0^{(H_1+2\alpha+3)p}\\
&\leq&2\epsilon+2\nu+\epsilon+\frac{1}{2}(1+\epsilon)^{H_1+2\alpha+1}\sigma_0^{H_1+2\alpha+1}+\epsilon^{-p}\sigma_0^{(H_1+2\alpha+3)p}|\log\sigma_0|^{-p^2/\xi_4}.
\end{eqnarray*}
Choose $\epsilon = \sigma_0^{2\alpha}$ and $\nu = \sigma_0^{2\alpha}$. When $H_1>2\alpha$ and $\sigma_0\rightarrow 0$, we have
\[\log\frac{1+\epsilon}{1-\nu}+\frac{1}{2}\log\frac{(1+\epsilon)\tau_{\max}}{\tau_{\min}}+\left(\tau_{\max}^{1/2}(1+\epsilon^{-1})k_{\sigma_0}^{-1}\sigma_0^{H_1+2\alpha+2}\right)^p\lesssim\sigma_0^{2\alpha}.\]
Next, for the second term, as $w_{\sigma_0}^*\in\Delta_{k_{\sigma_0}}(w_{\sigma_0},\sigma_0^{H_1+2\alpha+1})$, by (\ref{eq:D2}), we have
\[D\left(P_{\wt f_0}^{n}\|P_{k_{\sigma_0}, \theta^{*(k_{\sigma_0})}}^{n}\right) = nD\left(P_{\wt f_0}\|P_{k_{\sigma_0}, \theta^{*(k_{\sigma_0})}}\right)\leq nD_2\left(P_{\wt f_0}\|P_{k_{\sigma_0}, \theta^{*(k_{\sigma_0})}}\right)\lesssim n\sigma_0^{2\alpha}.\]
Finally, for the last term,
\begin{eqnarray*}
&&D\left(Q^{(k_{\sigma_0})}\|\Pi^{(k_{\sigma_0})}\right)\\
 &=& -\log\pi(k_{\sigma_0})-\log\Pi_w^{(k_{\sigma_0})}(\wt\Theta_w^{(k_{\sigma_0})})-\sum_{j=1}^{k_{\sigma_0}}\log\Pi_{\mu_j}(\wt\Theta_{\mu_j}^{(k_{\sigma_0})})-\log\Pi_{\sigma}(\wt\Theta_{\sigma}^{(k_{\sigma_0})})
\end{eqnarray*}
Then, by the same arguments as in the proof of Lemma \ref{lem:mixture3}, when $\sigma_0 = n^{-\frac{1}{2\alpha+1}}(\log n)^{\frac{2\alpha r}{2\alpha+1}}$, we can further obtain that
\[-\log\Pi_{\mu_j}\left(\wt\Theta_{\mu_j}^{(k_{\sigma_0})}\right)\lesssim|\log\sigma_0|^{\max\{1,c_6/\xi_4\}},\]
\[-\log\Pi_{w}^{(k_{\sigma_0})}(\wt\Theta_w^{(k_{\sigma_0})})\lesssim k_{\sigma_0}(\log k_{\sigma_0})^{d_3}|\log\frac{\sigma_0^{2\alpha}}{2k_{\sigma_0}}\sigma_0^{H_1+2\alpha+1}|\asymp k_{\sigma_0}(\log k_{\sigma_0})^{d_3}|\log\sigma_0|,\]
\[-\log\Pi_{\sigma}(\wt\Theta_{\sigma_0}^{(k_{\sigma_0})})\lesssim |\log\wt\sigma_0|+\wt\sigma_0^{-b_6}\lesssim\sigma_0^{-1},\]
\[-\log\pi(k_{\sigma_0})\lesssim k_{\sigma_0}\log k_{\sigma_0}.\]
Therefore, with the choice of $\xi_4 = \min\{p,\xi_3\}$, we have
\[D\left(Q^{(k_{\sigma_0})}\|\Pi^{(k_{\sigma_0})}\right)\lesssim\sigma_0^{-1}(\log\sigma_0)^r,\]
where $r$ is the same defined in Theorem \ref{thm:mixture}. Combining all the bounds above, we have
\begin{eqnarray*}
&&D\left(\bar Q^{(k_{\sigma_0})}\|\bar\Pi^{(k_{\sigma_0})}\right)+\bar Q^{(k_{\sigma_0})}D\left(P_{\wt f_0}^{n}\|P(\cdot|k_{\sigma_0},z^{(k_{\sigma_0})},\theta^{(k_{\sigma_0})})\right)-\log\pi(k_{\sigma_0})\\
&\lesssim& n\sigma_0^{2\alpha}+\sigma_0^{-1}(\log\sigma_0)^{r}\asymp n^{\frac{1}{2\alpha+1}}(\log n)^{\frac{2\alpha r}{2\alpha+1}}.
\end{eqnarray*}
Finally, we have
\[P_{\wt f_0}^{n}\left[\wh Q^{(\wh k)}nH\left(P_{\wh k,\theta^{(\wh k)}}, P_{\wt f_0}\right)^2\right]\lesssim n^{\frac{1}{2\alpha+1}}(\log n)^{\frac{2\alpha r}{2\alpha+1}}.\]
With the same change of measure argument in the proof of Theorem \ref{thm:mixture}, the proof is complete.
\end{proof}

\subsection{Proofs of Theorem \ref{thm:VB-sieve-general}, Corollary \ref{cor:VB-sieve-factorize} and Theorem \ref{thm:EB as VB}}\label{sec:pf-EB}

We first show the following lemma to assist the proof of Theorem \ref{thm:VB-sieve-general}
\begin{lemma}\label{lem:support}
The variational posterior $\wh{Q}$ with respect to the set $\mathcal{S}_{\rm MF}$ is a product measure, with the density for each coordinate in the form of
\begin{equation}\label{eq:combine}
q_j(\theta_j) = \left\{\begin{array}{ll} g_{j1}(\theta_j),&j<k,\\ (1- p) g_{j1}(\theta_j)+pg_{j2}(\theta_j),&j =  k,\\ g_{j2}(\theta_j),&j>k,\end{array}\right.
\end{equation}
where $g_{j1}\in\mathcal{G}_{j1} = \left\{g:\int_{\Theta_{j1}}g(\theta_j)d\theta_j = 1\right\}$ and $g_{j2}\in\mathcal{G}_{j2} = \left\{g:\int_{\Theta_{j2}}g(\theta_j)d\theta_j = 1\right\}$ for all j, $k$ is some integer, and $p\in [0,1)$.
\end{lemma}

\begin{proof}
In order that $D(\wh{Q}\|\Pi(\cdot|X^{(n)})) < \infty$, we must have
\[\supp(\wh Q)\subseteq\supp(\Pi(\cdot|X^{(n)}))\subseteq\supp(\Pi).\]
In other words, for any set $B$ such that $\Pi(B) = 1$, we must have $\wh Q(B) = 1$. For each coordinate, we can assume that $q_j(\theta_j) = p_jg_{j1}(\theta_j)+(1-p_j)g_{j2}(\theta_j)$, where $g_{j1}\in\mathcal{G}_{j1}$ and $g_{j2}\in\mathcal{G}_{j2}$. For each $k$, define
$$B_k=\left\{\theta=(\theta_j)_{j=1}^{\infty}: \theta_j\in \Theta_{j1}\text{ for }j\leq k\text{ and }\theta_j\in\Theta_{j2}\text{ for }j>k\right\}.$$
Obverse that for $j\neq l$, $B_j\cap B_l=\varnothing$. Then, we can define the set $B=\cup_{k=0}^{\infty}B_k$. According to the sampling process of $\Pi$, $\Pi(B ) = 1$, which implies that $\wh Q(B) = 1$.
Note that for each $k$,
$$\wh Q(B_k)=\prod_{j\leq k}p_j\prod_{j>k}(1-p_j),$$
and then
\begin{equation}
1=\wh Q(B)=\sum_{k=0}^{\infty}\prod_{j\leq k}p_j\prod_{j>k}(1-p_j).\label{eq:constraint}
\end{equation}
For any $0< k<s$,
\begin{eqnarray*}
1 &=& (1-p_k+p_k)(1-p_s+p_s) = (1-p_k)p_s+[(1-p_s)p_k+p_kp_s+\\
&&(1-p_k)(1-p_s)]\prod_{l\neq k,s}(1-p_l+p_l)\\
&\geq&(1-p_k)p_s+\sum_{k=0}^{\infty}\prod_{j\leq k}p_j\prod_{j>k}(1-p_j)\\
& = &(1-p_k)p_s+1.
\end{eqnarray*}
Therefore, $(1-p_k)p_s = 0$ for all $0<k<s$, and there are three possible cases:
\begin{itemize}
\item $p_j = 0$ for all $j$.
\item $p_j = 1$ for all $j$.
\item $p_j = 0$ for $j<k$, $p_j = 1$ for $j>k$, and $p_k\in[0,1)$ for some $k\in\mathbb{N}$.
\end{itemize} 
However, the first two cases do not satisfy the constraint (\ref{eq:constraint}). Thus, the variational posterior $\wh{Q}$ is limited to the form (\ref{eq:combine}), which completes the proof.
\end{proof}

\begin{proof}[Proof of Theorem \ref{thm:VB-sieve-general}]
By Lemma \ref{lem:support}, the variational posterior has the form
$$p\prod_{j<k}g_{j1}(\theta_j)\prod_{j\geq k}g_{j2}(\theta_j)+(1-p)\prod_{j\leq k}g_{j1}(\theta_j)\prod_{j>k}g_{j2}(\theta_j).$$
Now we need to determine $k$, $p$, $g_{j1}$ for $j\leq k$ and $g_{j2}$ for $j\geq k$. We denote the above distribution by $Q_k$. Then, it is easy to see that $Q_k(B_{k-1}\cup B_k)=1$. This implies
\begin{eqnarray}\label{eq:KL_Q_k}
&&D(Q_k\|\Pi(\cdot|X^{(n)}))\nonumber\\
 &=& \int_{B_{k-1}} p\prod_{j<k}g_{j1}(\theta_j)\prod_{j\geq k}g_{j2}(\theta_j)\log\frac{p\prod_{j<k}g_{j1}(\theta_j)\prod_{j\geq k}g_{j2}(\theta_j)}{\pi(k-1)p(X^{(n)}|\theta)\prod_{j< k}f_{j1}(\theta_j)\prod_{j\geq k}f_{j2}(\theta_j)}d\theta \nonumber\\
&& + \int_{B_{k}} (1-p)\prod_{j\leq k}g_{j1}(\theta_j)\prod_{j> k}g_{j2}(\theta_j)\log\frac{(1-p)\prod_{j\leq k}g_{j1}(\theta_j)\prod_{j> k}g_{j2}(\theta_j)}{\pi(k)p(X^{(n)}|\theta)\prod_{j\leq k}f_{j1}(\theta_j)\prod_{j>k}f_{j2}(\theta_j)}d\theta\nonumber\\
&& +\log p_\Pi(X^{(n)})\nonumber\\
&=& p\log\frac{p}{\pi(k-1)\exp\left(m_{k-1}\left(X^{(n)}, (g_{j1})_{j = 1}^{k-1}\cup(g_{j2})_{j=  k}^{\infty}\right)\right)}\nonumber\\
&&+(1-p)\log\frac{1-p}{\pi(k)\exp\left(m_{k}\left(X^{(n)}, (g_{j1})_{j = 1}^{k}\cup(g_{j2})_{j=  k+1}^{\infty}\right)\right)}\nonumber\\
&&+\log p_{\Pi}(X^{(n)}),\nonumber\\
\end{eqnarray}
where $p_\Pi(X^{(n)}) = \int p(X^{(n)}|\theta)d\Pi(\theta)$. Minimizing $D(Q_k\|\Pi(\cdot|X^{(n)}))$ over $p$ leads to 
\[\wt p = \frac{\pi(k-1)e^{m_{k-1}\left(X^{(n)}, (g_{j1})_{j = 1}^{k-1}\cup(g_{j2})_{j=  k}^{\infty}\right)}}{\pi(k-1)e^{m_{k-1}\left(X^{(n)}, (g_{j1})_{j = 1}^{k-1}\cup(g_{j2})_{j=  k}^{\infty}\right)}+\pi(k)e^{m_{k}\left(X^{(n)}, (g_{j1})_{j = 1}^{k}\cup(g_{j2})_{j=  k+1}^{\infty}\right)}}.\]
Plugging $\wt p$ into (\ref{eq:KL_Q_k}), we have
\begin{eqnarray*}
&&D(Q_k\|\Pi(\cdot|X^{(n)}))\\
 &=&-\log\left[\pi(k-1)e^{m_{k-1}\left(X^{(n)}, (g_{j1})_{j = 1}^{k-1}\cup(g_{j2})_{j=  k}^{\infty}\right)}+\pi(k)e^{m_{k}\left(X^{(n)}, (g_{j1})_{j = 1}^{k}\cup(g_{j2})_{j=  k+1}^{\infty}\right)}\right]\\
 &&+\log p_{\Pi}(X^{(n)}).
\end{eqnarray*}
Therefore, $\wt k$, $\wt g_{j1}^{(\wt k)}$, $\wt g_{j2}^{(\wt k)}$ are the solution to maximizxing the objective function
$$\pi(k-1)e^{m_{k-1}\left(X^{(n)},(g_{j1})_{j=1}^{k-1}\cup (g_{j2})_{j=k}^{\infty}\right)}+\pi(k)e^{m_k\left(X^{(n)},(g_{j1})_{j=1}^{k}\cup (g_{j2})_{j=k+1}^{\infty}\right)},$$
under the constraints that $g_{j1}\in\mathcal{G}_{j1}$ and $g_{j2}\in\mathcal{G}_{j2}$ for all $j$. The proof is complete.
 \end{proof}

\begin{proof}[Proof of Corollary \ref{cor:VB-sieve-factorize}]
If $p(X^{(n)}|\theta) = \prod_{j=1}^{\infty}p(X_j^{(n)}|\theta_j)$, then
\begin{eqnarray*}
&&m_{k-1}(X^{(n)}, (g_{j1})_{j=1}^{k-1}\cup(g_{j2})_{j = k}^{\infty})\\
 &=& \sum_{j = 1}^{k-1}\int g_{j1}(\theta_j)\log\frac{f_{j1}(\theta_j)p(X_j^{(n)}|\theta_j)}{g_{j1}(\theta_j)}d\theta_j+\sum_{j = k}^{\infty}\int g_{j2}(\theta_j)\log\frac{f_{j2}(\theta_j)p(X_j^{(n)}|\theta_j)}{g_{j2}(\theta_j)}d\theta_j\\
&\leq&\sum_{j = 1}^{k-1}\log\int_{\Theta_{j1}} f_{j1}(\theta_j)p(X_j^{(n)}|\theta_j)d\theta_j+\sum_{j = k}^{\infty}\log\int_{\Theta_{j2}} f_{j2}(\theta_j)p(X_j^{(n)}|\theta_j)d\theta_j,
\end{eqnarray*}
and
\begin{eqnarray*}
&&m_{k}(X^{(n)}, (g_{j1})_{j=1}^{k}\cup(g_{j2})_{j = k+1}^{\infty})\\
 &=& \sum_{j = 1}^{k}\int g_{j1}(\theta_j)\log\frac{f_{j1}(\theta_j)p(X_j^{(n)}|\theta_j)}{g_{j1}(\theta_j)}d\theta_j+\sum_{j = k+1}^{\infty}\int g_{j2}(\theta_j)\log\frac{f_{j2}(\theta_j)p(X_j^{(n)}|\theta_j)}{g_{j2}(\theta_j)}d\theta_j\\
&\leq&\sum_{j = 1}^{k}\log\int_{\Theta_{j1}} f_{j1}(\theta_j)p(X_j^{(n)}|\theta_j)d\theta_j+\sum_{j = k+1}^{\infty}\log\int_{\Theta_{j2}} f_{j2}(\theta_j)p(X_j^{(n)}|\theta_j)d\theta_j.
\end{eqnarray*}
The equalities above hold when $g_{j1}(\theta_j)\propto f_{j1}(\theta_j)p(X_j^{(n)}|\theta_j)\1_{\theta_j\in\Theta_{j1}}$ for $j\leq k$ and $g_{j2}(\theta_j)\propto f_{j2}(\theta_j)p(X_j^{(n)}|\theta_j)\1_{\theta_j\in\Theta_{j2}}$ for $j\geq k$. Plug these choices into the objective function, and then the objective function becomes
\begin{eqnarray*}
&&\pi(k-1)\prod_{j=1}^{k-1}\int_{\Theta_{j1}} f_{j1}(\theta_j)p(X_j^{(n)}|\theta_j)d\theta_j\prod_{j=k}^{\infty}\int_{\Theta_{j2}} f_{j2}(\theta_j)p(X_j^{(n)}|\theta_j)d\theta_j\\
&&+\pi(k)\prod_{j=1}^{k}\int_{\Theta_{j1}} f_{j1}(\theta_j)p(X_j^{(n)}|\theta_j)d\theta_j\prod_{j=k+1}^{\infty}\int_{\Theta_{j2}} f_{j2}(\theta_j)p(X_j^{(n)}|\theta_j)d\theta_j.
\end{eqnarray*}
This implies that $\wt{k}$ maximizes $\pi(k-1|X^{(n)})+\pi(k|X^{(n)})$, where
\[\pi(k|X^{(n)})\propto\pi(k)\prod_{j=1}^{k}\int_{\Theta_{j1}} f_{j1}(\theta_j)p(X_j^{(n)}|\theta_j)d\theta_j\prod_{j=k+1}^{\infty}\int_{\Theta_{j2}} f_{j2}(\theta_j)p(X_j^{(n)}|\theta_j)d\theta_j.\]
Therefore, $\wt p$ is given by 
\[\wt p = \frac{\pi(k-1|X^{(n)})}{\pi(k-1|X^{(n)})+\pi(k|X^{(n)})}.\]
The proof is complete.
\end{proof}

\begin{proof}[Proof of Theorem \ref{thm:EB as VB}]
Assume $Q\in\mathcal{S}_{\rm EB}$, according to the definition, there exists a $k$ such that 
\[Q\left(B_k\right) = 1,\]
where $B_k = (\otimes_{j\leq k}\Theta_{j1})\bigotimes(\otimes_{j>k}\Theta_{j2})$. Then
\begin{eqnarray}\label{eq:Q_EB_VB}
&&D\left(Q\|\Pi(\cdot|X^{(n)})\right) = \int_{B_k}dQ(\theta)\log\frac{dQ(\theta)p_\Pi^{(n)}(X^{(n)})}{d\Pi(\theta)p(X^{(n)}|\theta)}\nonumber\\
&=&\int_{B_k}dQ(\theta)\log\frac{dQ(\theta)/d\theta}{\pi(k)\prod_{j\leq k}f_{j1}(\theta_j)\prod_{j>k}f_{j2}(\theta_j)p(X^{(n)}|\theta)}+\log p_{\Pi}^{(n)}(X^{(n)}),\nonumber\\
\end{eqnarray}
where $p_{\Pi}^{(n)}(X^{(n)}) = \int p^{(n)}(X^{(n)}|\theta)d\Pi(\theta)$.

Therefore, for a specific $k$, $Q$ is chosen as
\[dQ^{(k)}(\theta)\propto\prod_{j\leq k}f_{j1}(\theta_j)\prod_{j>k}f_{j2}(\theta_j)p(X^{(n)}|\theta)d\theta,\]
to minimize $D\left(Q\|\Pi(\cdot|X^{(n)})\right)$ under the constraint that $Q\left(B_k\right) = 1$. Plug this form into the right hand side of (\ref{eq:Q_EB_VB}), we can get $k = \wh k$ selected as
\[\wh k = \argmax_{k}\pi(k)\int\prod_{j\leq k}f_{j1}(\theta_j)\prod_{j>k}f_{j2}(\theta_j)p(X^{(n)}|\theta)d\theta.\]
And therefore, $\wh Q_{\rm EB} = \wh Q^{(\wh k)}$ is the variational posterior with the variational class $\mathcal{S}_{\rm EB}$. 
\end{proof}

\subsection{Proof of Theorem \ref{thm:UB for S_k}}

\begin{proof}[Proof of Theorem \ref{thm:UB for S_k}]
Recall that
$$d\wh{Q}_{[k]}=\prod_{j\leq k}dN\left(\frac{n}{n+j^{2\beta+1}}Y_j,\frac{1}{n+j^{2\beta+1}}\right)\prod_{j=k+1}^ndN(0,e^{-jn})\prod_{j>n}\delta_0.$$
Then, we can decompose the risk into
\begin{eqnarray*}
&&P_{\theta^*}^{(n)}\wh{Q}_{[k]}\|{\theta}-\theta^*\|^2 = P_{\theta^*}^{(n)}\wh{Q}_{[k]}\sum_{j\leq k}(\theta_j-\theta_j^*)^2 + P_{\theta^*}^{(n)}\wh{Q}_{[k]}\sum_{j> k}(\theta_j-\theta_j^*)^2 \\
&=& P_{\theta^*}^{(n)}\sum_{j\leq k}\left(\frac{n}{n+j^{2\beta+1}}Y_j-\theta_j^*\right)^2 + \sum_{j>k}\theta_{j}^{*2}  +\sum_{j\leq k}\frac{1}{n+j^{2\beta+1}} + \sum_{j=k+1}^ne^{-jn} \\
&=& \sum_{j\leq k}\left(\frac{j^{2\beta+1}}{n+j^{2\beta+1}}\right)^2\theta_j^{*2} + \sum_{j>k}\theta_{j}^{*2}  + \sum_{j\leq k}\frac{n}{(n+j^{2\beta+1})^2}\\
&& + \sum_{j\leq k}\frac{1}{n+j^{2\beta+1}} + \sum_{j=k+1}^ne^{-jn}.
\end{eqnarray*}
For the upper bound, we have
$$P_{\theta^*}^{(n)}\wh{Q}_{[k]}\|{\theta}-\theta^*\|^2\leq \sum_{j\leq k}\left(\frac{j^{2\beta+1}}{n+j^{2\beta+1}}\right)^2\theta_j^{*2} + \sum_{j>k}\theta_{j}^{*2}+2\sum_{j\leq k}\frac{1}{n+j^{2\beta+1}}+2e^{-kn}.$$

Now we discuss in the two cases:

\begin{itemize}
\item When $k\leq n^{\frac{1}{2\beta+1}}$, we have
\[\sum_{j>k}\theta_{j}^{*2}\leq k^{-2\alpha}B^2,\qquad \sum_{j\leq k}\frac{1}{n+j^{2\beta+1}}\leq \frac{k}{n},\]
and
$$\sum_{j\leq k}\left(\frac{j^{2\beta+1}}{n+j^{2\beta+1}}\right)^2\theta_j^{*2}\leq \sum_{j\leq k}\frac{j^{4\beta+2-2\alpha}}{n^2}j^{2\alpha}\theta_j^{*2}\leq \frac{1+k^{4\beta+2-2\alpha}}{n^2}B^2.$$
Therefore,
$$P_{\theta^*}^{(n)}\wh{Q}_{[k]}\|{\theta}-\theta^*\|^2\lesssim k^{-2\alpha}+\frac{k}{n}.$$
\item When $k>n^{\frac{1}{2\beta+1}}$, we have
$$\sum_{j\leq k}\frac{1}{n+j^{2\beta+1}}\leq \frac{n^{\frac{1}{2\beta+1}}}{n}+\sum_{j>n^{\frac{1}{2\beta+1}}}j^{-2\beta-1}\lesssim n^{-\frac{2\beta}{2\beta+1}},$$
and
$$\sum_{j\leq k}\left(\frac{j^{2\beta+1}}{n+j^{2\beta+1}}\right)^2\theta_j^{*2}\leq \sum_{j\leq n^{\frac{1}{2\beta+1}}}\frac{j^{4\beta+2-2\alpha}}{n^2}j^{2\alpha}\theta_j^{*2}+\sum_{j>n^{\frac{1}{2\beta+1}}}\theta_j^{*2}\lesssim n^{-\frac{2\alpha}{2\beta+1}}.$$
Thus, we have
$$P_{\theta^*}^{(n)}\wh{Q}_{[k]}\|{\theta}-\theta^*\|^2\lesssim n^{-\frac{2(\alpha\wedge \beta)}{2\beta+1}}.$$
\end{itemize}
Now we prove the lower bound. According to the risk decomposition, we have
$$P_{\theta^*}^{(n)}\wh{Q}_{[k]}\|{\theta}-\theta^*\|^2\geq \sum_{j\leq k}\left(\frac{j^{2\beta+1}}{n+j^{2\beta+1}}\right)^2\theta_j^{*2} + \sum_{j>k}\theta_{j}^{*2}+\sum_{j\leq k}\frac{1}{n+j^{2\beta+1}}.$$

\begin{itemize}
\item
When $k\leq n^{\frac{1}{2\beta+1}}$, we consider a $\theta^*$ with every coordinate $0$ except that $\theta_{k+1}^*=(k+1)^{-\alpha}B$. It is easy to check that $\theta^*\in\Theta_{\alpha}(B)$. Then, we have
$\sum_{j\leq k}\frac{1}{n+j^{2\beta+1}}\geq \frac{k}{2n}$ and $\sum_{j>k}\theta_{j}^{*2}\geq B^2(k+1)^{-2\alpha}$. Therefore,
$$\sup_{\theta^*\in\Theta_{\alpha}(B)}P_{\theta^*}^{(n)}\wh{Q}_{[k]}\|{\theta}-\theta^*\|^2\gtrsim k^{-2\alpha}+\frac{k}{n}.$$
\item
When $k>n^{\frac{1}{2\beta+1}}$, we consider a $\theta^*$ with every coordinate $0$ except that $\theta^*_{\ceil{n^{\frac{1}{2\beta+1}}}}=\left(\ceil{n^{\frac{1}{2\beta+1}}}\right)^{-\alpha}B$, and it is easy to check that $\theta^*\in\Theta_{\alpha}(B)$. Then, we have
$$\sum_{j\leq k}\left(\frac{j^{2\beta+1}}{n+j^{2\beta+1}}\right)^2\theta_j^{*2} \geq \frac{1}{4}\theta^{*2}_{\ceil{n^{\frac{1}{2\beta+1}}}}\gtrsim n^{-\frac{2\alpha}{2\beta+1}},$$
and
$$\sum_{j\leq k}\frac{1}{n+j^{2\beta+1}}\gtrsim \sum_{j\leq n^{\frac{1}{2\beta+1}}}\frac{1}{n+j^{2\beta+1}}\gtrsim n^{-\frac{2\beta}{2\beta+1}}.$$
This leads to the lower bound
$$\sup_{\theta^*\in\Theta_{\alpha}(B)}P_{\theta^*}^{(n)}\wh{Q}_{[k]}\|{\theta}-\theta^*\|^2\gtrsim n^{-\frac{2(\alpha\wedge \beta)}{2\beta+1}}.$$
\end{itemize}
Now the proof is complete.
\end{proof}

\subsection{Proofs of Theorem \ref{thm:lasso} and Theorem \ref{thm:VB-LASSO}}

\begin{proof}[Proof of Theorem \ref{thm:lasso}]
Define $Q = N(\beta_0,\tau^2 I_p)$, and then
\[Q\log(dQ) = -\frac{p}{2}\log(2\pi\tau^2e),\]
which is a constant with respect to $\beta_0$. Thus,
\begin{eqnarray*}
\wh Q_{\tau^2} &=& \argmin_{Q\in\mathcal{S}_{\tau^2}}D\left(Q\|\Pi(\cdot|y)\right)\\
&=&\argmin_{Q\in\mathcal{S}_{\tau^2}}\{Q\log(dQ)-Q\log(d\Pi)-Q\log p_{\beta}(y)\}\\
&=&\argmin_{Q\in\mathcal{S}_{\tau^2}}\left\{\frac{1}{2}Q\|X\beta-y\|^2+\lambda \sum_{i=1}^pQ|\beta_i|\right\}.
\end{eqnarray*}
where $p_\beta(y) = \frac{1}{(2\pi)^{n/2}}\exp\left(-\frac{1}{2}\|y-X\beta\|^2\right)$. With $Q = N(\beta_0,\tau^2I_p)$, we have $\beta_j$'s independently drawn from $N(\beta_{0j},\tau^2)$, and therefore,
\[Q\|X\beta-y\|^2 = \|X\beta_0-y\|^2+\tau^2{\rm tr}(X^TX),\]
\[Q|\beta_i| = \tau Q|\tau^{-1}\beta_{0i}| = \tau h(\tau^{-1}\beta_{0i}),\]
where
\begin{eqnarray*}
h(x) &=& \int_{-\infty}^{\infty}|t|\phi(t-x)dt = \int_{-\infty}^{\infty}|t+x|\phi(t)dt\\
&=&\int_{-\infty}^{-x}-(t+x)\phi(t)dt+\int_{-x}^{\infty}(t+x)\phi(t)dt\\
&=&\phi(t)\Big|_{-\infty}^{-x}-\phi(t)\Big|_{-x}^{\infty}-x\Phi(-x)+x\Phi(x)\\
&=&2\phi(x)+x[\Phi(x)-\Phi(-x)].
\end{eqnarray*}
The proof is complete.
\end{proof}

\begin{lemma}\label{lem:bound}
\[0\leq h(x)-|x|\leq \sqrt{\frac{2}{\pi}}\qquad\text{for all $x$.}\]
\end{lemma}

\begin{proof}[Proof of Lemma \ref{lem:bound}]
It is not hard to see that $h(-x) = h(x)$. Thus, we only need to show the inequality for $x\geq 0$. For $x\geq 0$, set $d(x) = h(x)-x$, and then 
\[d'(x) = \Phi(x)-\Phi(-x)-1\leq 0.\]
Thus, $d(x)$ is monotonically decreasing when $x>0$ and $d(x)\leq d(0) = 2\phi(0) = \sqrt{\frac{2}{\pi}}$. For the left part of inequality, notice that $\frac{1-\Phi(x)}{\phi(x)}\leq\frac{1}{x}$ for all $x>0$ in \cite{pinelis2002monotonicity}, and then we can directly obtain that $d(x)\geq 0$.
\end{proof}

\begin{proof}[Proof of Theorem \ref{thm:VB-LASSO}]
We use the notation $H_{\tau}(\beta)=\sum_{j=1}^p\tau h(\beta_j/\tau)$. By Lemma \ref{lem:bound}, we have
$|H_{\tau}(\beta)-\|\beta\|_1| \leq \tau\sum_{j = 1}^p|h(\beta_j/\tau)-\beta_j/\tau|\leq p\tau\sqrt{2/\pi}$.
By rearranging the basic inequality $\|y-X\wh{\beta}\|^2 + 2\lambda H_\tau(\wh{\beta})\leq \|y-X\beta^*\|^2 + 2\lambda H_\tau(\beta^*)$, we have
$$\|X(\wh{\beta}-\beta^*)\|^2 \leq  2\left|\iprod{X(\wh{\beta}-\beta^*)}{y-X\beta^*}\right| + 2\lambda H_\tau(\beta^*) - 2\lambda H_\tau(\wh{\beta}).$$
For the terms on the right hand side of the above inequality, we have $\left|\iprod{X(\wh{\beta}-\beta^*)}{y-X\beta^*}\right|\leq \|X^T(y-X\beta^*)\|_{\infty}\|\wh{\beta}-\beta^*\|_1$ and $H_\tau(\beta^*) -  H_\tau(\wh{\beta}) \leq \|\beta^*\|_1-\|\wh{\beta}\|_1 + 2p\tau\sqrt{2/\pi}$. Therefore, with the notation $\Delta=\wh{\beta}-\beta^*$, we have
\begin{equation}
\|X\Delta\|^2 \leq \lambda \|\Delta\|_1 + 2\lambda\|\beta^*\|_1 - 2\lambda\|\beta^*+\Delta\|_1 + 2\lambda p\tau\sqrt{2/\pi}, \label{eq:Lasso-step}
\end{equation}
as long as $\lambda\geq 2\|X^T(y-X\beta^*)\|_{\infty}$. Note that the choice $\lambda=C\sqrt{n\log p}$ implies that the condition $\lambda\geq 2\|X^T(y-X\beta^*)\|_{\infty}$ holds with probability at least $1-p^{-C'}$ by a union bound argument in \cite{bickel2009simultaneous}. With the decompositions $\|\Delta\|_1=\|\Delta_S\|_1+\|\Delta_{S^c}\|_1$, $\|\beta^*\|_1=\|\beta_S^*\|_1$ and $\|\beta^*+\Delta\|_1=\|\beta^*_S+\Delta_S\|_1 + \|\Delta_{S^c}\|_1$, the inequality (\ref{eq:Lasso-step}) becomes
\begin{equation}
\|X\Delta\|^2 \leq \lambda\left(3\|\Delta_S\|_1-\|\Delta_{S^c}\|_1+2p\tau\sqrt{2/\pi}\right).\label{eq:Lasso-mid}
\end{equation}
The inequality (\ref{eq:Lasso-mid}) immediately implies what is known as the generalized cone condition defined in \cite{gao2017sparse},
\begin{equation}
\|\Delta_{S^c}\|_1\leq 3\|\Delta_S\|_1 + 2p\tau\sqrt{2/\pi}. \label{eq:g-cone}
\end{equation}
Another consequence of (\ref{eq:Lasso-mid}) is the error bound
\begin{equation}
\|X\Delta\|^2 \leq \lambda\left(3\sqrt{s}\|\Delta\|+2p\tau\sqrt{2/\pi}\right). \label{eq:g-error}
\end{equation}
For the $\Delta$ that satisfies (\ref{eq:g-cone}), we define
\begin{eqnarray*}
\Delta^{(1)} &=& \frac{3\|\Delta_S\|_1+2p\tau\sqrt{2/\pi}}{3\|\Delta_S\|_1}\Delta_S + \frac{3\|\Delta_S\|_1}{3\|\Delta_S\|_1+2p\tau\sqrt{2/\pi}}\Delta_{S^c}, \\
\Delta^{(2)} &=& -\frac{2p\tau\sqrt{2/\pi}}{3\|\Delta_S\|_1}\Delta_S + \frac{2p\tau\sqrt{2/\pi}}{3\|\Delta_S\|_1+2p\tau\sqrt{2/\pi}}\Delta_{S^c}.
\end{eqnarray*}
It is easy to see that $\Delta=\Delta^{(1)}+\Delta^{(2)}$. Since
\begin{eqnarray*}
&&\|\Delta^{(1)}_{S^c}\|_1=\frac{3\|\Delta_S\|_1}{3\|\Delta_S\|_1+2p\tau\sqrt{2/\pi}}\|\Delta_{S^c}\|_1\\
&\leq& \frac{3\|\Delta_S\|_1}{3\|\Delta_S\|_1+2p\tau\sqrt{2/\pi}}(3\|\Delta_{S}\|_1+2p\tau\sqrt{2/\pi})=3\|\Delta_{S}\|_1\leq 3\|\Delta_S^{(1)}\|_1,
\end{eqnarray*}
we have $\frac{1}{\sqrt{n}}\|X\Delta^{(1)}\|\geq \kappa \|\Delta^{(1)}\|$ by the definition of $\kappa$ in (\ref{eq:RE}). We also bound $\|\Delta^{(2)}\|$ by
$$\|\Delta^{(2)}\|\leq \|\Delta^{(2)}\|_1\leq \frac{2p\tau\sqrt{2/\pi}}{3} + \frac{2p\tau\sqrt{2/\pi}}{3\|\Delta_S\|_1+2p\tau\sqrt{2/\pi}}\|\Delta_{S^c}\|_1\leq \frac{8p\tau\sqrt{2/\pi}}{3},$$
where the last inequality is by (\ref{eq:g-cone}). Therefore,
$$\|\Delta\|\leq \|\Delta^{(1)}\|+\|\Delta^{(2)}\|\leq \frac{\|X\Delta^{(1)}\|}{\kappa\sqrt{n}}+\frac{8p\tau\sqrt{2/\pi}}{3}.$$
Since $\|X\Delta^{(2)}\|\leq\sqrt{n\max_{i,j}|X_{ij}|^2\|\Delta^{(2)}\|_1^2}\leq\sqrt{L^2n^2\|\Delta^{(2)}\|_1^2}\leq nL\frac{8p\tau\sqrt{2/\pi}}{3}$, we have
$$\|\Delta\|\leq \frac{\|X\Delta\|}{\kappa\sqrt{n}}+\frac{8p\tau\sqrt{2/\pi}}{3}+\frac{\|X\Delta^{(2)}\|}{\kappa\sqrt{n}}\leq \frac{\|X\Delta\|}{\kappa\sqrt{n}}+\left(1+\frac{\sqrt{n}L}{\kappa}\right)\frac{8p\tau\sqrt{2/\pi}}{3}.$$
Combining the above inequality and (\ref{eq:g-error}), we have
$$\|\Delta\|^2\lesssim \frac{\|X\Delta\|^2}{n\kappa^2}+\left(1+\frac{n}{\kappa^2}\right)p^2\tau^2\lesssim
\left(\frac{\lambda\sqrt{s}}{n\kappa^2}\|\Delta\| +\frac{p\tau\lambda}{n\kappa^2}\right)+ \left(1+\frac{n}{\kappa^2}\right)p^2\tau^2,$$
which further leads to
$$\|\Delta\|^2\lesssim \frac{\lambda^2s}{n^2\kappa^4} + \frac{p\tau\lambda}{n\kappa^2} + \left(1+\frac{n}{\kappa^2}\right)p^2\tau^2.$$
With $\lambda=C\sqrt{n\log p}$ and $\tau=O\left(\frac{1}{np}\right)$, we have $\|\Delta\|^2\lesssim \frac{s\log p}{n\kappa^4}$, which completes the proof.
\end{proof}

\subsection{Proofs of Theorem \ref{thm:misspecification}, Theorem \ref{thm:mis_MF} and Theorem \ref{thm:PC-MC-misp}}\label{sec:pf-misp}

To show Theorem \ref{thm:misspecification}, we need the following three lemmas.

\begin{lemma}\label{lem:exp_convergence}
If the conditions (\ref{eq:C1})-(\ref{eq:C3}) in Theorem \ref{thm:convergence} are satisfied, then there exists a constant $M_0>0$ large enough such that when $n\epsilon^2\geq n\epsilon_n^2\geq M_0$ and $0<a\leq\frac{m}{2C_1}$, we have
\[P_0^{(n)}\left(\log\Pi\left[\exp\left(aL(P_\theta^{(n)},P_{0}^{(n)})\right)\Big|X^{(n)}\right]\geq aC_1n\epsilon^2+\log 2\right)\leq\exp\left(-\frac{m}{2}n\epsilon^2\right),\]
where $m = \frac{1}{2}\min\{1,\rho-1\}$.
\end{lemma}

\begin{lemma}\label{lem:VB-exp}
Under the conditions (\ref{eq:C1})-(\ref{eq:C3}) in Theorem \ref{thm:convergence}, there exist some constants $M_0>1$, $M>0$ and $c>0$ such that when $n\epsilon^2\geq n\epsilon_n^2\geq M_0$,
 \[P_0^{(n)}\left(\wh QL(P_\theta^{(n)},P_{0}^{(n)})> M\left(D\left(\wh Q\|\Pi(\cdot|X^{(n)})\right)+n\epsilon^2\right)\right)\leq\exp\left(-cn\epsilon^2\right),\]
where $\wh Q$ is the variational posterior distribution defined in (\ref{eq:VB}).
\end{lemma}

\begin{lemma}\label{lem:mis}
Suppose the conditions (\ref{eq:C1})-(\ref{eq:C3}) in Theorem \ref{thm:convergence} hold for $P_0^{(n)} = P_{\theta_0}^{(n)}$, when $n\epsilon^2\geq n\epsilon_n^2\geq M_0$, with any $p>1$ as a constant
\begin{eqnarray*}
&&P_*^{(n)}\left(\wh QL(P_\theta^{(n)},P_{\theta_0}^{(n)})> M\left(D\left(\wh Q\|\Pi(\cdot|X^{(n)})\right)+c^{-1}D_p\left(P_*^{(n)}\|P_{\theta_0}^{(n)}\right)+n\epsilon^2\right)\right)\\
&\leq&\exp\left(-\frac{(p-1)c}{p}n\epsilon^2\right),
\end{eqnarray*}
where $M_0$, $M$ and $c$ are the same constants in Lemma \ref{lem:VB-exp} and $\wh Q$ is the variational posterior distribution defined in (\ref{eq:VB}).
\end{lemma}

\begin{proof}[Proof of Lemma \ref{lem:exp_convergence}]
According to Lemma \ref{lem:strong convergence}
\[P_0^{(n)}\Pi\left(L(P_{\theta}^{(n)},P_0^{(n)})>C_1n\epsilon^2\Big|X^{(n)}\right)\leq 4\exp\left(-2mn\epsilon^2\right),\]
with $m = \frac{1}{2}\min\{1,\rho-1\}$ for any $\epsilon\geq\epsilon_n$. Then by Markov inequality,
\[P_0^{(n)}\left[\Pi\left(L(P_\theta^{(n)}, P_0^{(n)})>C_1n\epsilon^2\Big|X^{(n)}\right)>\exp\left(-mn\epsilon^2\right)\right]\leq 4\exp\left(-mn\epsilon^2\right).\]
Denote $B_j = \left\{X^{(n)}\Big|\Pi\left(L(P_\theta^{(n)}, P_0^{(n)})>C_1jn\epsilon^2\Big|X^{(n)}\right)\leq\exp\left(-mjn\epsilon^2\right)\right\}$ and $B = \cap_{j=1}^\infty B_j$. Then,
\[P_0^{(n)}(B^c) \leq \sum_{j=1}^\infty P_0^{(n)}(B_j^c)\leq 4\sum_{j=1}^\infty\exp(-mjn\epsilon^2)\leq\frac{4}{1-\exp(-mn\epsilon^2)}\exp(-mn\epsilon^2).\]
When $M_0 = \frac{2\log 8}{m}$ and $n\epsilon^2\geq M_0$, it is easy to check that 
\[P_0^{(n)}(B^c)\leq \exp\left(-\frac{m}{2}n\epsilon^2\right).\]
Under the event $B$,
\begin{eqnarray*}
&&\Pi\left[\exp\left(aL(P_\theta^{(n)},P_{0}^{(n)})\right)\Big|X^{(n)}\right]\\
&\leq&\exp\left(aC_1n\epsilon^2\right)+\int_{\exp\left(aC_1n\epsilon^2\right)}^\infty\Pi\left[\exp\left(aL(P_\theta^{(n)},P_{0}^{(n)})\right)\geq t\Big|X^{(n)}\right]dt\\
&\leq&\exp\left(aC_1n\epsilon^2\right)+\sum_{j=1}^\infty\left[\exp\left((j+1)aC_1n\epsilon^2\right)-\exp\left(jaC_1n\epsilon^2\right)\right]\Pi\left[L(P_\theta^{(n)},P_{0}^{(n)})\geq jC_1n\epsilon^2\Big|X^{(n)}\right]\\
&\leq&\exp\left(aC_1n\epsilon^2\right)+\exp\left(aC_1n\epsilon^2\right)\sum_{j=1}^\infty\exp\left((aC_1-m)jn\epsilon^2\right)\\
&\leq&\exp\left(aC_1n\epsilon^2\right)\left[1+\sum_{j=1}^\infty\exp\left(-\frac{m}{2}jn\epsilon^2\right)\right]\\
&=&\exp\left(aC_1n\epsilon^2\right)\frac{1}{1-\exp\left(-\frac{m}{2}n\epsilon^2\right)}\leq 2\exp\left(aC_1n\epsilon^2\right),
\end{eqnarray*}
where we have used the condition that $0<a\leq\frac{m}{2C_1}$. The conclusion of the lemma directly follows the result above.
\end{proof}

\begin{proof}[Proof of Lemma \ref{lem:VB-exp}]
According to Lemma \ref{lem:basic ineq}, for any $a>0$, we have
\[a\wh QL(P_\theta^{(n)},P_{0}^{(n)})\leq D\left(\wh Q\big\|\Pi(\cdot|X^{(n)})\right)+\log\Pi\left[\exp\left(aL(P_\theta^{(n)},P_{0}^{(n)})\right)\Big|X^{(n)}\right].\]
Choose $a = \frac{\min\{1,\rho-1\}}{4C_1}$, and then according to Lemma \ref{lem:exp_convergence}, under the event $B$ (defined in the proof of Lemma \ref{lem:exp_convergence}), for $n\epsilon^2>M_0>1$, we have
\[\wh QL(P_\theta^{(n)},P_{0}^{(n)})\leq \frac{1}{a}\left(D\left(\wh Q\big\|\Pi(\cdot|X^{(n)})\right)+\log 2\right)+C_1n\epsilon^2\leq M\left(D\left(\wh Q\|\Pi(\cdot|X^{(n)})\right)+n\epsilon^2\right),\]
with $M = \max\left\{\frac{\log 2}{a}+C_1,\frac{1}{a}\right\}$. Therefore, the conclusion that
\[P_0^{(n)}\left(\wh QL(P_\theta^{(n)},P_{0}^{(n)})> M\left(D\left(\wh Q\|\Pi(\cdot|X^{(n)})\right)+n\epsilon^2\right)\right)\leq\exp\left(-cn\epsilon^2\right),\]
is implied by $P_0^{(n)}(B^c)\leq \exp\left(-cn\epsilon^2\right)$,
where $c = \frac{\min\{1,\rho-1\}}{4}$.
\end{proof}

\begin{proof}[Proof of Lemma \ref{lem:mis}]
By H\"older's inequality,
\begin{eqnarray*}
&&P_*^{(n)}\left(\wh QL(P_\theta^{(n)},P_{\theta_0}^{(n)})> M\left[D\left(\wh Q\|\Pi(\cdot|X^{(n)})\right)+n\epsilon^2\right]\right)\\
&\leq&\left(P_{\theta_0}^{(n)}\left(\frac{dP_*^{(n)}}{dP_{\theta_0}^{(n)}}\right)^p\right)^{1/p}\left(P_{\theta_0}^{(n)}\left(\wh QL(P_\theta^{(n)},P_{\theta_0}^{(n)})> M\left[D\left(\wh Q\|\Pi(\cdot|X^{(n)})\right)+n\epsilon^2\right]\right)\right)^{1-1/p}\\
&\leq&\exp\left(-\frac{p-1}{p}\left(cn\epsilon^2-D_p\left(P_*^{(n)}\|P_{\theta_0}^{(n)}\right)\right)\right)\\
\end{eqnarray*}
where $q = \frac{p}{p-1}$. Replace $n\epsilon^2$ by $n\epsilon^2+c^{-1}D_p\left(P_*^{(n)}\|P_{\theta_0}^{(n)}\right)$, and we have
\begin{eqnarray*}
&&P_*^{(n)}\left(\wh QL(P_\theta^{(n)},P_{0}^{(n)})> M\left(D\left(\wh Q\|\Pi(\cdot|X^{(n)})\right)+c^{-1}D_p\left(P_*^{(n)}\|P_{\theta_0}^{(n)}\right)+n\epsilon^2\right)\right)\\
&\leq&\exp\left(-\frac{(p-1)c}{p}n\epsilon^2\right),
\end{eqnarray*}
where $M_0$, $M$ and $c$ are the same constants in Lemma \ref{lem:VB-exp}.
\end{proof}

Now we can show Theorem \ref{thm:misspecification}.
\begin{proof}[Proof of Theorem \ref{thm:misspecification}]
Define $$Y = M^{-1}\wh QL(P_\theta^{(n)},P_{\theta_0}^{(n)})-\left[D\left(\wh Q\|\Pi(\cdot|X^{(n)})\right)+c^{-1}D_p\left(P_*^{(n)}\|P_{\theta_0}^{(n)}\right)+n\epsilon_n^2\right].$$
Then Lemma \ref{lem:mis} implies that $$P_*^{(n)}(Y\geq t)\leq \exp\left(-\frac{(p-1)c}{p}(n\epsilon_n^2+t)\right),$$
for all $t\geq 0$. Note that
\begin{eqnarray*}
&&P_*^{(n)}\left(\wh QL\left(P_{\theta}^{(n)}, P_{\theta_0}^{(n)}\right)\right)\\
&=&MP_*^{(n)}D\left(\wh Q\|\Pi(\cdot|X^{(n)})\right)+Mc^{-1}D_p\left(P_*^{(n)}\|P_{\theta_0}^{(n)}\right)+Mn\epsilon_n^2+MP_*^{(n)}Y.
\end{eqnarray*}
For the first term on the right hand side of the above equality, we have 
\begin{eqnarray*}
P_*^{(n)}D\left(\wh Q\|\Pi(\cdot|X^{(n)})\right) &=& P_*^{(n)}\inf_{Q\in\mathcal{S}}D\left(Q\|\Pi(\cdot|X^{(n)})\right) \\
&\leq& \inf_{Q\in\mathcal{S}}P_*^{(n)}D\left(Q\|\Pi(\cdot|X^{(n)})\right)=n\gamma_n^2.
\end{eqnarray*}
The term $P_*^{(n)}Y$ can be bounded by
\begin{eqnarray*}
P_*^{(n)}Y &\leq& P_*^{(n)}Y\1_{\{Y\geq 0\}}\leq\int_0^{\infty}P_*^{(n)}(Y\geq t)dt\\
&\leq&\int_0^{\infty}\exp\left(-\frac{(p-1)c}{p}(n\epsilon_n^2+t)\right)dt\leq \frac{p}{(p-1)c}\exp\left(-\frac{(p-1)c}{p}n\epsilon_n^2\right)\\
&\lesssim&n\epsilon_n^2
\end{eqnarray*}
The proof is complete by choosing $p=2$.
\end{proof}

\begin{proof}[Proof of Theorem \ref{thm:mis_MF}]
Theorem \ref{thm:mis_MF} uses the same arguments in the proof of Theorem \ref{thm:convergence2} with $\epsilon_n^2$ replaced by $\epsilon_n^2+\frac{1}{n}D_2\left(P_*^{(n)}\|P_{\theta_0}^{(n)}\right)$. Therefore, we omit the details here.
\end{proof}
In the end of this part, we will show Theorem \ref{thm:PC-MC-misp}, which directly implies Theorem \ref{thm:PC-MC}. 
We want to check conditions (\ref{eq:C1})-(\ref{eq:C3}) for $\theta_0\in\Theta_{k_0}(B)$. For this aim, we establish the following lemmas.

\begin{lemma}\label{lem:marginal_prior}
The marginal sampling process of $\theta$ in the prior for piecewise constant model can be regarded as following procedure:
\begin{itemize}
\item Sample $k\sim\pi(k)$ with 
\begin{equation}\label{eq:pi_k}
\pi(k)= \frac{\Gamma(k-1+\alpha_0)\Gamma(n-k+\beta_0)\Gamma(\alpha_0+\beta_0)(n-1)!}{\Gamma(n-1+\alpha_0+\beta_0)\Gamma(\alpha_0)\Gamma(\beta_0)(k-1)!(n-k)!};
\end{equation}
\item Conditioning on $k$, sample $k-1$ change points uniformly from $\{2,3,\cdots, n\}$. In the other words, we uniformly sample a subset $S\subseteq\{2,3,\cdots, n\}$ of size $k-1$ with probability ${n-1\choose k-1}^{-1}$;
\item Conditioning on $S$, sample $\theta_i$ according to $\theta_i\sim g_i$ for all $i\in S$ and $\theta_i = \theta_{i-1}$ for all $i\not\in S$.
\end{itemize}
Moreover, when (\ref{eq:MC-condition1}) is satisfied, 
\[n^{-(C_2+1)(k-1)-1}\leq\pi(k)\leq n^{-(C_1-1)(k-1)}.\]
\end{lemma}

\begin{proof}
First of all, the density of marginal prior on $\theta$ can be written as
\begin{eqnarray*}
\frac{d\Pi(\theta)}{d\theta} &=& \int\sum_{z}\frac{\Gamma(\alpha_0+\beta_0)}{\Gamma(\alpha_0)\Gamma(\beta_0)}w^{\alpha_0+\sum_{i = 2}^nz_i-1}(1-w)^{\beta_0+n-1-\sum_{i = 2}^nz_i-1}\\
&&\times g(\theta_1)\prod_{z_i = 1,i>1}g(\theta_i)\prod_{z_i = 0,i>1}\delta_{\theta_{i-1}}(\theta_i)dw\\
&=&\sum_{k = 1}^n\pi(k){n-1\choose k-1}^{-1}\sum_{|S| = k-1}g(\theta_1)\prod_{i\in S}g(\theta_i)\prod_{i\not\in S}\delta_{\theta_{i-1}}(\theta_i),
\end{eqnarray*}
where $S$ above is the set of label $2\leq i\leq n$ such that $z_i = 1$ and $\pi(k)$ is defined in (\ref{eq:pi_k}), which implies the marginal sampling process of $\theta$ can be written as the procedure above.

Then the condition (\ref{eq:MC-condition1}) indicates that
\[\frac{\pi(k+1)}{\pi(k)} = \frac{k-1+\alpha_0}{n-k+\beta_0-1}\frac{n-k}{k}\leq\frac{n(\alpha_0+n-1)}{\beta_0}\leq n^{1-C_1},\]
\[\frac{\pi(k+1)}{\pi(k)} = \frac{k-1+\alpha_0}{n-k+\beta_0-1}\frac{n-k}{k}\geq\frac{\alpha_0}{(\beta_0+n)n}\geq n^{-C_2-1},\]
which implies that
\[n^{-(C_2+1)(k-1)}\pi(1)\leq\pi(k)\leq n^{-(C_1-1)(k-1)}\pi(1).\]
When $C_1>1$, $C_2>0$, it is easy to see that $1/n<\pi(1)<1$ as $\pi(k)$ is decreasing with respect to $k$. Hence, we have
\[n^{-(C_2+1)(k-1)-1}\leq\pi(k)\leq n^{-(C_1-1)(k-1)}.\]
\end{proof}

\begin{lemma}\label{lem:PC-estimator}
Suppose $\theta_0\in\Theta_{k_0}$. For some integer $m\geq k_0$, define
\begin{equation}\label{eq:PC-estimator}
\widehat\theta_m = \argmin_{\theta\in\Theta_m}\|\theta-X\|^2.
\end{equation}
Then for any $t\geq 24\sigma^2 r\log\frac{en}{r}$ with $r = \min\{n,m+k_0\}$,  we have
\[P_{\theta^*}^{(n)}(\|\widehat\theta_m-\theta^*\|^2>t)\leq\exp\left(-\frac{t}{16\sigma^2}\right).\]
\end{lemma}

\begin{proof}
According to the definition,
\[\|\widehat\theta_m-X\|^2\leq \|\theta^*-X\|^2.\]
Using the identity $\|\widehat\theta_m-X\|^2 = \|\widehat\theta_m-\theta^*\|^2+\|\theta^*-X\|^2+2\iprod{\widehat\theta_m-\theta^*}{\theta^*-X}$, we get
\[\|\widehat\theta_m-\theta^*\|\leq 2\left|\iprod{\frac{\widehat\theta_m-\theta^*}{\|\widehat\theta_m-\theta^*\|}}{X-\theta^*}\right|.\]
Since $\frac{\widehat\theta_m-\theta^*}{\|\widehat\theta_m-\theta^*\|}\in \Theta_r$, we have
\[\|\widehat\theta_m-\theta^*\|^2\leq 4\sigma^2\sup_{\|u\|=1:u\in\Theta_{r}}\left|\sum_{i=1}^nu_iZ_i\right|^2,\]
where $Z_i\sim N(0,1)$.
Then,
\begin{eqnarray*}
&&P_{\theta^*}^{(n)}(\|\widehat\theta_m-\theta^*\|^2>t)\leq \mathbb{P}\left(\sup_{\|u\|=1:u\in\Theta_{r}}\left|\sum_{i=1}^nu_iZ_i\right|^2\geq \frac{t}{4\sigma^2}\right)\\
&\leq&\sum_{x_1+x_2+\cdots+x_r = n}\mathbb{P}\left(\sup_{\sum_{i=1}^rx_i\wt u_i^2 =1}\left|\sum_{i=1}^r\sqrt{x_i}\wt u_i \wt Z_i\right|^2\geq \frac{t}{4\sigma^2}\right)\\
& = &\sum_{x_1+x_2+\cdots+x_r = n}\mathbb{P}\left(\|\wt Z\|^2\geq\frac{t}{4\sigma^2}\right),
\end{eqnarray*}
where $r= \min\{m+k_0, n\}$, $\wt Z = (\wt Z_1, \wt Z_2,\cdots, \wt Z_r)^T\sim N(0, I_r)$. Then
a standard chi-squared bound gives
\begin{eqnarray*}
&&P_{\theta^*}^{(n)}(\|\widehat\theta_m-\theta^*\|^2>t)\leq\exp\left(r\log\frac{en}{r}+\frac{r}{2}\log 2\right)\exp\left(-\frac{t}{8\sigma^2}\right)\\
&\leq&\exp\left(\frac{t}{16\sigma^2}\right)\exp\left(-\frac{t}{8\sigma^2}\right) = \exp\left(-\frac{t}{16\sigma^2}\right).
\end{eqnarray*}
The proof is complete.
\end{proof}

We want check conditions (\ref{eq:C1})-(\ref{eq:C3}) with respect to $\theta_0$. This step can be split into the following two lemmas.

\begin{lemma}\label{lem:PC-MC1}
Assume $\theta_0\in\Theta_{k_0}(B)$. For the prior $\Pi$ that satisfies (\ref{eq:MC-condition1}), the conditions (\ref{eq:C1}) and (\ref{eq:C2}) hold for all $\epsilon>\sqrt{\frac{k_0\log n}{n}}$.
\end{lemma}

\begin{lemma}\label{lem:PC-MC2}
Assume $\theta_0\in\Theta_{k_0}(B)$. For the prior $\Pi$ that satisfies (\ref{eq:MC-condition1}) and (\ref{eq:MC-condition2}), the conditions (\ref{eq:C3}) and (\ref{eq:C4**}) hold for $\epsilon_n = \sigma\sqrt{\frac{k_0\log n}{n}}$ with both $\mathcal{S} = \mathcal{S}_{\rm MC}$ and $\mathcal{S} = \mathcal{S}_{\rm MC}^{\rm joint}$.
\end{lemma}

Now we start to prove Lemma \ref{lem:PC-MC1} and Lemma \ref{lem:PC-MC2}.

\begin{proof}[Proof of Lemma \ref{lem:PC-MC1}]
For any $\epsilon>\sqrt{\frac{k_0\log n}{n}}$, we set $m = \lceil \frac{C_0n\epsilon^2}{2\log n}\rceil$. Choose a sufficiently large $C_0$ so that $m\geq 2k_0\geq 2$. 
We consider $\Theta_n(\epsilon) = \Theta_r$ with $r = \min\{k_0+m, n\}$. Then by the condition (\ref{eq:MC-condition1}) and Lemma \ref{lem:marginal_prior}, we have 
\[\Pi(\Theta_n(\epsilon)^c) = \sum_{j = r+1}^n\pi(j)\leq n^{-(C_1-1)r}\sum_{j = 1}^{n-r}\pi(j)\leq n^{-(C_1-1)r}\leq\exp\left(-(C_1-1)C_0n\epsilon^2\right),\] 
which implies (\ref{eq:C2}).

To show (\ref{eq:C1}),
we consider the testing function $\phi_n = \mathbb{I}\{\|\widehat\theta_m-\theta_0\|\geq 5\sigma\sqrt{(C_0+1)n\epsilon^2}\}$, where $\widehat\theta_m$ is defined in (\ref{eq:PC-estimator}). Note that  
\begin{eqnarray*}
&&(5\sigma\epsilon\sqrt{(C_0+1)n})^2\geq 24(C_0+1)\sigma^2n\epsilon^2\\
&\geq& 24(C_0n\epsilon^2+k_0\log n)\sigma^2\geq 24(2m+k_0)\sigma^2\log n,
\end{eqnarray*}
and
we apply Lemma \ref{lem:PC-estimator} to obtain
\[P_{\theta_0}^{(n)}\phi_n =P_{\theta_0}^{(n)}(\|\widehat\theta_m-\theta_0\|^2\geq 25(C_0+1)\sigma^2n\epsilon^2)\leq\exp\left(-\frac{25}{16}(C_0+1)n\epsilon^2\right).\]
Moreover, for any $\theta\in\Theta_n(\epsilon)$ and $\|\theta-\theta_0\|^2\geq 10\sigma\epsilon\sqrt{(C_0+1)n}$, we have
\[(5\sigma\epsilon\sqrt{(C_0+1)n})^2\geq 24(2m+k_0)\sigma^2\log n \geq 24(m+r)\sigma^2\log n,\]
and then,
\begin{eqnarray*}
P_\theta^{(n)}(1-\phi_n) &=& P_{\theta}^{(n)}(\|\widehat\theta_m-\theta_0\|\leq 5\sigma\epsilon\sqrt{(C_0+1)n})\\
&\leq& P_\theta^{(n)}(\|\widehat\theta_m-\theta\|\geq 5\sigma\epsilon\sqrt{(C_0+1)n})\\
&\leq& \exp\left(-\frac{25}{16}(C_0+1)n\epsilon^2\right).
\end{eqnarray*}
Therefore, (\ref{eq:C1}) is satisfied with a sufficiently large $C_0$.
\end{proof}

\begin{proof}[Proof of Lemma \ref{lem:PC-MC2}]
We first verify condition (\ref{eq:C3}). Note that for any $\rho>0$,
\[D_\rho\left(P_{\theta_0}^{(n)}\|P_\theta^{(n)}\right) = \frac{\rho}{2\sigma^2}\|\theta-\theta_0\|^2.\]
Consider the set $\Theta = \cup_{i = 1}^n[\theta_{0i}-n^{-1/2}, \theta_{0i}+n^{-1/2}]$, then for $n>1$,
\[\Theta\subseteq\left\{\theta: D_\rho\left(P_{\theta_0}^{(n)}\|P_{\theta}^{(n)}\right)\leq \frac{\rho}{\sigma^2}\leq \frac{\rho}{\sigma^2\log 2}k_0\log n\right\},\]
and
\begin{eqnarray*}
\Pi(\Theta) &\geq&\pi(k_0)\prod_{i\in S(\theta_0)}\int_{\theta_{0i}-n^{-1/2}}^{\theta_{0i}+n^{-1/2}}g(\theta_j)d\theta_j\geq n^{-(C_2+1)(k_0-1)-1}\left(2cn^{-1/2}\right)^{k_0}\\
&\geq&\exp\left(-\left(\frac{5}{2}+C_2-\log(2c)\right)n\epsilon_n^2\right),
\end{eqnarray*}
where $S(\theta_0) = \{i: i>1, \theta_{0i}\neq\theta_{0(i-1)}\}\cup\{1\}$ and $C_2$ is given in (\ref{eq:MC-condition1}). 
Therefore, condition (\ref{eq:C3}) is satisfied.

Now we check condition (\ref{eq:C4**}) for both $\mathcal{S} = \mathcal{S}_{\rm MC}$ and $\mathcal{S} = \mathcal{S}_{\rm MC}^{\rm joint}$. 
When $\mathcal{S} = \mathcal{S}_{\rm MC}$, assume $|S(\theta_0)| = \wt k_0$ and $S(\theta_0) = \{a_0+1,a_1+1,\cdots, a_{\wt k_0-1}+1\}$ with $0 = a_0< a_1<\cdots< a_{\wt k_0} = n$. Since $\theta_0\in\Theta_{k_0}(B)$, we must have $\wt k_0\leq k_0$. Define
\[\Theta_i = \left(\theta_{0i}-n^{-1/2},\theta_{0i}+n^{-1/2}\right),\mbox{ for }i \in S(\theta_0).\]
Then we define 
\[\Theta = \{\theta: \theta_i\in\Theta_i,\mbox{ for }i\in S(\theta_0), \theta_i = \theta_{i-1},\mbox{ for }i\not\in S(\theta_0)\}.\]
Then choose $dQ(\theta) = \frac{d\Pi(\theta)\1_{\Theta}(\theta)}{\Pi(\Theta)}$. As
\[dQ(\theta) = \prod_{i\in S(\theta_0)}\frac{g(\theta_i)\1_{\Theta_i}(\theta_i)}{\int_{\Theta_i}g(\theta_i)d\theta_i}\prod_{i\not\in S(\theta_0)}\delta_{\theta_{i-1}}(\theta_i)d\theta,\]
we have $Q\in\mathcal{S}_{\rm MC}$. For any $\theta\in\supp(Q) = \Theta$, \begin{eqnarray*}
D\left(P_{\theta^*}^{(n)}\|P_{\theta}^{(n)}\right) &=& \frac{1}{2\sigma^2}\|\theta^*-\theta\|^2\leq \frac{1}{\sigma^2}\|\theta^*-\theta_0\|^2+\frac{1}{\sigma^2}\|\theta_0-\theta\|^2\\
&\leq& \frac{1}{\sigma^2}\|\theta^*-\theta_0\|^2+\frac{1}{\sigma^2}
\leq D_2\left(P_{\theta^*}^{(n)}\|P_{\theta}^{(n)}\right)+\sigma^{-2}k_0\log n.
\end{eqnarray*}
Moreover, 
\begin{eqnarray*}
D\left(Q\|\Pi\right) &=& -\log\Pi(\Theta) = -\log\pi(\wt k_0)-\sum_{i\in S(\theta_0)}\log\left(\int_{\theta_{0i}-n^{-1/2}}^{\theta_{0i}+n^{-1/2}}g(\theta_i)d\theta_i\right)\\
&\lesssim& \wt k_0\log n\leq k_0\log n.
\end{eqnarray*}
Thus, condition (\ref{eq:C4**}) is satisfied for $\mathcal{S} = \mathcal{S}_{\rm MC}$.

When $\mathcal{S}= \mathcal{S}_{\rm MC}^{\rm joint}$. Choose $dQ^{\rm joint}(w,z,\theta) = dQ^{(w)}(w)\prod_{i = 2}^ndQ_i^{(z)}(z_i)dQ^{(\theta)}(\theta)$, where
\[Q^{(w)} = {\rm Beta}(\wt k_0-1+\alpha_0, n-\wt k_0+\beta_0),\]
 \[Q_{i}^{(z)}(z_i = 1) =\left\{\begin{array}{ll}0,&i\not\in S(\theta_0),\\ 1,&i\in S(\theta_0),\end{array}\right.\qquad\mbox{ for all $i>1$},\]
\[dQ^{(\theta)}(\theta) = \prod_{i\in S(\theta_0)}\frac{g(\theta_i)\1_{\Theta_i}(\theta_i)}{\int_{\Theta_i}g(\theta_i)d\theta_i}\prod_{i\not\in S(\theta_0)}\delta_{\theta_{i-1}}(\theta_i)d\theta,\]
Obviously, we have $Q^{\rm joint}\in\mathcal{S}_{\rm MC}^{\rm joint}$ and for any $\theta\in\supp(Q^{(\theta)})$, we have shown that 
\[D\left(P_{\theta^*}^{(n)}\|P_{\theta}^{(n)}\right)\lesssim D_2\left(P_{\theta^*}^{(n)}\|P_{\theta_0}^{(n)}\right)+k_0\log n.\]
On the other hand, suppose $dQ^{(\theta)}(\theta) = q^{(\theta)}(\theta)d\theta$ and $dQ^{(w)}(w) = q^{(w)}(w)dw$, we have
\begin{eqnarray*}
&&D\left(Q^{\rm joint}(w,z,\theta)\|\Pi(w,z,\theta)\right)\\
 &=& \iint q^{(w)}(w)q^{(\theta)}(\theta)\log\frac{q^{(w)}(w)q^{(\theta)}(\theta)}{\pi(w)w^{\wt k_0-1}(1-w)^{n-\wt k_0}\prod_{i\in S(\theta_0)}g(\theta_i)d\theta_i\prod_{i\not\in S(\theta_0)}\delta_{\theta_{i-1}}(\theta_i)}d\theta dw\\
 &=&-\log\pi(\wt k_0)-\sum_{i\in S(\theta_0)}\log\left(\int_{\theta_{0i}-n^{-1/2}}^{\theta_{0i}+n^{-1/2}}g(\theta_i)d\theta_i\right)\\
 &\lesssim&\wt k_0\log  n\leq k_0\log n.
 \end{eqnarray*}
Thus, condition (\ref{eq:C4**}) is satisfied for $\mathcal{S} = \mathcal{S}_{\rm MC}^{\rm joint}$. The proof is complete.

\end{proof}

\begin{proof}[Proof of Theorem \ref{thm:PC-MC-misp}]
By Lemma \ref{lem:PC-MC1} and Lemma \ref{lem:PC-MC2}, together with Theorem \ref{thm:misspecification} and Theorem \ref{thm:mis_MF}, we have
 \begin{eqnarray*}
 P_{\theta^*}^{(n)}\wh Q\|\theta-\theta_0\|^2\lesssim k_0\log n+\|\theta^*-\theta_0\|^2,
 \end{eqnarray*}
 for both $\wh{Q}=\wh{Q}_{\rm MC}$ and $\wh{Q}=\wh{Q}_{\rm MC}^{\rm joint}$.
 Then for every $1\leq k_0\leq n$ and $\theta_0\in\Theta_{k_0}(B)$, we have
 \[P_{\theta^*}^{(n)}\wh Q\|\theta-\theta^*\|^2\lesssim P_{\theta^*}^{(n)}\wh Q\|\theta-\theta_0\|^2+\|\theta_0-\theta^*\|^2\lesssim k_0\log n+\|\theta^*-\theta_0\|^2.\]
 Therefore, by taking minimum over $k_0\in[n]$ and $\theta_0\in\Theta_{k_0}(B)$, we can get
 \[P_{\theta^*}^{(n)}\wh Q\|\theta-\theta^*\|^2\lesssim\min_{1\leq k\leq n}\left\{\inf_{\theta_0\in\Theta_k(B)}\|\theta^*-\theta_0\|^2+k\log n\right\}.\]
The proof is complete. 
\end{proof}

\subsection{Proof of Theorem \ref{thm:convergence-general}}

\begin{proof}[Proof of Theorem \ref{thm:convergence-general}]
By  Lemma \ref{lem:basic ineq}, we have
\[a\wh Q_*L(P_\theta^{(n)}, P_0^{(n)})\leq D(\wh Q_*\|\Pi(\cdot|X^{(n)}))+\log\Pi(\exp(aL(P_\theta^{(n)}, P_0^{(n)}))|X^{(n)}).\]
Then, under the conditions of Theorem \ref{thm:convergence-general}, we have
$$D(\wh Q_*\|\Pi(\cdot|X^{(n)}))\leq D_*(\wh Q_*\|\Pi(\cdot|X^{(n)}))\leq D_*(Q\|\Pi(\cdot|X^{(n)})),$$
for all $Q\in\mathcal{S}$. Then, following the same argument in the proof of Theorem \ref{thm:convergence}, we complete the proof.
\end{proof}

\begin{small}
\bibliographystyle{plainnat}
\bibliography{reference}
\end{small}




\end{document}